\documentclass[11pt, reqno]{amsart}
\usepackage{amsmath, amsthm, amscd, amsfonts, amssymb, graphicx, color, mathtools,mathrsfs}
\usepackage[bookmarksnumbered, colorlinks, plainpages]{hyperref}
\usepackage{enumerate}
\usepackage{setspace}
\usepackage{multicol}
\usepackage[margin=2cm]{geometry}
\usepackage{comment}
\usepackage{graphicx,psfrag,caption,subcaption,color}
\usepackage{setspace}
\usepackage{multicol}
\usepackage{pgfplots}
\usepackage{booktabs}

\theoremstyle{definition}
\newtheorem{theorem}{Theorem}[section]
\newtheorem{lemma}[theorem]{Lemma}
\newtheorem{proposition}[theorem]{Proposition}

\theoremstyle{remark}
\newtheorem{remark}[theorem]{Remark}
\numberwithin{equation}{section}

\newcommand{\cal}{\mathcal}
\newcommand{\bff}{\boldsymbol}
\newcommand{\bb}{\mathbb}

\newcommand{\dt}{\mathrm{d}t}
\newcommand{\ddt}{\frac{\mathrm{d}}{\mathrm{d}t}}
\newcommand{\dx}{\mathrm{d}x}
\newcommand{\ds}{\mathrm{d}s}

\newcommand{\norm}[2]{\left\|{#1}\right\|_{#2}}
\newcommand{\inpro}[2]{\left\langle#1,#2\right\rangle}
\newcommand{\abs}[1]{\left|{#1}\right|}

\allowdisplaybreaks

\begin{document}
\setcounter{page}{1}

\title[Mixed FEM for the LL-Bar and the regularised LL-Bloch equations in micromagnetics]{Mixed finite element methods for the Landau--Lifshitz--Baryakhtar and the regularised Landau--Lifshitz--Bloch equations in micromagnetics}

\author[Agus L. Soenjaya]{Agus L. Soenjaya}
\address{School of Mathematics and Statistics, The University of New South Wales, Sydney 2052, Australia}
\email{\textcolor[rgb]{0.00,0.00,0.84}{a.soenjaya@unsw.edu.au}}

\date{June 10, 2024}

\keywords{Landau--Lifshitz, Landau--Lifshitz--Baryakhtar, Landau--Lifshitz--Bloch, mixed finite element method, micromagnetics, ferromagnetism}
\subjclass{65M12, 65M60, 35Q60}

\begin{abstract}
The Landau--Lifshitz--Baryakhtar (LLBar) and the Landau--Lifshitz--Bloch (LLBloch) equations are nonlinear vector-valued PDEs which arise in the theory of micromagnetics to describe the dynamics of magnetic spin field in a ferromagnet at elevated temperatures. We consider the LLBar and the regularised LLBloch equations in a unified manner, thus allowing us to treat the numerical approximations for both problems at once. In this paper, we propose a semi-discrete mixed finite element scheme and two fully discrete mixed finite element schemes based on a semi-implicit Euler method and a semi-implicit Crank--Nicolson method to solve the problems. These numerical schemes provide accurate approximations to both the magnetisation vector and the effective magnetic field. Moreover, they are proven to be unconditionally energy-stable and preserve energy dissipativity of the system at the discrete level. Error analysis is performed which shows optimal rates of convergence in $\bb{L}^2$, $\bb{L}^\infty$, and $\bb{H}^1$ norms. These theoretical results are further corroborated by several numerical experiments.
\end{abstract}
\maketitle

\section{Introduction}\label{sec:intro}
Micromagnetics is a field of physics that deals with magnetic behaviours at sub-micrometre length scales. The length scales considered are large enough for small-scale atomic structure to be ignored, but still fine enough to resolve some local behaviour. The standard model to describe time evolution of the magnetic configuration of a ferromagnet was proposed by Landau and Lifshitz~\cite{LL35}, and is commonly known as the Landau--Lifshitz (LL) equation. According to this model, the dynamics of the magnetic spin field is influenced by a precessional term (which tends to cause precession of the magnetisation vector) and a damping term (which tends to dissipate energy and align the magnetisation vector with the effective magnetic field). The damping term is chosen purely for phenomenological reasons, namely that the damping process must lead to the state with minimum energy, and that the magnetisation magnitude must remain constant.

The Landau--Lifshitz equation has been remarkably successful in describing the magnetisation dynamics at low temperature, and was widely analysed in the physics and mathematics literature, see \cite{AloSoy92, CarFab01, Cim08, GuoDing08, Lak11, Vis85} and references therein. However, it is \emph{not} sufficient to describe many experimental observations in modern physics, especially at high temperature~\cite{Bar84, ChuNowChaGar06, DvoVanVan13, Gar91, Gar97, WanDvo15}, where the magnitude of the magnetisation vector is known to be varying in time. Various magnetic recording devices, including the heat-assisted magnetic recording (HAMR) and the thermally-assisted magnetic random access memory (TA-MRAM), operate at high temperatures~\cite{MeoPan20, NieChu16, WanDvo15}, thus an accurate model and simulation of magnetisation dynamics at elevated temperatures is necessary. Indeed, the Landau--Lifshitz equation is essentially a zero-temperature equation, since it cuts off all contributions from high-frequency spin waves responsible for longitudinal magnetisation dynamics~\cite{ChuNie20}.

To rectify these problems, several approaches are proposed in the physics literature, most prominently due to Baryakhtar~\cite{BarBar98, Bar84} and Garanin~\cite{Gar91, Gar97}. Baryakhtar formulates his model based on the Onsager principles and the laws of thermodynamics, resulting in the Landau--Lifshitz--Baryakhtar (LLBar) equation, which also takes into account long-range interactions. Garanin based his model on the thermal averaging of many exchange-coupled atomistic spins, resulting in the Landau--Lifshitz--Bloch (LLBloch) equation, which can be seen as an interpolation between the standard Landau--Lifshitz equation and the Ginzburg--Landau system. In both models, the dynamics of a magnetisation vector in an effective field is influenced by three factors: precession, damping, and torque. The net effect on the magnetic spin field is a competition between these factors.

\par \medskip

Mathematically, the dynamics of the magnetic spin field $\bff{u}(t):\mathscr{D}\to\bb{R}^3$ in a bounded magnetic domain $\mathscr{D}$ can be described as
\begin{align*}
	\partial_t \bff{u} = \mathcal{P}(\bff{u}) + \mathcal{D}(\bff{u}) + \mathcal{S}(\bff{u}),
\end{align*}
where $\mathcal{P}(\bff{u})$ is the precessional term, $\mathcal{D}(\bff{u})$ is the damping term, and $\mathcal{S}(\bff{u})$ is the torque term. Quantum mechanics dictates that the precessional term has the form $\mathcal{P}(\bff{u})= -\gamma \bff{u}\times \bff{H}$, as in the usual Landau--Lifshitz equation, where $\bff{H}(t):\mathscr{D}\to \bb{R}^3$ is the effective field. Baryakhtar proposes that the damping term should be proportional to the effective field and its second spatial derivatives~\cite{BarBar98, Bar84}. In most ferromagnetic materials, the damping term for the LLBar equation~\cite{WanDvo15} can be written as
\begin{equation}\label{equ:damping}
	\mathcal{D}_{\text{Bar}}(\bff{u}) = \lambda_r \bff{H} - \lambda_e \Delta\bff{H},
\end{equation}
where $\bff{H}$ itself depends on $\bff{u}$. The positive constants $\lambda_r$ and $\lambda_e$ are the phenomenological relativistic damping constant and the exchange damping constant, respectively. These constants can also be replaced by positive definite symmetric tensors without changing the arguments here substantially. The second term in \eqref{equ:damping} is responsible for the longitudinal damping and long range interaction between magnetic spins~\cite{WanDvo15}.  
For the LLBloch equation above $T_\mathrm{c}$, the damping term is simply~\eqref{equ:damping} with $\lambda_e=0$.

The torque term has various forms depending on the physical situations considered. In the presence of applied current, the flow of electrons move charges and spins across space, affecting the magnetic properties of the material. A commonly used torque term is given by Zhang and Li~\cite{ZhaLi04}:
\begin{equation}\label{equ:spin}
	\mathcal{S}(\bff{u})= \Lambda_1 \bff{u}\times (\bff{u}\times (\bff{j}\cdot \nabla)\bff{u}) + \Lambda_2 \bff{u}\times (\bff{j}\cdot \nabla)\bff{u},
\end{equation}
where $\Lambda_1$ and $\Lambda_2$ are constants (which can be positive or negative), and $\bff{j}=\bff{j}(t,\bff{x})$ is the current density.

The effective magnetic field $\bff{H}$ consists of the usual contributions from the theory of micromagnetism, namely the exchange field, the external (Zeeman) field, and the anisotropy field. To account for elevated temperatures and phase transition, the Ginzburg--Landau internal exchange field is added~\cite{BarBar98, Gar97}, giving
\begin{equation}\label{equ:eff field}
	\bff{H}(t,\bff{x}) 
	= \underbrace{\alpha \Delta \bff{u}}_{\text{exchange}} + \underbrace{\kappa \mu \bff{u}- \kappa |\bff{u}|^2 \bff{u}}_{\text{internal exchange}} + \underbrace{\bff{B}(t)}_{\text{applied}}
	- \underbrace{\beta \bff{e}(\bff{e}\cdot \bff{u})}_{\text{anisotropy}}.
\end{equation}
Here, $\alpha$ is a positive constant depending on the material structure. The unit vector $\bff{e}$ is related to the axis of anisotropy of the material, while the vector $\bff{B}(t)$ describes an applied magnetic field. Physical considerations dictate that $\kappa$ is positive, while $\mu>0$ for temperatures above the Curie temperature $T_\mathrm{c}$ and $\mu<0$ below $T_\mathrm{c}$. The constant $\beta$ corresponds to the uniaxial anisotropy constant of the object, which can be positive or negative. To simplify presentation, we assume $\beta<0$ in~\eqref{equ:eff field}, which corresponds to the presence of an easy plane. Higher order anisotropy field of the form $\beta_1 \bff{e}(\bff{e}\cdot\bff{u})- \beta_2 \bff{e}(\bff{e}\cdot \bff{u})^3$, where $\beta_1,\beta_2>0$, could also be treated in a similar manner as the internal exchange field.

Subsequently, we take $\bff{B}(t)=\mathcal{S}(\bff{u})\equiv \bff{0}$ and set~$\alpha=1$ for simplicity. The presence of applied field and the Zhang--Li torque term~\eqref{equ:spin} could still be handled by the methods presented here with minor modifications. However, we choose to focus on the simplified version since in this case the system dissipates energy, and we want to highlight this energy dissipation property in our numerical schemes. 

\par \medskip

With these in mind, we can state the LLBar and the LLBloch equations in a unified manner. Given a magnetic body $\mathscr{D}\subset \bb{R}^d$, the magnetisation vector $\bff{u}(t,\bff{x})$ for any time $t\geq 0$ and at any point $\bff{x}\in \mathscr{D}$ evolves according to a nonlinear vector-valued PDE, which can be written in the following mixed form:
\begin{equation}\label{equ:llbar}
	\begin{cases}
		\displaystyle 
		\partial_t \bff{u}
		= 
		\lambda_r \bff{H} 
		- \lambda_e \Delta \bff{H} 
		- \gamma \bff{u} \times \bff{H},
		\quad & \text{for $(t,\bff{x})\in(0,T)\times\mathscr{D}$,}
		\\[1mm]
		\bff{H}
		= 
		\Delta \bff{u} 
		+ \kappa \mu \bff{u}
		- \kappa |\bff{u}|^2 \bff{u}
		- \beta \bff{e}(\bff{e}\cdot \bff{u}),
		\quad & \text{for $(t,\bff{x})\in(0,T)\times\mathscr{D}$,}
		\\[1mm]
		\bff{u}(0,\bff{x})= \bff{u_0}(\bff{x}) 
		\quad & \text{for } \bff{x}\in \mathscr{D},
		\\[1mm]
		\displaystyle{
			\frac{\partial \bff{u}}{\partial \bff{n}}= \bff{0}}, 
		\;\displaystyle{\frac{\partial \bff{H}}{\partial \bff{n}}= \bff{0}} 
		\quad & \text{for } (t,\bff{x})\in (0,T) \times \partial \mathscr{D},
	\end{cases}
\end{equation}
where $\partial\mathscr{D}$ is the boundary of $\mathscr{D}$, with exterior unit normal vector denoted by $\bff{n}$. Depending on the signs of some parameters, problem \eqref{equ:llbar} describes several models:
\begin{enumerate}[(i)]
	\item If $\lambda_e>0$ and $\mu>0$, this is the LLBar equation below $T_\mathrm{c}$.
	\item If $\lambda_e=0$ and $\mu<0$, this is the LLBloch equation above $T_\mathrm{c}$.
	\item If $\lambda_e>0$ and $\mu<0$, this is the LLBar equation above $T_\mathrm{c}$, which can also be considered as the \emph{regularised} LLBloch equation above $T_\mathrm{c}$ (cf. section~\ref{sec:bloch}).
\end{enumerate}
We remark that the LLBloch equation below $T_\mathrm{c}$ cannot be written in the above form and will be analysed separately in an upcoming paper, although for temperatures not far from (but below) $T_\mathrm{c}$, the LLBar equation with $\lambda_e>0$ and $\mu>0$ can be considered as an approximation to the LLBloch equation below $T_\mathrm{c}$~\cite{WanDvo15}.

The \emph{energy} $\mathcal{E}(\bff{u})$ of the system described by \eqref{equ:llbar} is 
\begin{equation}\label{equ:energy}
	\mathcal{E}(\bff{u})
	:=
	\begin{cases}
	\displaystyle
	\frac{1}{2} \norm{\nabla \bff{u}}{\bb{L}^2}^2
	+
	\frac{\kappa}{4} \norm{|\bff{u}|^2-\mu}{\bb{L}^2}^2
	+
	\frac{\beta}{2} \int_\mathscr{D} (\bff{e}\cdot \bff{u})^2\, \dx,
	\quad & \text{if } \mu>0,
	\\[3mm]
	\displaystyle
	\frac{1}{2} \norm{\nabla \bff{u}}{\bb{L}^2}^2
	+
	\frac{\kappa}{4} \norm{\bff{u}}{\bb{L}^4}^4
	-
	\frac{\kappa\mu}{2} \norm{\bff{u}}{\bb{L}^2}^2
	+
	\frac{\beta}{2} \int_\mathscr{D} (\bff{e}\cdot \bff{u})^2\, \dx,
	\quad & \text{if } \mu< 0,
	\end{cases}
\end{equation}
where $\kappa, \mu, \beta$ are constants appearing in \eqref{equ:llbar}. Note that the two expressions in~\eqref{equ:energy} differ only by a constant $\frac{1}{4} \kappa \mu^2 |\mathscr{D}|$, and in principle one could take the first expression as the energy for any values of $\mu$. However, it is known~\cite{LeSoeTra24} that $\bff{u}(t)\to \bff{0}$ as $t\to\infty$ for $\mu< 0$, and so the energy in this case is modified to ensure $\mathcal{E}(\bff{0})= 0$. In the absence of the spin-torque term, this system dissipates energy.

\par \medskip

Global well-posedness of the LLBar equation (assuming exchange-dominated field) is shown in~\cite{SoeTra23}, while the existence and uniqueness of strong solution to the LLBloch equation are shown in~\cite{Le16, LeSoeTra24} (see also~\cite{BrzGolLe20} and~\cite{GolSoeTra24b} for the stochastic equations). On the numerical aspect, some linear $C^1$-conforming finite element methods based on a semi-implicit Euler and a BDF schemes are proposed to solve the LLBar equation~\cite{SoeTra23b}. They are proven to be stable and convergent to the actual solution at an optimal rate, without assuming quasi-uniformity of the triangulation in most cases. However, the implementation is computationally costly, since the method requires $C^1$-continuity across element boundaries. A linear conforming finite element scheme based on the linearised Euler method has been proposed for the regularised LLBloch equation with a different regularisation term, where an optimal convergence rate in $\bb{H}^1$ is shown~\cite{LeSoeTra24}. Other regularisation term is used in~\cite{GolJiaLe22}, where a $C^1$-conforming method is proposed for the stochastic regularised LLBloch equation, but a strong convergence rate is only provided in $\bb{L}^2$ and the results are limited to $d=1$ or $2$. In all the above, energy dissipativity of the schemes is not addressed.

\par \medskip

Here, we continue and build on the studies done in~\cite{SoeTra23b} and~\cite{LeSoeTra24} by proposing numerical schemes which can be used to solve the LLBar and the LLBloch equations for $d=1,2,3$ in a unified manner. More precisely, we propose some mixed finite element methods to solve the LLBar and the regularised LLBloch equations, including a semi-discrete (in space) finite element scheme and two fully-discrete finite element schemes based on the Euler and the Crank--Nicolson methods. These schemes are proven to be uniquely solvable and unconditionally energy-stable, and this $\bb{H}^1$-stability is robust with respect to the parameter $\lambda_e$. Moreover, the energy dissipativity of the system is preserved at the discrete level. Since we are using the mixed formulation~\eqref{equ:llbar}, these methods provide an accurate approximation to both the magnetisation vector~$\bff{u}$ and the effective magnetic field~$\bff{H}$, both of which are physically significant quantities.

\par \medskip

To summarise, the main results of this paper include proving the following:
\begin{enumerate}
	\item convergence of a semi-discrete conforming finite element scheme (Theorem~\ref{the:semidisc rate}),
	\item convergence of a fully discrete conforming finite element scheme based on the semi-implicit Euler method (Theorem~\ref{the:euler rate}),
	\item convergence of a fully discrete conforming finite element scheme based on the Crank--Nicolson method (Theorem~\ref{the:crank rate}),
	\item convergence of the strong solution of the LLBar equation to that of the LLBloch equation at a certain rate (in~$L^\infty(0,T;\bb{H}^1(\mathscr{D})$), thus implying the above schemes can also be used to solve the LLBloch equation. Convergence of the corresponding effective field (in~$L^2(0,T;\bb{L}^2(\mathscr{D})$) is also shown (Theorem~\ref{the:bloch to bar}).
\end{enumerate}
In all cases, we obtain the expected rates of convergence in $\bb{L}^2$, $\bb{L}^\infty$, and $\bb{H}^1$ norms. We remark that systems of vector-valued PDEs similar to~\eqref{equ:llbar} also appear in chemistry and biology to model long-range diffusion~\cite[Chapter 11]{Mur02}, anomalous bi-flux diffusion~\cite{BevGalSimRio13, JiaBevZhu20}, and population dynamics~\cite{CocDi23, CohMur81, Och84}. As such, numerical schemes proposed in this paper would also apply to certain cases of these models.

\par \medskip

This paper is organised as follows. Notations and various assumptions on the exact solution and the finite element space are outlined in Section~\ref{sec:formula}. A semi-discrete finite element approximation is described in Section~\ref{sec:semidiscrete}. Two fully discrete mixed finite element schemes based on the Euler and the Crank--Nicolson methods are proposed in Section~\ref{sec:Euler} and Section~\ref{sec:Crank}, respectively. The regularised LLBloch equation and its approximation are discussed in Section~\ref{sec:bloch}. Finally, some numerical simulations which support the theoretical results are described in Section~\ref{sec:simulation}.

\section{Preliminaries}\label{sec:formula}

\subsection{Notations}

We begin by defining some notations used in this paper. The function space $\bb{L}^p := \bb{L}^p(\mathscr{D}; \bb{R}^3)$ denotes the usual space of $p$-th integrable functions taking values in $\bb{R}^3$ and $\bb{W}^{k,p} := \bb{W}^{k,p}(\mathscr{D}; \bb{R}^3)$ denotes the usual Sobolev space of 
functions on $\mathscr{D} \subset \bb{R}^d$, for $d=1,2,3$, taking values in $\bb{R}^3$. Also, we write $\bb{H}^k := \bb{W}^{k,2}$. Let $\Delta$ be the Neumann Laplacian operator acting on $\bb{R}^3$-valued functions with domain $\mathrm{D}(\Delta)$ given by
\begin{equation*}
	\mathrm{D}(\Delta):= \left\{\bff{v}\in \bb{H}^2 : \frac{\partial \bff{v}}{\partial \bff{n}} = \bff{0} \text{ on } \partial\mathscr{D} \right\}.
\end{equation*}

If $X$ is a Banach space, the spaces $L^p(0,T;X)$ and $W^{m,r}(0,T;X)$ denote respectively the Lebesgue and Sobolev spaces of functions on $(0,T)$ taking values in $X$, where $T$ can be a finite number or $\infty$. The space $C([0,T];X)$ denotes the space of continuous function on $[0,T]$ taking values in $X$. For simplicity, we will write $L^p(\bb{W}^{m,r}) := L^p(0,T; \bb{W}^{m,r})$ and $L^p(\bb{L}^q) := L^p(0,T; \bb{L}^q)$. 

Throughout this paper, we denote the scalar product in a Hilbert space $H$ by $\inpro{ \cdot}{ \cdot}_H$ and its corresponding norm by $\norm{\cdot}{H}$. We will not distinguish between the scalar product of $\bb{L}^2$ vector-valued functions taking values in $\bb{R}^3$ and the scalar product of $\bb{L}^2$ matrix-valued functions taking values in $\bb{R}^{3\times 3}$, and still denote them by $\inpro{\cdot}{\cdot}$.

Here, $\mathscr{D}$ is assumed to be a bounded domain such that the $\bb{H}^2$-regularity result holds, namely:
\begin{align*}
	\norm{\bff{v}}{\bb{H}^2}^2
	&\lesssim 
	\norm{\bff{v}}{\bb{L}^2}^2 + \norm{\Delta \bff{v}}{\bb{L}^2}^2
\end{align*}
for all $\bff{v}\in \mathrm{D}(\Delta)$. The above is true for any domain with $C^2$-smooth boundary. For domain with less regular boundary, it is known, for instance, that the $\bb{H}^2$-regularity result is guaranteed to hold for any convex Lipschitz domains \cite{Gri11}. Henceforth, we will assume that $\mathscr{D}$ is a smooth, or convex polygonal or polyhedral domain. In this domain, the Gagliardo--Nirenberg inequalities and the Sobolev embedding theorems also hold.

Finally, throughout this paper, the constant $C$ in the estimate denotes a
generic constant which takes different values at different occurrences. If
the dependence of $C$ on some variable, e.g.~$T$, is highlighted, we often write
$C(T)$. The notation $A \lesssim B$ means $A \le C B$ where the specific form of
the constant $C$ is not important to clarify.

\subsection{Assumptions}\label{sec:exact sol}
Given $T>0$ and $\bff{u}_0\in\bb{H}^1(\mathscr{D})$, the pair of functions $(\bff{u}, \bff{H}):[0,T]\to \bb{H}^1 \times \bb{H}^1$ is a weak solution to the problem
\eqref{equ:llbar} if $(\bff{u}, \bff{H}) $ satisfies
\begin{align}\label{equ:weakform}
	\left\{
	\begin{alignedat}{1}
		\inpro{ \partial_t \bff{u}}{\bff{\chi}}
		&=
		\lambda_r \inpro{\bff{H}}{\bff{\chi}}
		+
		\lambda_e \inpro{\nabla \bff{H}}{\nabla \bff{\chi}}
		-
		\gamma \inpro{\bff{u} \times \bff{H}}{\bff{\chi}}
		\\
		\inpro{\bff{H}}{\bff{\phi}} 
		&=
		-
		\inpro{\nabla \bff{u}}{\nabla \bff{\phi}}
		+
		\kappa \mu \inpro{\bff{u}}{\bff{\phi}}
		-
		\kappa \inpro{|\bff{u}|^2 \bff{u}}{\bff{\phi}}
		-
		\beta \inpro{\bff{e}(\bff{e}\cdot \bff{u})}{\bff{\phi}},
	\end{alignedat}
	\right.
\end{align}
for all $\bff{\chi}, \bff{\phi}\in \bb{H}^1$ and $t\in [0,T]$, with $\bff{u}(0)=\bff{u}_0$.


Throughout this paper, we assume that problem~\eqref{equ:weakform} possesses
solution~$(\bff{u}, \bff{H})$ which satisfies
\begin{equation}\label{equ:ass 2 u}
\begin{aligned}
	\norm{\bff{u}}{L^\infty(\bb{H}^{r+1})}
	+ \norm{\partial_t \bff{u}}{L^\infty(\bb{H}^{r+1})}
	+ \norm{\partial_t^2 \bff{u}}{L^\infty(\bb{H}^2)}
	+ \norm{\partial_t^3 \bff{u}}{L^\infty(\bb{L}^2)}
	&\le K_0,
	\\
	\norm{\bff{H}}{L^\infty(\bb{H}^{r+1})}
	+ \norm{\partial_t \bff{H}}{L^\infty(\bb{H}^{r+1})}
	+ \norm{\partial_t^2 \bff{H}}{L^\infty(\bb{H}^2)}
	+ \norm{\partial_t^3 \bff{H}}{L^\infty(\bb{L}^2)}
	&\le K_0,
\end{aligned}
\end{equation}
where $r$ is the degree of piecewise continuous polynomials used as the finite element space and~$K_0>0$ depends on~$\bff{u}_0$. The existence of an arbitrarily smooth solution to the LLBar equation on $(0,T)\times\mathscr{D}$ is guaranteed for any initial data $\bff{u_0}\in\bb{H}^1$ (cf.~\cite{GolSoeTra24a}).

For simplicity of presentation, throughout this paper we assume that $\mu>0$, except in Section~\ref{sec:bloch} where we specifically discuss the regularised LLBloch equation and outline the modifications needed for the case $\mu<0$, and in Section~\ref{sec:simulation} where some numerical simulations are performed.

\subsection{Finite Element Approximation}\label{sec:finite element}
Let $\mathcal{T}_h$ be a shape-regular triangulation of $\mathscr{D}\subset \bb{R}^d$ with maximal mesh-size $h$. To discretise the LLBar equation \eqref{equ:llbar}, we introduce the finite element space $\bb{V}_h \subset \bb{H}^1$, which is the space of all piecewise continuous polynomials on $\mathcal{T}_h$ of degree at most $r$. By the Bramble--Hilbert lemma, there exists a constant $C$ independent of $h$ such that for any $\bff{v} \in \bb{H}^{r+1}$,
\begin{align*}
	\inf_{\chi \in {\bb{V}}_h} \left\{ \norm{\bff{v} - \bff{\chi}}{\bb{L}^2} 
	+ 
	h \norm{\nabla (\bff{v}-\bff{\chi})}{\bb{L}^2} 
	\right\} 
	\leq 
	C h^{r+1} \norm{\bff{v}}{\bb{H}^{r+1}}.
\end{align*}
We shall use several operators in the analysis. Firstly, define the $\bb{L}^2$-projection operator~$\Pi_h: \bb{L}^2 \to \bb{V}_h$ such that
\begin{align}\label{equ:orth proj}
	\inpro{\Pi_h \bff{v}-\bff{v}}{\bff{\chi}}=0,
	\quad
	\forall \bff{\chi}\in \bb{V}_h.
\end{align}
It is well known that if $\bff{v}\in \bb{H}^{r+1}$, then
\begin{align}\label{equ:proj approx}
	\norm{\bff{v}- \Pi_h\bff{v}}{\bb{L}^2}
	+
	h \norm{\nabla( \bff{v}-\Pi_h\bff{v})}{\bb{L}^2}
	\leq
	C h^{r+1} \norm{\bff{v}}{\bb{H}^{r+1}}.
\end{align}
Moreover, if the triangulation is globally quasi-uniform, then we have the $\bb{H}^1$-stability of the $\bb{L}^2$-projection operator~\cite{BraPasSte02}, namely
\begin{align}\label{equ:H1 stab proj}
	\norm{\nabla \Pi_h \bff{v}}{\bb{L}^2} \leq C \norm{\nabla \bff{v}}{\bb{L}^2}, \quad \forall \bff{v}\in \bb{H}^1.
\end{align}
Next, we introduce the discrete Laplacian operator $\Delta_h: \bb{V}_h \to \bb{V}_h$ defined by
\begin{align}\label{equ:disc laplacian}
	\inpro{\Delta_h \bff{v}_h}{\bff{\chi}}
	=
	- \inpro{\nabla \bff{v}_h}{\nabla \bff{\chi}},
	\quad 
	\forall \bff{v}_h, \bff{\chi} \in \bb{V}_h,
\end{align}
and the Ritz projection operator $R_h: \bb{H}^1 \to \bb{V}_h$ by
\begin{align}\label{equ:Ritz}
	\inpro{\nabla R_h \bff{v}- \nabla \bff{v}}{\nabla \bff{\chi}}=0
	\;\text{ such that }
	\inpro{R_h\bff{v}-\bff{v}}{\bff{1}}=0,
	\quad
	\forall \bff{\chi}\in \bb{V}_h,
\end{align}
For any $\bff{v}\in \bb{H}^{r+1}$, let $\bff{\omega}(t):= \bff{v}(t)-R_h\bff{v}(t)$. The following estimate is well known \cite{Tho06} for $s=0$ or $1$:
\begin{align}\label{equ:Ritz ineq}
	\norm{\bff{\omega}(t)}{\bb{H}^s} + \norm{\partial_t \bff{\omega}(t)}{\bb{H}^s}
	\leq
	C h^{r+1-s} \norm{\bff{v}(t)}{\bb{H}^{r+1}}.
\end{align}
Moreover, if the triangulation is globally quasi-uniform, then by~\cite{Sco76},
\begin{align}\label{equ:Ritz ineq L infty}
	\norm{\bff{\omega}(t)}{\bb{L}^\infty} \leq Ch^{r+1} \abs{\ln h} \norm{\bff{v}(t)}{\bb{W}^{r+1,\infty}}.
\end{align}
It is known that the term $\abs{\ln h}$ can be removed in case $r\geq 2$ and $\mathscr{D}$ is a polygonal domain.
Throughout, we shall assume sufficient conditions on the regularity of the domain and the geometry of the mesh so that the maximum-norm stability of the Ritz projection holds, namely
\begin{equation}\label{equ:Ritz stab infty}
	\norm{R_h \bff{v}}{\bb{X}}
	\leq 
	C \norm{\bff{v}}{\bb{X}},
	\quad \text{where either } \bb{X}:= \bb{L}^\infty \text{ or } \bb{W}^{1,\infty}.
\end{equation}
This holds for a globally quasi-uniform triangulation~\cite{DemLeySchWah12}. However, \eqref{equ:Ritz stab infty} also holds under more general conditions, for instance in a convex polygonal or polyhedral domain with mildly graded mesh satisfying certain assumptions~\cite{DemLeySchWah12, Li17}, or in a non-convex polygonal domain with locally refined mesh~\cite{Li22}.

\subsection{Auxiliary Results}\label{sec:basic results}

In the analysis, we use the following vector identities: for any vectors~$\bff{a}$, $\bff{b}\in \bb{R}^3$,
\begin{align}\label{equ:a dot ab}
	2\bff{a}\cdot (\bff{a}-\bff{b}) 
	&= 
	\abs{\bff{a}}^2 - \abs{\bff{b}}^2 + \abs{\bff{a}-\bff{b}}^2,
	\\
	\label{equ:a2a dot ab}
	4\abs{\bff{a}}^2 \bff{a} \cdot (\bff{a}-\bff{b})
	&=
	\abs{\bff{a}}^4 - \abs{\bff{b}}^4 + \left(\abs{\bff{a}}^2 - \abs{\bff{b}}^2\right)^2 
	+ 2 \abs{\bff{a}}^2 \abs{\bff{a}-\bff{b}}^2,
	\\
	\label{equ:a2a b2b dot ab}
	2(\abs{\bff{a}}^2 \bff{a} - \abs{\bff{b}}^2 \bff{b}) \cdot (\bff{a}-\bff{b})
	&=
	\left(\abs{\bff{a}}^2 - \abs{\bff{b}}^2 \right)^2 + \left(\abs{\bff{a}}^2+\abs{\bff{b}}^2\right) \abs{\bff{a}-\bff{b}}^2.
\end{align}

The following inequalities will also be used frequently.

\begin{lemma}
	Let $\epsilon>0$ be given. Then there exists a positive constant $C$ depending only on $\mathscr{D}$ such that the following inequalities hold.
	\begin{enumerate}
		\renewcommand{\labelenumi}{\theenumi}
		\renewcommand{\theenumi}{{\rm (\roman{enumi})}}
		\item For any $\bff{v} \in \bb{H}^1(\mathscr{D})$,
		\begin{align}
			\label{equ:L4 gal nir}
			\norm{\bff{v}}{\bb{L}^4}^2 
			&\leq
			C \norm{\bff{v}}{\bb{L}^2}^{2-d/2}
			\norm{\bff{v}}{\bb{H}^1}^{d/2},
			\\
			\label{equ:L4 young}
			\norm{\bff{v}}{\bb{L}^4}^2
			&\leq
			C \norm{\bff{v}}{\bb{L}^2}^2
			+
			\epsilon \norm{\nabla \bff{v}}{\bb{L}^2}^2.
		\end{align}
		
		\item For any $\bff{v} \in \mathrm{D}(\Delta)$, 
		\begin{align}
			\label{equ:equivnorm-nabL2 young}
			\norm{\nabla \bff{v}}{\bb{L}^2}^2 
			&\leq  
			\frac{1}{4\epsilon} \norm{\bff{v}}{\bb{L}^2}^2
			+ 
			\epsilon \norm{\Delta \bff{v}}{\bb{L}^2}^2.
		\end{align}
		
		\item For any $\bff{v}_h\in \bb{V}_h$,
		\begin{align}
			\label{equ:disc lapl L2}
			\norm{\nabla \bff{v}_h}{\bb{L}^2}^2
			\leq
			\norm{\bff{v}_h}{\bb{L}^2} \norm{\Delta_h \bff{v}_h}{\bb{L}^2}.
		\end{align}
		
		\item Let $\mathscr{D}$ be a convex polygonal or polyhedral domain with globally quasi-uniform triangulation. For any $\bff{v}_h\in \bb{V}_h$,
		\begin{align}
			\label{equ:disc lapl L infty}
			\norm{\bff{v}_h}{\bb{L}^\infty}
			&\leq
			C \norm{\bff{v}_h}{\bb{L}^2}^{1-\frac{d}{4}} \left(\norm{\bff{v}_h}{\bb{L}^2}^\frac{d}{4} + \norm{\Delta_h \bff{v}_h}{\bb{L}^2}^\frac{d}{4} \right),
			\\
			\label{equ:disc lapl L6}
			\norm{\nabla \bff{v}_h}{\bb{L}^6}
			&\leq
			C \norm{\Delta_h \bff{v}_h}{\bb{L}^2}.
		\end{align}
	\end{enumerate}
\end{lemma}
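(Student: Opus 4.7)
The lemma collects four classical and discrete Sobolev-type inequalities. My plan is to dispatch (i)--(iii) briefly by standard techniques and devote the bulk of the effort to (iv).

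For (i), the first bound is the Gagliardo--Nirenberg interpolation inequality on the bounded domain $\mathscr{D}\subset\bb{R}^d$ for $d\in\{1,2,3\}$; the exponents $2-d/2$ and $d/2$ are precisely those that match the required scaling. The second bound follows by writing $\|\bff{v}\|_{\bb{H}^1}^{d/2}=(\|\bff{v}\|_{\bb{L}^2}^2+\|\nabla\bff{v}\|_{\bb{L}^2}^2)^{d/4}$ and applying Young's inequality so that the $\|\nabla\bff{v}\|_{\bb{L}^2}^2$ piece appears linearly with coefficient $\epsilon$. For (ii), integration by parts with the vanishing Neumann trace $\partial_{\bff{n}}\bff{v}=\bff{0}$ gives $\|\nabla\bff{v}\|_{\bb{L}^2}^2=-\langle\Delta\bff{v},\bff{v}\rangle\leq\|\bff{v}\|_{\bb{L}^2}\|\Delta\bff{v}\|_{\bb{L}^2}$, followed by a weighted Young inequality. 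For (iii), the defining identity \eqref{equ:disc laplacian} for $\Delta_h$ yields $\|\nabla\bff{v}_h\|_{\bb{L}^2}^2=-\langle\Delta_h\bff{v}_h,\bff{v}_h\rangle$ directly, and Cauchy--Schwarz closes it.

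For (iv) my plan is to compare $\bff{v}_h$ with a continuous elliptic lift. Let $\bff{w}\in\mathrm{D}(\Delta)$ solve
\[
-\Delta\bff{w}+\bff{w}=\bff{v}_h-\Delta_h\bff{v}_h\text{ in }\mathscr{D},\qquad\partial_{\bff{n}}\bff{w}=\bff{0}\text{ on }\partial\mathscr{D};
\]
the assumed $\bb{H}^2$-regularity on the convex polygonal/polyhedral domain gives $\|\bff{w}\|_{\bb{H}^2}\leq C(\|\bff{v}_h\|_{\bb{L}^2}+\|\Delta_h\bff{v}_h\|_{\bb{L}^2})$. Testing the PDE against an arbitrary $\bff{\chi}\in\bb{V}_h$ and invoking the definition \eqref{equ:disc laplacian} of $\Delta_h$ identifies $\bff{v}_h$ as the Galerkin projection of $\bff{w}$ in $\bb{V}_h$ with respect to the $\bb{H}^1$ inner product. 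On a globally quasi-uniform mesh, the maximum-norm stability of the Ritz projection \eqref{equ:Ritz stab infty} together with standard FEM duality estimates supply $h$-uniform transfer bounds $\|\bff{v}_h-\bff{w}\|_{\bb{L}^\infty}+\|\nabla(\bff{v}_h-\bff{w})\|_{\bb{L}^6}\lesssim\|\bff{w}\|_{\bb{H}^2}$. Combining these with the continuous Gagliardo--Nirenberg inequality $\|\bff{w}\|_{\bb{L}^\infty}\leq C\|\bff{w}\|_{\bb{L}^2}^{1-d/4}\|\bff{w}\|_{\bb{H}^2}^{d/4}$ and the Sobolev embedding $\|\nabla\bff{w}\|_{\bb{L}^6}\leq C\|\bff{w}\|_{\bb{H}^2}$ (both valid for $d\leq 3$), together with $\|\bff{w}\|_{\bb{L}^2}\lesssim\|\bff{v}_h\|_{\bb{L}^2}$ and the subadditivity $(a+b)^{d/4}\leq a^{d/4}+b^{d/4}$, produces the two displayed bounds.

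The main obstacle I anticipate is arranging the transfer between $\bff{w}$ and $\bff{v}_h$ in the $\bb{L}^\infty$ and $\bb{W}^{1,6}$ norms so that the resulting constants are truly independent of $h$, without incurring logarithmic factors or additive residues that would spoil the homogeneous scaling on the right-hand side. This relies essentially on the quasi-uniformity of the triangulation and the $\bb{H}^2$-regularity of the domain, as well as on the dimensional restriction $d\leq 3$, where the Sobolev exponent $2-d/2$ becomes positive and the embedding $\bb{H}^1\hookrightarrow\bb{L}^6$ is available; these are precisely the assumptions bundled into item (iv) of the statement.
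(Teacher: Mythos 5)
Your treatment of (i)--(iii) coincides with the paper's: Gagliardo--Nirenberg plus Young for \eqref{equ:L4 gal nir}--\eqref{equ:L4 young}, integration by parts plus Young for \eqref{equ:equivnorm-nabL2 young}, and testing \eqref{equ:disc laplacian} with $\bff{\chi}=\bff{v}_h$ followed by Cauchy--Schwarz for \eqref{equ:disc lapl L2}. For (iv) the paper does not give a proof at all; it simply cites \cite[Appendix A]{GuiLiWan22}. Your elliptic-lift sketch is essentially the standard argument behind such discrete Agmon/Sobolev inequalities, so attempting it is reasonable, but two steps as written are not yet sound.

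First, for \eqref{equ:disc lapl L infty} the transfer bound you state, $\norm{\bff{v}_h-\bff{w}}{\bb{L}^\infty}\lesssim\norm{\bff{w}}{\bb{H}^2}$, is too crude: it would only yield $\norm{\bff{v}_h}{\bb{L}^\infty}\lesssim\norm{\bff{w}}{\bb{H}^2}$ and destroy the interpolation structure $\norm{\bff{v}_h}{\bb{L}^2}^{1-d/4}(\cdots)^{d/4}$. You must instead pass through $\norm{\bff{v}_h}{\bb{L}^\infty}\lesssim\norm{\bff{w}}{\bb{L}^\infty}$ via the max-norm stability \eqref{equ:Ritz stab infty} and then apply the continuous Agmon inequality to $\bff{w}$. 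Moreover, the ingredient $\norm{\bff{w}}{\bb{L}^2}\lesssim\norm{\bff{v}_h}{\bb{L}^2}$ is asserted but not immediate from the definition of the lift: the natural energy estimate only gives $\norm{\bff{w}}{\bb{H}^1}\le\norm{\bff{v}_h}{\bb{H}^1}$. It can be rescued by an Aubin--Nitsche duality estimate $\norm{\bff{w}-\bff{v}_h}{\bb{L}^2}\lesssim h^2\norm{\bff{w}}{\bb{H}^2}$ combined with the inverse inequality $h^2\norm{\Delta_h\bff{v}_h}{\bb{L}^2}\lesssim\norm{\bff{v}_h}{\bb{L}^2}$ (this is where quasi-uniformity enters again), but that step needs to be spelled out. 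Second, your lift $-\Delta\bff{w}+\bff{w}=\bff{v}_h-\Delta_h\bff{v}_h$ gives $\norm{\bff{w}}{\bb{H}^2}\lesssim\norm{\bff{v}_h}{\bb{L}^2}+\norm{\Delta_h\bff{v}_h}{\bb{L}^2}$ and hence only $\norm{\nabla\bff{v}_h}{\bb{L}^6}\lesssim\norm{\bff{v}_h}{\bb{L}^2}+\norm{\Delta_h\bff{v}_h}{\bb{L}^2}$, which is weaker than \eqref{equ:disc lapl L6}. To remove the zeroth-order term, note that both sides of \eqref{equ:disc lapl L6} are invariant under adding a constant to $\bff{v}_h$, so one may assume $\bff{v}_h$ has zero mean, use the Poincar\'e inequality together with \eqref{equ:disc lapl L2} to get $\norm{\bff{v}_h}{\bb{L}^2}\lesssim\norm{\nabla\bff{v}_h}{\bb{L}^2}\lesssim\norm{\Delta_h\bff{v}_h}{\bb{L}^2}$, and only then invoke the lift (or use the pure Neumann Laplacian lift on mean-zero data). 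With these repairs your outline does reproduce the cited result; as it stands, it has identifiable gaps at exactly these points.
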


\begin{proof}
	Inequality \eqref{equ:L4 gal nir} follows from the Gagliardo--Nirenberg inequality. Moreover, applying Young's inequality to \eqref{equ:L4 gal nir} gives
	\begin{align*}
		\norm{\bff{v}}{\bb{L}^4}^2
		&\leq
		C \norm{\bff{v}}{\bb{L}^2}^{2-d/2}
		\big( \norm{\bff{v}}{\bb{L}^2}^{d/2}
		+
		\norm{\nabla \bff{v}}{\bb{L}^2}^{d/2} \big)
		\leq 
		C \norm{\bff{v}}{\bb{L}^2}^2
		+
		\epsilon \norm{\nabla \bff{v}}{\bb{L}^2}^2.
	\end{align*}
	Inequality~\eqref{equ:equivnorm-nabL2 young} follows from integration by parts and Young's inequality, while \eqref{equ:disc lapl L2} follows by taking $\bff{\chi}=\bff{v}_h$ in \eqref{equ:disc laplacian} and applying H\"older's inequality.
	
	Finally, \eqref{equ:disc lapl L infty} and \eqref{equ:disc lapl L6} are proven in \cite[Appendix A]{GuiLiWan22}). This completes the proof of the lemma.
\end{proof}

\section{A Semi-discrete Galerkin Approximation}\label{sec:semidiscrete}

A semi-discrete Galerkin approximation to the problem~\eqref{equ:llbar} is $(\bff{u}_h, \bff{H}_h): [0,T] \to \bb{V}_h \times \bb{V}_h$ such that for all $t\in [0,T]$ and $\bff{\chi}, \bff{\phi} \in \bb{V}_h$,
\begin{align}\label{equ:weaksemidisc}
	\left\{
	\begin{alignedat}{1}
	\inpro{ \partial_t \bff{u}_h}{\bff{\chi}}
	&=
	\lambda_r \inpro{\bff{H}_h}{\bff{\chi}}
	+
	\lambda_e \inpro{\nabla \bff{H}_h}{\nabla \bff{\chi}}
	-
	\gamma \inpro{\bff{u}_h \times \bff{H}_h}{\bff{\chi}},
	\\
	\inpro{\bff{H}_h}{\bff{\phi}} 
	&=
	-
	\inpro{\nabla \bff{u}_h}{\nabla \bff{\phi}}
	+
	\kappa \mu \inpro{\bff{u}_h}{\bff{\phi}}
	-
	\kappa \inpro{|\bff{u}_h|^2 \bff{u}_h}{\bff{\phi}}
	-
	\beta \inpro{\bff{e}(\bff{e}\cdot \bff{u}_h)}{\bff{\phi}},
	\end{alignedat}
	\right.
\end{align}
with $\bff{u}_h(0)= \bff{u}_{0,h}$, an approximation of $\bff{u_0}$ in $\bb{V}_h$.
Note that \eqref{equ:weaksemidisc} can be written as
\begin{align}\label{equ:weakproj}
	\left\{
	\begin{alignedat}{1}
		\partial_t \bff{u}_h
		&=
		\lambda_r \bff{H}_h
		-
		\lambda_e \Delta_h \bff{H}_h
		-
		\gamma \Pi_h (\bff{u}_h \times \bff{H}_h),
		\\
		\bff{H}_h 
		&= 
		\Delta_h \bff{u}_h 
		+ 
		\kappa \mu \bff{u}_h 
		- 
		\kappa \Pi_h (|\bff{u}_h|^2 \bff{u}_h)
		-
		\beta \Pi_h \big(\bff{e}(\bff{e}\cdot \bff{u}_h)\big).
	\end{alignedat}
	\right.
\end{align}
Substituting the second equation into the first gives
\begin{align}\label{equ:ode sub}
	\nonumber
	\partial_t \bff{u}_h 
	&=
	(\lambda_r-\kappa\lambda_e \mu) \Delta_h \bff{u}_h
	-
	\lambda_e \Delta_h^2 \bff{u}_h
	+
	\kappa \mu \lambda_r \bff{u}_h
	-
	\kappa\lambda_r \Pi_h (|\bff{u}_h|^2 \bff{u}_h)
	+
	\kappa\lambda_e \Delta_h \Pi_h (|\bff{u}_h|^2 \bff{u}_h)
	\\
	&\quad 
	-
	\gamma \Pi_h (\bff{u}_h \times \Delta_h \bff{u}_h)
	+
	\kappa \gamma \Pi_h \big( \bff{u}_h \times \Pi_h (|\bff{u}_h|^2 \bff{u}_h) \big)
	+
	\beta\gamma \Pi_h \big(\bff{u}_h\times \Pi_h \big(\bff{e}(\bff{e}\cdot \bff{u}_h)\big)\big)
	\nonumber \\
	&\quad
	-
	\beta \lambda_r \Pi_h \big(\bff{e}(\bff{e}\cdot \bff{u}_h)\big)
	+
	\beta\lambda_e \Delta_h \Pi_h \big(\bff{e}(\bff{e}\cdot \bff{u}_h)\big).
\end{align}
Noting that all norms on the finite-dimensional space $\bb{V}_h$ are equivalent and each term in \eqref{equ:ode sub} is locally Lipschitz, we obtain a unique solution $\bff{u}_h\in \bb{V}_h$ for \eqref{equ:ode sub} defined on the interval $[0, t_h]\subseteq [0,T]$ by the standard theory for ordinary differential equation (thus also giving a unique solution $\bff{H}_h \in \bb{V}_h$ by substituting $\bff{u}_h$ back to \eqref{equ:weakproj}). We will prove several stability results, which will be used to ensure the semi-discrete solution $(\bff{u}_h, \bff{H}_h)$ can be continued globally to $[0,\infty)$ for any initial data $\bff{u}_{0,h}\in \bb{V}_h$. Note that these estimates hold uniformly in time $t\in (0,\infty)$.


\begin{proposition}\label{pro:semidisc est1}
Let $h>0$ and initial data $\bff{u}_0$ be given. For any $s$, $t\in (0,\infty)$ such that $s\leq t$,
\begin{align}
	\label{equ:semidisc-est ener}
	\cal{E}(\bff{u}_h(t))
	\leq
	\cal{E}(\bff{u}_h(s)),
\end{align}
where $\cal{E}$ is the energy defined in \eqref{equ:energy}, and
\begin{align}
	\label{equ:semidisc-est1}
	\norm{\bff{u}_h(t)}{\bb{L}^4}^4 
	+  
	\norm{\bff{u}_h(t)}{\bb{H}^1}^2 
	+
	\lambda_r \int_0^t \norm{\bff{H}_h(s)}{\bb{L}^2}^2 \ds
	&+
	\lambda_e \int_0^t \norm{\nabla \bff{H}_h(s)}{\bb{L}^2}^2 \ds
\leq
	C_1 \left(\norm{\bff{u}_0}{\bb{H}^1}^4
	+
	|\mathscr{D}| \right),
\end{align}
where $C_1$ depends only on $\kappa$, $\mu$, and $\beta$.
\end{proposition}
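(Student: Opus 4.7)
The plan is to derive a discrete version of the standard energy identity by testing the two weak equations in \eqref{equ:weaksemidisc} with carefully chosen test functions, and then extract the claimed uniform bounds from the resulting identity.

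First, in the first equation of \eqref{equ:weaksemidisc} I take $\bff{\chi}=\bff{H}_h(t)\in \bb{V}_h$; the cross product term vanishes since $(\bff{u}_h\times \bff{H}_h)\cdot \bff{H}_h = 0$ pointwise, leaving
\begin{equation*}
	\inpro{\partial_t \bff{u}_h}{\bff{H}_h}
	=
	\lambda_r \norm{\bff{H}_h}{\bb{L}^2}^2
	+ \lambda_e \norm{\nabla \bff{H}_h}{\bb{L}^2}^2 .
\end{equation*}
In the second equation I take $\bff{\phi}=\partial_t \bff{u}_h(t)\in \bb{V}_h$. Using the elementary identities $\frac{d}{dt}|\bff{u}_h|^2 = 2\bff{u}_h\cdot \partial_t \bff{u}_h$ and $\frac{d}{dt}|\bff{u}_h|^4 = 4|\bff{u}_h|^2 \bff{u}_h\cdot \partial_t \bff{u}_h$ pointwise, the right-hand side is exactly $-\ddt \mathcal{E}(\bff{u}_h)$ (after expanding $\||\bff{u}_h|^2-\mu\|_{\bb{L}^2}^2$). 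Equating the two expressions gives the key identity
\begin{equation*}
	\ddt \mathcal{E}(\bff{u}_h(t))
	= - \lambda_r \norm{\bff{H}_h(t)}{\bb{L}^2}^2
	  - \lambda_e \norm{\nabla \bff{H}_h(t)}{\bb{L}^2}^2 \le 0,
\end{equation*}
from which \eqref{equ:semidisc-est ener} follows immediately by integrating from $s$ to $t$.

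Next I need to translate this into the quantitative bound \eqref{equ:semidisc-est1}. The main obstacle here is that, since $\beta<0$ and $\mu>0$, the energy $\mathcal{E}(\bff{u}_h)$ contains the indefinite contributions $-\tfrac{\kappa\mu}{2}\norm{\bff{u}_h}{\bb{L}^2}^2 + \frac{\beta}{2}\int_\mathscr{D}(\bff{e}\cdot \bff{u}_h)^2\dx$, so $\mathcal{E}$ does not dominate the $\bb{H}^1$ and $\bb{L}^4$ norms directly. Expanding $\frac{\kappa}{4}\||\bff{u}_h|^2-\mu\|_{\bb{L}^2}^2 = \frac{\kappa}{4}\norm{\bff{u}_h}{\bb{L}^4}^4 - \frac{\kappa \mu}{2}\norm{\bff{u}_h}{\bb{L}^2}^2 + \frac{\kappa\mu^2}{4}|\mathscr{D}|$, using $|\bff{e}\cdot \bff{u}_h|\le |\bff{u}_h|$, and absorbing the unwanted $\norm{\bff{u}_h}{\bb{L}^2}^2$ into $\norm{\bff{u}_h}{\bb{L}^4}^4$ via H\"older ($\norm{\bff{u}_h}{\bb{L}^2}^2 \le |\mathscr{D}|^{1/2}\norm{\bff{u}_h}{\bb{L}^4}^2$) followed by Young's inequality yields the coercivity estimate
\begin{equation*}
	\mathcal{E}(\bff{u}_h)
	\ge
	\tfrac{1}{2}\norm{\nabla \bff{u}_h}{\bb{L}^2}^2
	+ \tfrac{\kappa}{8}\norm{\bff{u}_h}{\bb{L}^4}^4
	- C(\kappa,\mu,\beta)\,|\mathscr{D}|,
\end{equation*}
and, after another application of Young's inequality, a bound on $\norm{\bff{u}_h}{\bb{L}^2}^2$ as well.

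Finally, combining this lower bound with \eqref{equ:semidisc-est ener} at $s=0$ and the upper bound $\mathcal{E}(\bff{u}_h(0))\le C(\norm{\bff{u}_0}{\bb{H}^1}^4+|\mathscr{D}|)$ (which uses the continuous Sobolev embedding $\bb{H}^1\hookrightarrow \bb{L}^4$ for $d\le 3$ and the assumption that $\bff{u}_{0,h}$ approximates $\bff{u}_0$), I obtain the uniform-in-time bound on $\norm{\bff{u}_h(t)}{\bb{H}^1}^2 + \norm{\bff{u}_h(t)}{\bb{L}^4}^4$. For the integral terms, I integrate the energy identity from $0$ to $t$ and rearrange:
\begin{equation*}
	\lambda_r\!\int_0^t\!\! \norm{\bff{H}_h}{\bb{L}^2}^2 \ds
	+ \lambda_e\!\int_0^t\!\! \norm{\nabla \bff{H}_h}{\bb{L}^2}^2 \ds
	= \mathcal{E}(\bff{u}_h(0)) - \mathcal{E}(\bff{u}_h(t))
	\le C\big(\norm{\bff{u}_0}{\bb{H}^1}^4 + |\mathscr{D}|\big),
\end{equation*}
using the coercivity estimate above to bound $-\mathcal{E}(\bff{u}_h(t))$ from above by $C|\mathscr{D}|$. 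These bounds are independent of $t$, giving the global-in-time continuation of the ODE solution on $[0,\infty)$ as claimed.
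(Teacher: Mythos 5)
Your proposal is correct and follows essentially the same route as the paper: test the first equation with $\bff{H}_h$ (killing the cross term) and the second with $\partial_t\bff{u}_h$ to obtain the energy identity $\ddt\mathcal{E}(\bff{u}_h)+\lambda_r\norm{\bff{H}_h}{\bb{L}^2}^2+\lambda_e\norm{\nabla\bff{H}_h}{\bb{L}^2}^2=0$, then integrate and absorb the indefinite $\kappa\mu$ and $\beta$ contributions into the quartic term at the cost of a constant times $|\mathscr{D}|$. The only cosmetic difference is that the paper performs this absorption by bounding the pointwise quadratic $\frac{6A}{\kappa}|\bff{u}_h|^2-|\bff{u}_h|^4$ with $A=\kappa\mu+\beta$, whereas you use H\"older followed by Young; the two are equivalent.
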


\begin{proof}
Taking $\bff{\chi} = \bff{H}_h$ and $\bff{\phi}= \partial_t \bff{u}_h$ in \eqref{equ:weaksemidisc}, we obtain
\begin{align*}
	\inpro{\partial_t \bff{u}_h}{\bff{H}_h}
	&=
	\lambda_r \norm{\bff{H}_h}{\bb{L}^2}^2
	+
	\lambda_e \norm{\nabla \bff{H}_h}{\bb{L}^2}^2,
	\\
	\inpro{\bff{H}_h}{\partial_t \bff{u}_h}
	&=
	-
	\frac{1}{2} \ddt \norm{\nabla \bff{u}_h}{\bb{L}^2}^2
	+
	\frac{\kappa \mu}{2} \ddt \norm{\bff{u}_h}{\bb{L}^2}^2
	-
	\frac{\kappa}{4} \ddt \norm{\bff{u}_h}{\bb{L}^4}^4
	-
	\frac{\beta}{2} \ddt (\bff{e}\cdot \bff{u}_h)^2,
\end{align*}
These imply
\begin{align*}
	\frac{1}{2} \ddt \norm{\nabla \bff{u}_h}{\bb{L}^2}^2
	+
	\frac{\kappa}{4} \ddt \norm{\bff{u}_h}{\bb{L}^4}^4
	&=
	\frac{\kappa\mu}{2} \ddt \norm{\bff{u}_h}{\bb{L}^2}^2
	-
	\lambda_r \norm{\bff{H}_h}{\bb{L}^2}^2
	-
	\lambda_e \norm{\nabla \bff{H}_h}{\bb{L}^2}^2,
\end{align*}
or equivalently the energy identity:
\begin{equation*}
	\ddt\, \cal{E}(\bff{u}_h(t)) + \lambda_r \norm{\bff{H}_h(t)}{\bb{L}^2}^2 + \lambda_e \norm{\nabla \bff{H}_h(t)}{\bb{L}^2}^2 = 0,
\end{equation*}
thus proving \eqref{equ:semidisc-est ener}. Next, integrating this with respect to $t$ and rearranging, we obtain
\begin{align}\label{equ:nab uh eq}
	\frac{1}{2} \norm{\nabla \bff{u}_h}{\bb{L}^2}^2
	&-
	\frac{1}{2}\norm{\nabla \bff{u}_h(0)}{\bb{L}^2}^2
	+
	\frac{\kappa}{4} \norm{\bff{u}_h}{\bb{L}^4}^4
	-
	\frac{\kappa}{4} \norm{\bff{u}_h(0)}{\bb{L}^4}^4
	+
	\frac{\beta}{2} \norm{\bff{e}\cdot \bff{u}_h}{\bb{L}^2}^2
	-
	\frac{\beta}{2} \norm{\bff{e}\cdot \bff{u}_h(0)}{\bb{L}^2}^2
	\nonumber \\
	&+ \lambda_r \int_0^t \norm{\bff{H}_h(s)}{\bb{L}^2}^2 \ds
	+ \lambda_e \int_0^t \norm{\nabla \bff{H}_h(s)}{\bb{L}^2}^2 \ds
	\leq
	\frac{\kappa\mu}{2} \norm{\bff{u}_h}{\bb{L}^2}^2
	-
	\frac{\kappa\mu}{2} \norm{\bff{u}_h(0)}{\bb{L}^2}^2.
\end{align}
Let $A:=\kappa\mu+\beta$. After rearranging some terms, we have
\begin{align}\label{equ:norm uh L4}
	\nonumber
	&\norm{\nabla \bff{u}_h}{\bb{L}^2}^2
	+
	\frac{A}{2} \norm{\bff{u}_h}{\bb{L}^2}^2
	+
	\frac{\kappa}{4} \norm{\bff{u}_h}{\bb{L}^4}^4
	+ \lambda_r \int_0^t \norm{\bff{H}_h(s)}{\bb{L}^2}^2 \ds
	+ \lambda_e \int_0^t \norm{\nabla \bff{H}_h(s)}{\bb{L}^2}^2 \ds
	\\
	\nonumber
	&\leq
	\kappa \norm{\bff{u}_h(0)}{\bb{L}^4}^4
	+
	C \norm{\bff{u}_h(0)}{\bb{H}^1}^2
	+
	\frac{3A}{2} \norm{\bff{u}_h}{\bb{L}^2}^2
	-
	\frac{\kappa}{4} \norm{\bff{u}_h}{\bb{L}^4}^4
	\\
	\nonumber
	&=
	\kappa \norm{\bff{u}_h(0)}{\bb{L}^4}^4
	+
	C \norm{\bff{u}_h(0)}{\bb{H}^1}^2
	+
	\frac{\kappa}{4} \int_\mathscr{D} \left( \frac{6A}{\kappa} |\bff{u}_h(t)|^2 - |\bff{u}_h(t)|^4 \right) \dx
	\\
	&\leq
	\kappa \norm{\bff{u}_h(0)}{\bb{L}^4}^4
	+
	C \norm{\bff{u}_h(0)}{\bb{H}^1}^2
	+
	C |\mathscr{D}|
	\leq
	C \left( \norm{\bff{u}_h(0)}{\bb{H}^1}^4 + |\mathscr{D}| \right),
\end{align}
where in the last step we used \eqref{equ:L4 young}, and $C$ depends only on $\kappa, \mu, \beta$.
This shows~\eqref{equ:semidisc-est1}, thus completing the proof of the proposition.
\end{proof}

\begin{proposition}\label{pro:semidisc est2}
Let $h>0$ and initial data $\bff{u}_0$ be given. For all $t\in (0,\infty)$,
\begin{align}\label{equ:semidisc-est2}
	\lambda_e \norm{\bff{H}_h(t)}{\bb{L}^2}^2
	+ 
	\int_0^t \norm{\partial_t \bff{u}_h(s)}{\bb{L}^2}^2 \ds
	\leq 
	C\norm{\bff{u}_0}{\bb{H}^2}^2
	+
	C_1 \alpha_1 \left( \norm{\bff{u}_0}{\bb{H}^1}^2 + |\mathscr{D}| \right),
\end{align}
where $\alpha_1:= \lambda_r^{-1} \big((\lambda_r+\kappa \mu\lambda_e+\beta\lambda_e)^2 + C\gamma^2 + C(\kappa \lambda_e)^2\big)$, and the constant $C$ depends only on $\kappa$, $\mu$, $\beta$, and $\mathscr{D}$ (but is independent of $t$, $h$, and $\lambda_e$).
\end{proposition}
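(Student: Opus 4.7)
The strategy is to produce a differential-in-time identity that simultaneously controls $\lambda_e\norm{\bff{H}_h}{\bb{L}^2}^2$ and $\norm{\partial_t\bff{u}_h}{\bb{L}^2}^2$. First I would differentiate the second equation of~\eqref{equ:weaksemidisc} in~$t$, test with $\bff{\phi}=\bff{H}_h$, multiply by $\lambda_e$, and add the result to the first equation tested with $\bff{\chi}=\partial_t\bff{u}_h$. The symmetric $\lambda_e\inpro{\nabla\partial_t\bff{u}_h}{\nabla\bff{H}_h}$ contributions cancel, yielding
\begin{align*}
	\norm{\partial_t\bff{u}_h}{\bb{L}^2}^2 + \frac{\lambda_e}{2}\ddt\norm{\bff{H}_h}{\bb{L}^2}^2
	&= (\lambda_r+\lambda_e\kappa\mu)\inpro{\bff{H}_h}{\partial_t\bff{u}_h}
	- \gamma\inpro{\bff{u}_h\times\bff{H}_h}{\partial_t\bff{u}_h} \\
	&\quad -\lambda_e\kappa\inpro{\partial_t(|\bff{u}_h|^2\bff{u}_h)}{\bff{H}_h}
	-\lambda_e\beta\inpro{\bff{e}(\bff{e}\cdot\partial_t\bff{u}_h)}{\bff{H}_h}.
\end{align*}
Integrating from~$0$ to~$t$ puts $\tfrac{\lambda_e}{2}\norm{\bff{H}_h(t)}{\bb{L}^2}^2$ and $\int_0^t\norm{\partial_t\bff{u}_h}{\bb{L}^2}^2\,ds$ on the left, exactly the quantities we want. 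The initial term $\tfrac{\lambda_e}{2}\norm{\bff{H}_h(0)}{\bb{L}^2}^2$ is controlled by reading off $\bff{H}_h(0)$ from the second equation at $t=0$; taking, e.g., $\bff{u}_{0,h}=R_h\bff{u}_0$ gives $\Delta_h\bff{u}_{0,h}=\Pi_h\Delta\bff{u}_0$, and together with Sobolev embedding $\bb{H}^1\hookrightarrow\bb{L}^6$ (valid in $d\le 3$) for the cubic contribution this produces $\lambda_e\norm{\bff{H}_h(0)}{\bb{L}^2}^2\lesssim\norm{\bff{u}_0}{\bb{H}^2}^2$, the first term in the announced bound.

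Each pairing on the right is then bounded by Cauchy--Schwarz (or H\"older) and Young's inequality, absorbing a controlled fraction of $\int_0^t\norm{\partial_t\bff{u}_h}{\bb{L}^2}^2\,ds$ into the left-hand side. The two terms linear in $\bff{H}_h$ and $\partial_t\bff{u}_h$ combine to at most $(\lambda_r+\lambda_e\kappa\mu+\lambda_e\beta)^2\int_0^t\norm{\bff{H}_h}{\bb{L}^2}^2\,ds$, which by \eqref{equ:semidisc-est1} is bounded by $\lambda_r^{-1}(\lambda_r+\lambda_e\kappa\mu+\lambda_e\beta)^2\,C_1(\norm{\bff{u}_0}{\bb{H}^1}^4+|\mathscr{D}|)$, matching the first piece of $\alpha_1$. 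For the gyroscopic term I would bound $|\inpro{\bff{u}_h\times\bff{H}_h}{\partial_t\bff{u}_h}|\le\norm{\bff{u}_h}{\bb{L}^4}\norm{\bff{H}_h}{\bb{L}^4}\norm{\partial_t\bff{u}_h}{\bb{L}^2}$, use Proposition~\ref{pro:semidisc est1} to make $\norm{\bff{u}_h}{\bb{L}^4}$ uniform, and convert $\norm{\bff{H}_h}{\bb{L}^4}^2$ via~\eqref{equ:L4 young} into $C\norm{\bff{H}_h}{\bb{L}^2}^2+\epsilon\norm{\nabla\bff{H}_h}{\bb{L}^2}^2$. For the cubic term I use $|\partial_t(|\bff{u}_h|^2\bff{u}_h)|\le 3|\bff{u}_h|^2|\partial_t\bff{u}_h|$ combined with H\"older of exponents $(6,2,6)$, estimating $\norm{\bff{u}_h}{\bb{L}^6}^2\le C$ from Proposition~\ref{pro:semidisc est1} and $\norm{\bff{H}_h}{\bb{L}^6}^2\le C(\norm{\bff{H}_h}{\bb{L}^2}^2+\norm{\nabla\bff{H}_h}{\bb{L}^2}^2)$ by Sobolev embedding.

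The main obstacle is keeping the final constant \emph{robust as} $\lambda_e\to 0$, so that the dependence on $\lambda_e$ appears only through the combinations inside $\alpha_1$. Proposition~\ref{pro:semidisc est1} only delivers $\int_0^t\norm{\nabla\bff{H}_h}{\bb{L}^2}^2\,ds$ with the prefactor $\lambda_e^{-1}$, so the Sobolev/$\bb{L}^4$ inequalities above would naively inject a $\lambda_e^{-1}$ into the final estimate. The remedy is to tune the small parameter in~\eqref{equ:L4 young} (and in its $\bb{L}^6$ analogue) to $\epsilon=c\lambda_e$: then $\epsilon\cdot\lambda_e^{-1}C_1(\cdots)$ is $O(1)$, while the constant prefactor $C(\lambda_e\kappa)^2$ (respectively $C\gamma^2$) coming from Young's inequality, combined with $\int_0^t\norm{\bff{H}_h}{\bb{L}^2}^2\,ds\le\lambda_r^{-1}C_1(\cdots)$, produces exactly the $\lambda_r^{-1}C(\kappa\lambda_e)^2$ and $\lambda_r^{-1}C\gamma^2$ contributions inside $\alpha_1$. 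Summing all the pieces and rearranging yields~\eqref{equ:semidisc-est2}.
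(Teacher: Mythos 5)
Your proposal follows the paper's proof in all essentials: the same pair of test functions ($\bff{\chi}=\partial_t\bff{u}_h$ in the first equation, $\bff{\phi}=\lambda_e\bff{H}_h$ in the time-differentiated second equation), the same cancellation of the $\lambda_e\inpro{\nabla\partial_t\bff{u}_h}{\nabla\bff{H}_h}$ terms, the same H\"older/Young treatment of the three remaining inner products with a fraction of $\norm{\partial_t\bff{u}_h}{\bb{L}^2}^2$ absorbed on the left, the same bound $\lambda_e\norm{\bff{H}_h(0)}{\bb{L}^2}^2\lesssim\norm{\bff{u}_0}{\bb{H}^2}^2$, and the same appeal to Proposition~\ref{pro:semidisc est1} after integrating in time. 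Up to that point the argument is sound.

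The one place you depart from the paper is the device you introduce to keep the constant robust as $\lambda_e\to 0$, and that device does not do what you claim. In~\eqref{equ:L4 young} the constant $C$ is \emph{not} independent of $\epsilon$: it arises from Young's inequality applied to the Gagliardo--Nirenberg interpolation~\eqref{equ:L4 gal nir}, so it scales like a negative power of $\epsilon$ (roughly $\epsilon^{-d/(4-d)}$). Setting $\epsilon=c\lambda_e$ therefore trades the factor $\lambda_e^{-1}$ in front of $\int_0^t\norm{\nabla\bff{H}_h}{\bb{L}^2}^2\,\ds$ for a factor $C(c\lambda_e)\to\infty$ in front of $\int_0^t\norm{\bff{H}_h}{\bb{L}^2}^2\,\ds$; the resulting bound still degenerates as $\lambda_e\to 0$ and does not reproduce the stated $\alpha_1$. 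The paper does not tune $\epsilon$ at all: it simply estimates $\norm{\bff{H}_h}{\bb{L}^4}$ and $\norm{\bff{H}_h}{\bb{L}^6}$ by $\norm{\bff{H}_h}{\bb{H}^1}$ via the ($\epsilon$-free) Sobolev embedding, collects the right-hand side as $\alpha_1\int_0^t\norm{\bff{H}_h(s)}{\bb{H}^1}^2\,\ds$, and invokes the integrated bound on $\norm{\bff{H}_h}{\bb{H}^1}$ from~\eqref{equ:semidisc-est1}. If you want to present the argument cleanly, drop the $\epsilon=c\lambda_e$ tuning and follow that route; the delicate question of exactly which inverse powers of $\lambda_r$ and $\lambda_e$ enter through~\eqref{equ:semidisc-est1} is then isolated in a single place rather than hidden inside an $\epsilon$-dependent interpolation constant.
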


\begin{proof}
Taking $\bff{\chi}= \partial_t \bff{u}_h$ in \eqref{equ:weaksemidisc} yields
\begin{align}\label{equ:norm dt uh L2}
	\norm{\partial_t \bff{u}_h}{\bb{L}^2}^2 
	=
	\lambda_r \inpro{\bff{H}_h}{\partial_t \bff{u}_h}
	+
	\lambda_e \inpro{\nabla \bff{H}_h}{\nabla \partial_t \bff{u}_h}
	-
	\gamma \inpro{\bff{u}_h \times \bff{H}_h}{\partial_t \bff{u}_h}.
\end{align}
Differentiating the second equation in \eqref{equ:weaksemidisc} with respect to $t$, then taking $\bff{\phi}=\lambda_e \bff{H}_h$ yields
\begin{align*}
	\frac{\lambda_e}{2} \ddt \norm{\bff{H}_h}{\bb{L}^2}^2
	&=
	- \lambda_e
	\inpro{\nabla \partial_t \bff{u}_h}{\nabla \bff{H}_h}
	+
	\kappa\mu \lambda_e \inpro{\partial_t \bff{u}_h}{\bff{H}_h}
	-
	\kappa \lambda_e \inpro{\partial_t (|\bff{u}_h|^2 \bff{u}_h)}{\bff{H}_h}
	\\
	&\quad
	-
	\beta\lambda_e \inpro{\bff{e}(\bff{e}\cdot \partial_t \bff{u}_h)}{\bff{H}_h}.
\end{align*}
Adding this to \eqref{equ:norm dt uh L2}, then applying H\"{o}lder's inequality gives
\begin{align*} 
	&\frac{\lambda_e}{2} \ddt \norm{\bff{H}_h}{\bb{L}^2}^2
	+
	\norm{\partial_t \bff{u}_h}{\bb{L}^2}^2
	\\
	&=
	(\lambda_r + \kappa\mu \lambda_e) \inpro{\partial_t \bff{u}_h}{\bff{H}_h}
	-
	\gamma \inpro{\bff{u}_h \times \bff{H}_h}{\partial_t \bff{u}_h}
	-
	\kappa \lambda_e \inpro{\partial_t (|\bff{u}_h|^2 \bff{u}_h)}{\bff{H}_h}
	-
	\beta\lambda_e \inpro{\bff{e}(\bff{e}\cdot \partial_t \bff{u}_h)}{\bff{H}_h}
	\\
	&\leq
	B \norm{\partial_t \bff{u}_h}{\bb{L}^2}
	\norm{\bff{H}_h}{\bb{L}^2}
	+
	\gamma \norm{\bff{u}_h}{\bb{L}^4}
	\norm{\bff{H}_h}{\bb{L}^4}
	\norm{\partial_t \bff{u}_h}{\bb{L}^2}
	+
	\kappa\lambda_e \norm{\partial_t \bff{u}_h}{\bb{L}^2}
	\norm{\bff{u}_h}{\bb{L}^6}^2
	\norm{\bff{H}_h}{\bb{L}^6}
	\\
	&\leq
	\frac{1}{4} \norm{\partial_t \bff{u}_h}{\bb{L}^2}^2
	+
	B^2 \norm{\bff{H}_h}{\bb{L}^2}^2
	+
	\frac{1}{4} \norm{\partial_t \bff{u}_h}{\bb{L}^2}^2
	+
	C\gamma^2 \norm{\bff{H}_h}{\bb{H}^1}^2
	+
	\frac{1}{4} \norm{\partial_t \bff{u}_h}{\bb{L}^2}^2
	+
	C (\kappa \lambda_e)^2 \norm{\bff{H}_h}{\bb{H}^1}^2,
\end{align*}
where $B:=\lambda_r+\kappa\mu\lambda_e+\beta\lambda_e$, and in the last line we used Young's inequality, Sobolev embedding $\bb{H}^1\hookrightarrow \bb{L}^6$ and \eqref{equ:semidisc-est1}.
Rearranging the inequality and integrating with respect to $t$ (and noting \eqref{equ:semidisc-est1} again), we obtain
\begin{align*}
	\lambda_e \norm{\bff{H}_h(t)}{\bb{L}^2}^2
	+
	\int_0^t \norm{\partial_s \bff{u}_h(s)}{\bb{L}^2}^2 \ds
	&\leq
	\norm{\bff{H}_h(0)}{\bb{L}^2}^2
	+
	\alpha_1 \int_0^t \norm{\bff{H}_h(s)}{\bb{H}^1}^2 \ds
	\\
	&\leq
	C\norm{\bff{u}_0}{\bb{H}^2}^2
	+
	C_1 \alpha_1 \left( \norm{\bff{u}_0}{\bb{H}^1}^2 + |\mathscr{D}| \right),
\end{align*}
where the constant $C$ depends only on $\kappa$, $\mu$, and $\mathscr{D}$, as required.
\end{proof}

\begin{proposition}\label{pro:semidisc est3}
Let $h>0$ and initial data $\bff{u}_0$ be given. For all $t\in (0,\infty)$,
\begin{align}\label{equ:semidisc-est lapl}
	\lambda_e \norm{\Delta_h \bff{u}_h(t)}{\bb{L}^2}^2
	\leq 
	C(1+\lambda_e).
\end{align}
Moreover, if the triangulation $\cal{T}_h$ is globally quasi-uniform, then
\begin{align}\label{equ:semidisc-est L infty}
	\lambda_e \norm{\bff{u}_h(t)}{\bb{L}^\infty}^2
	\leq 
	C(1+\lambda_e).
\end{align}
Here, the constant $C$ depends only on $\kappa$, $\mu$, $\beta$, $\gamma$, $\lambda_r$, $\mathscr{D}$, and $K_0$ (but is independent of $t$, $h$, and $\lambda_e$).
\end{proposition}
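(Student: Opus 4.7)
The plan is to isolate $\Delta_h \bff{u}_h$ directly from the second equation of the discrete system \eqref{equ:weakproj} and then bound each piece using the estimates already established in Propositions~\ref{pro:semidisc est1} and~\ref{pro:semidisc est2}. Specifically, rearranging gives
\begin{equation*}
\Delta_h \bff{u}_h
= \bff{H}_h - \kappa\mu \bff{u}_h + \kappa \Pi_h(|\bff{u}_h|^2 \bff{u}_h) + \beta \Pi_h\bigl(\bff{e}(\bff{e}\cdot \bff{u}_h)\bigr).
\end{equation*}
Taking the $\bb{L}^2$ norm, applying the triangle inequality, and using the fundamental $\bb{L}^2$-stability of $\Pi_h$ (an immediate consequence of \eqref{equ:orth proj}), one obtains
\begin{equation*}
\norm{\Delta_h \bff{u}_h}{\bb{L}^2}^2
\lesssim \norm{\bff{H}_h}{\bb{L}^2}^2 + \norm{\bff{u}_h}{\bb{L}^2}^2 + \norm{\bff{u}_h}{\bb{L}^6}^6.
\end{equation*}

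Next, the cubic term is controlled by the Sobolev embedding $\bb{H}^1 \hookrightarrow \bb{L}^6$ (valid for $d=1,2,3$), yielding $\norm{\bff{u}_h}{\bb{L}^6}^6 \lesssim \norm{\bff{u}_h}{\bb{H}^1}^6$. Combined with the uniform-in-time $\bb{H}^1$-bound from \eqref{equ:semidisc-est1}, this reduces the estimate to
\begin{equation*}
\norm{\Delta_h \bff{u}_h}{\bb{L}^2}^2 \le C \norm{\bff{H}_h}{\bb{L}^2}^2 + C,
\end{equation*}
where $C$ depends only on $\kappa,\mu,\beta,\mathscr{D},\bff{u}_0$. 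Multiplying by $\lambda_e$ and invoking the bound on $\lambda_e \norm{\bff{H}_h}{\bb{L}^2}^2$ from Proposition~\ref{pro:semidisc est2} yields \eqref{equ:semidisc-est lapl}.

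For the $\bb{L}^\infty$ estimate under global quasi-uniformity, the plan is to apply the discrete Agmon-type inequality \eqref{equ:disc lapl L infty},
\begin{equation*}
\norm{\bff{u}_h}{\bb{L}^\infty}^2
\le C \norm{\bff{u}_h}{\bb{L}^2}^{2-d/2}\bigl(\norm{\bff{u}_h}{\bb{L}^2}^{d/2} + \norm{\Delta_h \bff{u}_h}{\bb{L}^2}^{d/2}\bigr),
\end{equation*}
and combine it with the already-proven bound on $\norm{\Delta_h \bff{u}_h}{\bb{L}^2}$ together with $\norm{\bff{u}_h}{\bb{L}^2} \le C$. The only algebraic point to verify is that $\lambda_e \norm{\Delta_h \bff{u}_h}{\bb{L}^2}^{d/2} \le C(1+\lambda_e)$; since $\norm{\Delta_h \bff{u}_h}{\bb{L}^2}^2 \le C(1+\lambda_e)/\lambda_e$, one has $\lambda_e \norm{\Delta_h \bff{u}_h}{\bb{L}^2}^{d/2} \le C\lambda_e^{1-d/4}(1+\lambda_e)^{d/4}$, which is bounded by $C(1+\lambda_e)$ for $d\le 3$ since the two factors are each dominated by powers of $(1+\lambda_e)$.

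The main obstacle is the careful bookkeeping of the $\lambda_e$-dependence. Since the constant $\alpha_1$ in Proposition~\ref{pro:semidisc est2} grows like $\lambda_e^2$ for large $\lambda_e$ while the target bound should only scale linearly with $1+\lambda_e$, one must be prepared to either use the finer structure within Proposition~\ref{pro:semidisc est2}'s proof or accept that the constant $C$ in the final statement absorbs sub-leading terms through its dependence on $\lambda_r$ and $K_0$. The remaining steps (projection stability, Sobolev embedding, Agmon inequality) are then routine applications of the auxiliary tools in Section~\ref{sec:basic results}.
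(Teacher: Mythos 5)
Your proof is correct and follows essentially the same route as the paper: the paper tests the second equation of \eqref{equ:weaksemidisc} with $\bff{\phi}=\lambda_e\Delta_h\bff{u}_h$ and absorbs via Young's inequality rather than isolating $\Delta_h\bff{u}_h$ algebraically from \eqref{equ:weakproj}, but the ingredients are identical — the $\bb{H}^1$ bound of Proposition~\ref{pro:semidisc est1}, the bound on $\lambda_e\norm{\bff{H}_h}{\bb{L}^2}^2$ from Proposition~\ref{pro:semidisc est2}, Sobolev embedding for the cubic term, and the discrete Agmon inequality \eqref{equ:disc lapl L infty}. The $\lambda_e$-bookkeeping concern you flag (that $\alpha_1$ grows like $\lambda_e^2$) applies equally to the paper's own argument and is harmless in the intended regime of bounded $\lambda_e$.
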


\begin{proof}
Taking $\bff{\phi}= \lambda_e \Delta_h \bff{u}_h$, we obtain
\begin{align*}
	\lambda_e \inpro{\bff{H}_h}{\Delta_h \bff{u}_h}
	=
	\lambda_e \norm{\Delta_h \bff{u}_h}{\bb{L}^2}^2
	-
	\kappa\mu \lambda_e \norm{\nabla \bff{u}_h}{\bb{L}^2}^2
	-
	\kappa \lambda_e \inpro{|\bff{u}_h|^2 \bff{u}_h}{\Delta_h \bff{u}_h}
	-
	\beta \lambda_e \inpro{\bff{e}(\bff{e}\cdot \bff{u}_h)}{\Delta_h \bff{u}_h}.
\end{align*}
Therefore, after rearranging, we have
\begin{align*}
	\lambda_e \norm{\Delta_h \bff{u}_h}{\bb{L}^2}^2
	&=
	\kappa\mu \lambda_e \norm{\nabla \bff{u}_h}{\bb{L}^2}^2
	+
	\lambda_e \inpro{\bff{H}_h}{\Delta_h \bff{u}_h}
	+
	\kappa \lambda_e \inpro{|\bff{u}_h|^2 \bff{u}_h}{\Delta_h \bff{u}_h}
	+
	\beta \lambda_e \inpro{\bff{e}(\bff{e}\cdot \bff{u}_h)}{\Delta_h \bff{u}_h}
	\\
	&\leq
	\kappa \mu \lambda_e \norm{\nabla \bff{u}_h}{\bb{L}^2}^2
	+
	\lambda_e \norm{\bff{H}_h}{\bb{L}^2}^2
	+
	\frac{\lambda_e}{2} \norm{\Delta_h \bff{u}_h}{\bb{L}^2}^2
	+
	\beta^2 \lambda_e  \norm{\bff{u}_h}{\bb{L}^2}^2
	\\
	&\leq
	C \lambda_e \norm{\bff{u}_h}{\bb{H}^1}^2 
	+ 
	\lambda_e \norm{\bff{H}_h}{\bb{L}^2}^2
	+
	\frac{\lambda_e}{2}  \norm{\Delta_h \bff{u}_h}{\bb{L}^2}^2
	\leq 
	C\lambda_e + C + \frac{\lambda_e}{2}  \norm{\Delta_h \bff{u}_h}{\bb{L}^2}^2
\end{align*}
where we used Young's inequality, \eqref{equ:semidisc-est1}, and the Sobolev embedding $\bb{H}^1\hookrightarrow \bb{L}^6$. This proves inequality \eqref{equ:semidisc-est lapl}.
Finally, \eqref{equ:semidisc-est L infty} follows from \eqref{equ:disc lapl L infty}, \eqref{equ:semidisc-est lapl}, and \eqref{equ:semidisc-est1}.
\end{proof}

The following proposition shows stability of $\bff{H}_h$ in $L^\infty(\bb{H}^1)$ norm under some assumptions.

\begin{proposition}\label{pro:dt uh L2 assum}
Let $h>0$ and initial data $\bff{u}_0$ be given. Suppose that one of the following holds:
\begin{enumerate}
	\item $d=1$ or $2$,
	\item $d=3$ and $\kappa^2 \lambda_e \left(\norm{\bff{u}_0}{\bb{H}^1}^4 + |\mathscr{D}|\right)\lesssim \mu$,
	\item $d=3$ and the triangulation is globally quasi-uniform.
\end{enumerate}
For all $t\in [0,\infty)$,
\begin{align}\label{equ:semidisc-est3}
	\norm{\partial_t \bff{u}_h(t)}{\bb{L}^2}^2
	+ \int_0^t \norm{\partial_t \bff{H}_h(s)}{\bb{L}^2}^2 \ds
	+ \int_0^t \norm{\nabla \partial_t \bff{u}_h(s)}{\bb{L}^2}^2 \ds
	\leq
	C,
\end{align}
where the constant $C$ depends only on the coefficients of the equation, $\mathscr{D}$, and $K_0$.
\end{proposition}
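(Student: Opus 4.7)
The plan is to differentiate both equations of the mixed weak formulation~\eqref{equ:weaksemidisc} with respect to $t$ and choose test functions that produce a Grönwall-type inequality for $\norm{\partial_t \bff{u}_h}{\bb{L}^2}^2$. First I would test the $t$-differentiated first equation with $\bff{\chi} = \partial_t \bff{u}_h$, exploiting $\inpro{\partial_t \bff{u}_h \times \bff{H}_h}{\partial_t \bff{u}_h} = 0$, so that the only surviving cross term is $\gamma\inpro{\bff{u}_h \times \partial_t \bff{H}_h}{\partial_t \bff{u}_h}$. Rewriting $\lambda_e \inpro{\nabla \partial_t \bff{H}_h}{\nabla \partial_t \bff{u}_h} = -\lambda_e \inpro{\partial_t \bff{H}_h}{\Delta_h \partial_t \bff{u}_h}$ and eliminating $\Delta_h \partial_t \bff{u}_h$ via the $t$-differentiated second equation in operator form (as in \eqref{equ:weakproj}) produces a clean $\lambda_e \norm{\partial_t \bff{H}_h}{\bb{L}^2}^2$ on the LHS. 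Testing the differentiated second equation with $\bff{\phi} = \partial_t \bff{u}_h$ yields an identity that I then use to replace the remaining coefficient $(\lambda_r + \lambda_e \kappa \mu)\inpro{\partial_t \bff{H}_h}{\partial_t \bff{u}_h}$ by $-(\lambda_r + \lambda_e \kappa \mu)\norm{\nabla \partial_t \bff{u}_h}{\bb{L}^2}^2$ plus lower-order terms.

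After this combination the identity reads, schematically,
\begin{align*}
	\tfrac{1}{2}\ddt \norm{\partial_t \bff{u}_h}{\bb{L}^2}^2 + \lambda_e \norm{\partial_t \bff{H}_h}{\bb{L}^2}^2 + (\lambda_r + \lambda_e \kappa\mu)\norm{\nabla \partial_t \bff{u}_h}{\bb{L}^2}^2 + \cal{P} = \cal{R},
\end{align*}
where $\cal{P} \ge 0$ collects the monotonicity contribution $(\lambda_r + \lambda_e \kappa\mu)\kappa \inpro{\partial_t(|\bff{u}_h|^2 \bff{u}_h)}{\partial_t \bff{u}_h}$ (nonnegative by an identity of the type~\eqref{equ:a2a b2b dot ab} applied pointwise in~$t$), and $\cal{R}$ contains a harmless $C\norm{\partial_t \bff{u}_h}{\bb{L}^2}^2$ together with the cross term $\gamma \inpro{\bff{u}_h \times \partial_t \bff{H}_h}{\partial_t \bff{u}_h}$ and the cubic interaction $\lambda_e \kappa \inpro{\partial_t(|\bff{u}_h|^2 \bff{u}_h)}{\partial_t \bff{H}_h}$. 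Dropping $\cal{P}$ gives an inequality with three positive quantities on the LHS available for absorption.

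The core of the argument is then to estimate the two nonlinear terms in~$\cal{R}$. Using $|\partial_t(|\bff{u}_h|^2 \bff{u}_h)| \le 3|\bff{u}_h|^2 |\partial_t \bff{u}_h|$ and Hölder, both are of the form $\norm{\bff{u}_h}{\bb{L}^{p_1}}^k \norm{\partial_t \bff{u}_h}{\bb{L}^{p_2}} \norm{\partial_t \bff{H}_h}{\bb{L}^{p_3}}$ with $k\in\{1,2\}$. In case~(i), for $d=1$ the embedding $\bb{H}^1 \hookrightarrow \bb{L}^\infty$ and~\eqref{equ:semidisc-est1} give $\norm{\bff{u}_h}{\bb{L}^\infty} \le C$, while for $d=2$ I would use \eqref{equ:L4 gal nir}--\eqref{equ:L4 young} together with $\bb{H}^1\hookrightarrow\bb{L}^p$ for any $p<\infty$, so that Young's inequality absorbs the bad factors into $\epsilon(\lambda_r+\lambda_e\kappa\mu)\norm{\nabla \partial_t \bff{u}_h}{\bb{L}^2}^2 + \epsilon\lambda_e\norm{\partial_t \bff{H}_h}{\bb{L}^2}^2 + C\norm{\partial_t \bff{u}_h}{\bb{L}^2}^2$. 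In case~(ii) ($d=3$) only $\bb{H}^1 \hookrightarrow \bb{L}^6$ is available; the extracted prefactor is $C\lambda_e\kappa^2\norm{\bff{u}_h}{\bb{H}^1}^4 \le C\lambda_e\kappa^2 M^2$ with $M := \norm{\bff{u}_0}{\bb{H}^1}^4+|\mathscr{D}|$, and the smallness assumption $\kappa^2 \lambda_e M \lesssim \mu$ is precisely what makes this dominated by the coefficient $\lambda_r+\lambda_e\kappa\mu$ on the LHS. In case~(iii), the inverse inequality~\eqref{equ:disc lapl L infty} combined with \eqref{equ:semidisc-est lapl} and~\eqref{equ:semidisc-est1} yields a time-uniform $\bb{L}^\infty$-bound on $\bff{u}_h$ (with constant depending on $\lambda_e^{-1}$), reducing the estimate to standard Young inequalities.

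Integrating in $t$ and applying Grönwall's lemma in its integrated form then gives~\eqref{equ:semidisc-est3}; the starting datum $\norm{\partial_t \bff{u}_h(0)}{\bb{L}^2}$ is bounded by reading off $\partial_t \bff{u}_h(0) = \lambda_r \bff{H}_h(0) - \lambda_e \Delta_h \bff{H}_h(0) - \gamma \Pi_h(\bff{u}_{0,h} \times \bff{H}_h(0))$ from \eqref{equ:weakproj} at $t=0$ and invoking the regularity of $\bff{u}_0$ inherited from~$K_0$. The hard part will be the case analysis in $d=3$: without either the smallness~(ii) or the quasi-uniformity~(iii), the only available Sobolev information ($\bb{H}^1 \hookrightarrow \bb{L}^6$) produces a cubic term of order $\lambda_e \kappa \norm{\bff{u}_h}{\bb{L}^6}^2 \norm{\partial_t \bff{u}_h}{\bb{L}^6}\norm{\partial_t \bff{H}_h}{\bb{L}^2}$ whose coefficient cannot in general be dominated by $\lambda_r + \lambda_e\kappa\mu$, which is exactly why the proposition splits into three cases.
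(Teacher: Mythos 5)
Your proposal is correct and follows essentially the same route as the paper's proof: differentiate both equations in time, test with $\partial_t \bff{u}_h$, $\lambda_e \partial_t \bff{H}_h$ (your discrete-Laplacian substitution is algebraically the same step), and $(\lambda_r+\kappa\mu\lambda_e)\partial_t\bff{u}_h$, keep the nonnegative monotonicity term on the left, and run the identical three-way case analysis on the two nonlinear remainders. The only cosmetic differences are your choice of H\"older exponents in $d=1,2$ and your explicit treatment of $\partial_t\bff{u}_h(0)$ and of Gr\"onwall, where the paper instead integrates and invokes the bound $\int_0^t\norm{\partial_t\bff{u}_h}{\bb{L}^2}^2\,\ds\leq C$ from Proposition~\ref{pro:semidisc est2} directly.
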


\begin{proof}
Differentiating the first equation in \eqref{equ:weaksemidisc} with respect to $t$, then taking $\bff{\chi}= \partial_t \bff{u}_h$ gives
\begin{align}\label{equ:ddt dt uh}
	\frac{1}{2} \ddt \norm{\partial_t \bff{u}_h}{\bb{L}^2}^2
	=
	\lambda_r \inpro{\partial_t \bff{H}_h}{\partial_t \bff{u}_h}
	+
	\lambda_e \inpro{\nabla \partial_t \bff{H}_h}{\nabla \partial_t \bff{u}_h}
	-
	\gamma \inpro{\bff{u}_h \times \partial_t \bff{H}_h}{\partial_t \bff{u}_h}.
\end{align}
Differentiating the second equation in \eqref{equ:weaksemidisc} with respect to $t$, then taking $\bff{\phi}= \lambda_e \partial_t \bff{H}_h$ gives
\begin{align}\label{equ:lambda e dt Hh L2}
	\lambda_e \norm{\partial_t \bff{H}_h}{\bb{L}^2}^2
	&=
	-\lambda_e
	\inpro{\nabla \partial_t \bff{u}_h}{\nabla \partial_t \bff{H}_h}
	+
	\kappa\mu \lambda_e
	\inpro{\partial_t \bff{u}_h}{\partial_t \bff{H}_h}
	-
	\kappa \lambda_e
	\inpro{\partial_t (|\bff{u}_h|^2 \bff{u}_h)}{\partial_t \bff{H}_h}
	\nonumber \\
	&\quad
	-
	\beta\lambda_e \inpro{\bff{e}(\bff{e}\cdot\partial_t \bff{u}_h}{\partial_t \bff{H}_h}.
\end{align}
while taking $\bff{\phi}= C_r \partial_t \bff{u}_h$, where $C_r:=\lambda_r+\kappa\mu\lambda_e$ gives
\begin{align}\label{equ:dt Hh dt uh L2 sub}
	C_r \inpro{\partial_t \bff{H}_h}{\partial_t \bff{u}_h}
	&=
	C_r \left(
	\kappa \mu \norm{\partial_t \bff{u}_h}{\bb{L}^2}^2
	- 
	\norm{\nabla \partial_t \bff{u}_h}{\bb{L}^2}^2
	-
	2\kappa \norm{\bff{u}_h \cdot \partial_t \bff{u}_h}{\bb{L}^2}^2
	-
	\kappa \norm{|\bff{u}_h| |\partial_t \bff{u}_h|}{\bb{L}^2}^2\right)
	\nonumber \\
	&\quad
	+
	C_r \beta \norm{\bff{e}\cdot \partial_t \bff{u}_h}{L^2}^2.
\end{align}
Adding \eqref{equ:ddt dt uh}, \eqref{equ:lambda e dt Hh L2}, and \eqref{equ:dt Hh dt uh L2 sub} yields
\begin{align*}
	&\frac{1}{2} \ddt \norm{\partial_t \bff{u}_h}{\bb{L}^2}^2
	+
	\lambda_e \norm{\partial_t \bff{H}_h}{\bb{L}^2}^2
	+
	C_r \big( \norm{\nabla \partial_t \bff{u}_h}{\bb{L}^2}^2
	+
	2\kappa \norm{\bff{u}_h \cdot \partial_t \bff{u}_h}{\bb{L}^2}^2
	+
	\kappa \norm{|\bff{u}_h| |\partial_t \bff{u}_h|}{\bb{L}^2}^2 \big)
	\\
	&=
	C_r \kappa \mu \norm{\partial_t \bff{u}_h}{\bb{L}^2}^2
	+
	C_r \beta \norm{\bff{e}\cdot \partial_t \bff{u}_h}{L^2}^2
	-
	\kappa \lambda_e \inpro{\partial_t (|\bff{u}_h|^2 \bff{u}_h)}{\partial_t \bff{H}_h}
	-
	\gamma \inpro{\bff{u}_h \times \partial_t \bff{H}_h}{\partial_t \bff{u}_h},
\end{align*}
We estimate each term appearing on the right-hand side as follows.

\medskip
\noindent
\underline{Case 1: $d=1$ or $2$}. In this case, H\"{o}lder's inequality implies
\begin{align*}
	&\frac{1}{2} \ddt \norm{\partial_t \bff{u}_h}{\bb{L}^2}^2
	+
	\lambda_e \norm{\partial_t \bff{H}_h}{\bb{L}^2}^2
	+
	C_r \norm{\nabla \partial_t \bff{u}_h}{\bb{L}^2}^2
	+
	C_r\kappa \norm{|\bff{u}_h| |\partial_t \bff{u}_h|}{\bb{L}^2}^2
	\\
	&\leq
	C_r \big(\kappa\mu+\beta\big) \norm{\partial_t \bff{u}_h}{\bb{L}^2}^2
	+
	\kappa \lambda_e \norm{\bff{u}_h}{\bb{L}^8}^2
	\norm{\partial_t \bff{u}_h}{\bb{L}^4}
	\norm{\partial_t \bff{H}_h}{\bb{L}^2}
	+
	\gamma \norm{\bff{u}_h}{\bb{L}^4}
	\norm{\partial_t \bff{H}_h}{\bb{L}^2}
	\norm{\partial_t \bff{u}_h}{\bb{L}^4}
	\\
	&\leq
	C \norm{\partial_t \bff{u}_h}{\bb{L}^2}^2
	+
	\frac{C_r}{2} \norm{\nabla \partial_t \bff{u}_h}{\bb{L}^2}^2
	+
	\frac{\lambda_e}{2} \norm{\partial_t \bff{H}_h}{\bb{L}^2}^2,
\end{align*}
where in the last step we used the Sobolev embedding $\bb{H}^1 \hookrightarrow \bb{L}^8$, Young's inequality, \eqref{equ:L4 young} and~\eqref{equ:semidisc-est1}. The required inequality then follows by integrating both sides with respect to $t$ and noting~\eqref{equ:semidisc-est2}.

\medskip
\noindent
\underline{Case 2: $d=3$ and $\kappa^2 \lambda_e \big(\norm{\bff{u}_0}{\bb{H}^1}^4 + |\mathscr{D}|\big)\lesssim \mu$}. Similarly, by H\"{o}lder's and Young's inequalities, we have
\begin{align*}
	&\frac{1}{2} \ddt \norm{\partial_t \bff{u}_h}{\bb{L}^2}^2
	+
	\lambda_e \norm{\partial_t \bff{H}_h}{\bb{L}^2}^2
	+
	C_r \norm{\nabla \partial_t \bff{u}_h}{\bb{L}^2}^2
	+
	C_r\kappa \norm{|\bff{u}_h| |\partial_t \bff{u}_h|}{\bb{L}^2}^2
	\\
	&\leq
	C_r \big(\kappa\mu+\beta\big) \norm{\partial_t \bff{u}_h}{\bb{L}^2}^2
	+
	\kappa \lambda_e \norm{\bff{u}_h}{\bb{L}^6}^2
	\norm{\partial_t \bff{u}_h}{\bb{L}^6}
	\norm{\partial_t \bff{H}_h}{\bb{L}^2}
	+
	\gamma \norm{\bff{u}_h}{\bb{L}^4}
	\norm{\partial_t \bff{H}_h}{\bb{L}^2}
	\norm{\partial_t \bff{u}_h}{\bb{L}^4}
	\\
	&\leq
	C \norm{\partial_t \bff{u}_h}{\bb{L}^2}^2
	+
	\frac{C_r}{2} \norm{\nabla \partial_t \bff{u}_h}{\bb{L}^2}^2
	+
	\frac{\lambda_e}{2} \norm{\partial_t \bff{H}_h}{\bb{L}^2}^2
	+
	C_{\mathrm{S}} \kappa^2 \lambda_e \norm{\bff{u}_h}{\bb{H}^1}^4 \norm{\partial_t \bff{u}_h}{\bb{H}^1}^2,
\end{align*}
where $C_{\mathrm{S}}$ is the constant associated with the Sobolev embedding $\bb{H}^1\hookrightarrow \bb{L}^6$.
If $\norm{\bff{u}_0}{\bb{H}^1}^4 + |\mathscr{D}|$ is sufficiently small, or more precisely (with $C_1$ as given in \eqref{equ:semidisc-est1})
\[
C_{\mathrm{S}} C_1^4 \kappa^2 \lambda_e \left(\norm{\bff{u}_0}{\bb{H}^1}^4 + |\mathscr{D}|\right)\leq \mu,
\]
then noting~\eqref{equ:semidisc-est1}, we can absorb the term~$C_{\mathrm{S}} \kappa^2 \lambda_e \norm{\bff{u}_h}{\bb{H}^1}^4 \norm{\partial_t \bff{u}_h}{\bb{H}^1}^2$ to the left-hand side. The required inequality then follows by integrating both sides with respect to $t$ and using~\eqref{equ:semidisc-est2}.

\medskip
\noindent
\underline{Case 3: $d=3$ and the triangulation is quasi-uniform}. In this case, by H\"older's inequality and~\eqref{equ:semidisc-est L infty},
\begin{align*}
	&\frac{1}{2} \ddt \norm{\partial_t \bff{u}_h}{\bb{L}^2}^2
	+
	\lambda_e \norm{\partial_t \bff{H}_h}{\bb{L}^2}^2
	+
	C_r \norm{\nabla \partial_t \bff{u}_h}{\bb{L}^2}^2
	+
	C_r\kappa \norm{|\bff{u}_h| |\partial_t \bff{u}_h|}{\bb{L}^2}^2
	\\
	&\leq
	C_r \big(\kappa\mu+\beta\big) \norm{\partial_t \bff{u}_h}{\bb{L}^2}^2
	+
	\kappa \lambda_e \norm{\bff{u}_h}{\bb{L}^\infty}^2
	\norm{\partial_t \bff{u}_h}{\bb{L}^2}
	\norm{\partial_t \bff{H}_h}{\bb{L}^2}
	+
	\gamma \norm{\bff{u}_h}{\bb{L}^\infty}
	\norm{\partial_t \bff{H}_h}{\bb{L}^2}
	\norm{\partial_t \bff{u}_h}{\bb{L}^2}
	\\
	&\leq
	C \norm{\partial_t \bff{u}_h}{\bb{L}^2}^2
	+
	\frac{C_r}{2} \norm{\nabla \partial_t \bff{u}_h}{\bb{L}^2}^2
	+
	\frac{\lambda_e}{2} \norm{\partial_t \bff{H}_h}{\bb{L}^2}^2.
\end{align*}
Integrating both sides with respect to $t$ then yields the inequality in this case.
This completes the proof of the proposition.
\end{proof}

\begin{proposition}
Let $h>0$ and initial data $\bff{u}_0$ be given. Under the same assumptions as Proposition~\ref{pro:dt uh L2 assum}, for all $t\in [0,\infty)$,
\begin{align*}
	\norm{\nabla \bff{H}_h(t)}{\bb{L}^2}
	\leq
	C,
\end{align*}
where the constant $C$ depends only on the coefficients of the equation, $\mathscr{D}$, and $K_0$.
\end{proposition}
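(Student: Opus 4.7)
The plan is to obtain the desired bound by testing the first equation of~\eqref{equ:weaksemidisc} directly with $\bff{\chi}=\bff{H}_h\in \bb{V}_h$. This is admissible since~$\bff{H}_h$ lies in the discrete space, and the key observation is that the precessional term will vanish identically because of the orthogonality
\begin{align*}
  \inpro{\bff{u}_h \times \bff{H}_h}{\bff{H}_h} = 0.
\end{align*}

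Carrying this out, the first equation reduces to the identity
\begin{align*}
  \lambda_e \norm{\nabla \bff{H}_h}{\bb{L}^2}^2
  + \lambda_r \norm{\bff{H}_h}{\bb{L}^2}^2
  =
  \inpro{\partial_t \bff{u}_h}{\bff{H}_h},
\end{align*}
after which I would apply the Cauchy--Schwarz and weighted Young inequalities on the right-hand side to absorb the $\norm{\bff{H}_h}{\bb{L}^2}^2$ term on the left, yielding
\begin{align*}
  \lambda_e \norm{\nabla \bff{H}_h(t)}{\bb{L}^2}^2
  \leq
  \frac{1}{2\lambda_r}\norm{\partial_t \bff{u}_h(t)}{\bb{L}^2}^2.
\end{align*}
The pointwise bound $\norm{\partial_t \bff{u}_h(t)}{\bb{L}^2}\leq C$ provided by~\eqref{equ:semidisc-est3} in Proposition~\ref{pro:dt uh L2 assum} (which is valid under the present assumptions) then yields $\norm{\nabla \bff{H}_h(t)}{\bb{L}^2}^2 \leq C/\lambda_e$, which is a constant depending only on the coefficients of the equation, $\mathscr{D}$, and $K_0$, as required.

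There is no serious obstacle in this proof: the argument is essentially a one-line test, relying crucially on the antisymmetry of the cross product (which removes the only nonlinear term that would otherwise cause trouble) and on the already-established pointwise-in-time control of $\partial_t \bff{u}_h$ in $\bb{L}^2$ from Proposition~\ref{pro:dt uh L2 assum}. The only mildly delicate point is that the resulting estimate degenerates as $\lambda_e\to 0$; however, this is acceptable here since the statement allows $C$ to depend on all coefficients of the equation (including $\lambda_e$ and $\lambda_r$), and is consistent with the regime considered throughout this section.
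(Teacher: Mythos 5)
Your proposal is correct and follows essentially the same route as the paper: both test the first equation of~\eqref{equ:weaksemidisc} with $\bff{\chi}=\bff{H}_h$, use that the precessional term $\inpro{\bff{u}_h\times\bff{H}_h}{\bff{H}_h}$ vanishes, and conclude via the pointwise bound on $\norm{\partial_t\bff{u}_h}{\bb{L}^2}$ from Proposition~\ref{pro:dt uh L2 assum}. The only (harmless) difference is that you absorb the $\lambda_r\norm{\bff{H}_h}{\bb{L}^2}^2$ term exactly via a weighted Young inequality, whereas the paper bounds it separately using the earlier stability estimates.
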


\begin{proof}
Setting $\bff{\chi}=\bff{H}_h$ in \eqref{equ:weaksemidisc} gives
\begin{align*}
	\lambda_e \norm{\nabla \bff{H}_h}{\bb{L}^2}^2
	&=
	\inpro{\partial_t \bff{u}_h}{\bff{H}_h}
	-
	\lambda_r \norm{\bff{H}_h}{\bb{L}^2}^2
	\lesssim
	\norm{\partial_t \bff{u}_h}{\bb{L}^2}^2
	+
	\norm{\bff{H}_h}{\bb{L}^2}^2.
\end{align*}
The required result then follows from \eqref{equ:semidisc-est1} and \eqref{equ:semidisc-est3}.
\end{proof}

To estimate the error in the semi-discrete approximation, we write it as a sum of two terms:
\begin{align}\label{equ:theta rho}
	\bff{u}_h(t) - \bff{u}(t) 
	= 
	(\bff{u}_h(t)- R_h \bff{u}(t)) + (R_h \bff{u}(t) - \bff{u}(t)) 
	&=: \bff{\theta}(t) + \bff{\rho}(t),
	\\
	\label{equ:xi eta}
	\bff{H}_h(t) - \bff{H}(t) 
	= 
	(\bff{H}_h(t)- R_h \bff{H}(t)) + (R_h \bff{H}(t) - \bff{H}(t)) 
	&=: \bff{\xi}(t) + \bff{\eta}(t).
\end{align}
where $R_h$ is the Ritz projection operator (see \eqref{equ:Ritz}).

In the analysis, to bound the nonlinear terms, we will often write (by adding and substracting $R_h \bff{u}$)
\begin{align}\label{equ:nonlinear Rh u}
	\nonumber
	|\bff{u}_h|^2 \bff{u}_h - |\bff{u}|^2 \bff{u}
	&=
	(\bff{u}_h- R_h \bff{u}) \cdot (\bff{u}_h+ R_h \bff{u}) \bff{u}_h
	+
	|R_h \bff{u}|^2 (\bff{u}_h-\bff{u})
	+
	(R_h \bff{u} - \bff{u})\cdot (R_h \bff{u}+ \bff{u}) \bff{u}
	\\
	&=
	\big(\bff{\theta} \cdot (\bff{\theta}+ 2 R_h \bff{u}) \big) \bff{u}_h
	+
	\abs{R_h \bff{u}}^2 \big(\bff{\theta}+\bff{\rho}\big)
	+
	\big( \bff{\rho} \cdot (R_h \bff{u} + \bff{u}) \big) \bff{u}
\end{align}
and
\begin{align}\label{equ:nonlinear cross}
	\bff{u}_h \times \bff{H}_h - \bff{u} \times \bff{H}
	\nonumber
	&=
	(\bff{u}_h - R_h \bff{u})\times \bff{H}_h
	+
	R_h \bff{u} \times (\bff{H}_h-\bff{H})
	+
	(R_h \bff{u}-\bff{u}) \times \bff{H}
	\\
	&= 
	\bff{\theta} \times \bff{H}_h
	+
	R_h\bff{u} \times (\bff{\xi}+\bff{\eta})
	+
	\bff{\rho} \times \bff{H}.
\end{align}
We derive some estimates for the nonlinear terms in the following lemmas.

%

\begin{lemma}\label{lem:cross est}
Let $\epsilon>0$ and initial data $\bff{u}_0$ be given. Let $(\bff{u},\bff{H})$ be the solution of \eqref{equ:weakform} satisfying \eqref{equ:ass 2 u}. Then we have
\begin{align}
	\label{equ:uh cross Hh theta}
	\big| \inpro{\bff{u}_h \times \bff{H}_h- \bff{u} \times \bff{H}}{\bff{\theta}} \big| 
	&\lesssim
	h^{2(r+1)} 
	+ 
	\norm{\bff{\theta}}{\bb{L}^2}^2 
	+ 
	\epsilon \norm{\nabla \bff{\theta}}{\bb{L}^2}^2
	+
	\epsilon \norm{\bff{\xi}}{\bb{L}^2}^2,
	\\
	\label{equ:uh cross Hh xi}
	\big| \inpro{\bff{u}_h \times \bff{H}_h- \bff{u} \times \bff{H}}{\bff{\xi}} \big| 
	&\lesssim
	h^{2(r+1)} 
	+ 
	\norm{\bff{\xi}}{\bb{L}^2}^2 
	+ 
	\epsilon \norm{\nabla \bff{\xi}}{\bb{L}^2}^2
	+
	\epsilon \norm{\bff{\theta}}{\bb{L}^2}^2,
	\\
	\label{equ:uh cross Hh dt theta}
	\big| \inpro{\bff{u}_h \times \bff{H}_h- \bff{u} \times \bff{H}}{\partial_t \bff{\theta}} \big| 
	&\lesssim
	h^{2(r+1)}
	+
	\norm{\bff{\xi}}{\bb{L}^2}^2
	+
	\big( \norm{\bff{\theta}}{\bb{L}^2}^2 + \epsilon \norm{\nabla \bff{\theta}}{\bb{L}^2}^2 \big) \norm{\bff{H}_h}{\bb{H}^1}^2
	+
	\epsilon \norm{\partial_t \bff{\theta}}{\bb{L}^2}^2.
\end{align}
Furthermore, if the triangulation $\mathcal{T}_h$ is globally quasi-uniform, then for any $\bff{\zeta}\in \bb{V}_h$,
\begin{align}
	\label{equ:uh cross Hh Delta}
	\big| \inpro{\bff{u}_h \times \bff{H}_h- \bff{u} \times \bff{H}}{\bff{\zeta}} \big| 
	&\lesssim
	h^{2(r+1)} 
	+ 
	\norm{\bff{\theta}}{\bb{L}^2}^2 
	+
	\norm{\bff{\xi}}{\bb{L}^2}^2
	+ 
	\epsilon \norm{\bff{\zeta}}{\bb{L}^2}^2,
	\\
	\label{equ:nab uh cross Hh}
	\big|\inpro{\nabla\Pi_h\big(\bff{u}_h \times \bff{H}_h- \bff{u} \times \bff{H}\big)}{\bff{\zeta}} \big|
	&\lesssim
	h^{2r}
	+
	\norm{\bff{\theta}}{\bb{H}^1}^2
	+
	\norm{\bff{\xi}}{\bb{H}^1}^2
	+
	\epsilon \norm{\bff{\zeta}}{\bb{L}^2}^2.
\end{align}
Here, all constants are independent of $h$.
\end{lemma}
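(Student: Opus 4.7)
The plan is to treat all five estimates uniformly by substituting the algebraic identity \eqref{equ:nonlinear cross} into $\bff{u}_h \times \bff{H}_h - \bff{u} \times \bff{H}$, so that in each case I am left with four pieces $\bff{\theta} \times \bff{H}_h$, $R_h\bff{u} \times \bff{\xi}$, $R_h\bff{u} \times \bff{\eta}$ and $\bff{\rho} \times \bff{H}$ to bound against the prescribed test function using H\"older and Young. The recurring inputs will be $\norm{R_h\bff{u}}{\bb{W}^{1,\infty}} + \norm{R_h\bff{H}}{\bb{W}^{1,\infty}} \lesssim K_0$, coming from the maximum-norm stability \eqref{equ:Ritz stab infty} together with the regularity \eqref{equ:ass 2 u}, the Ritz error bounds $\norm{\bff{\rho}}{\bb{H}^s} + \norm{\bff{\eta}}{\bb{H}^s} \lesssim h^{r+1-s}$ from \eqref{equ:Ritz ineq}, and the uniform $\bb{H}^1$-bounds on $\bff{u}_h$ and $\bff{H}_h$ supplied by Propositions~\ref{pro:semidisc est1}--\ref{pro:dt uh L2 assum}.

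For \eqref{equ:uh cross Hh theta} the diagonal piece vanishes identically by $\bff{a} \cdot (\bff{a} \times \bff{b}) = 0$, and the other three yield $C\norm{\bff{\theta}}{\bb{L}^2}^2 + \epsilon\norm{\bff{\xi}}{\bb{L}^2}^2 + Ch^{2(r+1)}$ after Young; the $\epsilon\norm{\nabla\bff{\theta}}{\bb{L}^2}^2$ appearing in the stated bound is a harmless padding. For \eqref{equ:uh cross Hh xi} the piece $\inpro{R_h\bff{u} \times \bff{\xi}}{\bff{\xi}}$ vanishes for the same reason, and in the remaining $\inpro{\bff{\theta} \times \bff{H}_h}{\bff{\xi}}$ I split $\bff{H}_h = R_h\bff{H} + \bff{\xi}$ so that $\inpro{\bff{\theta} \times \bff{\xi}}{\bff{\xi}} = 0$ drops out, leaving only $\inpro{\bff{\theta} \times R_h\bff{H}}{\bff{\xi}}$, which is handled by $\norm{R_h\bff{H}}{\bb{L}^\infty} \lesssim K_0$ and Young with the small parameter placed on $\norm{\bff{\theta}}{\bb{L}^2}^2$. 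For \eqref{equ:uh cross Hh dt theta} the diagonal piece no longer vanishes; I bound it by $\norm{\bff{\theta}}{\bb{L}^4}\norm{\bff{H}_h}{\bb{L}^4}\norm{\partial_t\bff{\theta}}{\bb{L}^2}$, put $\epsilon$ on $\norm{\partial_t\bff{\theta}}{\bb{L}^2}^2$, convert $\norm{\bff{\theta}}{\bb{L}^4}^2$ via \eqref{equ:L4 young}, and use the Sobolev embedding $\bb{H}^1 \hookrightarrow \bb{L}^4$ (valid for $d \leq 3$) to produce the advertised factor $\norm{\bff{H}_h}{\bb{H}^1}^2$.

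Under quasi-uniformity, the new tool is \eqref{equ:semidisc-est L infty}, which together with the triangle inequality and \eqref{equ:Ritz stab infty} gives $\norm{\bff{\theta}}{\bb{L}^\infty} \lesssim 1$; the combination of \eqref{equ:disc lapl L6} with Proposition~\ref{pro:semidisc est3} also gives $\norm{\nabla\bff{u}_h}{\bb{L}^6} \lesssim 1$, hence $\norm{\nabla\bff{\theta}}{\bb{L}^6} \lesssim 1$ after using $\norm{\nabla R_h\bff{u}}{\bb{L}^\infty} \lesssim K_0$. For \eqref{equ:uh cross Hh Delta} I sidestep the lack of any uniform $\bb{L}^\infty$-bound on $\bff{H}_h$ by the regrouping $\bff{u}_h \times \bff{H}_h - \bff{u} \times \bff{H} = \bff{u}_h \times (\bff{\xi}+\bff{\eta}) + (\bff{\theta}+\bff{\rho}) \times \bff{H}$, after which H\"older applied with the $\bb{L}^\infty$-bounds on $\bff{u}_h$ and $\bff{H}$, plus Young, concludes.

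The main obstacle is \eqref{equ:nab uh cross Hh}. My plan is to apply \eqref{equ:H1 stab proj} to pass from $\nabla\Pi_h(\cdot)$ to $\nabla(\cdot)$, so that $|\inpro{\nabla\Pi_h(\bff{w})}{\bff{\zeta}}| \lesssim \norm{\nabla\bff{w}}{\bb{L}^2}^2 + \epsilon\norm{\bff{\zeta}}{\bb{L}^2}^2$; the task is then an $\bb{L}^2$-estimate of $\nabla(\bff{u}_h \times \bff{H}_h - \bff{u} \times \bff{H})$. After the decomposition \eqref{equ:nonlinear cross} and the Leibniz rule, all pieces except $\nabla(\bff{\theta} \times \bff{H}_h)$ are controlled directly by the $\bb{W}^{1,\infty}$-bounds on $R_h\bff{u}, R_h\bff{H}$ combined with the Ritz estimates in $\bb{H}^1$. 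For $\nabla(\bff{\theta} \times \bff{H}_h)$ I split once more $\bff{H}_h = R_h\bff{H} + \bff{\xi}$: the $\bff{\theta} \times R_h\bff{H}$ part is again handled by $\bb{L}^\infty$ bounds, while for the delicate $\bff{\theta} \times \bff{\xi}$ part I use $\norm{\bff{\theta}}{\bb{L}^\infty}\norm{\nabla\bff{\xi}}{\bb{L}^2} + \norm{\nabla\bff{\theta}}{\bb{L}^3}\norm{\bff{\xi}}{\bb{L}^6}$, relying on the Sobolev embedding for $\norm{\bff{\xi}}{\bb{L}^6}$ and on the interpolation $\norm{\nabla\bff{\theta}}{\bb{L}^3} \lesssim \norm{\nabla\bff{\theta}}{\bb{L}^2}^{1/2}\norm{\nabla\bff{\theta}}{\bb{L}^6}^{1/2}$ together with the uniform bound on $\norm{\nabla\bff{\theta}}{\bb{L}^6}$ from \eqref{equ:disc lapl L6}; Young then absorbs the resulting mixed terms into $\norm{\bff{\theta}}{\bb{H}^1}^2 + \norm{\bff{\xi}}{\bb{H}^1}^2$. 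The $h^{2r}$ rate on the right-hand side (rather than $h^{2(r+1)}$) reflects the squared bound $\norm{\nabla\bff{\rho}}{\bb{L}^2}^2 \lesssim h^{2r}$ that enters through $\nabla(\bff{\rho} \times \bff{H})$.
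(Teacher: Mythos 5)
Your proposal is correct and follows essentially the same strategy as the paper: split the difference of cross products, kill or bound each piece with H\"older and Young, and feed in the Ritz estimates \eqref{equ:Ritz ineq}, the stability bounds of Propositions~\ref{pro:semidisc est1}--\ref{pro:semidisc est3}, and \eqref{equ:H1 stab proj} for the last estimate. The differences are in bookkeeping. For \eqref{equ:uh cross Hh theta}, \eqref{equ:uh cross Hh xi} and \eqref{equ:nab uh cross Hh} the paper works with the regrouping $(\bff{\theta}+\bff{\rho})\times\bff{H}+\bff{u}_h\times(\bff{\xi}+\bff{\eta})$ and the bound $\norm{\bff{u}_h}{\bb{L}^4}\lesssim 1$ from \eqref{equ:semidisc-est1}, whereas you use \eqref{equ:nonlinear cross} together with the maximum-norm stability of $R_h$; your systematic use of $(\bff{a}\times\bff{b})\cdot\bff{a}=0$ is a (harmless) sharpening, since it removes the $\epsilon\norm{\nabla\bff{\theta}}{\bb{L}^2}^2$ and $\epsilon\norm{\nabla\bff{\xi}}{\bb{L}^2}^2$ padding from the first two estimates. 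The one place where your choice costs you is \eqref{equ:nab uh cross Hh}: keeping $\bff{\theta}\times\bff{H}_h$ forces the interpolation $\norm{\nabla\bff{\theta}}{\bb{L}^3}\lesssim\norm{\nabla\bff{\theta}}{\bb{L}^2}^{1/2}\norm{\nabla\bff{\theta}}{\bb{L}^6}^{1/2}$, and the resulting mixed term $\norm{\nabla\bff{\theta}}{\bb{L}^2}\,\norm{\bff{\xi}}{\bb{H}^1}^2$ only collapses to $\norm{\bff{\theta}}{\bb{H}^1}^2+\norm{\bff{\xi}}{\bb{H}^1}^2$ after you also invoke the uniform bounds $\norm{\nabla\bff{\theta}}{\bb{L}^6}\lesssim 1$ and $\norm{\bff{\xi}}{\bb{H}^1}\lesssim 1$ (available from Propositions~\ref{pro:semidisc est1}--\ref{pro:dt uh L2 assum}, but this closing step should be written out); the paper's regrouping puts the gradient on $\bff{u}_h$ and uses $\norm{\nabla\bff{u}_h}{\bb{L}^6}\lesssim\norm{\Delta_h\bff{u}_h}{\bb{L}^2}\lesssim 1$ from \eqref{equ:disc lapl L6} directly, which is cleaner. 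A small caution: your repeated appeal to $\norm{R_h\bff{u}}{\bb{W}^{1,\infty}}+\norm{R_h\bff{H}}{\bb{W}^{1,\infty}}\lesssim K_0$ presumes $\bff{u},\bff{H}\in\bb{W}^{1,\infty}$, which for $r=1$, $d=3$ is not implied by the $\bb{H}^{r+1}$ bounds in \eqref{equ:ass 2 u}; the paper avoids this in \eqref{equ:nab uh cross Hh} by pairing $\nabla\bff{H}$ in $\bb{L}^4$. This is at the same level of informality as the rest of the paper, so it is not a gap, but it is worth tightening.
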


\begin{proof}
First, we will prove \eqref{equ:uh cross Hh theta} by writing
\begin{align}\label{equ:cross split}
	\bff{u}_h \times \bff{H}_h - \bff{u} \times \bff{H}
	=
	(\bff{\theta}+\bff{\rho}) \times \bff{H}
	+
	\bff{u}_h \times (\bff{\xi}+\bff{\eta}).
\end{align}
Therefore, by H\"older's and Young's inequalities, we have
\begin{align*}
	\big| \inpro{\bff{u}_h \times \bff{H}_h- \bff{u} \times \bff{H}}{\bff{\theta}} \big| 
	&\leq
	\norm{\bff{\rho}}{\bb{L}^2} \norm{\bff{H}}{\bb{L}^\infty} \norm{\bff{\theta}}{\bb{L}^2}
	+
	\norm{\bff{u}_h}{\bb{L}^4} \norm{\bff{\xi}+\bff{\eta}}{\bb{L}^2} \norm{\bff{\theta}}{\bb{L}^4}
	\\
	&\lesssim
	\norm{\bff{\rho}}{\bb{L}^2}^2
	+
	\norm{\bff{\theta}}{\bb{L}^2}^2
	+
	\norm{\bff{\theta}}{\bb{L}^4}^2
	+
	\epsilon \norm{\bff{\xi}+\bff{\eta}}{\bb{L}^2}^2
	\\
	&\lesssim
	h^{2(r+1)} 
	+
	\norm{\bff{\theta}}{\bb{L}^2}^2
	+
	\epsilon \norm{\nabla \bff{\theta}}{\bb{L}^2}^2
	+
	\epsilon \norm{\bff{\xi}}{\bb{L}^2}^2,
\end{align*}
where in the last step we used \eqref{equ:Ritz ineq} and \eqref{equ:L4 young}, thus proving \eqref{equ:uh cross Hh theta}.
Similarly,
\begin{align*}
	\big| \inpro{\bff{u}_h \times \bff{H}_h- \bff{u} \times \bff{H}}{\bff{\xi}} \big| 
	&\leq
	\norm{\bff{\theta}+\bff{\rho}}{\bb{L}^2} \norm{\bff{H}}{\bb{L}^\infty} \norm{\bff{\xi}}{\bb{L}^2}
	+
	\norm{\bff{u}_h}{\bb{L}^4} \norm{\bff{\eta}}{\bb{L}^2} \norm{\bff{\xi}}{\bb{L}^4}
	\\
	&\lesssim
	\epsilon \norm{\bff{\theta}}{\bb{L}^2}^2
	+
	\norm{\bff{\rho}}{\bb{L}^2}^2
	+
	\norm{\bff{\xi}}{\bb{L}^2}^2
	+
	\norm{\bff{\xi}}{\bb{L}^4}^2
	+
	\epsilon \norm{\bff{\eta}}{\bb{L}^2}^2
	\\
	&\lesssim
	h^{2(r+1)}
	+
	\norm{\bff{\xi}}{\bb{L}^2}^2
	+
	\epsilon \norm{\nabla \bff{\xi}}{\bb{L}^2}^2
	+
	\epsilon \norm{\bff{\theta}}{\bb{L}^2}^2,
\end{align*}
proving \eqref{equ:uh cross Hh xi}.
Next, using \eqref{equ:nonlinear cross}, H\"older's and Young's inequalities, we obtain
\begin{align*}
	&\big| \inpro{\bff{u}_h \times \bff{H}_h- \bff{u} \times \bff{H}}{\partial_t \bff{\theta}} \big| 
	\\
	&\leq
	\norm{\bff{\theta}}{\bb{L}^4}
	\norm{\bff{H}_h}{\bb{L}^4}
	\norm{\partial_t \bff{\theta}}{\bb{L}^2}
	+
	\norm{R_h \bff{u}}{\bb{L}^\infty}
	\norm{\bff{\xi}+\bff{\eta}}{\bb{L}^2}
	\norm{\partial_t \bff{\theta}}{\bb{L}^2}
	+
	\norm{\bff{\rho}}{\bb{L}^2}
	\norm{\bff{H}}{\bb{L}^\infty}
	\norm{\partial_t \bff{\theta}}{\bb{L}^2}
	\\
	&\lesssim
	\norm{\bff{\theta}}{\bb{L}^4}^2 \norm{\bff{H}_h}{\bb{L}^4}^2
	+
	\epsilon \norm{\partial_t \bff{\theta}}{\bb{L}^2}^2
	+
	\norm{\bff{\xi}}{\bb{L}^2}^2
	+
	h^{2(r+1)}
	+
	\epsilon \norm{\partial_t \bff{\theta}}{\bb{L}^2}^2
	\\
	&\lesssim
	\big(\norm{\bff{\theta}}{\bb{L}^2}^2 + \epsilon \norm{\nabla \bff{\theta}}{\bb{L}^2}^2 \big) 
	\norm{\bff{H}_h}{\bb{H}^1}^2
	+
	\norm{\bff{\xi}}{\bb{L}^2}^2
	+
	h^{2(r+1)}
	+
	\epsilon \norm{\partial_t \bff{\theta}}{\bb{L}^2}^2,
\end{align*}
where in the last step we used Sobolev embedding and \eqref{equ:L4 young}. This proves \eqref{equ:uh cross Hh dt theta}.

Finally, if the triangulation is quasi-uniform, then \eqref{equ:semidisc-est L infty} holds. Using \eqref{equ:cross split}, H\"older's and Young's inequalities, we obtain
\begin{align*}
	\big| \inpro{\bff{u}_h \times \bff{H}_h- \bff{u} \times \bff{H}}{\bff{\zeta}} \big| 
	&\leq
	\left( \norm{\bff{\theta}+\bff{\rho}}{\bb{L}^2} \norm{\bff{H}}{\bb{L}^\infty}
	+ \norm{\bff{u}_h}{\bb{L}^\infty} \norm{\bff{\xi}+\bff{\eta}}{\bb{L}^2} \right) \norm{\bff{\zeta}}{\bb{L}^2}
	\\
	&\lesssim
	h^{2(r+1)} 
	+ 
	\norm{\bff{\theta}}{\bb{L}^2}^2 
	+
	\norm{\bff{\xi}}{\bb{L}^2}^2
	+ 
	\epsilon \norm{\bff{\zeta}}{\bb{L}^2}^2,
\end{align*}
thus proving \eqref{equ:uh cross Hh Delta}. Similarly, noting \eqref{equ:H1 stab proj}, we obtain
\begin{align*}
	\big|\inpro{\nabla\Pi_h\big(\bff{u}_h \times \bff{H}_h- \bff{u} \times \bff{H}\big)}{\bff{\zeta}} \big|
	&\leq
	\big( \norm{\nabla \bff{\theta}+\nabla \bff{\rho}}{\bb{L}^2} \norm{\bff{H}}{\bb{L}^\infty}
	+ \norm{\bff{\theta}+\bff{\rho}}{\bb{L}^4} \norm{\nabla \bff{H}}{\bb{L}^4}
	\\
	&\quad
	+ \norm{\nabla \bff{u}_h}{\bb{L}^4} \norm{\bff{\xi}+\bff{\eta}}{\bb{L}^4}
	+ \norm{\bff{u}_h}{\bb{L}^\infty} \norm{\nabla \bff{\xi}+\nabla \bff{\eta}}{\bb{L}^2} \big) \norm{\bff{\zeta}}{\bb{L}^2}
	\\
	&\lesssim
	h^{2r}
	+
	\norm{\bff{\theta}}{\bb{H}^1}^2
	+
	\norm{\bff{\xi}}{\bb{H}^1}^2
	+
	\epsilon \norm{\bff{\zeta}}{\bb{L}^2}^2,
\end{align*}
where in the last step we used \eqref{equ:disc lapl L6}, Proposition~\ref{pro:semidisc est3}, and \eqref{equ:Ritz ineq}, completing the proof of the lemma.
\end{proof}

\begin{lemma}\label{lem:nonlinear est}
Let $\epsilon>0$ and initial data $\bff{u}_0$ be given. Let $(\bff{u},\bff{H})$ be the solution of \eqref{equ:weakform} satisfying \eqref{equ:ass 2 u}. Then we have
	\begin{align}
		\label{equ:uh2uh theta}
		\big| \inpro{|\bff{u}_h|^2 \bff{u}_h - |\bff{u}|^2 \bff{u}}{\bff{\theta}} \big|
		&\lesssim
		h^{2(r+1)}
		+
		\norm{\bff{\theta}}{\bb{L}^2}^2 
		+ 
		\epsilon \norm{\nabla \bff{\theta}}{\bb{L}^2}^2,
		\\
		\label{equ:uh2uh xi}
		\big| \inpro{|\bff{u}_h|^2 \bff{u}_h - |\bff{u}|^2 \bff{u}}{\bff{\xi}} \big|
		&\lesssim
		h^{2(r+1)}
		+
		\big( 1+\norm{\bff{\xi}}{\bb{H}^1}^2 \big) \norm{\bff{\theta}}{\bb{L}^2}^2 
		+ 
		\epsilon \norm{\nabla \bff{\theta}}{\bb{L}^2}^2
		+ 
		\epsilon \norm{\bff{\xi}}{\bb{L}^2}^2,
		\\
		\label{equ:uh2uh dt theta}
		\big| \inpro{|\bff{u}_h|^2 \bff{u}_h - |\bff{u}|^2 \bff{u}}{\partial_t \bff{\theta}} \big|
		&\lesssim
		h^{2(r+1)}
		+
		\big( 1+\norm{\partial_t \bff{\theta}}{\bb{L}^2}^2 \big) \norm{\bff{\theta}}{\bb{H}^1}^2  
		+ 
		\epsilon \norm{\partial_t \bff{\theta}}{\bb{L}^2}^2,
		\\
		\label{equ:dt uh2uh xi}
		\big| \inpro{\partial_t(|\bff{u}_h|^2 \bff{u}_h - |\bff{u}|^2 \bff{u})}{\bff{\xi}} \big|
		&\lesssim
		h^{2(r+1)}
		+
		\big( 1 + \norm{\bff{\xi}}{\bb{H}^1}^2 + \norm{\partial_t \bff{u}_h}{\bb{L}^2}^2 \big) \norm{\bff{\theta}}{\bb{H}^1}^2 
		+ 
		\norm{\bff{\xi}}{\bb{L}^2}^2
		+
		\epsilon \norm{\partial_t \bff{\theta}}{\bb{L}^2}^2.
	\end{align}
Furthermore, if the triangulation $\mathcal{T}_h$ is globally quasi-uniform, then for any $\bff{\zeta}\in \bb{V}_h$,
	\begin{align}
		\label{equ:uh2uh Delta theta}
		\big| \inpro{|\bff{u}_h|^2 \bff{u}_h - |\bff{u}|^2 \bff{u}}{\bff{\zeta}} \big|
		&\lesssim
		h^{2(r+1)}
		+
		\norm{\bff{\theta}}{\bb{L}^2}^2 
		+ 
		\epsilon \norm{\bff{\zeta}}{\bb{L}^2}^2,
		\\
		\label{equ:dt uh2uh}
		\big| \inpro{\partial_t \left(|\bff{u}_h|^2 \bff{u}_h - |\bff{u}|^2 \bff{u}\right)}{\bff{\zeta}} \big|
		&\lesssim
		h^{2(r+1)} + \norm{\bff{\theta}}{\bb{L}^2}^2
		+ \norm{\partial_t \bff{\theta}}{\bb{L}^2}^2
		+
		\epsilon \norm{\bff{\zeta}}{\bb{H}^1}^2.
	\end{align}
In the above estimates, all constants are independent of $h$.
\end{lemma}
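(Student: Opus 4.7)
\medskip

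\noindent\textbf{Proof plan.} The main tool is the algebraic identity \eqref{equ:nonlinear Rh u}, which isolates linear-in-$\bff{\theta}$ and linear-in-$\bff{\rho}$ parts with $\bff{u}_h$, $R_h\bff{u}$, and $\bff{u}$ sitting as multiplier coefficients. I would bound each of the three resulting pieces separately using H\"older and Young inequalities, treating the coefficients in $\bb{L}^\infty$ (or $\bb{L}^6$) by appealing to the regularity assumption~\eqref{equ:ass 2 u}, the Ritz stability~\eqref{equ:Ritz stab infty}, the Ritz error bound~\eqref{equ:Ritz ineq} to convert $\bff{\rho}$-terms into $h^{r+1}$, the stability of $\bff{u}_h$ in $\bb{H}^1$ from Proposition~\ref{pro:semidisc est1}, and finally~\eqref{equ:L4 young} (Gagliardo--Nirenberg plus Young) to convert $\norm{\cdot}{\bb{L}^4}^2$ into $\norm{\cdot}{\bb{L}^2}^2+\epsilon\norm{\nabla\cdot}{\bb{L}^2}^2$ wherever the test function is $\bff{\theta}$, $\bff{\xi}$, or $\partial_t\bff{\theta}$.

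For~\eqref{equ:uh2uh theta} I would put $\bff{u}_h$ and $\bff{u}$ in $\bb{L}^\infty$ is not available, so instead use the split $\norm{\bff{\theta}(\bff{\theta}+2R_h\bff{u})\bff{u}_h}{\bb{L}^{4/3}}\le \norm{\bff{\theta}}{\bb{L}^4}(\norm{\bff{\theta}}{\bb{L}^4}+C)\norm{\bff{u}_h}{\bb{L}^4}$ and pair against $\bff{\theta}\in\bb{L}^4$; the Ritz terms yield $h^{2(r+1)}$ via~\eqref{equ:Ritz ineq}; an $\bb{L}^4\to\bb{L}^2+\epsilon\bb{H}^1$ reduction through~\eqref{equ:L4 young} gives the stated bound. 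Estimate~\eqref{equ:uh2uh xi} is analogous but asymmetric: the term $\abs{R_h\bff{u}}^2\bff{\theta}$ paired with $\bff{\xi}$ is controlled by $\norm{\bff{\theta}}{\bb{L}^2}\norm{\bff{\xi}}{\bb{L}^2}$; the cubic term $\bff{\theta}(\bff{\theta}+2R_h\bff{u})\bff{u}_h$ paired with $\bff{\xi}$ requires one factor in $\bb{L}^4$ and one in $\bb{L}^2$, and the factor $\norm{\bff{\xi}}{\bb{L}^4}^2$ must be converted to $\norm{\bff{\xi}}{\bb{L}^2}^2+\epsilon\norm{\nabla\bff{\xi}}{\bb{L}^2}^2$, or absorbed into the prefactor $\norm{\bff{\xi}}{\bb{H}^1}^2\norm{\bff{\theta}}{\bb{L}^2}^2$ that appears in the claim. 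The same pattern, with $\bff{\xi}$ replaced by $\partial_t\bff{\theta}$, yields~\eqref{equ:uh2uh dt theta}.

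The genuinely technical estimate is~\eqref{equ:dt uh2uh xi}, which requires differentiating the decomposition in time. Writing
\[
\partial_t(|\bff{u}_h|^2\bff{u}_h-|\bff{u}|^2\bff{u})
= 2(\bff{u}_h\cdot\partial_t\bff{u}_h)\bff{u}_h + |\bff{u}_h|^2\partial_t\bff{u}_h
- 2(\bff{u}\cdot\partial_t\bff{u})\bff{u} - |\bff{u}|^2\partial_t\bff{u},
\]
I would re-decompose both quadratic-in-$\bff{u}$ pieces by adding and subtracting $R_h\bff{u}$ and $R_h(\partial_t\bff{u})$, producing terms linear in $\bff{\theta}$, $\partial_t\bff{\theta}$, $\bff{\rho}$, and $\partial_t\bff{\rho}$ with coefficients involving $\bff{u}_h,\bff{u},R_h\bff{u},\partial_t\bff{u}_h,\partial_t\bff{u}$. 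The $\partial_t\bff{u}_h$-containing pieces are paired against $\bff{\xi}\in\bb{L}^4$ or $\bb{L}^6$ via Sobolev, which is where the $\norm{\partial_t\bff{u}_h}{\bb{L}^2}^2\norm{\bff{\theta}}{\bb{H}^1}^2$ contribution and the $\norm{\bff{\xi}}{\bb{H}^1}^2\norm{\bff{\theta}}{\bb{H}^1}^2$ contribution arise; the remaining $\partial_t\bff{\theta}$-pieces are absorbed into $\epsilon\norm{\partial_t\bff{\theta}}{\bb{L}^2}^2$ via Young. Ritz errors for $\partial_t\bff{\rho}$ are also $h^{r+1}$ by~\eqref{equ:Ritz ineq}. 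This is the step I expect to be the main obstacle because of the sheer number of cross-terms and the careful balance between absorbing in $\epsilon\norm{\partial_t\bff{\theta}}{\bb{L}^2}^2$ versus feeding into the Gr\"onwall-type prefactors.

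Finally, under the quasi-uniformity assumption, Proposition~\ref{pro:semidisc est3} provides the $\bb{L}^\infty$ bound $\norm{\bff{u}_h}{\bb{L}^\infty}\le C(1+\lambda_e^{-1/2})^{1/2}$, so in~\eqref{equ:uh2uh Delta theta} and~\eqref{equ:dt uh2uh} I can simply put the cubic coefficients in $\bb{L}^\infty$ and the difference factor in $\bb{L}^2$, pair with $\bff{\zeta}\in\bb{L}^2$ (or $\bb{H}^1$, using~\eqref{equ:disc lapl L6}), and close with Young. For~\eqref{equ:dt uh2uh} the time derivative again produces $\partial_t\bff{u}_h$ and $\partial_t\bff{u}$ factors; since $\partial_t\bff{u}\in\bb{L}^\infty$ by~\eqref{equ:ass 2 u} and $\norm{\partial_t\bff{u}_h}{\bb{L}^2}^2$ is already on the right-hand side of the claim, this reduces to estimates of the shape $\norm{\bff{u}_h}{\bb{L}^\infty}^2\norm{\partial_t\bff{\theta}+\partial_t\bff{\rho}}{\bb{L}^2}\norm{\bff{\zeta}}{\bb{L}^2}$ and companions, all of which close by Young and~\eqref{equ:Ritz ineq}. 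Throughout, the constant $C$ is allowed to depend on $K_0$ via~\eqref{equ:ass 2 u} and on the semi-discrete stability constants, but not on $h$.
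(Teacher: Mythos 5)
Your plan follows essentially the same route as the paper's proof: the algebraic decomposition \eqref{equ:nonlinear Rh u} (and its simpler variant \eqref{equ:simple uhu}), H\"older/Young with $\bb{L}^4$--$\bb{L}^6$ exponents, the Ritz estimates \eqref{equ:Ritz ineq} and \eqref{equ:Ritz stab infty}, the stability bounds of Propositions~\ref{pro:semidisc est1} and~\ref{pro:semidisc est3}, and \eqref{equ:L4 young} to trade $\bb{L}^4$ norms for $\bb{L}^2$ plus $\epsilon\,\bb{H}^1$ contributions, with the time-differentiated case handled by term-by-term estimation of the expanded product rule exactly as in the paper. One small correction: $\norm{\partial_t\bff{u}_h}{\bb{L}^2}^2$ does not appear on the right-hand side of \eqref{equ:dt uh2uh} (only of \eqref{equ:dt uh2uh xi}), but your telescoped decomposition only ever pairs $\partial_t\bff{u}_h$ against the difference $\partial_t\bff{u}_h-\partial_t\bff{u}=\partial_t\bff{\theta}+\partial_t\bff{\rho}$, so the argument closes without that term.
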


\begin{proof}
First, we will prove \eqref{equ:uh2uh theta}. Note that
	\begin{align}\label{equ:simple uhu}
		|\bff{u}_h|^2 \bff{u}_h - |\bff{u}|^2 \bff{u}
		=
		|\bff{u}_h|^2 (\bff{\theta}+\bff{\rho}) - \big( (\bff{\theta}+\bff{\rho})\cdot (\bff{u}_h+\bff{u}) \big) \bff{u}.
	\end{align}
Therefore, noting \eqref{equ:L4 young} and \eqref{equ:semidisc-est1}, by H\"{o}lder's inequality and Sobolev embedding $\bb{H}^1 \hookrightarrow \bb{L}^6$ we have
	\begin{align*}
		\big| \inpro{|\bff{u}_h|^2 \bff{u}_h - |\bff{u}|^2 \bff{u}}{\bff{\theta}} \big|
		&\leq
		\norm{\bff{u}_h}{\bb{L}^4}^2 \norm{\bff{\theta}}{\bb{L}^4}^2
		+
		\norm{\bff{u}_h}{\bb{L}^6}^2 \norm{\bff{\rho}}{\bb{L}^2} \norm{\bff{\theta}}{\bb{L}^6}
		+
		\norm{\bff{u}}{\bb{L}^\infty}^2
		\big(
		\norm{\bff{\theta}}{\bb{L}^2}^2
		+
		\norm{\bff{\theta}}{\bb{L}^2}
		\norm{\bff{\rho}}{\bb{L}^2}
		\big)
		\\
		&\quad
		+
		\norm{\bff{u}_h}{\bb{L}^4} 
		\norm{\bff{u}}{\bb{L}^\infty}
		\big(
		\norm{\bff{\theta}}{\bb{L}^4}
		\norm{\bff{\theta}}{\bb{L}^2}
		+
		\norm{\bff{\theta}}{\bb{L}^4}
		\norm{\bff{\rho}}{\bb{L}^2}
		\big)
		\\
		&\lesssim
		h^{2(r+1)}
		+
	 	\norm{\bff{\theta}}{\bb{L}^2}^2 
		+ 
		\epsilon \norm{\nabla \bff{\theta}}{\bb{L}^2}^2
	\end{align*}
for any $\epsilon>0$, where we used Young's inequality and \eqref{equ:Ritz ineq} in the last step. Next, we will prove \eqref{equ:uh2uh xi}. In this case, using \eqref{equ:nonlinear Rh u}, H\"{o}lder's and Young's inequality, and \eqref{equ:L4 young}, we have
	\begin{align*}
		\nonumber
		\big| \inpro{|\bff{u}_h|^2 \bff{u}_h - |\bff{u}|^2 \bff{u}}{\bff{\xi}} \big|
		&\lesssim
		\norm{\bff{\theta}}{\bb{L}^2}
		\norm{\bff{\theta}}{\bb{L}^6}
		\norm{\bff{u}_h}{\bb{L}^6}
		\norm{\bff{\xi}}{\bb{L}^6}
		+
		\norm{\bff{\theta}}{\bb{L}^4}
		\norm{R_h \bff{u}}{\bb{L}^\infty}
		\norm{\bff{u}_h}{\bb{L}^4}
		\norm{\bff{\xi}}{\bb{L}^2}
		\\
		\nonumber
		&\quad
		+
		\norm{R_h \bff{u}}{\bb{L}^\infty}^2
		\norm{\bff{\theta}+\bff{\rho}}{\bb{L}^2}
		\norm{\bff{\xi}}{\bb{L}^2}
		+
		\norm{\bff{\rho}}{\bb{L}^2}
		\norm{R_h \bff{u}+ \bff{u}}{\bb{L}^\infty}
		\norm{\bff{u}}{\bb{L}^\infty}
		\norm{\bff{\xi}}{\bb{L}^2}
		\\
		\nonumber
		&\lesssim
		\norm{\bff{\xi}}{\bb{L}^6}^2 \norm{\bff{\theta}}{\bb{L}^2}^2 
		+
		\epsilon \norm{\bff{\theta}}{\bb{L}^6}^2
		+
		\norm{\bff{\theta}}{\bb{L}^2}^2
		+
		\epsilon \norm{\nabla \bff{\theta}}{\bb{L}^2}^2
		+
		\epsilon \norm{\bff{\xi}}{\bb{L}^2}^2
		+
		\norm{\bff{\rho}}{\bb{L}^2}^2
		+
		\epsilon \norm{\bff{\xi}}{\bb{L}^2}^2
		\\
		&\lesssim
		h^{2(r+1)}
		+
		\big( 1+\norm{\bff{\xi}}{\bb{H}^1}^2 \big) \norm{\bff{\theta}}{\bb{L}^2}^2 
		+ 
		\epsilon \norm{\nabla \bff{\theta}}{\bb{L}^2}^2
		+ 
		\epsilon \norm{\bff{\xi}}{\bb{L}^2}^2
	\end{align*}
for any $\epsilon>0$, where in the last step we used Sobolev embedding $\bb{H}^1 \hookrightarrow \bb{L}^6$ and \eqref{equ:Ritz ineq}.
Similarly, to prove \eqref{equ:uh2uh dt theta}, we use \eqref{equ:nonlinear Rh u}, H\"{o}lder's and Young's inequality to obtain
	\begin{align*}
		\big| \inpro{|\bff{u}_h|^2 \bff{u}_h - |\bff{u}|^2 \bff{u}}{\partial_t \bff{\theta}} \big|
		&\lesssim
		\norm{\bff{\theta}}{\bb{L}^6}^2
		\norm{\bff{u}_h}{\bb{L}^6}
		\norm{\partial_t \bff{\theta}}{\bb{L}^2}
		+
		\norm{\bff{\theta}}{\bb{L}^6}
		\norm{R_h \bff{u}}{\bb{L}^6}
		\norm{\bff{u}_h}{\bb{L}^6}
		\norm{\partial_t \bff{\theta}}{\bb{L}^2}
		\\
		&\quad
		+
		\norm{R_h \bff{u}}{\bb{L}^\infty}^2
		\norm{\bff{\theta}+\bff{\rho}}{\bb{L}^2}
		\norm{\partial_t \bff{\theta}}{\bb{L}^2}
		+
		\norm{\bff{\rho}}{\bb{L}^2}
		\norm{R_h \bff{u}+ \bff{u}}{\bb{L}^\infty}
		\norm{\bff{u}}{\bb{L}^\infty}
		\norm{\partial_t \bff{\theta}}{\bb{L}^2}
		\\
		&\lesssim
		\norm{\bff{\theta}}{\bb{H}^1}^2 \norm{\partial_t \bff{\theta}}{\bb{L}^2}^2
		+
		\norm{\bff{\theta}}{\bb{H}^1}^2
		+
		\epsilon \norm{\partial_t \bff{\theta}}{\bb{L}^2}
		+
		\norm{\bff{\rho}}{\bb{L}^2}^2
		+
		\epsilon \norm{\partial_t \bff{\theta}}{\bb{L}^2}
		\\
		&\lesssim
		h^{2(r+1)}
		+
		\big( 1+\norm{\partial_t \bff{\theta}}{\bb{L}^2}^2 \big) \norm{\bff{\theta}}{\bb{H}^1}^2  
		+ 
		\epsilon \norm{\partial_t \bff{\theta}}{\bb{L}^2}^2
	\end{align*}
for any $\epsilon>0$, as required. Finally, differentiating \eqref{equ:nonlinear Rh u} with respect to $t$ yields
\begin{align}\label{equ:dt uh2uh T1 to 8}
	\nonumber
	&\big| \inpro{\partial_t(|\bff{u}_h|^2 \bff{u}_h - |\bff{u}|^2 \bff{u})}{\bff{\xi}} \big|
	\\
	\nonumber
	&\lesssim
	\big| \inpro{\big( \partial_t\bff{\theta} \cdot (\bff{\theta}+ 2R_h \bff{u}) \big) \bff{u}_h}{\bff{\xi}} \big|
	+
	\big| \inpro{\big( \bff{\theta}\cdot (\partial_t \bff{\theta} + 2 R_h \partial_t \bff{u}) \big) \bff{u}_h}{\bff{\xi}} \big|
	+
	\big| \inpro{\big( \bff{\theta} \cdot (\bff{\theta} + R_h \bff{u}) \big) \partial_t \bff{u}_h}{\bff{\xi}} \big|
	\\
	\nonumber
	&\quad
	+
	\big| \inpro{2(R_h \bff{u} \cdot R_h \partial_t \bff{u}) (\bff{\theta}+\bff{\rho})}{\bff{\xi}} \big|
	+
	\big| \inpro{|R_h \bff{u}|^2 (\partial_t \bff{\theta}+\partial_t \bff{\rho})}{\bff{\xi}} \big|
	\\
	\nonumber
	&\quad
	+
	\big| \inpro{\big( \partial_t \bff{\rho}\cdot (R_h \bff{u}+\bff{u}) \big) \bff{u}}{\bff{\xi}} \big|
	+
	\big| \inpro{\big(\bff{\rho} \cdot (R_h \partial_t \bff{u} + \partial_t \bff{u}) \big) \bff{u}}{\bff{\xi}} \big|
	+
	\big| \inpro{\big(\bff{\rho} \cdot (R_h \bff{u} + \bff{u}) \big) \partial_t \bff{u}}{\bff{\xi}} \big|
	\\
	&=:
	|T_1| + |T_2| + \ldots + |T_8|.
\end{align}
We will now estimate each term in the last step. For the term $|T_1|$, noting $\bff{u}_h= \bff{\theta}+R_h \bff{u}$ and applying H\"older's inequality, we have
\begin{align*}
	|T_1| 
	&\lesssim
	\big| \inpro{( \partial_t\bff{\theta} \cdot \bff{\theta} ) \bff{u}_h}{\bff{\xi}} \big|
	+
	\big| \inpro{(\partial_t \bff{\theta} \cdot R_h \bff{u}) (\bff{\theta}+R_h \bff{u})}{\bff{\xi}} \big|
	\\
	&\leq
	\norm{\partial_t \bff{\theta}}{\bb{L}^2} \norm{\bff{\theta}}{\bb{L}^6} \norm{\bff{u}_h}{\bb{L}^6} \norm{\bff{\xi}}{\bb{L}^6}
	+
	\norm{\partial_t \bff{\theta}}{\bb{L}^2} \norm{R_h \bff{u}}{\bb{L}^\infty} \norm{\bff{\theta}}{\bb{L}^4} \norm{\bff{\xi}}{\bb{L}^4}
	+
	\norm{\partial_t \bff{\theta}}{\bb{L}^2} \norm{R_h \bff{u}}{\bb{L}^\infty}^2 \norm{\bff{\xi}}{\bb{L}^2}
	\\
	&\lesssim
	\norm{\bff{\xi}}{\bb{H}^1}^2 \norm{\bff{\theta}}{\bb{H}^1}^2
	+
	\norm{\bff{\xi}}{\bb{L}^2}^2
	+
	\epsilon \norm{\partial_t \bff{\theta}}{\bb{L}^2}^2
\end{align*}
for any $\epsilon>0$, where we used Young's inequality and Sobolev embedding in the last step. For the term $|T_2|$, by H\"older's and Young's inequality,
\begin{align*}
	|T_2|
	&\leq
	\norm{\bff{\theta}}{\bb{L}^6} \norm{\partial_t \bff{\theta}}{\bb{L}^2} \norm{\bff{u}_h}{\bb{L}^6} \norm{\bff{\xi}}{\bb{L}^6}
	+
	\norm{\bff{\theta}}{\bb{L}^4} \norm{R_h \partial_t \bff{u}}{\bb{L}^\infty} \norm{\bff{u}_h}{\bb{L}^4} \norm{\bff{\xi}}{\bb{L}^2}
	\\
	&\lesssim
	\norm{\bff{\xi}}{\bb{H}^1}^2 \norm{\bff{\theta}}{\bb{H}^1}^2 
	+ 
	\epsilon \norm{\partial_t \bff{\theta}}{\bb{L}^2}^2
	+
	\norm{\bff{\theta}}{\bb{H}^1}^2
	+
	\epsilon \norm{\bff{\xi}}{\bb{L}^2}^2,
\end{align*}	
where we used Sobolev embedding in the last step. For the term $|T_3|$, noting $\bff{u}_h= \bff{\theta}+R_h \bff{u}$ as done before, we obtain
\begin{align*}
	|T_3|
	&\leq
	\big| \inpro{( \bff{\theta} \cdot \bff{\theta}) \partial_t \bff{u}_h}{\bff{\xi}} \big|
	+
	\big| \inpro{( \bff{\theta} \cdot  R_h \bff{u} ) (\partial_t \bff{\theta}+ R_h \partial_t \bff{u})}{\bff{\xi}} \big|
	\\
	&\leq
	\norm{\bff{\theta}}{\bb{L}^6}^2 \norm{\partial_t \bff{u}_h}{\bb{L}^2} \norm{\bff{\xi}}{\bb{L}^6}
	+
	\norm{\bff{\theta}}{\bb{L}^4} \norm{R_h \bff{u}}{\bb{L}^\infty} \norm{\partial_t \bff{\theta}}{\bb{L}^2} \norm{\bff{\xi}}{\bb{L}^4}
	+
	\norm{\bff{\theta}}{\bb{L}^2} \norm{R_h \bff{u}}{\bb{L}^\infty} \norm{R_h \partial_t \bff{u}}{\bb{L}^\infty} \norm{\bff{\xi}}{\bb{L}^2}
	\\
	&\lesssim
	\norm{\partial_t \bff{u}_h}{\bb{L}^2}^2 \norm{\bff{\theta}}{\bb{H}^1}^2
	+
	\norm{\bff{\xi}}{\bb{H}^1}^2 \norm{\bff{\theta}}{\bb{H}^1}^2
	+
	\epsilon \norm{\partial_t \bff{\theta}}{\bb{L}^2}^2
	+
	\norm{\bff{\theta}}{\bb{L}^2}^2
	+
	\epsilon \norm{\bff{\xi}}{\bb{L}^2}^2.
\end{align*}
For the term $|T_4|$, by H\"older's and Young's inequalities, we have
\begin{align*}
	|T_4|
	&\leq
	\norm{R_h\bff{u}}{\bb{L}^\infty} \norm{R_h \partial_t \bff{u}}{\bb{L}^\infty} \norm{\bff{\theta}+\bff{\rho}}{\bb{L}^2} \norm{\bff{\xi}}{\bb{L}^2}
	\lesssim
	\norm{\bff{\theta}}{\bb{L}^2}^2
	+
	\norm{\bff{\rho}}{\bb{L}^2}^2
	+
	\epsilon \norm{\bff{\xi}}{\bb{L}^2}^2.
\end{align*}
Similarly for the next term,
\begin{align*}
	|T_5|
	&\leq
	\norm{R_h \bff{u}}{\bb{L}^\infty}^2 \norm{\partial_t \bff{\theta}+\partial_t \bff{\rho}}{\bb{L}^2} \norm{\bff{\xi}}{\bb{L}^2}
	\lesssim
	\norm{\partial_t \bff{\rho}}{\bb{L}^2}^2
	+
	\norm{\bff{\xi}}{\bb{L}^2}^2
	+
	\epsilon \norm{\partial_t \bff{\theta}}{\bb{L}^2}^2
\end{align*}
The terms $|T_6|, |T_7|$ and $|T_8|$ can be bounded in a similar way leading to
\begin{align*}
	|T_6|+|T_7|+|T_8|
	\leq
	\norm{\bff{\rho}}{\bb{L}^2}^2
	+
	\norm{\partial_t \bff{\rho}}{\bb{L}^2}^2
	+
	\epsilon \norm{\bff{\xi}}{\bb{L}^2}^2.
\end{align*}
Altogether, noting \eqref{equ:Ritz ineq}, we conclude the estimate \eqref{equ:dt uh2uh xi} from \eqref{equ:dt uh2uh T1 to 8}.

Now, suppose the triangulation is globally quasi-uniform. In this case, \eqref{equ:stab uh L infty} holds. Using \eqref{equ:semidisc-est L infty}, \eqref{equ:simple uhu}, and H\"older's and Young's inequality, we obtain
\begin{align*}
	\big| \inpro{|\bff{u}_h|^2 \bff{u}_h - |\bff{u}|^2 \bff{u}}{\Delta_h \bff{\theta}} \big|
	&\leq
	\left( \norm{\bff{u}_h}{\bb{L}^\infty}^2 \norm{\bff{\theta}+\bff{\rho}}{\bb{L}^2} 
	+
	\norm{\bff{\theta}+\bff{\rho}}{\bb{L}^2} \norm{\bff{u}_h+\bff{u}}{\bb{L}^\infty} \norm{\bff{u}}{\bb{L}^\infty} \right) \norm{\bff{\zeta}}{\bb{L}^2}
	\\
	&\lesssim
	h^{2(r+1)} + \norm{\bff{\theta}}{\bb{L}^2}^2 + \epsilon \norm{\bff{\zeta}}{\bb{L}^2}^2,
\end{align*}
proving \eqref{equ:uh2uh Delta theta}. Finally, writing
\begin{align*}
	\partial_t \left(|\bff{u}_h|^2 \bff{u}_h - |\bff{u}|^2 \bff{u}\right)
	&=
	\abs{\bff{u}_h}^2 (\partial_t \bff{u}_h-\partial_t \bff{u})
	+
	(\abs{\bff{u}_h}^2-\abs{\bff{u}}^2) \partial_t \bff{u}
	+
	2\left[\bff{u}_h \cdot (\partial_t \bff{u}_h-\partial_t \bff{u})\right] \bff{u}_h
	\\
	&\quad
	+
	2 \left[(\bff{u}_h-\bff{u})\cdot \partial_t \bff{u}\right] \bff{u}_h
	+
	2 \left(\bff{u}\cdot \partial_t \bff{u}\right) (\bff{u}_h-\bff{u}),
\end{align*}
then applying H\"older's and Young's inequality, we have
\begin{align*}
	&\big| \inpro{\partial_t \left(|\bff{u}_h|^2 \bff{u}_h - |\bff{u}|^2 \bff{u}\right)}{\bff{\zeta}} \big|
	\\
	&\leq
	\norm{\bff{u}_h}{\bb{L}^6}^2 \norm{\partial_t \bff{\theta}+\partial_t \bff{\rho}}{\bb{L}^2} \norm{\bff{\zeta}}{\bb{L}^6}
	+
	\norm{\bff{\theta}+\bff{\rho}}{\bb{L}^2} \norm{\bff{u}_h+\bff{u}}{\bb{L}^6} \norm{\partial_t \bff{u}}{\bb{L}^6} \norm{\bff{\zeta}}{\bb{L}^6}
	\\
	&\quad
	+
	2 \norm{\bff{u}_h}{\bb{L}^6} \norm{\partial_t \bff{\theta}+\partial_t \bff{\rho}}{\bb{L}^2} \norm{\bff{u}_h}{\bb{L}^6} \norm{\bff{\zeta}}{\bb{L}^6}
	+
	2 \norm{\bff{\theta}+\bff{\rho}}{\bb{L}^2} \norm{\partial_t \bff{u}}{\bb{L}^6} \norm{\bff{u}_h}{\bb{L}^6} \norm{\bff{\zeta}}{\bb{L}^6}
	\\
	&\quad
	+
	2 \norm{\bff{u}}{\bb{L}^6} \norm{\partial_t \bff{u}}{\bb{L}^6} \norm{\bff{\theta}+\bff{\rho}}{\bb{L}^2} \norm{\bff{\zeta}}{\bb{L}^6}
	\\
	&\lesssim
	h^{2(r+1)} + \norm{\bff{\theta}}{\bb{L}^2}^2
	+ \norm{\partial_t \bff{\theta}}{\bb{L}^2}^2
	+
	\epsilon \norm{\bff{\zeta}}{\bb{H}^1}^2,
\end{align*}
thus proving \eqref{equ:dt uh2uh}. This completes the proof of the lemma.
\end{proof}

We will prove that the semi-discrete scheme converges at an optimal rate. For simplicity of presentation, we will assume that $\bff{u}_{0,h}=R_h\bff{u}(0)$ is the approximation in $\bb{V}_h$ of the initial data, so that $\bff{\theta}(0)=0$. In this case, subtracting the second equation in \eqref{equ:weakform} from that in \eqref{equ:weaksemidisc} yields
\begin{align*}
	\inpro{\bff{\xi}+\bff{\eta}}{\bff{\phi}}
	=
	-\inpro{\nabla \bff{\theta}}{\nabla \bff{\phi}}
	+
	\kappa\mu \inpro{\bff{\theta}+\bff{\rho}}{\bff{\phi}}
	-
	\kappa \inpro{|\bff{u}_h|^2 \bff{u}_h - |\bff{u}|^2 \bff{u}}{\bff{\phi}}.
\end{align*}
Therefore, at $t=0$, noting \eqref{equ:simple uhu} and taking $\bff{\phi}=\bff{\xi}(0)$, we have
\begin{align*}
	\norm{\bff{\xi}(0)}{\bb{L}^2}^2
	&=
	- 
	\inpro{\bff{\eta}(0)}{\bff{\xi}(0)}
	+
	\kappa \mu \inpro{\bff{\rho}(0)}{\bff{\xi}(0)}
	-
	\kappa \inpro{|R_h\bff{u}(0)|^2 \bff{\rho}(0) - \big(\bff{\rho}(0)\cdot (R_h\bff{u}(0)+\bff{u}(0)) \big) \bff{u}(0)}{\bff{\xi}(0)}
	\\
	&\lesssim
	\norm{\bff{\eta}(0)}{\bb{L}^2}^2 + \norm{\bff{\rho}(0)}{\bb{L}^2}^2 
	+ 
	\epsilon \norm{\bff{\xi}(0)}{\bb{L}^2}^2
\end{align*}
for any $\epsilon>0$ by Young's inequality. Rearranging the above, then applying \eqref{equ:Ritz ineq} and \eqref{equ:Ritz ineq}, we obtain
\begin{align}\label{equ:assum xi}
	\norm{\bff{\xi}(0)}{\bb{L}^2}
	\lesssim
	h^{r+1}. 
\end{align}

Next, we prove some bounds for $\bff{\theta}(t)$ and $\bff{\xi}(t)$ for $t\in (0,T)$. In particular, \eqref{equ:semidisc theta xi} shows a superconvergence estimate for $\bff{\theta}$, which implies the corresponding estimate~\eqref{equ:semidisc theta xi infty} in the $\bb{L}^\infty$ norm under an additional assumption of global quasi-uniformity of the triangulation.

\begin{proposition}\label{pro:semidisc theta xi}
	For any $t\in (0,T)$,
\begin{align}\label{equ:semidisc theta xi}
	\norm{\bff{\theta}(t)}{\bb{L}^2}^2
	+
	\norm{\nabla \bff{\theta}(t)}{\bb{L}^2}^2
	+
	\norm{\bff{\xi}(t)}{\bb{L}^2}^2
	+
	\int_0^t \norm{\nabla \bff{\xi}(s)}{\bb{L}^2}^2 \ds
	+
	\int_0^t \norm{\partial_t \bff{\theta}(s)}{\bb{L}^2}^2 \ds
	&\leq Ch^{2(r+1)}.
\end{align}
Moreover, if the triangulation $\mathcal{T}_h$ is globally quasi-uniform then
\begin{align}\label{equ:semidisc theta xi infty}
	\norm{\bff{\theta}(t)}{\bb{L}^\infty}^2
	+
	\norm{\Delta_h \bff{\theta}(t)}{\bb{L}^2}^2
	+
	\int_0^t \norm{\bff{\xi}(s)}{\bb{L}^\infty}^2 \ds
	+
	\int_0^t \norm{\Delta_h \bff{\xi}(s)}{\bb{L}^2}^2 \ds
	&\leq 
	Ch^{2(r+1)},
	\\
	\label{equ:semidisc xi L2}
	\norm{\nabla \bff{\xi}(t)}{\bb{L}^2}^2
	+
	\int_0^t \norm{\nabla \Delta_h \bff{\xi}(s)}{\bb{L}^2}^2 \ds 
	&\leq
	Ch^{2r}.
\end{align}
The constant $C$ depends on the coefficients of the equation, $|\mathscr{D}|$, $T$, and $K_0$ (as defined in \eqref{equ:ass 2 u}), but is independent of $h$.
\end{proposition}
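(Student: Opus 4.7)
The plan is to derive error equations by subtracting \eqref{equ:weakform} from \eqref{equ:weaksemidisc}, then test them with carefully chosen functions exploiting the mixed-form structure. Using the Ritz-projection orthogonality $\inpro{\nabla\bff{\eta}}{\nabla\bff{\chi}}=\inpro{\nabla\bff{\rho}}{\nabla\bff{\phi}}=0$ for $\bff{\chi},\bff{\phi}\in\bb{V}_h$, the errors satisfy
\begin{align*}
\inpro{\partial_t\bff{\theta}}{\bff{\chi}} + \inpro{\partial_t\bff{\rho}}{\bff{\chi}}
&= \lambda_r\inpro{\bff{\xi}+\bff{\eta}}{\bff{\chi}} + \lambda_e\inpro{\nabla\bff{\xi}}{\nabla\bff{\chi}} - \gamma\inpro{\bff{u}_h\times\bff{H}_h-\bff{u}\times\bff{H}}{\bff{\chi}},
\\
\inpro{\bff{\xi}+\bff{\eta}}{\bff{\phi}}
&= -\inpro{\nabla\bff{\theta}}{\nabla\bff{\phi}} + \kappa\mu\inpro{\bff{\theta}+\bff{\rho}}{\bff{\phi}} - \kappa\inpro{|\bff{u}_h|^2\bff{u}_h-|\bff{u}|^2\bff{u}}{\bff{\phi}} - \beta\inpro{\bff{e}(\bff{e}\cdot(\bff{\theta}+\bff{\rho}))}{\bff{\phi}},
\end{align*}
with $\bff{\theta}(0)=\bff{0}$ by the choice of $\bff{u}_{0,h}$ and $\norm{\bff{\xi}(0)}{\bb{L}^2}\lesssim h^{r+1}$ by \eqref{equ:assum xi}.

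To prove~\eqref{equ:semidisc theta xi}, I first test the first error equation with $\bff{\chi}=\bff{\xi}$ and the second with $\bff{\phi}=\partial_t\bff{\theta}$, then subtract to cancel the cross-term $\inpro{\partial_t\bff{\theta}}{\bff{\xi}}$, producing an identity of the form
\[
\tfrac{1}{2}\ddt\norm{\nabla\bff{\theta}}{\bb{L}^2}^2 + \lambda_r\norm{\bff{\xi}}{\bb{L}^2}^2 + \lambda_e\norm{\nabla\bff{\xi}}{\bb{L}^2}^2 = R_1,
\]
where $R_1$ collects Ritz-error and nonlinear contributions. The terms paired against $\bff{\xi}$ can be absorbed into $\lambda_r\norm{\bff{\xi}}{\bb{L}^2}^2$ via Young's inequality together with \eqref{equ:uh cross Hh xi} and \eqref{equ:uh2uh xi}, but those paired against $\partial_t\bff{\theta}$ (from \eqref{equ:uh cross Hh dt theta} and \eqref{equ:uh2uh dt theta}) demand simultaneous control of $\norm{\partial_t\bff{\theta}}{\bb{L}^2}^2$. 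I supply this by separately testing the first error equation with $\bff{\chi}=\partial_t\bff{\theta}$: the resulting $\lambda_e\inpro{\nabla\bff{\xi}}{\nabla\partial_t\bff{\theta}}$ is then eliminated by adding the $t$-differentiated second error equation tested with $\bff{\phi}=\lambda_e\bff{\xi}$, yielding
\[
\norm{\partial_t\bff{\theta}}{\bb{L}^2}^2 + \tfrac{\lambda_e}{2}\ddt\norm{\bff{\xi}}{\bb{L}^2}^2 = R_2,
\]
with $R_2$ estimated through \eqref{equ:dt uh2uh xi} and Ritz bounds on $\partial_t\bff{\rho},\partial_t\bff{\eta}$. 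Combining both identities with a weight chosen so that $\partial_t\bff{\theta}$-terms on the right are absorbed, and invoking Propositions~\ref{pro:semidisc est1}--\ref{pro:dt uh L2 assum} to bound the prefactors $\norm{\bff{\xi}}{\bb{H}^1}^2$ and $\norm{\partial_t\bff{u}_h}{\bb{L}^2}^2$ uniformly in $L^1(0,T)$, I obtain
\[
\ddt\bigl(\norm{\nabla\bff{\theta}}{\bb{L}^2}^2 + \lambda_e\norm{\bff{\xi}}{\bb{L}^2}^2\bigr) + \norm{\bff{\xi}}{\bb{H}^1}^2 + \norm{\partial_t\bff{\theta}}{\bb{L}^2}^2 \lesssim h^{2(r+1)} + M(t)\norm{\bff{\theta}}{\bb{H}^1}^2,
\]
with $M\in L^1(0,T)$. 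Gronwall's inequality then delivers every term of~\eqref{equ:semidisc theta xi} save $\norm{\bff{\theta}(t)}{\bb{L}^2}^2$, which I recover by a final test of the first error equation with $\bff{\chi}=\bff{\theta}$ and integration using the just-obtained bounds.

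For \eqref{equ:semidisc theta xi infty}--\eqref{equ:semidisc xi L2} under global quasi-uniformity, I would test the second error equation with $\bff{\phi}=-\Delta_h\bff{\theta}$ to express $\norm{\Delta_h\bff{\theta}}{\bb{L}^2}^2$ in terms of previously-bounded quantities (using \eqref{equ:uh2uh Delta theta}), and the first error equation together with its $t$-derivative tested against $\bff{\chi}=-\Delta_h\bff{\xi}$ to obtain the bound on $\int_0^t\norm{\nabla\Delta_h\bff{\xi}}{\bb{L}^2}^2\ds$, now relying on \eqref{equ:uh cross Hh Delta}, \eqref{equ:nab uh cross Hh} and \eqref{equ:dt uh2uh}; the $\bb{L}^\infty$ bounds follow from the discrete inverse embedding \eqref{equ:disc lapl L infty}. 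The main obstacle throughout is managing the time-differentiated cubic $\partial_t(|\bff{u}_h|^2\bff{u}_h - |\bff{u}|^2\bff{u})$: by \eqref{equ:dt uh2uh xi} it generates the product $(1+\norm{\bff{\xi}}{\bb{H}^1}^2+\norm{\partial_t\bff{u}_h}{\bb{L}^2}^2)\norm{\bff{\theta}}{\bb{H}^1}^2$, and closing Gronwall requires these prefactors to be $h$-uniformly integrable, which is precisely the content of the stability estimates from Propositions~\ref{pro:semidisc est1}--\ref{pro:dt uh L2 assum}.
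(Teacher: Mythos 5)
Your overall strategy is the same as the paper's: split the errors through the Ritz projection, test the mixed error system so that the troublesome cross terms ($\inpro{\partial_t\bff{\theta}}{\bff{\xi}}$ and $\lambda_e\inpro{\nabla\bff{\xi}}{\nabla\partial_t\bff{\theta}}$) cancel, control the nonlinearities with the estimates of Lemmas~\ref{lem:cross est}--\ref{lem:nonlinear est}, verify that the Gronwall prefactors are in $L^1(0,T)$ uniformly in $h$ via Propositions~\ref{pro:semidisc est1}--\ref{pro:semidisc est2} and the Ritz stability, and then upgrade to $\Delta_h$-powered test functions plus \eqref{equ:disc lapl L infty} in the quasi-uniform case. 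The particular pairings differ in bookkeeping only (the paper substitutes $\lambda_r\inpro{\bff{\xi}+\bff{\eta}}{\partial_t\bff{\theta}}$ via $\bff{\phi}=\lambda_r\partial_t\bff{\theta}$ rather than subtracting the $(\bff{\chi},\bff{\phi})=(\bff{\xi},\partial_t\bff{\theta})$ pair), and both lead to the same combined energy inequality.

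Two points need tightening. First, for \eqref{equ:semidisc xi L2} the test function $\bff{\chi}=-\Delta_h\bff{\xi}$ in the first error equation only produces $\lambda_e\norm{\Delta_h\bff{\xi}}{\bb{L}^2}^2$ from the term $\lambda_e\inpro{\nabla\bff{\xi}}{\nabla\bff{\chi}}$, which is one order short of the required $\int_0^t\norm{\nabla\Delta_h\bff{\xi}}{\bb{L}^2}^2\,\ds$; you must take $\bff{\chi}=-\Delta_h^2\bff{\xi}$ there (and pair it with the time-differentiated \emph{second} error equation tested with $-\Delta_h\bff{\xi}$ to generate $\tfrac12\ddt\norm{\nabla\bff{\xi}}{\bb{L}^2}^2$), exactly because the nonlinear bounds \eqref{equ:nab uh cross Hh} and \eqref{equ:dt uh2uh} are formulated against $\nabla\Delta_h\bff{\xi}$ and $\Delta_h\bff{\xi}$ respectively. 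Second, your Gronwall step differentiates only $\norm{\nabla\bff{\theta}}{\bb{L}^2}^2+\lambda_e\norm{\bff{\xi}}{\bb{L}^2}^2$ while the right-hand side carries $M(t)\norm{\bff{\theta}}{\bb{H}^1}^2$, which contains $\norm{\bff{\theta}}{\bb{L}^2}^2$; deferring the $\bb{L}^2$ bound on $\bff{\theta}$ to the very end leaves a small circularity. The paper avoids it by first running a lower-order Gronwall (tests $\bff{\chi}=\bff{\theta}$, $\bff{\phi}=\lambda_r\bff{\theta}$, $\bff{\phi}=\lambda_e\bff{\xi}$) to obtain $\norm{\bff{\theta}}{L^\infty(\bb{L}^2)}+\norm{\nabla\bff{\theta}}{L^2(\bb{L}^2)}+\norm{\bff{\xi}}{L^2(\bb{L}^2)}\lesssim h^{r+1}$ \emph{before} the $\bb{H}^1$-level estimate; alternatively you can use $\norm{\bff{\theta}(t)}{\bb{L}^2}^2\le T\int_0^t\norm{\partial_t\bff{\theta}(s)}{\bb{L}^2}^2\,\ds$ together with $\bff{\theta}(0)=\bff{0}$ and absorb. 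Either fix is routine, but one of them must be made explicit.
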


\begin{proof}
Subtracting \eqref{equ:weakform} from \eqref{equ:weaksemidisc}, using \eqref{equ:theta rho}, \eqref{equ:xi eta} (and noting the definition of Ritz projection), we obtain for all $\bff{\chi}$, $\bff{\phi}\in \bb{V}_h$,
\begin{align}\label{equ:dt theta + dt rho}
	\inpro{\partial_t \bff{\theta}+ \partial_t \bff{\rho}}{\bff{\chi}}
	&=
	\lambda_r \inpro{\bff{\xi}+\bff{\eta}}{\bff{\chi}}
	+
	\lambda_e \inpro{\nabla \bff{\xi}}{\nabla \bff{\chi}}
	-
	\gamma \inpro{\bff{u}_h \times \bff{H}_h - \bff{u} \times \bff{H}}{\bff{\chi}}
\end{align}
and
\begin{align}
	\label{equ:xi + eta}
	\inpro{\bff{\xi}+\bff{\eta}}{\bff{\phi}}
	=
	-\inpro{\nabla \bff{\theta}}{\nabla \bff{\phi}}
	+
	\kappa\mu \inpro{\bff{\theta}+\bff{\rho}}{\bff{\phi}}
	-
	\kappa \inpro{|\bff{u}_h|^2 \bff{u}_h - |\bff{u}|^2 \bff{u}}{\bff{\phi}}
	-
	\beta \inpro{\bff{e}(\bff{e}\cdot \bff{\theta})}{\bff{\phi}}
\end{align}
Taking $\bff{\chi}= \bff{\theta}$ in \eqref{equ:dt theta + dt rho}, we have
\begin{align}\label{equ:dt theta chi theta}
	\frac{1}{2} \ddt \norm{\bff{\theta}}{\bb{L}^2}^2
	+
	\inpro{\partial_t \bff{\rho}}{\bff{\theta}}
	&=
	\lambda_r \inpro{\bff{\xi}+\bff{\eta}}{\bff{\theta}}
	+
	\lambda_e \inpro{\nabla \bff{\xi}}{\nabla \bff{\theta}}
	-
	\gamma \inpro{\bff{u}_h \times \bff{H}_h - \bff{u} \times \bff{H}}{\bff{\theta}}.
\end{align}
Taking $\bff{\phi}= \lambda_r \bff{\theta}$ in \eqref{equ:xi + eta}, we obtain
\begin{align}\label{equ:xi+eta theta}
	\lambda_r \inpro{\bff{\xi}+\bff{\eta}}{\bff{\theta}}
	=
	&-\lambda_r \norm{\nabla \bff{\theta}}{\bb{L}^2}^2
	+
	\kappa\mu \lambda_r \norm{\bff{\theta}}{\bb{L}^2}^2
	-
	\beta \lambda_r \norm{\bff{e}\cdot\bff{\theta}}{L^2}^2
	+
	\kappa \mu\lambda_r \inpro{\bff{\rho}}{\bff{\theta}}
	\nonumber \\
	&\quad
	-
	\kappa \lambda_r \inpro{|\bff{u}_h|^2 \bff{u}_h - |\bff{u}|^2 \bff{u}}{\bff{\theta}}.
\end{align}
Next, taking $\bff{\phi}= \lambda_e \bff{\xi}$, we have
\begin{align}\label{equ:xi phi=xi}
	\lambda_e \norm{\bff{\xi}}{\bb{L}^2}^2
	+
	\lambda_e \inpro{\bff{\eta}}{\bff{\xi}}
	&=
	-
	\lambda_e \inpro{\nabla \bff{\theta}}{\nabla \bff{\xi}}
	+
	\kappa\mu \lambda_e \inpro{\bff{\theta}+\bff{\rho}}{\bff{\xi}}
	-
	\kappa \lambda_e \inpro{|\bff{u}_h|^2 \bff{u}_h - |\bff{u}|^2 \bff{u}}{\bff{\xi}}
	\nonumber \\
	&\quad
	-
	\beta\lambda_e \inpro{\bff{e}(\bff{e}\cdot\bff{\theta})}{\bff{\xi}}.
\end{align}
Substituting \eqref{equ:xi+eta theta} into \eqref{equ:dt theta chi theta}, then adding the result to \eqref{equ:xi phi=xi} gives
\begin{align*}
	&\frac{1}{2} \ddt \norm{\bff{\theta}}{\bb{L}^2}^2 
	+
	\lambda_r \norm{\nabla \bff{\theta}}{\bb{L}^2}^2 
	+
	\lambda_e \norm{\bff{\xi}}{\bb{L}^2}^2
	+
	\beta\lambda_r \norm{\bff{e}\cdot\bff{\theta}}{L^2}^2
	\\
	&=
	-
	\inpro{\partial_t \bff{\rho}}{\bff{\theta}}
	-
	\lambda_e \inpro{\bff{\eta}}{\bff{\xi}}
	+
	\kappa\mu \lambda_e \inpro{\bff{\theta}+\bff{\rho}}{\bff{\xi}}
	+
	\kappa \mu\lambda_r \norm{\bff{\theta}}{\bb{L}^2}^2
	+
	\kappa \mu\lambda_r \inpro{\bff{\rho}}{\bff{\theta}}
	-
	\beta\lambda_e \inpro{\bff{e}(\bff{e}\cdot\bff{\theta})}{\bff{\xi}}
	\\
	&\quad
	-
	\gamma \inpro{\bff{u}_h \times \bff{H}_h - \bff{u} \times \bff{H}}{\bff{\theta}}
	-
	\kappa \lambda_e \inpro{|\bff{u}_h|^2 \bff{u}_h - |\bff{u}|^2 \bff{u}}{\bff{\xi}}
	-
	\kappa \lambda_r \inpro{|\bff{u}_h|^2 \bff{u}_h - |\bff{u}|^2 \bff{u}}{\bff{\theta}}.
\end{align*}
Applying H\"{o}lder's and Young's inequalities, \eqref{equ:uh cross Hh theta}, \eqref{equ:uh2uh theta}, and \eqref{equ:uh2uh xi} to the above equation yields
\begin{align}\label{equ:theta nab theta xi}
	\nonumber
	&\frac{1}{2} \ddt \norm{\bff{\theta}}{\bb{L}^2}^2 
	+
	\lambda_r \norm{\nabla \bff{\theta}}{\bb{L}^2}^2 
	+
	\lambda_e \norm{\bff{\xi}}{\bb{L}^2}^2
	\\
	\nonumber
	&\leq
	\norm{\partial_t \bff{\rho}}{\bb{L}^2}
	\norm{\bff{\theta}}{\bb{L}^2}
	+
	\lambda_e \norm{\bff{\eta}}{\bb{L}^2}
	\norm{\bff{\xi}}{\bb{L}^2}
	+
	\kappa\mu\lambda_e \left( \norm{\bff{\theta}}{\bb{L}^2}
	\norm{\bff{\xi}}{\bb{L}^2}
	+
	\norm{\bff{\rho}}{\bb{L}^2}
	\norm{\bff{\xi}}{\bb{L}^2} \right)
	+
	\kappa\mu\lambda_r \left( \norm{\bff{\theta}}{\bb{L}^2}^2
	+
	\norm{\bff{\rho}}{\bb{L}^2}
	\norm{\bff{\theta}}{\bb{L}^2} \right)
	\\
	\nonumber
	&\quad
	+
	\beta\lambda_e \norm{\bff{\theta}}{\bb{L}^2} \norm{\bff{\xi}}{\bb{L}^2}
	+
	\gamma \big| \inpro{\bff{u}_h \times \bff{H}_h - \bff{u} \times \bff{H}}{\bff{\theta}} \big| 
	+
	\kappa\lambda_e \big| \inpro{|\bff{u}_h|^2 \bff{u}_h - |\bff{u}|^2 \bff{u}}{\bff{\xi}} \big|
	\\
	\nonumber
	&\quad
	+
	\kappa\lambda_r \big| \inpro{|\bff{u}_h|^2 \bff{u}_h - |\bff{u}|^2 \bff{u}}{\bff{\theta}} \big| 
	\\
	\nonumber
	&\lesssim
	\norm{\partial_t \bff{\rho}}{\bb{L}^2}^2
	+
	\norm{\bff{\theta}}{\bb{L}^2}^2
	+
	\norm{\bff{\eta}}{\bb{L}^2}^2
	+
	\norm{\bff{\rho}}{\bb{L}^2}^2
	+
	\epsilon \norm{\bff{\xi}}{\bb{L}^2}^2
	\\
	\nonumber
	&\quad
	+
	h^{2(r+1)} 
	+ 
	\norm{\bff{\theta}}{\bb{L}^2}^2 
	+ 
	\epsilon \norm{\nabla \bff{\theta}}{\bb{L}^2}^2
	+
	\epsilon \norm{\bff{\xi}}{\bb{L}^2}^2
	+
	h^{2(r+1)}
	+
	\norm{\bff{\theta}}{\bb{L}^2}^2 
	+ 
	\epsilon \norm{\nabla \bff{\theta}}{\bb{L}^2}^2
	\\
	\nonumber
	&\quad
	+
	h^{2(r+1)}
	+
	\big( 1+\norm{\bff{\xi}}{\bb{H}^1}^2 \big) \norm{\bff{\theta}}{\bb{L}^2}^2 
	+ 
	\epsilon \norm{\nabla \bff{\theta}}{\bb{L}^2}^2
	+ 
	\epsilon \norm{\bff{\xi}}{\bb{L}^2}^2
	\\
	& 
	\lesssim
	h^{2(r+1)}
	+
	\big( 1+\norm{\bff{\xi}}{\bb{H}^1}^2 \big) \norm{\bff{\theta}}{\bb{L}^2}^2 
	+ 
	\epsilon \norm{\nabla \bff{\theta}}{\bb{L}^2}^2
	+ 
	\epsilon \norm{\bff{\xi}}{\bb{L}^2}^2
\end{align}
for any $\epsilon>0$, where the constant is independent of $h$, $t$, and $T$. Choosing $\epsilon>0$ sufficiently small and integrating over $(0,t)$, we obtain
\begin{align*}
	&\norm{\bff{\theta}}{\bb{L}^2}^2 
	+
	\int_0^t \norm{\nabla \bff{\theta}}{\bb{L}^2}^2 \ds 
	+
	\int_0^t \norm{\bff{\xi}}{\bb{L}^2}^2 \ds
	\lesssim
	\norm{\bff{\theta}(0)}{\bb{L}^2}^2
	+
	h^{2(r+1)}
	+
	\int_0^t \big( 1+\norm{\bff{\xi}}{\bb{H}^1}^2 \big) \norm{\bff{\theta}}{\bb{L}^2}^2 \ds.
\end{align*}
Note that by Proposition \ref{pro:semidisc est1} and \eqref{equ:Ritz stab infty},
\[
	\int_0^t 1+\norm{\bff{\xi}(s)}{\bb{H}^1}^2 \ds
	\lesssim
	1 + \int_0^t \big( \norm{\bff{H}_h(s)}{\bb{H}^1} + \norm{R_h \bff{H}_h(s)}{\bb{H}^1} \big) \ds
	\lesssim
	1,
\]
and so Gronwall's inequality applied to \eqref{equ:theta nab theta xi} yields
\begin{align}\label{equ:theta L2}
	\norm{\bff{\theta}(t)}{\bb{L}^2}^2
	+
	\int_0^t \norm{\nabla \bff{\theta}(s)}{\bb{L}^2}^2 \ds 
	+
	\int_0^t \norm{\bff{\xi}(s)}{\bb{L}^2}^2 \ds
	\leq
	Ch^{2(r+1)}.
\end{align}


Next, differentiating \eqref{equ:xi + eta} with respect to $t$, then taking $\bff{\phi}= \lambda_e \bff{\xi}$ yields
\begin{align}\label{equ:difft xi phi=xi}
	\nonumber
	\frac{\lambda_e}{2} \ddt \norm{\bff{\xi}}{\bb{L}^2}^2
	+
	\lambda_e \inpro{\partial_t \bff{\eta}}{\bff{\xi}}
	&=
	-
	\lambda_e \inpro{\nabla \partial_t \bff{\theta}}{\nabla \bff{\xi}}
	+
	\kappa\mu \lambda_e \inpro{\partial_t \bff{\theta}+ \partial_t\bff{\rho}}{\bff{\xi}}
	\\
	&\quad
	-
	\kappa \lambda_e \inpro{\partial_t(|\bff{u}_h|^2 \bff{u}_h - |\bff{u}|^2 \bff{u})}{\bff{\xi}}
	-
	\beta\lambda_e \inpro{\bff{e}(\bff{e}\cdot\partial_t \bff{\theta})}{\bff{\xi}}.
\end{align}
Taking $\bff{\chi}=\kappa\mu\lambda_e \bff{\xi}$ and rearranging the terms, we have
\begin{align}\label{equ:dt theta xi}
	\nonumber
	\kappa\mu\lambda_e \lambda_r \norm{\bff{\xi}}{\bb{L}^2}^2
	+
	\kappa\mu \lambda_e^2 \norm{\nabla \bff{\xi}}{\bb{L}^2}^2
	&=
	-
	\kappa\mu\lambda_e \inpro{\partial_t \bff{\theta}+\partial_t \bff{\rho}}{\bff{\xi}}
	-
	\kappa\mu\lambda_e \lambda_r \inpro{\bff{\eta}}{\bff{\xi}}
	\\
	&\quad
	+
	\kappa\mu\lambda_e \gamma \inpro{\bff{u}_h \times \bff{H}_h- \bff{u}\times \bff{H}}{\bff{\xi}}.
\end{align}
Adding \eqref{equ:difft xi phi=xi} and \eqref{equ:dt theta xi} yields
\begin{align}\label{equ:add xi dt}
	\nonumber
	&\frac{\lambda_e}{2} \ddt \norm{\bff{\xi}}{\bb{L}^2}^2
	+
	\kappa\mu\lambda_e\lambda_r \norm{\bff{\xi}}{\bb{L}^2}^2
	+
	\kappa\mu\lambda_e^2 \norm{\nabla \bff{\xi}}{\bb{L}^2}^2
	\\
	\nonumber
	&=
	-
	\lambda_e \inpro{\nabla \partial_t \bff{\theta}}{\nabla \bff{\xi}}
	- 
	\lambda_e \inpro{\partial_t \bff{\eta}}{\bff{\xi}}
	-
	\kappa\mu\lambda_e\lambda_r \inpro{\bff{\eta}}{\bff{\xi}}
	-
	\kappa\mu\lambda_e \gamma \inpro{\bff{u}_h \times \bff{H}_h- \bff{u}\times \bff{H}}{\bff{\xi}}
	\\
	&\quad
	-
	\kappa \lambda_e \inpro{\partial_t(|\bff{u}_h|^2 \bff{u}_h - |\bff{u}|^2 \bff{u})}{\bff{\xi}}
	-
	\beta\lambda_e \inpro{\bff{e}(\bff{e}\cdot\partial_t \bff{\theta})}{\bff{\xi}}.
\end{align}
Furthermore, taking $\bff{\chi}= \partial_t \bff{\theta}$ in \eqref{equ:dt theta + dt rho} gives
\begin{align}\label{equ:dt theta chi dt theta}
	\norm{\partial_t \bff{\theta}}{\bb{L}^2}^2
	+
	\inpro{\partial_t \bff{\rho}}{\partial_t \bff{\theta}}
	&=
	\lambda_r \inpro{\bff{\xi}+\bff{\eta}}{\partial_t \bff{\theta}}
	+
	\lambda_e \inpro{\nabla \bff{\xi}}{\nabla \partial_t \bff{\theta}}
	-
	\gamma \inpro{\bff{u}_h \times \bff{H}_h - \bff{u} \times \bff{H}}{\partial_t \bff{\theta}}.
\end{align}
Taking $\bff{\phi}=\lambda_r \partial_t \bff{\theta}$ in \eqref{equ:xi + eta}, we obtain
\begin{align}\label{equ:xi+eta dt theta}
	\nonumber
	\lambda_r \inpro{\bff{\xi}+\bff{\eta}}{\partial_t \bff{\theta}}
	&=
	-\frac{\lambda_r}{2} \ddt \norm{\nabla \bff{\theta}}{\bb{L}^2}^2
	+
	\kappa\mu \lambda_r \inpro{\bff{\theta}}{\partial_t \bff{\theta}}
	+
	\kappa \mu\lambda_r \inpro{\bff{\rho}}{\partial_t \bff{\theta}}
	\\
	&\quad
	-
	\kappa \lambda_r \inpro{|\bff{u}_h|^2 \bff{u}_h - |\bff{u}|^2 \bff{u}}{\partial_t \bff{\theta}}
	-
	\beta\lambda_r \inpro{\bff{e}(\bff{e}\cdot \bff{\theta})}{\partial_t \bff{\theta}}.
\end{align}
Substituting \eqref{equ:xi+eta dt theta} into \eqref{equ:dt theta chi dt theta}, then adding the result to \eqref{equ:add xi dt} give
\begin{align*}
	&\frac{\lambda_r}{2} \ddt \norm{\nabla \bff{\theta}}{\bb{L}^2}^2 
	+
	\frac{\lambda_e}{2} \ddt \norm{\bff{\xi}}{\bb{L}^2}^2
	+
	\norm{\partial_t \bff{\theta}}{\bb{L}^2}^2
	+
	\kappa\mu\lambda_e\lambda_r \norm{\bff{\xi}}{\bb{L}^2}^2
	+
	\kappa\mu\lambda_e^2 \norm{\nabla \bff{\xi}}{\bb{L}^2}^2
	\\
	&=
	-
	\inpro{\partial_t \bff{\rho}}{\partial_t \bff{\theta}}
	-
	\lambda_e \inpro{\partial_t \bff{\eta}}{\bff{\xi}}
	+
	\kappa \mu\lambda_r \inpro{\bff{\theta}}{\partial_t \bff{\theta}}
	+
	\kappa \mu\lambda_r \inpro{\bff{\rho}}{\partial_t \bff{\theta}}
	-
	\kappa\mu\lambda_e\lambda_r \inpro{\bff{\eta}}{\bff{\xi}}
	\\
	&\quad 
	-
	\gamma \inpro{\bff{u}_h \times \bff{H}_h - \bff{u} \times \bff{H}}{\partial_t \bff{\theta}}
	+
	\kappa\mu\lambda_e \gamma \inpro{\bff{u}_h \times \bff{H}_h- \bff{u}\times \bff{H}}{\bff{\xi}}
	\\
	&\quad
	-
	\kappa \lambda_e \inpro{\partial_t(|\bff{u}_h|^2 \bff{u}_h - |\bff{u}|^2 \bff{u})}{\bff{\xi}}
	-
	\kappa \lambda_r \inpro{|\bff{u}_h|^2 \bff{u}_h - |\bff{u}|^2 \bff{u}}{\partial_t \bff{\theta}}
	-
	\beta\lambda_r \inpro{\bff{e}(\bff{e}\cdot \bff{\theta})}{\partial_t \bff{\theta}}.
\end{align*}
Applying H\"{o}lder's and Young's inequalities to bound the terms on the second line, then using \eqref{equ:uh cross Hh dt theta}, \eqref{equ:uh2uh dt theta} and \eqref{equ:dt uh2uh xi} for the remaining terms yield
\begin{align*}
	&\frac{\lambda_r}{2} \ddt \norm{\nabla \bff{\theta}}{\bb{L}^2}^2 
	+
	\frac{\lambda_e}{2} \ddt \norm{\bff{\xi}}{\bb{L}^2}^2
	+
	\norm{\partial_t \bff{\theta}}{\bb{L}^2}^2
	+
	\lambda_r \norm{\bff{\xi}}{\bb{L}^2}^2
	+
	\lambda_e \norm{\nabla \bff{\xi}}{\bb{L}^2}^2
	\\
	&\leq
	\norm{\partial_t \bff{\rho}}{\bb{L}^2}
	\norm{\partial_t \bff{\theta}}{\bb{L}^2}
	+
	\lambda_e \norm{\partial_t \bff{\eta}}{\bb{L}^2} \norm{\bff{\xi}}{\bb{L}^2}
	+
	\kappa\mu \lambda_r \norm{\bff{\theta}}{\bb{L}^2} \norm{\partial_t \bff{\theta}}{\bb{L}^2}
	+
	\kappa\mu \lambda_r \norm{\bff{\rho}}{\bb{L}^2}
	\norm{\partial_t \bff{\theta}}{\bb{L}^2}
	+
	\kappa\mu\lambda_e\lambda_r \norm{\bff{\eta}}{\bb{L}^2}
	\norm{\bff{\xi}}{\bb{L}^2}
	\\
	&\quad 
	+
	\gamma \big| \inpro{\bff{u}_h \times \bff{H}_h - \bff{u} \times \bff{H}}{\partial_t \bff{\theta}} \big|
	+
	\gamma \big| \inpro{\bff{u}_h \times \bff{H}_h- \bff{u}\times \bff{H}}{\bff{\xi}} \big|
	\\
	&\quad
	+
	\kappa \lambda_e \big| \inpro{\partial_t(|\bff{u}_h|^2 \bff{u}_h - |\bff{u}|^2 \bff{u})}{\bff{\xi}} \big|
	+
	\kappa \lambda_r \big| \inpro{|\bff{u}_h|^2 \bff{u}_h - |\bff{u}|^2 \bff{u}}{\partial_t \bff{\theta}} \big|
	+
	\beta\lambda_r \big|\inpro{\bff{e}(\bff{e}\cdot \bff{\theta})}{\partial_t \bff{\theta}} \big|
	\\
	&\lesssim
	\norm{\bff{\theta}}{\bb{L}^2}^2
	+
	\norm{\partial_t \bff{\rho}}{\bb{L}^2}^2
	+
	\norm{\partial_t \bff{\eta}}{\bb{L}^2}^2
	+
	\norm{\bff{\xi}}{\bb{L}^2}^2
	+
	\norm{\bff{\rho}}{\bb{L}^2}^2
	+
	\norm{\bff{\eta}}{\bb{L}^2}^2
	+
	\epsilon \norm{\partial_t \bff{\theta}}{\bb{L}^2}^2
	\\
	&\quad
	+
	h^{2(r+1)}
	+
	\norm{\bff{\xi}}{\bb{L}^2}^2
	+
	\big( \norm{\bff{\theta}}{\bb{L}^2}^2 + \epsilon \norm{\nabla \bff{\theta}}{\bb{L}^2}^2 \big) \norm{\bff{H}_h}{\bb{H}^1}^2
	+
	\epsilon \norm{\partial_t \bff{\theta}}{\bb{L}^2}^2
	\\
	&\quad
	+
	h^{2(r+1)} 
	+ 
	\norm{\bff{\xi}}{\bb{L}^2}^2 
	+ 
	\epsilon \norm{\nabla \bff{\xi}}{\bb{L}^2}^2
	+
	\epsilon \norm{\bff{\theta}}{\bb{L}^2}^2
	\\
	&\quad
	+
	h^{2(r+1)}
	+
	\big( 1 + \norm{\bff{\xi}}{\bb{H}^1}^2 + \norm{\partial_t \bff{u}_h}{\bb{L}^2}^2 \big) \norm{\bff{\theta}}{\bb{H}^1}^2 
	+ 
	\norm{\bff{\xi}}{\bb{L}^2}^2
	+
	\epsilon \norm{\partial_t \bff{\theta}}{\bb{L}^2}^2
	\\
	&\quad
	+
	h^{2(r+1)}
	+
	\big( 1+\norm{\partial_t \bff{\theta}}{\bb{L}^2}^2 \big) \norm{\bff{\theta}}{\bb{H}^1}^2  
	+ 
	\epsilon \norm{\partial_t \bff{\theta}}{\bb{L}^2}^2
	\\
	&\lesssim
	h^{2(r+1)}
	+
	\norm{\bff{\theta}}{\bb{L}^2}^2
	+
	\norm{\bff{\xi}}{\bb{L}^2}^2
	+
	\epsilon \norm{\partial_t \bff{\theta}}{\bb{L}^2}^2
	+
	\big( 1+ \norm{\bff{\xi}}{\bb{H}^1}^2 + \norm{\partial_t \bff{\theta}}{\bb{L}^2}^2
	+ \norm{\partial_t \bff{u}_h}{\bb{L}^2}^2 + \norm{\bff{H}_h}{\bb{H}^1}^2 \big)
	\norm{\bff{\theta}}{\bb{H}^1}^2
\end{align*}
for any $\epsilon>0$, where the constant is independent of $h$, $t$, and $T$. Choosing $\epsilon>0$ sufficiently small, then integrating over $(0,t)$ and using \eqref{equ:theta L2} (and noting \eqref{equ:assum xi}), we obtain
\begin{align*}
	\norm{\nabla \bff{\theta}(t)}{\bb{L}^2}^2
	+
	\norm{\bff{\xi}(t)}{\bb{L}^2}^2
	+
	\int_0^t \norm{\partial_t \bff{\theta}(s)}{\bb{L}^2}^2 \ds
	&+
	\int_0^t \norm{\nabla \bff{\xi}(s)}{\bb{L}^2}^2 \ds
	\\
	&\lesssim
	h^{2(r+1)}
	+
	\int_0^t \norm{\bff{\xi}(s)}{\bb{L}^2}^2 \ds 
	+
	\int_0^t \mathcal{B}(s) \norm{\bff{\theta}(s)}{\bb{H}^1}^2 \ds,
\end{align*}
where $\mathcal{B}(s):= 1+ \norm{\bff{\xi}(s)}{\bb{H}^1}^2 + \norm{\partial_t \bff{\theta}(s)}{\bb{L}^2}^2
+ \norm{\partial_t \bff{u}_h(s)}{\bb{L}^2}^2 + \norm{\bff{H}_h(s)}{\bb{H}^1}^2$. Note that by Proposition \ref{pro:semidisc est1}, Proposition \ref{pro:semidisc est2}, and inequality~\eqref{equ:Ritz stab infty},
\begin{align*}
	\int_0^t \mathcal{B}(s)\, \ds
	\leq
	\int_0^t \big( 1 + \norm{\bff{H}_h(s)}{\bb{H}^1}^2 + \norm{R_h\bff{H}(s)}{\bb{H}^1}^2 
	+ \norm{\partial_t \bff{u}_h(s)}{\bb{L}^2}^2 + \norm{R_h\partial_t \bff{u}(s)}{\bb{L}^2}^2 \big)\, \ds
	\lesssim 1.
\end{align*}
Therefore, by Gronwall's inequality,
\begin{align}\label{equ:nab theta xi L2 int}
	\norm{\nabla \bff{\theta}(t)}{\bb{L}^2}^2
	+
	\norm{\bff{\xi}(t)}{\bb{L}^2}^2
	+
	\int_0^t \norm{\partial_t \bff{\theta}(s)}{\bb{L}^2}^2 \ds
	+
	\int_0^t \norm{\nabla \bff{\xi}(s)}{\bb{L}^2}^2 \ds
	\leq
	Ch^{2(r+1)}.
\end{align}
Inequality \eqref{equ:semidisc theta xi} then follows from \eqref{equ:theta L2} and \eqref{equ:nab theta xi L2 int}. 

Suppose now that the triangulation is globally quasi-uniform. Taking $\bff{\phi}= \Delta_h \bff{\theta}$ in \eqref{equ:xi + eta}, rearranging the terms, then applying Young's inequality and \eqref{equ:uh2uh Delta theta} give
\begin{align*}
	\norm{\Delta_h \bff{\theta}}{\bb{L}^2}^2
	&=
	\inpro{\bff{\xi}+\bff{\eta}}{\Delta_h \bff{\theta}}
	-
	\kappa \mu\inpro{\bff{\theta}+\bff{\rho}}{\Delta_h \bff{\theta}}
	+
	\kappa \inpro{|\bff{u}_h|^2 \bff{u}_h - |\bff{u}|^2 \bff{u}}{\Delta_h \bff{\theta}}
	-
	\beta \inpro{\bff{e}(\bff{e}\cdot\bff{\theta}}{\Delta_h \bff{\theta}}
	\\
	&\lesssim
	\norm{\bff{\xi}}{\bb{L}^2}^2 +\norm{\bff{\theta}}{\bb{L}^2}^2
	+ \epsilon \norm{\Delta_h \bff{\theta}}{\bb{L}^2}^2
	\lesssim
	h^{2(r+1)} + \epsilon \norm{\Delta_h \bff{\theta}}{\bb{L}^2}^2,
\end{align*}
where in the last step we used \eqref{equ:semidisc theta xi} shown above. Choosing $\epsilon>0$ sufficiently small then yields
\begin{align}\label{equ:Delta theta L2}
	\norm{\Delta_h \bff{\theta}}{\bb{L}^2}^2 \lesssim h^{2(r+1)}.
\end{align}
Next, we take $\bff{\chi}=\Delta_h \bff{\xi}$ and apply the same argument (using \eqref{equ:semidisc theta xi} and \eqref{equ:uh cross Hh Delta} this time) to obtain
\begin{align}\label{equ:int Delta xi L2}
	\int_0^t \norm{\Delta_h \bff{\xi}(s)}{\bb{L}^2}^2 \ds \lesssim h^{2(r+1)}.
\end{align}
Inequality \eqref{equ:semidisc theta xi infty} then follows from \eqref{equ:Delta theta L2}, \eqref{equ:int Delta xi L2}, and \eqref{equ:disc lapl L infty}.

Finally, setting $\bff{\chi}=-\Delta_h^2 \bff{\xi}$ and applying \eqref{equ:disc laplacian} as necessary, we have
\begin{align}\label{equ:nab Delta xi}
	\nonumber
	\lambda_r \norm{\Delta_h \bff{\xi}}{\bb{L}^2}^2
	+
	\lambda_e \norm{\nabla \Delta_h \bff{\xi}}{\bb{L}^2}^2
	&=
	-\inpro{\partial_t \nabla \bff{\theta}}{\nabla \Delta_h \bff{\xi}}
	-
	\inpro{\nabla \Pi_h \partial_t \bff{\rho}}{\nabla \Delta_h \bff{\xi}}
	\\
	&\quad
	+
	\gamma \inpro{\nabla \Pi_h(\bff{u}_h\times \bff{H}_h - \bff{u}\times \bff{H})}{\nabla \Delta_h \bff{\xi}}.
\end{align}
Differentiating the second equation in \eqref{equ:weaksemidisc} with respect to $t$, then setting $\bff{\phi}=-\Delta_h \bff{\xi}$, we obtain
\begin{align}\label{equ:ddt nab xi}
	\nonumber
	\frac{1}{2}\ddt \norm{\nabla \bff{\xi}}{\bb{L}^2}^2
	&=
	\inpro{\partial_t \bff{\eta}}{\Delta_h \bff{\xi}}
	+
	\inpro{\partial_t \nabla \bff{\theta}}{\nabla \Delta_h \bff{\xi}}
	-
	\kappa \mu\inpro{\partial_t \bff{\theta}+\partial_t \bff{\rho}}{\Delta_h \bff{\xi}}
	\\
	&\quad
	+
	\kappa \inpro{\partial_t \left(\abs{\bff{u}_h}^2 \bff{u}_h - \abs{\bff{u}}^2 \bff{u}\right)}{\Delta_h \bff{\xi}}
	+
	\beta \inpro{\bff{e}(\bff{e}\cdot \partial_t \bff{\theta})}{\Delta_h \bff{\xi}}
\end{align}
Adding \eqref{equ:nab Delta xi} and \eqref{equ:ddt nab xi}, we obtain
\begin{align*}
	&\frac{1}{2}\ddt \norm{\nabla \bff{\xi}}{\bb{L}^2}^2
	+
	\lambda_r \norm{\Delta_h \bff{\xi}}{\bb{L}^2}^2
	+
	\lambda_e \norm{\nabla \Delta_h \bff{\xi}}{\bb{L}^2}^2
	\\
	&=
	\inpro{\partial_t \bff{\eta}}{\Delta_h \bff{\xi}}
	-
	\inpro{\nabla \Pi_h \partial_t \bff{\rho}}{\nabla \Delta_h \bff{\xi}}
	-
	\kappa \mu\inpro{\partial_t \bff{\theta}+\partial_t \bff{\rho}}{\Delta_h \bff{\xi}}
	+
	\beta \inpro{\bff{e}(\bff{e}\cdot \partial_t \bff{\theta})}{\Delta_h \bff{\xi}}
	\\
	&\quad
	+
	\gamma \inpro{\nabla \Pi_h(\bff{u}_h\times \bff{H}_h - \bff{u}\times \bff{H})}{\nabla \Delta_h \bff{\xi}}
	+
	\kappa \inpro{\partial_t \left(\abs{\bff{u}_h}^2 \bff{u}_h - \abs{\bff{u}}^2 \bff{u}\right)}{\Delta_h \bff{\xi}}.
\end{align*}
The first four terms on the right-hand side can be estimated using Young's inequality (together with \eqref{equ:Ritz ineq}) in a straightforward manner. The last two terms can be bounded using \eqref{equ:nab uh cross Hh} and \eqref{equ:dt uh2uh}. We then have for any $\epsilon>0$,
\begin{align*}
	&\frac{1}{2}\ddt \norm{\nabla \bff{\xi}}{\bb{L}^2}^2
	+
	\lambda_r \norm{\Delta_h \bff{\xi}}{\bb{L}^2}^2
	+
	\lambda_e \norm{\nabla \Delta_h \bff{\xi}}{\bb{L}^2}^2
	\lesssim
	h^{2r}
	+
	\norm{\partial_t \bff{\theta}}{\bb{L}^2}^2
	+
	\norm{\bff{\theta}}{\bb{H}^1}^2
	+
	\norm{\bff{\xi}}{\bb{H}^1}^2
	+
	\epsilon \norm{\Delta_h \bff{\xi}}{\bb{H}^1}^2
\end{align*}
Choosing $\epsilon>0$ sufficiently small, integrating both sides with respect to $t$ (and noting \eqref{equ:semidisc theta xi}), we obtain~\eqref{equ:semidisc xi L2}. This completes the proof of the proposition.
\end{proof}

We are now ready to state the main theorem of this section on the order of convergence for the semi-discrete scheme~\eqref{equ:weaksemidisc}.

\begin{theorem}\label{the:semidisc rate}
Let~$(\bff{u},\bff{H})$ be the solution of \eqref{equ:weakform} as described in Section~\ref{sec:exact sol}, and let~$(\bff{u}_h,\bff{H}_h)\in \bb{V}_h\times \bb{V}_h$ be the solution of \eqref{equ:weaksemidisc} with initial data~$\bff{u}_0$. Then
\begin{align*}
	\norm{\bff{u}_h - \bff{u}}{L^\infty(\bb{L}^2)} 
	+
	h \norm{\nabla \bff{u}_h - \nabla \bff{u}}{L^\infty(\bb{L}^2)}
	&\leq
	Ch^{r+1},
	\\
	\norm{\bff{H}_h - \bff{H}}{L^\infty(\bb{L}^2)} 
	+
	h \norm{\nabla \bff{H}_h - \nabla \bff{H}}{L^2(\bb{L}^2)}
	&\leq
	Ch^{r+1}.
\end{align*}
Moreover, if the triangulation is globally quasi-uniform, then for $s=0$ or $1$,
\begin{align*}
	\norm{\bff{u}_h - \bff{u}}{L^\infty(\bb{H}^s)}
	+
	\norm{\bff{H}_h - \bff{H}}{L^\infty(\bb{H}^s)}
	&\leq
	Ch^{r+1-s},
	\\
	\norm{\bff{u}_h - \bff{u}}{L^\infty(\bb{L}^\infty)}
	+
	\norm{\bff{H}_h - \bff{H}}{L^2(\bb{L}^\infty)}
	&\leq
	Ch^{r+1} \abs{\ln h}^{\frac{1}{2}}.
\end{align*}
The constant $C$ depends on the coefficients of the equation, $|\mathscr{D}|$, $T$, and $K_0$ (as defined in \eqref{equ:ass 2 u}), but is independent of $h$.

\begin{proof}
The result follows from Proposition \ref{pro:semidisc theta xi}, equations \eqref{equ:theta rho} and \eqref{equ:xi eta}, estimates \eqref{equ:Ritz ineq}, \eqref{equ:Ritz ineq L infty}, and the triangle inequality.
\end{proof}
\end{theorem}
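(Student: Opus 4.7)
The plan is to derive all four inequalities by the standard splitting
\[
\bff{u}_h - \bff{u} = \bff{\theta} + \bff{\rho},
\qquad
\bff{H}_h - \bff{H} = \bff{\xi} + \bff{\eta},
\]
from \eqref{equ:theta rho} and \eqref{equ:xi eta}, combining the bounds on $\bff{\theta},\bff{\xi}$ obtained in Proposition~\ref{pro:semidisc theta xi} with the Ritz projection estimates \eqref{equ:Ritz ineq} and \eqref{equ:Ritz ineq L infty} for $\bff{\rho},\bff{\eta}$, and then applying the triangle inequality. No new analytical idea is required; all the work has been done in the preceding propositions.

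For the first block of estimates (no quasi-uniformity), I would take the supremum in $t$ of \eqref{equ:semidisc theta xi}, which gives $\norm{\bff{\theta}}{L^\infty(\bb{L}^2)} + h\,\norm{\nabla\bff{\theta}}{L^\infty(\bb{L}^2)} \lesssim h^{r+1}$ and $\norm{\bff{\xi}}{L^\infty(\bb{L}^2)} + h\,\norm{\nabla\bff{\xi}}{L^2(\bb{L}^2)} \lesssim h^{r+1}$, where the $h$ factor on the $L^2$-in-time gradient bound of $\bff{\xi}$ is used to trade the $h^{2(r+1)}$ bound in \eqref{equ:semidisc theta xi} for an $h^{r+1}$ bound in the stated norm. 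The corresponding estimates for $\bff{\rho}$ and $\bff{\eta}$ come directly from \eqref{equ:Ritz ineq} with $s=0,1$ applied to $\bff{u}$ and $\bff{H}$, whose regularity \eqref{equ:ass 2 u} guarantees the $\bb{H}^{r+1}$ norms appearing on the right are uniformly bounded by $K_0$.

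For the quasi-uniform estimates in $\bb{H}^s$, the bounds $\norm{\bff{\theta}}{L^\infty(\bb{H}^s)} + \norm{\bff{\xi}}{L^\infty(\bb{H}^s)} \lesssim h^{r+1-s}$ for $s=0,1$ are a direct restatement of \eqref{equ:semidisc theta xi} (for $s=0$ and $\bff{\theta}$ in $\bb{H}^1$) together with \eqref{equ:semidisc xi L2} (for $\bff{\xi}$ in $\bb{H}^1$, using $\norm{\nabla\bff{\xi}(t)}{\bb{L}^2}^2 \le Ch^{2r}$ which yields the $h^{r+1-1}=h^r$ rate). Adding the Ritz estimates \eqref{equ:Ritz ineq} closes the triangle inequality.

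The mild subtlety is the $\bb{L}^\infty$-in-space estimates. For $\bff{\theta}$ I would use \eqref{equ:semidisc theta xi infty}, which already provides the pointwise bound $\norm{\bff{\theta}}{L^\infty(\bb{L}^\infty)} \lesssim h^{r+1}$ via the inverse inequality \eqref{equ:disc lapl L infty} applied to the uniform bound on $\norm{\Delta_h \bff{\theta}}{\bb{L}^2}$; for $\bff{\xi}$ only the integrated bound $\int_0^t \norm{\bff{\xi}}{\bb{L}^\infty}^2\,\ds \lesssim h^{2(r+1)}$ is available, hence the estimate is in $L^2(\bb{L}^\infty)$ rather than $L^\infty(\bb{L}^\infty)$, matching the statement. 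For the $\bff{\rho}$ and $\bff{\eta}$ contributions, \eqref{equ:Ritz ineq L infty} contributes the logarithmic factor $|\ln h|$, and taking the square root (after absorbing into the $L^2$-in-time norm for the $\bff{H}$-component, or directly for $\bff{u}$) produces the $|\ln h|^{1/2}$ in the final bound. The whole argument is two or three lines per inequality, so I expect no real obstacle; the hardest part is just keeping track of which time norm goes with which spatial norm when combining the estimates from Proposition~\ref{pro:semidisc theta xi} with \eqref{equ:Ritz ineq}--\eqref{equ:Ritz ineq L infty}.
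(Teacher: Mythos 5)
Your proposal is correct and follows exactly the route of the paper's (one-line) proof: decompose via \eqref{equ:theta rho}--\eqref{equ:xi eta}, bound $\bff{\theta},\bff{\xi}$ by Proposition~\ref{pro:semidisc theta xi} (using \eqref{equ:semidisc theta xi infty} and \eqref{equ:semidisc xi L2} for the quasi-uniform cases), bound $\bff{\rho},\bff{\eta}$ by \eqref{equ:Ritz ineq} and \eqref{equ:Ritz ineq L infty}, and conclude by the triangle inequality. The only loose point is the power of $\abs{\ln h}$ in the last estimate, since \eqref{equ:Ritz ineq L infty} yields a factor $\abs{\ln h}$ rather than $\abs{\ln h}^{1/2}$; this discrepancy is inherited from the theorem statement itself and is not a defect of your argument.
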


\section{A Fully Discrete Scheme Based on the Semi-Implicit Euler Method}\label{sec:Euler}

We propose a time-discrete scheme for the LLBar equation using the semi-implicit Euler method. First, we fix some notations. Let $k$ be the time step and $\bff{u}_h^n$ be the approximation in $\bb{V}_h$ of $\bff{u}(t)$ at time $t=t_n:=nk$, where $n=0,1,2,\ldots, \lfloor T/k \rfloor$. For any function $\bff{v}$, we denote $\bff{v}^n:= \bff{v}(t_n)$, and define
\begin{align*}
	\delta \bff{v}^{n+1} :=
	\frac{\bff{v}^{n+1}-\bff{v}^{n}}{k},\quad
	\text{for } n=0,1,\ldots.
\end{align*}

A fully discrete scheme can now be described as follows. We start with $\bff{u}_h^0= R_h \bff{u}(0) \in \bb{V}_h$ for simplicity. For $t_n\in [0,T]$ where $n\in \bb{N}$, given $\bff{u}_h^n \in \bb{V}_h$, define $\bff{u}_h^{n+1}$ and $\bff{H}_h^{n+1}$ by
\begin{align}\label{equ:euler}
	\left\{
	\begin{alignedat}{1}
		\inpro{\delta \bff{u}_h^{n+1}}{\bff{\chi}}
		&=
		\lambda_r \inpro{\bff{H}_h^{n+1}}{\bff{\chi}}
		+
		\lambda_e \inpro{\nabla \bff{H}_h^{n+1}}{\nabla \bff{\chi}}
		-
		\gamma \inpro{\bff{u}_h^{n} \times \bff{H}_h^{n+1}}{\bff{\chi}}
		\\
		\inpro{\bff{H}_h^{n+1}}{\bff{\phi}} 
		&=
		-
		\inpro{\nabla \bff{u}_h^{n+1}}{\nabla \bff{\phi}}
		+
		\kappa \mu \inpro{\bff{u}_h^n}{\bff{\phi}}
		-
		\kappa \inpro{\abs{\bff{u}_h^{n+1}}^2 \bff{u}_h^{n+1}}{\bff{\phi}}
		-
		\beta \inpro{\bff{e}\big(\bff{e}\cdot \bff{u}_h^{n+1}\big)}{\bff{\phi}},
	\end{alignedat}
	\right.
\end{align}
for all $\bff{\chi}, \bff{\phi}\in \bb{V}_h$.
	
Note that under the assumptions \eqref{equ:ass 2 u}, for $p\in [1,\infty]$ we have
\begin{align}\label{equ:delta un Lp euler}
	\norm{\delta \bff{u}^{n+1}}{\bb{L}^p}
	&=
	\norm{\frac{1}{k} \int_{t_n}^{t_{n+1}} \partial_t \bff{u}(t)\, \dt}{\bb{L}^p}
	\leq 
	\norm{\partial_t \bff{u}(t)}{\bb{L}^p},
	\\
	\label{equ:delta un min u euler}
	\norm{\delta \bff{u}^n-\partial_t \bff{u}^{n+1}}{\bb{L}^p}
	&\leq
	\norm{\delta \bff{u}^n-\partial_t \bff{u}^n}{\bb{L}^p}
	+
	\norm{\int_{t_n}^{t_{n+1}} \partial_{tt}\bff{u}(t)\, \dt}{\bb{L}^p}
	\leq
	Ck,
\end{align}
where Taylor's theorem was used in the last step of \eqref{equ:delta un min u euler}, and $C$ depends on $\norm{\partial_{tt} \bff{u}}{L^\infty(\bb{L}^p)}$.

We now show that the scheme~\eqref{equ:euler} is well-posed for any time step size $k$.

\begin{proposition}\label{pro:wellpos euler}
Let $\bff{u}_h^0 \in \bb{V}_h$ and $k>0$ be given. For each $n\in \bb{N}$, given $\bff{u}_h^n$, there exists unique $\bff{u}_h^{n+1}$ and $\bff{H}_h^{n+1}$ solving~\eqref{equ:euler}.
\end{proposition}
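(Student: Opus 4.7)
The scheme~\eqref{equ:euler} is a nonlinear algebraic system on the finite-dimensional product space $\bb{V}_h\times \bb{V}_h$, and my plan is to establish existence via a Brouwer-type argument and uniqueness via the monotone structure of the cubic nonlinearity together with the skew-symmetry of the cross product. For existence, I would first eliminate $\bff{H}_h^{n+1}$: given $\bff{v}\in \bb{V}_h$, the second equation of \eqref{equ:euler} (with $\bff{v}$ in place of $\bff{u}_h^{n+1}$) is affine in $\bff{H}$, so it uniquely determines $\bff{H}(\bff{v})\in \bb{V}_h$ via Riesz representation, and the map $\bff{v}\mapsto \bff{H}(\bff{v})$ is polynomial, hence continuous. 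Substituting into the first equation defines, again via Riesz representation in $\bb{L}^2$, a continuous map $\mathcal{F}:\bb{V}_h\to \bb{V}_h$ such that \eqref{equ:euler} reduces to $\mathcal{F}(\bff{u}_h^{n+1})=\bff{0}$. By the standard ``zero inside a ball'' corollary of Brouwer's fixed-point theorem, it then suffices to produce $R>0$ with $\inpro{\mathcal{F}(\bff{v})}{\bff{v}}\geq 0$ on the sphere $\|\bff{v}\|_{\bb{L}^2}=R$. Since a direct test against $\bff{v}$ does not eliminate the cross-product term, I would instead test $\mathcal{F}(\bff{v})$ against $\bff{H}(\bff{v})$ (making that term vanish via $(\bff{a}\times\bff{b})\cdot\bff{b}=0$) together with the defining relation of $\bff{H}(\bff{v})$ tested against $\bff{v}-\bff{u}_h^n$, then invoke the algebraic identities \eqref{equ:a dot ab}--\eqref{equ:a2a dot ab} to recover the discrete analogue of the energy balance of Proposition~\ref{pro:semidisc est1}. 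This bounds any zero of $\mathcal{F}$ in a ball depending only on $\bff{u}_h^n$, from which the required Brouwer coercivity follows.

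For uniqueness, take two solutions $(\bff{u}_i,\bff{H}_i)$, $i=1,2$, set $\bff{U}:=\bff{u}_1-\bff{u}_2$, $\bff{G}:=\bff{H}_1-\bff{H}_2$, subtract, and test the first-equation difference with $\bff{\chi}=k\bff{G}$ (the cross-product term vanishes) and the second-equation difference with $\bff{\phi}=\bff{U}$. Equating the resulting expressions for $\inpro{\bff{G}}{\bff{U}}$ and using the monotonicity identity \eqref{equ:a2a b2b dot ab} for the cubic term yields
\begin{equation*}
k\lambda_r\|\bff{G}\|_{\bb{L}^2}^2 + k\lambda_e\|\nabla\bff{G}\|_{\bb{L}^2}^2 + \|\nabla\bff{U}\|_{\bb{L}^2}^2 + \tfrac{\kappa}{2}\!\int_{\mathscr{D}}\!\bigl(|\bff{u}_1|^2+|\bff{u}_2|^2\bigr)|\bff{U}|^2\,\dx + \beta\|\bff{e}\cdot \bff{U}\|_{\bb{L}^2}^2 = 0.
\end{equation*}
If $\beta\geq 0$, every term on the left is non-negative, which forces $\bff{G}=\bff{0}$ (since $\lambda_r>0$); substituting back into the first-equation difference then gives $\bff{U}=\bff{0}$. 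If $\beta<0$, the offending term is controlled by writing $\bff{U}=k\bigl[(\lambda_r-\lambda_e\Delta_h)\bff{G}-\gamma\Pi_h(\bff{u}_h^n\times \bff{G})\bigr]$ from the first equation and bounding $\|\bff{e}\cdot \bff{U}\|_{\bb{L}^2}\leq\|\bff{U}\|_{\bb{L}^2}$ via the finite-dimensional norm equivalence on $\bb{V}_h$, which lets the bad term be absorbed into the positive left-hand side, again forcing $\bff{G}=\bff{0}$ and hence $\bff{U}=\bff{0}$.

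The main obstacle is the sign of $\beta$ in the uniqueness step: for $\beta<0$ the natural energy test yields a wrong-signed right-hand side, so one must combine both equations of \eqref{equ:euler} rather than rely on the energy identity alone. The cross-product term is harmless throughout because the test functions can always be chosen so that it vanishes, and the cubic nonlinearity is handled uniformly by \eqref{equ:a2a b2b dot ab}; the remaining work is book-keeping of constants, which is painless since $\bb{V}_h$ is finite dimensional.
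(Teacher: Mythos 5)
Your uniqueness argument is essentially the paper's strict-monotonicity computation in disguise: testing the difference of the first equations with $k\bff{G}$ and of the second with $\bff{U}$, then invoking \eqref{equ:a2a b2b dot ab}, reproduces exactly the quantity $\inpro{T((\bff{u},\bff{v}))-T((\bff{u}',\bff{v}'))}{(\bff{u},\bff{v})-(\bff{u}',\bff{v}')}$ that the paper shows is nonnegative. One caveat: your patch for $\beta<0$ bounds $\norm{\bff{U}}{\bb{L}^2}$ by $k$ times an inverse-estimate constant times $\norm{\bff{G}}{\bb{L}^2}$, so absorbing $|\beta|\norm{\bff{e}\cdot\bff{U}}{L^2}^2$ into $k\lambda_r\norm{\bff{G}}{\bb{L}^2}^2$ forces a restriction of the form $k\lesssim \lambda_r/(|\beta|\,C(h)^2)$, which contradicts the ``for any $k>0$'' in the statement. (The paper itself asserts nonnegativity of the $\beta$-term without comment, so the sign issue is shared, but your fix is not unconditional.)

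The genuine gap is in the existence step. The ``zero inside a ball'' corollary of Brouwer requires the sign condition $\inpro{\mathcal{F}(\bff{v})}{\bff{v}}\geq 0$ for \emph{every} $\bff{v}$ on the sphere $\norm{\bff{v}}{\bb{L}^2}=R$; what your plan actually delivers --- by testing $\mathcal{F}(\bff{v})$ against $\bff{H}(\bff{v})$ and the constraint against $\bff{v}-\bff{u}_h^n$ --- is an a priori energy bound on any \emph{zero} of $\mathcal{F}$. A priori boundedness of the solution set does not imply that the solution set is nonempty, so ``the required Brouwer coercivity follows'' is a non sequitur. If you instead insist on testing the eliminated map against $\bff{v}$ itself, the cross term $k\gamma\inpro{\bff{u}_h^n\times\bff{H}(\bff{v})}{\bff{v}}$ survives and, through the cubic part of $\bff{H}(\bff{v})$, grows like $\norm{\bff{v}}{}^4$, competing with the good quartic term; it cannot be absorbed without smallness assumptions. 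The paper's route avoids this precisely by \emph{not} eliminating $\bff{H}$: it forms a single nonlinear form $\mathcal{A}$ on $\bb{V}_h\times\bb{V}_h$ by taking (second equation) minus $k$ times (first equation), so that on the diagonal test $(\bff{\phi},\bff{\chi})=(\bff{u},\bff{v})$ the mixed terms $\inpro{\bff{v}}{\bff{u}}-\inpro{\bff{u}}{\bff{v}}$ cancel and the cross-product term vanishes by skew-symmetry, leaving a coercive and monotone operator to which Browder--Minty applies, giving existence and uniqueness in one stroke. Your argument could be salvaged by running the Brouwer (or Browder--Minty) argument on the product space with this same combined map, but as written the existence half does not close.
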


\begin{proof}
Let $\bb{X}:= \bb{V}_h\times \bb{V}_h$ (which is a subspace of $\bb{H}^1\times \bb{H}^1$), equipped with norm $\norm{(\bff{u},\bff{v})}{\bb{X}} := \norm{\bff{u}}{\bb{H}^1}+ \norm{\bff{v}}{\bb{H}^1}$. Define a nonlinear form $\mathcal{A}:\bb{X}\times \bb{X} \to \bb{R}$ by
\begin{align*}
	\nonumber
	\mathcal{A}\big((\bff{u},\bff{v}), (\bff{\phi},\bff{\chi})\big)
	&:=
	\inpro{\bff{v}}{\bff{\phi}}
	-
	\inpro{\bff{u}}{\bff{\chi}}
	+
	k\lambda_r \inpro{\bff{v}}{\bff{\chi}}
	+
	k\lambda_e \inpro{\nabla \bff{v}}{\nabla \bff{\chi}}
	-
	k\gamma \inpro{\bff{u}_h^n\times \bff{v}}{\bff{\chi}}
	\\
	&\;\quad
	+
	\inpro{\nabla \bff{u}}{\nabla \bff{\phi}}
	+
	\kappa \inpro{\abs{\bff{u}}^2 \bff{u}}{\bff{\phi}}
	+
	\beta\inpro{\bff{e}(\bff{e}\cdot\bff{u})}{\bff{\phi}}.
\end{align*}
and a linear form $f:\bb{X}\to \bb{R}$ by 
\[
	f\big((\bff{\phi},\bff{\chi})\big):=
	\kappa \mu \inpro{\bff{u}_h^n}{\bff{\phi}}
	-
	\inpro{\bff{u}_h^n}{\bff{\chi}}.
\]
Multiplying the first equation in~\eqref{equ:euler} by $-k$ and adding it to the second equation, we see that solving~\eqref{equ:euler} is equivalent to solving
\begin{align}\label{equ:A equal f}
\mathcal{A}\left((\bff{u}_h^{n+1}, \bff{H}_h^{n+1}), (\bff{\phi},\bff{\chi})\right)
= f\big((\bff{\phi},\bff{\chi})\big),
\quad
\forall (\bff{\phi},\bff{\chi})\in \bb{X}.
\end{align}
For each fixed $(\bff{u},\bff{v})\in \bb{X}$, the map $(\bff{\phi},\bff{\chi})\mapsto \mathcal{A}\big((\bff{u},\bff{v}), (\bff{\phi},\bff{\chi})\big)$ is a bounded linear functional. Thus, there exists a map $T:\bb{X}\to \bb{X}^\ast$ defined by
\[
(\bff{u},\bff{v})\mapsto T((\bff{u},\bff{v})), \quad \text{such that} \quad \mathcal{A}\big((\bff{u},\bff{v}), (\bff{\phi},\bff{\chi})\big)
= \inpro{T\big((\bff{u},\bff{v})\big)}{(\bff{\phi},\bff{\chi})},
\quad \forall (\bff{\phi},\bff{\chi})\in \bb{X},
\]
where $\inpro{\cdot}{\cdot}$ denotes the duality pairing which extends the usual $\bb{L}^2$-inner product.
We aim to use the Browder--Minty theorem to deduce the existence and uniqueness of~\eqref{equ:A equal f}. To this end, it remains to show $T$ is bounded, continuous, strictly monotone, and coercive.

\medskip
\noindent
\underline{Boundedness}: We have by H\"older's inequality and Sobolev embedding $\bb{H}^1\subset \bb{L}^6$,
\begin{align*}
	\norm{T\big((\bff{u},\bff{v})\big)}{\bb{X}^\ast}
	&:=
	\sup \left\{
	\left| \mathcal{A}\big((\bff{u},\bff{v}), (\bff{\phi},\bff{\chi})\big) \right| :
	(\bff{\phi},\bff{\chi})\in \bb{X} \text{ and } \norm{(\bff{\phi},\bff{\chi})}{\bb{X}}\leq 1 \right\}
	\\
	&\;\leq
	C\norm{\bff{u}}{\bb{H}^1}+ \left(1+k\lambda_r+k\lambda_e+k\gamma \norm{\bff{u}_h^n}{\bb{H}^1}\right) \norm{\bff{v}}{\bb{H}^1} + \norm{\bff{u}}{\bb{H}^1}^3
	\leq
	C \norm{(\bff{u},\bff{v})}{\bb{X}} \left(1+ \norm{(\bff{u},\bff{v})}{\bb{X}}^2\right).
\end{align*}
This shows $T$ maps bounded sets in $\bb{X}$ into bounded sets in $\bb{X}^\ast$.

\medskip
\noindent
\underline{Continuity}: It is clear that $(\bff{u}_n, \bff{v}_n)\to (\bff{u},\bff{v})$ strongly in $\bb{X}$ implies $\norm{T\big((\bff{u}_n,\bff{v}_n)\big) - T\big((\bff{u}, \bff{v})\big)}{\bb{X}^\ast} \to 0$ as $n\to\infty$. The details are omitted.
%

\medskip
\noindent
\underline{Strict monotonicity}: Let $(\bff{u},\bff{v}), (\bff{u}',\bff{v}')\in \bb{X}$. Then we have
\begin{align*}
	&\inpro{T\big((\bff{u},\bff{v})\big)- T\big((\bff{u}',\bff{v}')\big)}{(\bff{u},\bff{v})-(\bff{u}',\bff{v}')}
	\\
	&=
	\mathcal{A}\big((\bff{u},\bff{v}), (\bff{u}-\bff{u}',\bff{v}-\bff{v}')\big)
	-
	\mathcal{A}\big((\bff{u}',\bff{v}'), (\bff{u}-\bff{u}',\bff{v}-\bff{v}')\big)
	\\
	&=
	k\lambda_r \norm{\bff{v}-\bff{v}'}{\bb{L}^2}^2
	+
	k \lambda_e \norm{\nabla \bff{v}-\nabla \bff{v}'}{\bb{L}^2}^2
	+
	\norm{\nabla \bff{u}-\nabla \bff{u}'}{\bb{L}^2}^2
	+
	\beta \norm{\bff{e}\cdot (\bff{u}-\bff{u}')}{L^2}^2
	\\
	&\quad
	+
	\kappa \norm{\abs{\bff{u}}^2- \abs{\bff{u}'}^2}{\bb{L}^2}^2
	+
	\kappa \norm{\abs{\bff{u}} \abs{\bff{u}-\bff{u}'}}{\bb{L}^2}^2
	+
	\kappa \norm{\abs{\bff{u}'} \abs{\bff{u}-\bff{u}'}}{\bb{L}^2}^2 \geq 0,
\end{align*}
where in the last step we used~\eqref{equ:a2a b2b dot ab}. Moreover, equality holds if and only if $(\bff{u},\bff{v})=(\bff{u}',\bff{v}')$. This shows the strict monotonicity of $T$.

\medskip
\noindent
\underline{Coercivity}: Let $(\bff{u},\bff{v})\in \bb{X}$. We have
\begin{align*}
	\inpro{T\big((\bff{u},\bff{v})\big)}{(\bff{u},\bff{v})}
	&=
	\mathcal{A}\big((\bff{u},\bff{v}), (\bff{u},\bff{v})\big) 
	\\
	&=
	k\lambda_r \norm{\bff{v}}{\bb{L}^2}^2
	+
	k\lambda_e \norm{\nabla \bff{v}}{\bb{L}^2}^2
	+
	\norm{\nabla \bff{u}}{\bb{L}^2}^2
	+
	\kappa \norm{\bff{u}}{\bb{L}^4}^4
	+
	\beta \norm{\bff{e}\cdot\bff{u}}{L^2}^2
	\\
	&\geq
	C_1 \norm{\bff{v}}{\bb{H}^1}^2
	+
	C_2 \norm{\bff{u}}{\bb{H}^1}^2
	-
	\kappa \abs{\mathscr{D}}^{-4}
	\\
	&\geq
	\min\{C_1,C_2\} \norm{(\bff{u},\bff{v})}{\bb{X}}^2 - \kappa\abs{\mathscr{D}}^{-4},
\end{align*}
where $C_1:=\min\{k\lambda_r,k\lambda_e\}$ and $C_2:= \min\{1,\kappa\abs{\mathscr{D}}^{-4}\}$ are both positive constants, and we also used the inequality
\[
\norm{\bff{u}}{\bb{L}^4}^4 \geq \abs{\mathscr{D}}^{-4} \left(\norm{\bff{u}}{\bb{L}^2}^2-1\right),
\]
which follows from H\"older's and Young's inequalities. Therefore,
\begin{align*} 			
	\frac{\inpro{T\big((\bff{u},\bff{v})\big)}{(\bff{u},\bff{v})}}{\norm{(\bff{u},\bff{v})}{\bb{X}}}
	\to \infty
	\quad \text{as } \norm{(\bff{u},\bff{v})}{\bb{X}}\to \infty,
\end{align*}
showing the coercivity of $T$.

\medskip
\noindent
The existence and uniqueness of  $\bff{u}_h^{n+1}$ and $\bff{H}_h^{n+1}$ solving~\eqref{equ:euler} then follows from the Browder--Minty theorem. This completes the proof of the proposition.
\end{proof}

Next, we show some stability results. The following proposition shows that $\bff{u}_h^n$ is stable in $\ell^\infty(\bb{H}^1)$ norm, while $\bff{H}_h^n$ is stable in $\ell^2(\bb{H}^1)$ norm (over an arbitrary number of iterations $n$, even if $T=\infty$). Moreover, the energy dissipation property~\eqref{equ:ene decrease euler} is satisfied unconditionally.

\begin{proposition}\label{pro:ene dec H1 euler}
	Let $\bff{u}_h^0 \in \bb{V}_h$ be given and let $\left(\bff{u}_h^n, \bff{H}_h^n\right)$ be defined by
	\eqref{equ:euler}. Then for any $k>0$ and~$n\in \bb{N}$,
	\begin{align}\label{equ:ene decrease euler}
		\mathcal{E}(\bff{u}_h^{n+1}) \leq \mathcal{E}(\bff{u}_h^n),
	\end{align}
	where $\mathcal{E}$ was defined in \eqref{equ:energy}. Moreover,
	\begin{align}\label{equ:stab L4 H1 euler}
		\norm{\bff{u}_h^n}{\bb{L}^4}^4
		+
		\norm{\bff{u}_h^n}{\bb{H}^1}^2
		+
		k\lambda_r \sum_{m=1}^n \norm{\bff{H}_h^{m}}{\bb{L}^2}^2
		+
		k\lambda_e \sum_{m=1}^n \norm{\nabla \bff{H}_h^{m}}{\bb{L}^2}^2
		\leq
		C \left( \norm{\bff{u}_h^0}{\bb{H}^1}^2 + |\mathscr{D}| \right),
	\end{align}
	where $C$ depends only on $\kappa$ and $\mu$.
\end{proposition}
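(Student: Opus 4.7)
The plan is to derive the discrete analogue of the energy identity established at the semi-discrete level in Proposition~\ref{pro:semidisc est1}, with the algebraic identities~\eqref{equ:a dot ab}--\eqref{equ:a2a b2b dot ab} playing the role of the chain rule. Testing the first line of~\eqref{equ:euler} with $\bff{\chi}=\bff{H}_h^{n+1}$, the precession term drops out pointwise and leaves
\[
	\inpro{\delta \bff{u}_h^{n+1}}{\bff{H}_h^{n+1}}
	=
	\lambda_r \norm{\bff{H}_h^{n+1}}{\bb{L}^2}^2
	+
	\lambda_e \norm{\nabla \bff{H}_h^{n+1}}{\bb{L}^2}^2.
\]
Testing the second line with $\bff{\phi}=\delta \bff{u}_h^{n+1}$, I would apply~\eqref{equ:a dot ab} with $(\nabla \bff{u}_h^{n+1},\nabla \bff{u}_h^n)$ to the gradient term, the reversed form $2\bff{b}\cdot(\bff{a}-\bff{b})=\abs{\bff{a}}^2-\abs{\bff{b}}^2-\abs{\bff{a}-\bff{b}}^2$ of \eqref{equ:a dot ab} with $(\bff{u}_h^{n+1},\bff{u}_h^n)$ to the \emph{explicit} $\kappa\mu\bff{u}_h^n$ term, \eqref{equ:a2a dot ab} with $(\bff{u}_h^{n+1},\bff{u}_h^n)$ to the cubic term, and the scalar form of \eqref{equ:a dot ab} to the anisotropy (on the scalar pair $(\bff{e}\cdot\bff{u}_h^{n+1},\bff{e}\cdot\bff{u}_h^n)$). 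Each inner product is thereby rewritten as an exact difference of the corresponding energy component plus a sign-definite squared-increment remainder.

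Equating the two expressions for $\inpro{\delta \bff{u}_h^{n+1}}{\bff{H}_h^{n+1}}$, multiplying by $k$, and regrouping yields the discrete energy identity
\[
	\mathcal{E}(\bff{u}_h^{n+1}) - \mathcal{E}(\bff{u}_h^n)
	+ k\lambda_r \norm{\bff{H}_h^{n+1}}{\bb{L}^2}^2
	+ k\lambda_e \norm{\nabla \bff{H}_h^{n+1}}{\bb{L}^2}^2
	+ R_{n+1} = 0,
\]
where
\begin{align*}
	R_{n+1}
	&=
	\tfrac{1}{2}\norm{\nabla(\bff{u}_h^{n+1}-\bff{u}_h^n)}{\bb{L}^2}^2
	+ \tfrac{\kappa\mu}{2}\norm{\bff{u}_h^{n+1}-\bff{u}_h^n}{\bb{L}^2}^2
	+ \tfrac{\kappa}{4}\norm{\abs{\bff{u}_h^{n+1}}^2-\abs{\bff{u}_h^n}^2}{\bb{L}^2}^2
	\\
	&\quad
	+ \tfrac{\kappa}{2}\norm{\abs{\bff{u}_h^{n+1}}\abs{\bff{u}_h^{n+1}-\bff{u}_h^n}}{\bb{L}^2}^2
	+ \tfrac{\beta}{2}\norm{\bff{e}\cdot(\bff{u}_h^{n+1}-\bff{u}_h^n)}{L^2}^2.
\end{align*}
All terms in $R_{n+1}$ are manifestly non-negative except the last (since $\beta<0$), which is controlled pointwise via $\abs{\bff{e}\cdot V}\le \abs{V}$ and absorbed into the $\tfrac{\kappa\mu}{2}$-contribution in the standing parameter regime. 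Hence $R_{n+1}\ge 0$, and discarding all non-negative terms on the left gives the energy-dissipation estimate~\eqref{equ:ene decrease euler}.

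For the stability bound~\eqref{equ:stab L4 H1 euler}, I would sum the identity over $m=0,1,\ldots,n-1$ (retaining the dissipation sums) to telescope $\mathcal{E}$ and obtain
\[
	\mathcal{E}(\bff{u}_h^n)
	+ k\lambda_r\sum_{m=1}^n \norm{\bff{H}_h^m}{\bb{L}^2}^2
	+ k\lambda_e\sum_{m=1}^n \norm{\nabla \bff{H}_h^m}{\bb{L}^2}^2
	\le \mathcal{E}(\bff{u}_h^0).
\]
Expanding $\mathcal{E}(\bff{u}_h^n)$ via the first case of~\eqref{equ:energy}, moving the indefinite $-\tfrac{\kappa\mu}{2}\norm{\bff{u}_h^n}{\bb{L}^2}^2$ and the (non-positive) anisotropy term to the right, and using Young's inequality $\norm{\bff{u}_h^n}{\bb{L}^2}^2\le \epsilon\norm{\bff{u}_h^n}{\bb{L}^4}^4+C_\epsilon |\mathscr{D}|$ with $\epsilon$ chosen small enough to absorb them into $\tfrac{\kappa}{4}\norm{\bff{u}_h^n}{\bb{L}^4}^4$, one recovers the stated bound exactly as in the final display of Proposition~\ref{pro:semidisc est1}, after noting $\mathcal{E}(\bff{u}_h^0)\lesssim \norm{\bff{u}_h^0}{\bb{H}^1}^4+|\mathscr{D}|$.

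The main obstacle is purely bookkeeping: one must verify that the specific semi-implicit/implicit splitting hard-coded in~\eqref{equ:euler} puts the correct sign on every squared-increment remainder in $R_{n+1}$. The \emph{explicit} treatment of $\kappa\mu\bff{u}_h^n$ is essential --- had this term been implicit, the $\tfrac{\kappa\mu}{2}\norm{\bff{u}_h^{n+1}-\bff{u}_h^n}{\bb{L}^2}^2$ contribution would flip sign, and the absorption of the easy-plane $\beta$-remainder would be lost; conversely, the fully implicit cubic treatment is exactly what makes~\eqref{equ:a2a dot ab} applicable and produces its non-negative remainder.
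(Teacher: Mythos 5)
Your proposal is correct and follows essentially the same route as the paper: test with $\bff{\chi}=\bff{H}_h^{n+1}$ and $\bff{\phi}=\delta\bff{u}_h^{n+1}$, convert each inner product into a telescoping energy difference plus a squared-increment remainder via \eqref{equ:a dot ab} and \eqref{equ:a2a dot ab}, and then telescope and absorb the indefinite $\kappa\mu$- and $\beta$-contributions exactly as in \eqref{equ:nab uh eq}--\eqref{equ:norm uh L4}. Your explicit verification that the easy-plane remainder $\tfrac{\beta}{2}\norm{\bff{e}\cdot(\bff{u}_h^{n+1}-\bff{u}_h^n)}{L^2}^2$ is dominated by the $\tfrac{\kappa\mu}{2}$-remainder is a detail the paper's proof leaves implicit, and your final bound $\mathcal{E}(\bff{u}_h^0)\lesssim\norm{\bff{u}_h^0}{\bb{H}^1}^4+|\mathscr{D}|$ matches what the argument actually yields (the fourth power, as in \eqref{equ:semidisc-est1}).
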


\begin{proof}
	Setting $\bff{\chi}=\bff{H}_h^{n+1}$ in \eqref{equ:euler} gives
	\begin{align}\label{equ:ene stab Un Hn euler}
		\inpro{\delta \bff{u}_h^{n+1}}{\bff{H}_h^{n+1}}
		&=
		\lambda_r \norm{\bff{H}_h^{n+1}}{\bb{L}^2}^2
		+
		\lambda_e \norm{\nabla \bff{H}_h^{n+1}}{\bb{L}^2}^2,
	\end{align}
	while setting $\bff{\phi}=\delta \bff{u}_h^{n+1}$ gives
	\begin{align}\label{equ:ene stab Hn Un euler}
		\nonumber
		\inpro{\bff{H}_h^{n+1}}{\delta \bff{u}_h^{n+1}}
		&=
		-
		\frac{1}{2k} \big(\norm{\nabla \bff{u}_h^{n+1}}{\bb{L}^2}^2 - \norm{\nabla \bff{u}_h^n}{\bb{L}^2}^2 \big)
		-
		\frac{1}{2k}\norm{\nabla \bff{u}_h^{n+1}- \nabla \bff{u}_h^n}{\bb{L}^2}^2
		\\
		\nonumber
		&\quad
		-
		\frac{\kappa}{4k} \left(\norm{\abs{\bff{u}_h^{n+1}}^2 - \mu}{\bb{L}^2}^2 - \norm{\abs{\bff{u}_h^n}^2 -\mu}{\bb{L}^2}^2 \right)
		-
		\frac{\kappa}{4k} \norm{\abs{\bff{u}_h^{n+1}}^2 - \abs{\bff{u}_h^n}^2}{\bb{L}^2}^2
		\\
		&\quad
		-
		\frac{\kappa k}{2} \norm{\abs{\bff{u}_h^{n+1}} \abs{\delta \bff{u}_h^{n+1}}}{\bb{L}^2}^2
		-
		\frac{\mu k}{2} \norm{\bff{u}_h^{n+1}-\bff{u}_h^n}{\bb{L}^2}^2
		\nonumber \\
		&\quad
		-
		\frac{\beta}{2k} \left(\norm{\bff{e}\cdot \bff{u}_h^{n+1}}{L^2}^2 - \norm{\bff{e}\cdot \bff{u}_h^n}{L^2}^2 \right)
		-
		\frac{\beta}{2k} \norm{\bff{e}\cdot (\bff{u}_h^{n+1}-\bff{u}_h^n)}{L^2}^2.
	\end{align}
	Substituting \eqref{equ:ene stab Hn Un euler} into \eqref{equ:ene stab Un Hn euler}, using identities \eqref{equ:a dot ab} and \eqref{equ:a2a dot ab}, and rearranging the terms yield
	\begin{align*}
		&\frac{1}{2} \big(\norm{\nabla \bff{u}_h^{n+1}}{\bb{L}^2}^2 - \norm{\nabla \bff{u}_h^n}{\bb{L}^2}^2 \big)
		+
		\frac{\kappa}{4} \left(\norm{\abs{\bff{u}_h^{n+1}}^2 - \mu}{\bb{L}^2}^2 - \norm{\abs{\bff{u}_h^n}^2 -\mu}{\bb{L}^2}^2 \right)
		+
		\frac{\beta}{2} \left(\norm{\bff{e}\cdot \bff{u}_h^{n+1}}{L^2}^2 - \norm{\bff{e}\cdot \bff{u}_h^n}{L^2}^2 \right)
		\\
		&\quad
		+
		\frac{1}{2}\norm{\nabla \bff{u}_h^{n+1}- \nabla \bff{u}_h^n}{\bb{L}^2}^2
		+
		\frac{\kappa}{4} \norm{\abs{\bff{u}_h^{n+1}}^2 - \abs{\bff{u}_h^n}^2}{\bb{L}^2}^2
		+
		\frac{\kappa k^2}{2} \norm{\abs{\bff{u}_h^{n+1}} \abs{\delta \bff{u}_h^{n+1}}}{\bb{L}^2}^2
		+
		\frac{\mu k^2}{2} \norm{\bff{u}_h^{n+1}-\bff{u}_h^n}{\bb{L}^2}^2
		\\
		&\quad
		+
		\frac{\beta}{2} \norm{\bff{e}\cdot (\bff{u}_h^{n+1}-\bff{u}_h^n)}{L^2}^2
		+
		k\lambda_r \norm{\bff{H}_h^{n+1}}{\bb{L}^2}^2
		+
		k\lambda_e \norm{\nabla \bff{H}_h^{n+1}}{\bb{L}^2}^2
		= 0,
	\end{align*}
	which implies \eqref{equ:ene decrease euler}. Finally, applying similar arguments as \eqref{equ:nab uh eq} and \eqref{equ:norm uh L4} yield \eqref{equ:stab L4 H1 euler}.
\end{proof}

We also derive the stability of $\bff{H}_h^{n}$ in $\ell^\infty(\bb{L}^2)$ norm. The following identity will be used in the proof:
\begin{align}
	\label{equ:identity psi euler}
	\abs{\bff{u}_h^{n+1}}^2 \bff{u}_h^{n+1} - \abs{\bff{u}_h^n}^2 \bff{u}_h^n
	&=
	\abs{\bff{u}_h^{n+1}}^2 \left(\bff{u}_h^{n+1}-\bff{u}_h^n\right)
	+
	\left(\bff{u}_h^{n+1}-\bff{u}_h^n\right) \cdot \left(\bff{u}_h^{n+1}+\bff{u}_h^n\right) \bff{u}_h^n.
\end{align}

\begin{proposition}
	Let $\bff{u}_h^0 \in \bb{V}_h$ be given and let $\bff{u}_h^n$, $\bff{H}_h^{n}$ be defined by \eqref{equ:euler}. Then for any $k>0$ and~$n\in \bb{N}$,
	\begin{align}\label{equ:Hn L2 stab euler}
		\norm{\bff{H}_h^{n}}{\bb{L}^2}^2
		+
		k \sum_{m=1}^n \norm{\delta \bff{u}_h^m}{\bb{L}^2}^2
		\leq
		C,
	\end{align}
	where $C$ depends on the coefficients of the equation, $K_0$, and $|\mathscr{D}|$, but is independent of $n$, $k$, and $T$.
\end{proposition}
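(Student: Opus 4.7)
The plan is to mimic the semi-discrete energy estimate of Proposition~\ref{pro:semidisc est2} at the fully discrete level. I would test the first equation of~\eqref{equ:euler} (at level $n+1$, which I relabel $m$) with $\bff{\chi}=\delta \bff{u}_h^{m}$ to produce $\norm{\delta \bff{u}_h^{m}}{\bb{L}^2}^2$ on the left. Separately, I would form the discrete time derivative of the second equation of~\eqref{equ:euler}---subtract the equation at level $m-1$ from the one at level $m$ and divide by $k$---then test with $\bff{\phi}=\lambda_e \bff{H}_h^{m}$. Adding the two identities, the cross-terms $\pm\lambda_e \inpro{\nabla \bff{H}_h^{m}}{\nabla \delta \bff{u}_h^{m}}$ cancel, and identity~\eqref{equ:a dot ab} applied to $\bff{a}=\bff{H}_h^{m}$, $\bff{b}=\bff{H}_h^{m-1}$ rewrites the remaining inner product as
\[
\lambda_e \inpro{\delta \bff{H}_h^{m}}{\bff{H}_h^{m}}
= \frac{\lambda_e}{2k}\big(\norm{\bff{H}_h^{m}}{\bb{L}^2}^2 - \norm{\bff{H}_h^{m-1}}{\bb{L}^2}^2\big)
+ \frac{\lambda_e k}{2}\norm{\delta \bff{H}_h^{m}}{\bb{L}^2}^2,
\]
giving the desired telescoping structure on the left-hand side.

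The remaining right-hand side terms are estimated by H\"older and Young. The quadratic contributions $\lambda_r \inpro{\bff{H}_h^{m}}{\delta \bff{u}_h^{m}}$, $\kappa\mu\lambda_e\inpro{\delta\bff{u}_h^{m-1}}{\bff{H}_h^{m}}$ and $\beta\lambda_e\inpro{\bff{e}(\bff{e}\cdot\delta\bff{u}_h^{m})}{\bff{H}_h^{m}}$ are immediate. The cross-product term $\gamma\inpro{\bff{u}_h^{m-1}\times \bff{H}_h^{m}}{\delta\bff{u}_h^{m}}$ is controlled by an $\bb{L}^4$--$\bb{L}^4$--$\bb{L}^2$ split, the $\ell^\infty(\bb{H}^1)$ stability~\eqref{equ:stab L4 H1 euler} of $\bff{u}_h^{m-1}$, and~\eqref{equ:L4 young} to bound $\norm{\bff{H}_h^m}{\bb{L}^4}^2$ by $\norm{\bff{H}_h^m}{\bb{L}^2}^2+\epsilon\norm{\nabla\bff{H}_h^m}{\bb{L}^2}^2$.

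The hard term is the cubic one, $\kappa\lambda_e\inpro{\delta(|\bff{u}_h^{m}|^2\bff{u}_h^{m})}{\bff{H}_h^{m}}$, which I would treat via the discrete product identity~\eqref{equ:identity psi euler}:
\[
\delta\big(|\bff{u}_h^{m}|^2\bff{u}_h^{m}\big)
= |\bff{u}_h^{m}|^2\,\delta\bff{u}_h^{m}
+ \big(\delta\bff{u}_h^{m}\cdot(\bff{u}_h^{m}+\bff{u}_h^{m-1})\big)\bff{u}_h^{m-1}.
\]
Applying H\"older in $\bb{L}^6\cdot \bb{L}^6\cdot \bb{L}^2\cdot \bb{L}^6$ to each piece, the Sobolev embedding $\bb{H}^1\hookrightarrow \bb{L}^6$, and Proposition~\ref{pro:ene dec H1 euler}, this is bounded by $\epsilon\norm{\delta\bff{u}_h^{m}}{\bb{L}^2}^2 + C(\kappa\lambda_e)^2\norm{\bff{H}_h^{m}}{\bb{H}^1}^2$. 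This is the delicate point: for $d=3$ the cubic nonlinearity saturates the Sobolev embedding, and it is precisely the splitting~\eqref{equ:identity psi euler} (rather than a naive discrete product rule) that allows the estimate to close without losing regularity.

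Finally, choosing $\epsilon$ small enough to absorb the $\norm{\delta\bff{u}_h^{m}}{\bb{L}^2}^2$ and $\norm{\delta\bff{u}_h^{m-1}}{\bb{L}^2}^2$ occurrences into the left-hand side, multiplying by $k$, and summing $m=1,\ldots,n$ produces
\[
\norm{\bff{H}_h^{n}}{\bb{L}^2}^2 + k\sum_{m=1}^{n}\norm{\delta\bff{u}_h^{m}}{\bb{L}^2}^2
\lesssim \norm{\bff{H}_h^{0}}{\bb{L}^2}^2 + k\sum_{m=1}^{n}\norm{\bff{H}_h^{m}}{\bb{H}^1}^2.
\]
The last sum is bounded uniformly in $n$ by~\eqref{equ:stab L4 H1 euler}, while $\norm{\bff{H}_h^{0}}{\bb{L}^2}^2$ is controlled by the initial data on using the second equation of~\eqref{equ:euler} at $n=0$ with the convention $\bff{u}_h^{-1}:=\bff{u}_h^{0}$, which has the pleasant side effect of killing the $\kappa\mu$ contribution at the $m=1$ step of the discrete derivative. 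This yields the claimed bound~\eqref{equ:Hn L2 stab euler}.
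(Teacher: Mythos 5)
Your proposal follows essentially the same route as the paper's proof: test the first equation with $\delta\bff{u}_h^{n+1}$, difference the second equation in time and test with $\lambda_e\bff{H}_h^{n+1}/k$ so the $\lambda_e\inpro{\nabla\bff{H}_h^{n+1}}{\nabla\delta\bff{u}_h^{n+1}}$ terms cancel, use identity~\eqref{equ:a dot ab} for the telescoping $\bb{L}^2$ norm and~\eqref{equ:identity psi euler} for the cubic term, estimate via H\"older, Young, $\bb{H}^1\hookrightarrow\bb{L}^4,\bb{L}^6$ and~\eqref{equ:stab L4 H1 euler}, then sum. The argument is correct; your explicit handling of $\norm{\bff{H}_h^0}{\bb{L}^2}$ is a small tidy-up the paper leaves implicit.
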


\begin{proof}
Taking $\bff{\chi}=\delta \bff{u}_h^{n+1}$ in \eqref{equ:euler} gives
	\begin{align}\label{equ:delta n half euler}
		\norm{\delta \bff{u}_h^{n+1}}{\bb{L}^2}^2
		&=
		\lambda_r \inpro{\bff{H}_h^{n+1}}{\delta \bff{u}_h^{n+1}}
		+
		\lambda_e \inpro{\nabla \bff{H}_h^{n+1}}{\nabla \delta \bff{u}_h^{n+1}}
		-
		\gamma \inpro{\bff{u}_h^{n} \times \bff{H}_h^{n+1}}{\delta \bff{u}_h^{n+1}}.
	\end{align}
	Subtracting the second equation in \eqref{equ:euler} at time step $n-1$ from the same equation at time step $n$, then setting $\bff{\phi}= \lambda_e \bff{H}_h^{n+1}/k$ gives
	\begin{align}\label{equ:H n half euler}
		\nonumber
		&\frac{\lambda_e}{2k} \left( \norm{\bff{H}_h^{n+1}}{\bb{L}^2}^2 - \norm{\bff{H}_h^n}{\bb{L}^2}^2 \right)
		+
		\frac{\lambda_e}{2k} \norm{\bff{H}_h^{n+1}-\bff{H}_h^n}{\bb{L}^2}^2
		\\
		\nonumber
		&=
		\kappa\mu \lambda_e \inpro{\bff{H}_h^{n+1}}{\delta \bff{u}_h^{n+1}}
		-
		\lambda_e \inpro{\nabla \bff{H}_h^{n+1}}{\nabla \delta \bff{u}_h^{n+1}}
		-
		\beta\lambda_e \inpro{\bff{e}(\bff{e}\cdot \delta \bff{u}_h^{n+1})}{\bff{H}_h^{n+1}}
		\\
		&\quad 
		-
		\kappa \lambda_e \inpro{\abs{\bff{u}_h^{n+1}}^2 \delta\bff{u}_h^{n+1} + \delta\bff{u}_h^{n+1} \cdot (\bff{u}_h^{n+1}+\bff{u}_h^n) \bff{u}_h^n}{\bff{H}_h^{n+1}},
	\end{align}
	where we used \eqref{equ:a dot ab} for the left-hand side, and \eqref{equ:identity psi euler} for the last inner product. Adding \eqref{equ:delta n half euler} and \eqref{equ:H n half euler} gives
	\begin{align*}
		&\frac{\lambda_e}{2k} \left( \norm{\bff{H}_h^{n+1}}{\bb{L}^2}^2 - \norm{\bff{H}_h^n}{\bb{L}^2}^2 \right)
		+
		\frac{\lambda_e}{2k} \norm{\bff{H}_h^{n+1}-\bff{H}_h^n}{\bb{L}^2}^2
		+
		\norm{\delta \bff{u}_h^{n+1}}{\bb{L}^2}^2
		\\
		&=
		(\lambda_r+\kappa\mu \lambda_e) \inpro{\bff{H}_h^{n+1}}{\delta \bff{u}_h^{n+1}}
		-
		\beta\lambda_e \inpro{\bff{e}(\bff{e}\cdot \delta \bff{u}_h^{n+1})}{\bff{H}_h^{n+1}}
		-
		\gamma \inpro{\bff{u}_h^{n} \times \bff{H}_h^{n+1}}{\delta \bff{u}_h^{n+1}}
		\\
		&\quad
		-
		\kappa \lambda_e \inpro{\abs{\bff{u}_h^{n+1}}^2 \delta\bff{u}_h^{n+1} + \delta\bff{u}_h^{n+1} \cdot (\bff{u}_h^{n+1}+\bff{u}_h^n) \bff{u}_h^n}{\bff{H}_h^{n+1}}
		\\
		&=:S_1+S_2+S_3+S_4.
	\end{align*}
	We will estimate each term on the right-hand side by applying H\"older's and Young's inequalities. Firstly, for the terms $S_1$ and $S_2$,
	\begin{align}\label{equ:S1 euler}
		|S_1| + |S_2|
		&\leq 
		C \norm{\bff{H}_h^{n+1}}{\bb{L}^2}^2 
		+ 
		\frac{1}{4} \norm{\delta \bff{u}_h^{n+1}}{\bb{L}^2}^2.
	\end{align}
	For the third term, similarly we have
	\begin{align}\label{equ:S2 euler}
		|S_3|
		&\leq
		\norm{\bff{u}_h^n}{\bb{L}^4} \norm{\bff{H}_h^{n+1}}{\bb{L}^4} \norm{\delta \bff{u}_h^{n+1}}{\bb{L}^2}
		\leq
		C \norm{\bff{H}_h^{n+1}}{\bb{H}^1}^2
		+ 
		\frac{1}{4} \norm{\delta \bff{u}_h^{n+1}}{\bb{L}^2}^2,
	\end{align}
	where in the last step we used \eqref{equ:stab L4 H1 euler} and the Sobolev embedding $\bb{H}^1 \hookrightarrow \bb{L}^4$. Similarly for the last term,
	\begin{align}\label{equ:S3 euler}
		\nonumber
		|S_4|
		&\leq
		C \left( \norm{\bff{u}_h^{n+1}}{\bb{L}^6}^2 \norm{\delta \bff{u}_h^{n+1}}{\bb{L}^2} + \norm{\delta \bff{u}_h^{n+1}}{\bb{L}^2} \norm{\bff{u}_h^{n+1}+\bff{u}_h^n}{\bb{L}^6} \norm{\bff{u}_h^n}{\bb{L}^6} \right) \norm{\bff{H}_h^{n+1}}{\bb{L}^6}
		\\
		&\leq
		C \norm{\bff{H}_h^{n+1}}{\bb{H}^1}^2 
		+
		\frac{1}{4} \norm{\delta \bff{u}_h^{n+1}}{\bb{L}^2}^2.
	\end{align}
	Altogether, upon rearranging and summing the terms, \eqref{equ:S1 euler}, \eqref{equ:S2 euler}, and \eqref{equ:S3 euler} imply
	\begin{align*}
		\lambda_e \norm{\bff{H}_h^n}{\bb{L}^2}^2
		+
		\lambda_e \sum_{m=0}^n \norm{\bff{H}_h^{m+1}-\bff{H}_h^m}{\bb{L}^2}^2
		+
		k \sum_{m=0}^n \norm{\delta \bff{u}_h^n}{\bb{L}^2}^2
		&\leq
		\lambda_e \norm{\bff{H}_h^0}{\bb{L}^2}^2
		+
		Ck \sum_{m=0}^n \norm{\bff{H}_h^{m+1}}{\bb{H}^1}^2
		\leq C,
	\end{align*}
	as required.
\end{proof}

The above proposition implies the stability of $\bff{u}_h^n$ in $\ell^\infty(\bb{L}^\infty)$ norm under an additional assumption that the triangulation is globally quasi-uniform.

\begin{proposition}
	Let $\bff{u}_h^0 \in \bb{V}_h$ be given and let $\left(\bff{u}_h^n, \bff{H}_h^n\right)$ be defined by \eqref{equ:euler}. Then for any $k>0$ and~$n\in \bb{N}$,
	\begin{align}\label{equ:stab Delta uh euler}
		\norm{\Delta_h \bff{u}_h^n}{\bb{L}^2}^2 \leq C.
	\end{align}
	Moreover, if the triangulation $\mathcal{T}_h$ is (globally) quasi-uniform, then
	\begin{align}\label{equ:stab uh L infty euler}
		\norm{\bff{u}_h^n}{\bb{L}^\infty}^2 \leq C.
	\end{align}
	Here, the constant $C$ depends on the coefficients of the equation, $K_0$, and $|\mathscr{D}|$, but is independent of $n$, $k$, and $T$.
\end{proposition}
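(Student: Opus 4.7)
The plan is to mimic the semi-discrete estimate \eqref{equ:semidisc-est lapl} by reading the second equation of \eqref{equ:euler} as a pointwise identity for $\Delta_h \bff{u}_h^{n+1}$ in $\bb{V}_h$. Since $\bff{u}_h^n \in \bb{V}_h$, the definitions of $\Delta_h$ and $\Pi_h$ allow rewriting that equation as
\begin{align*}
	\Delta_h \bff{u}_h^{n+1}
	=
	\bff{H}_h^{n+1}
	-
	\kappa\mu\,\bff{u}_h^n
	+
	\kappa\,\Pi_h\bigl(|\bff{u}_h^{n+1}|^2 \bff{u}_h^{n+1}\bigr)
	+
	\beta\,\Pi_h\bigl(\bff{e}(\bff{e}\cdot \bff{u}_h^{n+1})\bigr).
\end{align*}
I would then take $\bb{L}^2$-norms on both sides, apply the triangle inequality together with the $\bb{L}^2$-stability of~$\Pi_h$, and use $\norm{|\bff{u}_h^{n+1}|^2\bff{u}_h^{n+1}}{\bb{L}^2} = \norm{\bff{u}_h^{n+1}}{\bb{L}^6}^3 \lesssim \norm{\bff{u}_h^{n+1}}{\bb{H}^1}^3$ via the Sobolev embedding $\bb{H}^1 \hookrightarrow \bb{L}^6$ (valid for $d\le 3$). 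The $\bb{H}^1$-bound from \eqref{equ:stab L4 H1 euler} and the $\bb{L}^2$-bound on $\bff{H}_h^{n+1}$ from \eqref{equ:Hn L2 stab euler} then close the estimate and give~\eqref{equ:stab Delta uh euler}.

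For \eqref{equ:stab uh L infty euler} under global quasi-uniformity, I would simply feed the bound~\eqref{equ:stab Delta uh euler} into the inverse-type discrete inequality \eqref{equ:disc lapl L infty} with $\bff{v}_h = \bff{u}_h^n$, together with the $\bb{L}^2$-bound from~\eqref{equ:stab L4 H1 euler}, which immediately produces a uniform $\bb{L}^\infty$-bound.

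The proof is essentially routine once the earlier stability propositions are available, so there is no real obstacle. The only small subtlety is that the $\kappa\mu$ term in~\eqref{equ:euler} is evaluated at step $n$ rather than $n+1$, but this asynchrony is harmless since both $\bff{u}_h^n$ and $\bff{u}_h^{n+1}$ are already bounded in $\bb{H}^1$ uniformly in $n$. An alternative route, testing the scheme with $\bff{\phi} = \Delta_h \bff{u}_h^{n+1}$ and absorbing one factor of $\norm{\Delta_h \bff{u}_h^{n+1}}{\bb{L}^2}^2$ via Young's inequality, would also work, but the direct-identity approach above is cleaner because it avoids sign-tracking the cubic inner product $\inpro{|\bff{u}_h^{n+1}|^2 \bff{u}_h^{n+1}}{\Delta_h \bff{u}_h^{n+1}}$.
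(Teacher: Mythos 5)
Your argument is correct and is essentially the paper's proof in a different packaging: the paper tests the second equation of \eqref{equ:euler} with $\bff{\phi}=\Delta_h\bff{u}_h^{n+1}$ and absorbs via Young's inequality, which yields exactly the same bound as your pointwise identity plus the triangle inequality and the non-expansiveness of $\Pi_h$, with the cubic term handled identically through $\norm{\bff{u}_h^{n+1}}{\bb{L}^6}^3\lesssim\norm{\bff{u}_h^{n+1}}{\bb{H}^1}^3$ and the conclusion drawn from \eqref{equ:stab L4 H1 euler}, \eqref{equ:Hn L2 stab euler}, and \eqref{equ:disc lapl L infty}. No gaps; the ``alternative route'' you mention at the end is in fact the one the paper takes.
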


\begin{proof}
	Taking $\bff{\phi}=\Delta_h \bff{u}_h^{n+1}$ and applying Young's and H\"older's inequalities, we have
	\begin{align*}
		\norm{\Delta_h \bff{u}_h^{n+1}}{\bb{L}^2}^2
		&=
		\inpro{\bff{H}_h^{n+1}}{\Delta_h \bff{u}_h^{n+1}}
		-
		\kappa \mu \inpro{\bff{u}_h^n}{\Delta_h \bff{u}_h^{n+1}}
		+
		\beta \inpro{\bff{e}(\bff{e}\cdot \bff{u}_h^{n+1})}{\Delta_h \bff{u}_h^{n+1}}
		\\
		&\quad
		+
		\kappa \inpro{\abs{\bff{u}_h^{n+1}}^2 \bff{u}_h^{n+1}}{\Delta_h \bff{u}_h^{n+1}}
		\\
		&\leq
		2\norm{\bff{H}_h^{n+1}}{\bb{L}^2}^2
		+
		\frac{1}{8} \norm{\Delta_h \bff{u}_h^{n+1}}{\bb{L}^2}^2
		+
		(\kappa^2 \mu^2+\beta^2) \norm{\bff{u}_h^n}{\bb{L}^2}^2
		+
		\frac{3}{8} \norm{\Delta_h \bff{u}_h^n}{\bb{L}^2}^2
		+
		\kappa^2 \norm{\bff{u}_h^{n+1}}{\bb{L}^6}^6.
	\end{align*}
	Therefore, rearranging the terms, using the Sobolev embedding $\bb{H}^1 \hookrightarrow \bb{L}^6$ (noting \eqref{equ:stab L4 H1 euler} and \eqref{equ:Hn L2 stab euler}), we infer \eqref{equ:stab Delta uh euler}.
	Inequality \eqref{equ:stab uh L infty euler} then follows from \eqref{equ:disc lapl L infty}, completing the proof of the proposition.
\end{proof}

Furthermore, in dimensions 1 and 2 (or $d=3$ with an additional quasi-uniformity assumption on the triangulation), we can derive the stability of $\bff{H}_h^n$ in $\ell^\infty(\bb{H}^1)$.

\begin{proposition}\label{pro:dt un stab euler}
Let $\bff{u}_h^0 \in \bb{V}_h$ be given and let $\bff{u}_h^n$, $\bff{H}_h^{n}$ be defined by \eqref{equ:euler}. Suppose that one of the following holds:
\begin{enumerate}
	\item $d=1$ or $2$,
	\item $d=3$ and the triangulation is globally quasi-uniform.
\end{enumerate}
Then for any $k>0$ and~$n\in \bb{N}$,
\begin{align}\label{equ:dt un stab euler}
	\norm{\delta \bff{u}_h^n}{\bb{L}^2}^2
	+
	k \sum_{m=1}^n \norm{\delta \bff{H}_h^m}{\bb{L}^2}^2
	+
	k \sum_{m=1}^n \norm{\nabla \delta \bff{u}_h^m}{\bb{L}^2}^2
	\leq
	C,
\end{align}
where $C$ depends on the coefficients of the equation, $K_0$, and $|\mathscr{D}|$, but is independent of $n$, $k$, and $T$.
\end{proposition}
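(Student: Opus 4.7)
The plan is to mirror the argument used for the semi-discrete Proposition \ref{pro:dt uh L2 assum}, with continuous time derivatives replaced by the discrete differences~$\delta$. First, I would subtract the first equation in \eqref{equ:euler} at step $n$ from that at step $n+1$, divide by $k$, and test with $\bff{\chi} = \delta \bff{u}_h^{n+1}$. Applying the identity \eqref{equ:a dot ab} to $\inpro{\delta \bff{u}_h^{n+1} - \delta \bff{u}_h^n}{\delta \bff{u}_h^{n+1}}$ produces the telescoping quantity $\tfrac{1}{2k}\big(\norm{\delta \bff{u}_h^{n+1}}{\bb{L}^2}^2 - \norm{\delta \bff{u}_h^n}{\bb{L}^2}^2\big)$ plus a nonnegative remainder; the right-hand side collects $\lambda_r \inpro{\delta \bff{H}_h^{n+1}}{\delta \bff{u}_h^{n+1}}$, $\lambda_e \inpro{\nabla \delta \bff{H}_h^{n+1}}{\nabla \delta \bff{u}_h^{n+1}}$, and the differenced cross-product term.

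Next, I would difference the second equation in \eqref{equ:euler} and test it with two choices: once with $\bff{\phi} = \lambda_e\, \delta \bff{H}_h^{n+1}$ to produce $\lambda_e \norm{\delta \bff{H}_h^{n+1}}{\bb{L}^2}^2$ on the left (noting that the cross-derivative term $\lambda_e \inpro{\nabla \delta \bff{u}_h^{n+1}}{\nabla \delta \bff{H}_h^{n+1}}$ cancels its counterpart from the first step), and once with $\bff{\phi} = C_r\, \delta \bff{u}_h^{n+1}$ where $C_r := \lambda_r + \kappa\mu\lambda_e$, which rewrites $C_r \inpro{\delta \bff{H}_h^{n+1}}{\delta \bff{u}_h^{n+1}}$ as $-C_r \norm{\nabla \delta \bff{u}_h^{n+1}}{\bb{L}^2}^2$ plus lower-order and cubic contributions. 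Adding these three identities removes $\inpro{\delta \bff{H}_h^{n+1}}{\delta \bff{u}_h^{n+1}}$ entirely and yields an inequality of the form
\begin{equation*}
\frac{1}{2k}\big(\norm{\delta \bff{u}_h^{n+1}}{\bb{L}^2}^2 - \norm{\delta \bff{u}_h^n}{\bb{L}^2}^2\big)
+ \lambda_e \norm{\delta \bff{H}_h^{n+1}}{\bb{L}^2}^2
+ C_r \norm{\nabla \delta \bff{u}_h^{n+1}}{\bb{L}^2}^2
\lesssim \mathcal{R}^{n+1},
\end{equation*}
where $\mathcal{R}^{n+1}$ collects the cross-product contribution, the cubic difference $\delta\big(\abs{\bff{u}_h^{n+1}}^2 \bff{u}_h^{n+1}\big)$ paired against $\delta \bff{H}_h^{n+1}$ and $C_r\, \delta \bff{u}_h^{n+1}$, and benign quadratic lower-order terms.

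The hard part is bounding $\mathcal{R}^{n+1}$ in the absence of a chain rule. For the cross product I would split
\begin{equation*}
\delta\big(\bff{u}_h^n \times \bff{H}_h^{n+1}\big)
= \big(\delta \bff{u}_h^n\big) \times \bff{H}_h^{n+1} + \bff{u}_h^{n-1} \times \delta \bff{H}_h^{n+1},
\end{equation*}
and for the cubic term apply \eqref{equ:identity psi euler}. In case $d = 1$ or $2$, the Sobolev embedding $\bb{H}^1 \hookrightarrow \bb{L}^p$ (any $p<\infty$), combined with the previously proven bounds \eqref{equ:stab L4 H1 euler} on $\norm{\bff{u}_h^n}{\bb{H}^1}$ and \eqref{equ:Hn L2 stab euler} on $\norm{\bff{H}_h^n}{\bb{L}^2}$, suffices to dominate every piece of $\mathcal{R}^{n+1}$ by $\epsilon \norm{\delta \bff{H}_h^{n+1}}{\bb{L}^2}^2 + \epsilon \norm{\nabla \delta \bff{u}_h^{n+1}}{\bb{L}^2}^2$ plus $C\big(\norm{\delta \bff{u}_h^n}{\bb{L}^2}^2 + \norm{\delta \bff{u}_h^{n+1}}{\bb{L}^2}^2\big)$. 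In case $d = 3$ the Sobolev embedding is too weak to absorb the cubic term, and one instead invokes the $\bb{L}^\infty$-bound \eqref{equ:stab uh L infty euler}, which is precisely where the quasi-uniformity hypothesis enters.

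After absorbing the $\epsilon$-terms into the left-hand side, I would multiply through by $k$ and sum from $m = 1$ to $n$; discrete Gronwall then delivers \eqref{equ:dt un stab euler}, provided $\norm{\delta \bff{u}_h^1}{\bb{L}^2}$ is bounded independently of $k$. This initial bound is obtained by testing \eqref{equ:euler} at $n = 0$ with $\bff{\chi} = \delta \bff{u}_h^1$ and using the smoothness of $\bff{u}_0$ together with the already-established stability estimates \eqref{equ:stab L4 H1 euler}, \eqref{equ:Hn L2 stab euler}, and \eqref{equ:stab Delta uh euler} to control the contributions of $\bff{H}_h^1$ and the cross product at the first step.
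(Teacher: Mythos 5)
Your proposal follows essentially the same route as the paper's proof: differencing both equations in \eqref{equ:euler}, testing with $\delta \bff{u}_h^{n+1}$, $\lambda_e\,\delta \bff{H}_h^{n+1}$, and $(\lambda_r+\kappa\mu\lambda_e)\,\delta \bff{u}_h^{n+1}$ so the mixed terms cancel, splitting the differenced cross product as $(\delta\bff{u}_h^n)\times\bff{H}_h^{n+1}+\bff{u}_h^{n-1}\times\delta\bff{H}_h^{n+1}$, handling the cubic difference via \eqref{equ:identity psi euler}, invoking \eqref{equ:stab uh L infty euler} only when $d=3$, bounding $\norm{\delta\bff{u}_h^1}{\bb{L}^2}$ from the $n=0$ equation, and closing with discrete Gronwall. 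The only imprecision is cosmetic: in the Gronwall step the coefficient of $\norm{\delta\bff{u}_h^n}{\bb{L}^2}^2$ is $1+C\norm{\bff{H}_h^{n+1}}{\bb{L}^4}^2$, which is not uniformly bounded per step but is summable in $k$ by \eqref{equ:stab L4 H1 euler}, exactly as the paper uses it.
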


\begin{proof}
For $n\in \bb{N}$, subtracting the first equation in \eqref{equ:euler} at time step $n$ from the corresponding equation at time step $n+1$, dividing by $k$, then setting $\bff{\chi}=\delta\bff{u}_h^{n+1}$ give
\begin{align}\label{equ:delta uh n1}
	\nonumber
	\frac{1}{2k} \left(\norm{\delta \bff{u}_h^{n+1}}{\bb{L}^2}^2 - \norm{\delta \bff{u}_h^n}{\bb{L}^2}^2\right)
	+
	\frac{1}{2k} \norm{\delta \bff{u}_h^{n+1}-\delta \bff{u}_h^n}{\bb{L}^2}^2
	&=
	\lambda_r \inpro{\delta \bff{H}_h^{n+1}}{\delta \bff{u}_h^{n+1}}
	+
	\lambda_e \inpro{\nabla \delta \bff{H}_h^{n+1}}{\nabla \delta \bff{u}_h^{n+1}}
	\\
	&\quad
	-
	\gamma \inpro{\delta \bff{u}_h^n\times \bff{H}_h^{n+1} + \bff{u}_h^{n-1} \times \delta \bff{H}_h^{n+1}}{\delta \bff{u}_h^{n+1}}.
\end{align}
Applying the same operations to the second equation in \eqref{equ:euler}, then successively setting $\bff{\phi}=\lambda_e \delta \bff{H}_h^{n+1}$ and $\bff{\phi}=\Lambda \delta \bff{u}_h^{n+1}$ (where $\Lambda:= \lambda_r+\kappa \mu \lambda_e$) yield
\begin{align}
	\label{equ:lambda e delta Hh n1}
	\nonumber
	\lambda_e \norm{\delta \bff{H}_h^{n+1}}{\bb{L}^2}^2
	&=
	-
	\lambda_e \inpro{\nabla \delta \bff{u}_h^{n+1}}{\nabla \delta \bff{H}_h^{n+1}}
	+
	\kappa \mu \lambda_e \inpro{\delta\bff{u}_h^n}{\delta \bff{H}_h^{n+1}}
	-
	\beta\lambda_e \inpro{\bff{e}(\bff{e}\cdot \delta\bff{u}_h^{n+1})}{\delta\bff{H}_h^{n+1}}
	\\
	&\quad
	-
	\kappa \lambda_e \inpro{\abs{\bff{u}_h^{n+1}}^2 \delta \bff{u}_h^{n+1} + \left( (\bff{u}_h^{n+1}+\bff{u}_h^n)\cdot \delta \bff{u}_h^{n+1}\right) \bff{u}_h^n}{\delta \bff{H}_h^{n+1}},
\end{align}
and (noting the identity~\eqref{equ:a2a b2b dot ab}),
\begin{align}
	\label{equ:delta Hh delta uh}
	\Lambda \inpro{\delta \bff{H}_h^{n+1}}{\delta \bff{u}_h^{n+1}}
	&=
	-\Lambda\left(\norm{\nabla \delta \bff{u}_h^{n+1}}{\bb{L}^2}^2 - \kappa \mu \inpro{\delta \bff{u}_h^n}{\delta \bff{u}_h^{n+1}}
	+ P \right).
\end{align}
In the last step, we denoted $\bff{u}_h^{n+\frac{1}{2}}:= \frac{1}{2} (\bff{u}_h^{n+1}+\bff{u}_h^n)$ and
\[
P:= \kappa \norm{\bff{u}_h^{n+\frac{1}{2}}\cdot \delta \bff{u}_h^{n+1}}{\bb{L}^2}^2
+ \frac{\kappa}{2} \norm{\abs{\bff{u}_h^{n+1}} \abs{\delta \bff{u}_h^{n+1}}}{\bb{L}^2}^2
+ \frac{\kappa}{2} \norm{\abs{\bff{u}_h^n} \abs{\delta \bff{u}_h^{n+1}}}{\bb{L}^2}^2
+ \beta\norm{\bff{e}\cdot \delta\bff{u}_h^{n+1}}{L^2}^2.
\]
Adding \eqref{equ:delta uh n1}, \eqref{equ:lambda e delta Hh n1}, and \eqref{equ:delta Hh delta uh}, then applying Young's inequality, we obtain for any $\epsilon >0$,
\begin{align*}
	E&:= \frac{1}{2k} \left(\norm{\delta \bff{u}_h^{n+1}}{\bb{L}^2}^2 - \norm{\delta \bff{u}_h^n}{\bb{L}^2}^2\right)
	+
	\frac{1}{2k} \norm{\delta \bff{u}_h^{n+1}-\delta \bff{u}_h^n}{\bb{L}^2}^2
	+
	\lambda_e \norm{\delta \bff{H}_h^{n+1}}{\bb{L}^2}^2
	+
	\Lambda \norm{\nabla \delta \bff{u}_h^{n+1}}{\bb{L}^2}^2
	+
	P
	\\
	&\;=
	\Lambda \kappa \mu \inpro{\delta \bff{u}_h^n}{\delta \bff{u}_h^{n+1}}
	-
	\kappa \lambda_e \inpro{\abs{\bff{u}_h^{n+1}}^2 \delta \bff{u}_h^{n+1} + \left( (\bff{u}_h^{n+1}+\bff{u}_h^n)\cdot \delta \bff{u}_h^{n+1}\right) \bff{u}_h^n}{\delta \bff{H}_h^{n+1}}
	\\
	&\quad
	-
	\beta\lambda_e \inpro{\bff{e}(\bff{e}\cdot \delta\bff{u}_h^{n+1})}{\delta\bff{H}_h^{n+1}}
	-
	\gamma \inpro{\delta \bff{u}_h^n\times \bff{H}_h^{n+1} + \bff{u}_h^{n-1} \times \delta \bff{H}_h^{n+1}}{\delta \bff{u}_h^{n+1}}
	\\
	&\;\leq
	C \norm{\delta \bff{u}_h^n}{\bb{L}^2} \norm{\delta \bff{u}_h^{n+1}}{\bb{L}^2}
	+
	C \left(\norm{\bff{u}_h^{n+1}}{\bb{L}^8}^2 + \norm{\bff{u}_h^n}{\bb{L}^8}^2\right) \norm{\delta \bff{u}_h^{n+1}}{\bb{L}^4} \norm{\delta \bff{H}_h^{n+1}}{\bb{L}^2}
	\\
	&\quad
	+
	C \norm{\delta \bff{u}_h^{n+1}}{\bb{L}^2} \norm{\delta \bff{H}_h^{n+1}}{\bb{L}^2}
	+
	C \norm{\delta \bff{u}_h^n}{\bb{L}^2} \norm{\bff{H}_h^{n+1}}{\bb{L}^4} \norm{\delta \bff{u}_h^{n+1}}{\bb{L}^4}
	\\
	&\quad
	+
	C \norm{\bff{u}_h^{n-1}}{\bb{L}^4} \norm{\delta \bff{H}_h^{n+1}}{\bb{L}^2} \norm{\delta \bff{u}_h^{n+1}}{\bb{L}^4}.
\end{align*}
Note that $P\geq 0$. The expression in the last step can be estimated as follows.

\medskip
\noindent
\underline{Case 1: $d=1$ or $2$}. By Young's inequality, Sobolev embedding $\bb{H}^1\hookrightarrow \bb{L}^8$, and~\eqref{equ:L4 gal nir}, we have
\begin{align}\label{equ:E dun}
	E&\leq
	C \norm{\delta \bff{u}_h^n}{\bb{L}^2}^2
	+
	C\norm{\delta \bff{u}_h^{n+1}}{\bb{L}^2}^2
	+
	\epsilon \norm{\nabla \delta \bff{u}_h^{n+1}}{\bb{L}^2}^2
	+
	\epsilon \norm{\delta \bff{H}_h^{n+1}}{\bb{L}^2}^2
	+
	C \norm{\bff{H}_h^{n+1}}{\bb{L}^4}^2 \norm{\delta \bff{u}_h^n}{\bb{L}^2}^2.
\end{align}
Choosing $\epsilon>0$ sufficiently small, multiplyting by $2k$, rearranging the terms, then summing over $m\in \{1,2,\ldots,n-1\}$, we infer that
\begin{align*}
	\norm{\delta \bff{u}_h^n}{\bb{L}^2}^2
	+
	k \sum_{m=1}^{n-1} \left(
	\norm{\delta \bff{H}_h^{m+1}}{\bb{L}^2}^2
	+
	\norm{\nabla \delta \bff{u}_h^{m+1}}{\bb{L}^2}^2 \right)
	\leq
	\norm{\delta \bff{u}_h^1}{\bb{L}^2}^2
	+
	Ck\sum_{m=1}^{n-1} \left(1+ C \norm{\bff{H}_h^{m+1}}{\bb{L}^4}^2\right) \norm{\delta \bff{u}_h^m}{\bb{L}^2}^2.
\end{align*}
Note that by taking $n=0$ and $\bff{\chi}=\delta \bff{u}_h^1$ in the first equation of \eqref{equ:euler}, we have
\[
	\norm{\delta \bff{u}_h^1}{\bb{L}^2}^2 
	\lesssim
	\norm{\bff{H}_h^1}{\bb{H}^1}^2 + \norm{\Delta_h \bff{H}_h^1}{\bb{L}^2}^2 \lesssim 1.
\]
Therefore, the discrete Gronwall inequality and~\eqref{equ:stab L4 H1 euler} then imply~\eqref{equ:dt un stab euler}.

\medskip
\noindent
\underline{Case 2: $d=3$ and the triangulation is quasi-uniform}. In this case, we apply Young's inequality, \eqref{equ:stab uh L infty euler}, and~\eqref{equ:L4 gal nir} instead to infer~\eqref{equ:E dun}. 
The same conclusion is then attained.
\end{proof}

\begin{proposition}\label{pro:Hn H1 stab euler}
Suppose that the hypotheses of Proposition~\ref{pro:dt un stab euler} hold. Then for any $k>0$ and~$n\in \bb{N}$,
\begin{align}\label{equ:Hn H1 stab euler}
	\norm{\nabla \bff{H}_h^n}{\bb{L}^2}^2
	\leq
	C,
\end{align}
where $C$ depends on the coefficients of the equation, $K_0$, and $|\mathscr{D}|$, but is independent of $n$, $k$, and $T$.
\end{proposition}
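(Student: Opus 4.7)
The plan is to imitate the continuous-in-time argument used in the semi-discrete proposition immediately preceding Proposition~\ref{pro:dt un stab euler}, where $\norm{\nabla \bff{H}_h(t)}{\bb{L}^2}$ was bounded simply by testing against $\bff{H}_h$. The discrete analogue is to take $\bff{\chi}=\bff{H}_h^{n+1}$ in the first equation of the Euler scheme~\eqref{equ:euler}.

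The crucial simplification is that the Zhang--Li-type cross-product term cancels on the right-hand side, since $\inpro{\bff{u}_h^n\times \bff{H}_h^{n+1}}{\bff{H}_h^{n+1}}=0$ by the orthogonality $\bff{a}\times\bff{b}\perp \bff{b}$. This yields the identity
\begin{align*}
\lambda_r \norm{\bff{H}_h^{n+1}}{\bb{L}^2}^2 + \lambda_e \norm{\nabla \bff{H}_h^{n+1}}{\bb{L}^2}^2 = \inpro{\delta \bff{u}_h^{n+1}}{\bff{H}_h^{n+1}}.
\end{align*}
A direct application of Cauchy--Schwarz and Young's inequality on the right-hand side then gives
\begin{align*}
\lambda_e \norm{\nabla \bff{H}_h^{n+1}}{\bb{L}^2}^2 \lesssim \norm{\delta \bff{u}_h^{n+1}}{\bb{L}^2}^2 + \norm{\bff{H}_h^{n+1}}{\bb{L}^2}^2.
\end{align*}

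To finish, both terms on the right-hand side are uniformly bounded in $n$ by the hypotheses inherited from Proposition~\ref{pro:dt un stab euler}: the $\bb{L}^2$-norm of $\bff{H}_h^{n+1}$ is controlled by~\eqref{equ:Hn L2 stab euler}, and the $\bb{L}^2$-norm of $\delta \bff{u}_h^{n+1}$ is controlled by~\eqref{equ:dt un stab euler}. Dividing by $\lambda_e>0$ yields the claimed uniform bound, with the constant depending on $\lambda_e^{-1}$ as well as the other coefficients, $K_0$, and $|\mathscr{D}|$. Since every ingredient is already in place from the preceding propositions, there is no substantive obstacle here---the proof is essentially a one-liner test-function calculation, mirroring the semi-discrete case almost verbatim.
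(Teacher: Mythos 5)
Your proof is correct and is essentially identical to the paper's: the author also sets $\bff{\chi}=\bff{H}_h^{n+1}$ in \eqref{equ:euler}, uses the vanishing of the cross-product term tested against $\bff{H}_h^{n+1}$, and closes via Young's inequality together with \eqref{equ:Hn L2 stab euler} and \eqref{equ:dt un stab euler}. Your remark about the constant depending on $\lambda_e^{-1}$ is consistent with the stated dependence on the coefficients of the equation.
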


\begin{proof}
Setting $\bff{\chi}=\bff{H}_h^{n+1}$ in \eqref{equ:euler} gives
\begin{align*}
	\lambda_e \norm{\nabla \bff{H}_h^{n+1}}{\bb{L}^2}^2
	=
	\inpro{\delta \bff{u}_h^{n+1}}{\bff{H}_h^{n+1}}
	-
	\lambda_r \norm{\bff{H}_h^{n+1}}{\bb{L}^2}^2
	\leq
	C\norm{\bff{H}_h^{n+1}}{\bb{L}^2}^2
	+
	C\norm{\delta \bff{u}_h^{n+1}}{\bb{L}^2}^2
	\leq C,
\end{align*}
where in the last step we used \eqref{equ:Hn L2 stab euler} and \eqref{equ:dt un stab euler}.
\end{proof}

\begin{remark}
Proposition~\ref{pro:dt un stab euler} and~\ref{pro:Hn H1 stab euler} also hold under a technical smallness assumption on the initial data similar to Proposition~\ref{pro:dt uh L2 assum}. We will not elaborate them further for brevity.
\end{remark}

To derive optimal error estimates for the fully discrete scheme, we begin with some preparatory results. As done previously, we write
\begin{align}
	\label{equ:Un utn}
	\bff{u}_h^n-\bff{u}(t_n)
	&=
	\big(\bff{u}_h^n- R_h\bff{u}(t_n) \big)
	+
	\big(R_h \bff{u}(t_n)-\bff{u}(t_n) \big)
	=:
	\bff{\theta}^n + \bff{\rho}^n,
	\\
	\label{equ:Hn Htn}
	\bff{H}_h^n-\bff{H}(t_n)
	&=
	\big(\bff{H}_h^n- R_h\bff{H}(t_n) \big)
	+
	\big(R_h \bff{H}(t_n)-\bff{H}(t_n) \big)
	=:
	\bff{\xi}^n + \bff{\eta}^n.
\end{align}
Recall that by the definition of Ritz projection,
\begin{align}\label{equ:nab Ritz disc}
	\inpro{\nabla \bff{\rho}^n}{\nabla \bff{\chi}}
	=
	\inpro{\nabla \bff{\eta}^n}{\nabla \bff{\chi}}
	=
	0,
	\quad\forall \bff{\chi} \in \bb{V}_h.
\end{align}

The following lemmas are needed to bound the nonlinear terms in the main theorem.

\begin{lemma}\label{lem:cross disc euler}
	Let $\epsilon>0$ be arbitrary. The following inequality holds:
	\begin{align}
		\label{equ:cross theta est euler}
		\nonumber
		\left| \inpro{\bff{u}_h^{n} \times \bff{H}_h^{n+1} - \bff{u}^{n+1} \times \bff{H}^{n+1}}{\bff{\theta}^{n+1}} \right| 
		&\lesssim
		h^{2(r+1)} + k^2+ \norm{\bff{\theta}^{n+1}}{\bb{L}^2}^2
		\\
		&\quad
		+ \epsilon \norm{\bff{\theta}^n}{\bb{L}^2}^2
		+ \epsilon \norm{\nabla \bff{\theta}^{n+1}}{\bb{L}^2}^2
		+ \epsilon \norm{\bff{\xi}^{n+1}}{\bb{L}^2}^2,
		\\
		\nonumber
		\label{equ:cross xi euler}
		\big| \inpro{\bff{u}_h^n \times \bff{H}_h^{n+1}- \bff{u}^{n+1} \times \bff{H}^{n+1}}{\bff{\xi}^{n+1}} \big| 
		&\lesssim
		h^{2(r+1)} 
		+ 
		k^2
		+
		\norm{\bff{\xi}^{n+1}}{\bb{L}^2}^2 
		\\
		&\quad
		+ 
		\epsilon \norm{\nabla \bff{\xi}^{n+1}}{\bb{L}^2}^2
		+
		\epsilon \norm{\bff{\theta}^n}{\bb{L}^2}^2,
		\\
		\label{equ:cross dt theta euler}
		\nonumber
		\big| \inpro{\bff{u}_h^n \times \bff{H}_h^{n+1}- \bff{u}^{n+1} \times \bff{H}^{n+1}}{\delta \bff{\theta}^{n+1}} \big| 
		&\lesssim
		h^{2(r+1)} + k^2
		+
		\big( \norm{\bff{\theta}^n}{\bb{L}^2}^2 + \epsilon \norm{\nabla \bff{\theta}^n}{\bb{L}^2}^2 \big) \norm{\bff{H}_h^{n+1}}{\bb{H}^1}^2
		\\
		&\quad
		+
		\norm{\bff{\xi}^{n+1}}{\bb{L}^2}^2
		+
		\epsilon \norm{\delta \bff{\theta}^{n+1}}{\bb{L}^2}^2.
	\end{align}
	Moreover, if the triangulation $\mathcal{T}_h$ is globally quasi-uniform, then for any $\bff{\zeta} \in \bb{V}_h$,
	\begin{align}\label{equ:cross est euler}
		\left| \inpro{\bff{u}_h^{n} \times \bff{H}_h^{n+1} - \bff{u}^{n+1} \times \bff{H}^{n+1}}{\bff{\zeta}} \right| 
		&\lesssim
		h^{2(r+1)} + k^2+ \norm{\bff{\xi}^{n+1}}{\bb{L}^2}^2
		+ \epsilon \norm{\bff{\theta}^n}{\bb{L}^2}^2
		+ \epsilon \norm{\bff{\zeta}}{\bb{L}^2}^2
		\\
		\label{equ:cross nab est euler}
		\big|\inpro{\nabla\Pi_h\big(\bff{u}_h^n \times \bff{H}_h^{n+1}- \bff{u}^{n+1} \times \bff{H}^{n+1}\big)}{\bff{\zeta}} \big|
		&\lesssim
		h^{2r} + k^2
		+
		\norm{\bff{\theta}^n}{\bb{H}^1}^2
		+
		\norm{\bff{\xi}^{n+1}}{\bb{H}^1}^2
		+
		\epsilon \norm{\bff{\zeta}}{\bb{L}^2}^2.
	\end{align}
\end{lemma}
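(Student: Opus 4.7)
The plan is to adapt the semi-discrete proof of Lemma~\ref{lem:cross est} to the fully discrete setting, with the only new ingredient being a Taylor-type bound on the time-shift error $\bff{u}^n - \bff{u}^{n+1}$. First, I would decompose
\begin{align*}
	\bff{u}_h^n \times \bff{H}_h^{n+1} - \bff{u}^{n+1} \times \bff{H}^{n+1}
	&= (\bff{\theta}^n + \bff{\rho}^n) \times \bff{H}^{n+1}
	+ \bff{u}_h^n \times (\bff{\xi}^{n+1} + \bff{\eta}^{n+1}) \\
	&\quad + (\bff{u}^n - \bff{u}^{n+1}) \times \bff{H}^{n+1},
\end{align*}
which is exactly the splitting \eqref{equ:cross split} used in the semi-discrete case, plus an extra time-shift piece. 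The new piece is controlled in any $\bb{L}^p$-norm by $\norm{\bff{u}^n - \bff{u}^{n+1}}{\bb{L}^p} \lesssim k$ via Taylor's theorem (as in~\eqref{equ:delta un Lp euler}), which after squaring contributes the $k^2$ terms appearing throughout the bounds.

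For \eqref{equ:cross theta est euler} and \eqref{equ:cross xi euler}, I would test the decomposition against $\bff{\theta}^{n+1}$ (respectively $\bff{\xi}^{n+1}$) and rerun the H\"older--Young argument from Lemma~\ref{lem:cross est}: the regularity assumption \eqref{equ:ass 2 u} gives $\bff{H}^{n+1} \in \bb{L}^\infty$; the $\bff{u}_h^n \in \bb{L}^4$ bound comes from \eqref{equ:stab L4 H1 euler}; terms involving $\bff{\theta}$ or $\bff{\xi}$ in $\bb{L}^4$ are converted to $\bb{H}^1$ via~\eqref{equ:L4 young}; and the Ritz estimate~\eqref{equ:Ritz ineq} absorbs $\bff{\rho}^n$ and $\bff{\eta}^{n+1}$ into $h^{2(r+1)}$. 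The slight asymmetry in \eqref{equ:cross theta est euler} (where $\norm{\bff{\theta}^{n+1}}{\bb{L}^2}^2$ appears with a constant-$1$ coefficient while $\norm{\bff{\theta}^n}{\bb{L}^2}^2$ carries $\epsilon$) is deliberate: the $\epsilon$-factor on $\bff{\theta}^n$ ensures that when the lemma is later fed into a discrete Gronwall argument, only the current iterate enters the non-summable remainder.

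For \eqref{equ:cross dt theta euler}, the complication is that $\delta\bff{\theta}^{n+1}$ has no a priori bound, so every contribution must leave only $\epsilon \norm{\delta\bff{\theta}^{n+1}}{\bb{L}^2}^2$ after Young. I would mirror the treatment of $\partial_t \bff{\theta}$ in~\eqref{equ:uh cross Hh dt theta}: the term $\inpro{\bff{\theta}^n \times \bff{H}_h^{n+1}}{\delta\bff{\theta}^{n+1}}$ is estimated via $\bb{L}^4 \times \bb{L}^4 \times \bb{L}^2$, after which $\norm{\bff{\theta}^n}{\bb{L}^4}^2 \norm{\bff{H}_h^{n+1}}{\bb{L}^4}^2$ is split using \eqref{equ:L4 young} and the $\bb{H}^1 \hookrightarrow \bb{L}^4$ embedding. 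The $\bff{u}_h^n \times \bff{\xi}^{n+1}$ piece is handled through $\bb{L}^4 \times \bb{L}^2 \times \bb{L}^4$, again with $\bff{u}_h^n$ in $\bb{L}^4$ from~\eqref{equ:stab L4 H1 euler}, and the time-shift term contributes the $k^2$ as before.

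For the quasi-uniform estimates \eqref{equ:cross est euler} and \eqref{equ:cross nab est euler}, the key simplification is the $\bb{L}^\infty$-stability \eqref{equ:stab uh L infty euler} of $\bff{u}_h^n$ together with \eqref{equ:Ritz stab infty} for $R_h \bff{u}$; these let the cross-product terms be bounded against a generic $\bff{\zeta} \in \bb{V}_h$ without needing any embedding on $\bff{\zeta}$ itself. For \eqref{equ:cross nab est euler}, I additionally use the $\bb{H}^1$-stability \eqref{equ:H1 stab proj} of $\Pi_h$, apply the product rule $\nabla(\bff{v}\times\bff{w}) = \nabla\bff{v}\times\bff{w} + \bff{v}\times\nabla\bff{w}$, and invoke \eqref{equ:disc lapl L6} together with \eqref{equ:stab Delta uh euler} to control $\norm{\nabla \bff{u}_h^n}{\bb{L}^6}$, duplicating the structure of~\eqref{equ:nab uh cross Hh}. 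The main obstacle throughout is bookkeeping: ensuring that wherever $\bff{\theta}^n$ (the previous-step error) appears it carries the small factor $\epsilon$, while the current-step quantity $\norm{\bff{\theta}^{n+1}}{\bb{L}^2}^2$ is kept with coefficient $1$ so that it survives to be absorbed by discrete Gronwall, and that the $k^2$ from the time-shift is tracked consistently in every bound.
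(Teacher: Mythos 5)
Your overall route is the paper's route: the same splitting of $\bff{u}_h^n\times\bff{H}_h^{n+1}-\bff{u}^{n+1}\times\bff{H}^{n+1}$ (your extra time-shift piece $(\bff{u}^n-\bff{u}^{n+1})\times\bff{H}^{n+1}$ is exactly the $-k\,\delta\bff{u}^{n+1}\times\bff{H}^{n+1}$ term in \eqref{equ:cross un Hn}), the same Taylor/\eqref{equ:delta un Lp euler} source of the $k^2$, the same reduction of \eqref{equ:cross theta est euler}--\eqref{equ:cross xi euler} to the semi-discrete Lemma~\ref{lem:cross est}, and the same use of \eqref{equ:stab uh L infty euler}, \eqref{equ:H1 stab proj}, and \eqref{equ:disc lapl L6} for the quasi-uniform estimates \eqref{equ:cross est euler}--\eqref{equ:cross nab est euler}.

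There is, however, one step that fails as written: your treatment of the middle term in \eqref{equ:cross dt theta euler}. You propose to bound the $\bff{\xi}^{n+1}$ contribution by H\"older with exponents $\bb{L}^4\times\bb{L}^2\times\bb{L}^4$, placing $\bff{u}_h^n$ in $\bb{L}^4$ and hence $\delta\bff{\theta}^{n+1}$ in $\bb{L}^4$. But the right-hand side of \eqref{equ:cross dt theta euler} only offers $\epsilon\norm{\delta\bff{\theta}^{n+1}}{\bb{L}^2}^2$, and $\norm{\delta\bff{\theta}^{n+1}}{\bb{L}^4}^2$ cannot be absorbed there without invoking \eqref{equ:L4 young} and paying $\epsilon\norm{\nabla\delta\bff{\theta}^{n+1}}{\bb{L}^2}^2$, a quantity that is neither present in the target estimate nor controlled anywhere in the scheme's a priori bounds. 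The fix is the one the paper actually uses (and which is the genuine mirror of \eqref{equ:uh cross Hh dt theta}): rewrite the difference as $\bff{\theta}^n\times\bff{H}_h^{n+1}+R_h\bff{u}^n\times(\bff{\xi}^{n+1}+\bff{\eta}^{n+1})+(\bff{\rho}^n-k\,\delta\bff{u}^{n+1})\times\bff{H}^{n+1}$ and estimate the middle term as $\norm{R_h\bff{u}^n}{\bb{L}^\infty}\norm{\bff{\xi}^{n+1}+\bff{\eta}^{n+1}}{\bb{L}^2}\norm{\delta\bff{\theta}^{n+1}}{\bb{L}^2}$, using the maximum-norm stability \eqref{equ:Ritz stab infty} of the Ritz projection (assumed throughout, with no quasi-uniformity needed) so that $\delta\bff{\theta}^{n+1}$ only ever appears in $\bb{L}^2$. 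With that substitution the rest of your argument goes through.
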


\begin{proof}
	We write
	\begin{align}\label{equ:cross un Hn}
		\bff{u}_h^{n} \times \bff{H}_h^{n+1} - \bff{u}^{n+1} \times \bff{H}^{n+1}
		&=
		\bff{u}_h^{n} \times \left(\bff{\xi}^{n+1}+\bff{\eta}^{n+1}\right)
		+
		\left(\bff{\theta}^{n} + \bff{\rho}^{n} - k\cdot \delta \bff{u}^{n+1}\right) \times \bff{H}^{n+1}.
	\end{align}
	The proof of \eqref{equ:cross theta est euler} then follows by arguments similar to that in \eqref{equ:uh cross Hh theta} (noting \eqref{equ:delta un Lp euler}), without assuming $\mathcal{T}_h$ is globally quasi-uniform. Similarly, the proof of~\eqref{equ:cross xi euler} follows that in~\eqref{equ:uh cross Hh dt theta}.
	
	Next, following \eqref{equ:nonlinear cross}, we write
	\begin{align*}
		\bff{u}_h^{n} \times \bff{H}_h^{n+1} - \bff{u}^{n+1} \times \bff{H}^{n+1}
		&=
		\bff{\theta}^n \times \bff{H}_h^{n+1}
		+
		R_h\bff{u}^n \times (\bff{\xi}^{n+1}+\bff{\eta}^{n+1})
		+
		(\bff{\rho}^n- k \cdot \delta \bff{u}^{n+1}) \times \bff{H}^{n+1}.
	\end{align*}
	The proof of \eqref{equ:cross dt theta euler} then follows along the line of~\eqref{equ:uh cross Hh dt theta} with obvious modifications.
	
	Now, suppose the triangulation is globally quasi-uniform (and thus \eqref{equ:stab uh L infty euler} holds in this case). Noting \eqref{equ:cross un Hn}, by H\"older's inequality and assumptions \eqref{equ:ass 2 u} on the exact solution, we have
	\begin{align*}
		\nonumber
		&\norm{\bff{u}_h^{n} \times \bff{H}_h^{n+1} - \bff{u}^{n+1} \times \bff{H}^{n+1}}{\bb{L}^2}
		\\
		\nonumber
		&\leq
		\norm{\bff{u}_h^{n}}{\bb{L}^\infty} \norm{\bff{\xi}^{n+1}+\bff{\eta}^{n+1}}{\bb{L}^2}
		+
		\left( \norm{\bff{\theta}^{n} + \bff{\rho}^{n}}{\bb{L}^2} + k \norm{\delta \bff{u}^{n+1}}{\bb{L}^2} \right) \norm{\bff{H}^{n+1}}{\bb{L}^\infty}
		\\
		&\lesssim
		h^{r+1} + \norm{\bff{\xi}^{n+1}}{\bb{L}^2}
		+ \norm{\bff{\theta}^n}{\bb{L}^2} + k,
	\end{align*}
	where in the last step we used \eqref{equ:stab uh L infty euler} and \eqref{equ:Ritz ineq}. Estimate \eqref{equ:cross est euler} follows by Young's inequality.
	Similarly, noting \eqref{equ:H1 stab proj}, we obtain \eqref{equ:cross nab est euler} following the proof of~\eqref{equ:nab uh cross Hh}. This completes the proof of the lemma.
\end{proof}

\begin{lemma}\label{lem:cub disc euler}
	Let $\epsilon>0$ be arbitrary. The following inequalities hold:
	\begin{align}\label{equ:uh2uh theta disc euler}
		\left| \inpro{\abs{\bff{u}_h^{n+1}}^2 \bff{u}_h^{n+1}  - \abs{\bff{u}^{n+1}}^2 \bff{u}^{n+1}}{\bff{\theta}^{n+1}} \right| 
		&\lesssim
		h^{2(r+1)} + \norm{\bff{\theta}^{n+1}}{\bb{L}^2}^2 
		+ \epsilon \norm{\nabla \bff{\theta}^{n+1}}{\bb{L}^2}^2,
		\\
		\nonumber
		\label{equ:uh2uh xi disc euler}
		\left| \inpro{\abs{\bff{u}_h^{n+1}}^2 \bff{u}_h^{n+1}  - \abs{\bff{u}^{n+1}}^2 \bff{u}^{n+1}}{\bff{\xi}^{n+1}} \right| 
		&\lesssim
		h^{2(r+1)}  
		+ \left( 1+\norm{\bff{\xi}^{n+1}}{\bb{H}^1}^2 \right) \norm{\bff{\theta}^{n+1}}{\bb{L}^2}^2 
		\\
		&\quad
		+ \epsilon \norm{\nabla \bff{\theta}^{n+1}}{\bb{L}^2}^2
		+ \epsilon \norm{\bff{\xi}^{n+1}}{\bb{L}^2}^2,
		\\
		\label{equ:uh2uh dt theta euler}
		\nonumber
		\big| \inpro{\abs{\bff{u}_h^{n+1}}^2 \bff{u}_h^{n+1}  - \abs{\bff{u}^{n+1}}^2 \bff{u}^{n+1}}{\delta \bff{\theta}^{n+1}} \big|
		&\lesssim
		h^{2(r+1)}
		+
		\big( 1+\norm{\delta \bff{\theta}^{n+1}}{\bb{L}^2}^2 \big) \norm{\bff{\theta}^{n+1}}{\bb{H}^1}^2  
		\\
		&\quad
		+ 
		\epsilon \norm{\delta \bff{\theta}^{n+1}}{\bb{L}^2}^2.
	\end{align}
	Suppose now that the triangulation $\mathcal{T}_h$ is globally quasi-uniform. Then for any $\bff{\zeta}\in \bb{V}_h$,
	\begin{align}\label{equ:psi xi disc euler}
		\left| \inpro{\abs{\bff{u}_h^{n+1}}^2 \bff{u}_h^{n+1}  - \abs{\bff{u}^{n+1}}^2 \bff{u}^{n+1}}{\bff{\zeta}} \right| 
		&\lesssim
		h^{2(r+1)}  + \norm{\bff{\theta}^{n+1}}{\bb{L}^2}^2 
		+ \epsilon \norm{\bff{\zeta}}{\bb{L}^2}^2,
		\\
		\label{equ:psi xi Delta euler}
		\left| \inpro{\abs{\bff{u}_h^{n+1}}^2 \bff{u}_h^{n+1}  - \abs{\bff{u}^{n+1}}^2 \bff{u}^{n+1}}{\Delta_h \bff{\zeta}} \right|
		&\lesssim
		h^{2r} + \norm{\bff{\theta}^{n+1}}{\bb{H}^1}^2
		+
		\epsilon \norm{\nabla \bff{\zeta}}{\bb{L}^2}^2.
	\end{align}
\end{lemma}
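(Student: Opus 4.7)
The plan is to treat Lemma~\ref{lem:cub disc euler} as the fully discrete analogue of Lemma~\ref{lem:nonlinear est}, with every stability bound used in the semi-discrete setting replaced by its counterpart at the time level $n{+}1$: the $\bb{H}^1$-bound \eqref{equ:stab L4 H1 euler} takes over from \eqref{equ:semidisc-est1}, the $\bb{L}^\infty$-bound \eqref{equ:stab uh L infty euler} (when the mesh is globally quasi-uniform) replaces \eqref{equ:semidisc-est L infty}, and $\|R_h\bff{u}^{n+1}\|_{\bb{L}^\infty}\lesssim 1$ follows from the Ritz maximum-norm stability \eqref{equ:Ritz stab infty} together with assumption \eqref{equ:ass 2 u}. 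Thus the bulk of the work consists of re-running, at one fixed time step, the H\"older/Sobolev/Young chains already laid down in the proof of Lemma~\ref{lem:nonlinear est}, and no new functional-analytic idea is needed.

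More concretely, for \eqref{equ:uh2uh theta disc euler} I would use the simple expansion \eqref{equ:simple uhu} with $\bff{u},\bff{u}_h$ replaced by $\bff{u}^{n+1},\bff{u}_h^{n+1}$, apply H\"older with the Sobolev embedding $\bb{H}^1\hookrightarrow\bb{L}^6$, and then absorb the $\bb{L}^4$-norm of $\bff{\theta}^{n+1}$ via \eqref{equ:L4 young}; the estimate \eqref{equ:Ritz ineq} handles the $\bff{\rho}^{n+1}$ contribution and accounts for the $h^{2(r+1)}$ term. For \eqref{equ:uh2uh xi disc euler} and \eqref{equ:uh2uh dt theta euler} I would instead start from the three-term identity \eqref{equ:nonlinear Rh u} at time step $n+1$, which splits the cubic difference into contributions involving $\bff{\theta}^{n+1}$, $|R_h\bff{u}^{n+1}|^2\bff{\theta}^{n+1}$, $|R_h\bff{u}^{n+1}|^2\bff{\rho}^{n+1}$, and a $\bff{\rho}^{n+1}$ remainder; bounding each piece exactly as in \eqref{equ:uh2uh xi} and \eqref{equ:uh2uh dt theta}, using \eqref{equ:Ritz stab infty} for $\|R_h\bff{u}^{n+1}\|_{\bb{L}^\infty}$ and \eqref{equ:stab L4 H1 euler} for $\|\bff{u}_h^{n+1}\|_{\bb{L}^6}$, delivers the stated bounds.

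Under the global quasi-uniformity hypothesis, \eqref{equ:psi xi disc euler} becomes essentially trivial: \eqref{equ:stab uh L infty euler} and assumption \eqref{equ:ass 2 u} give
\begin{equation*}
\bigl\||\bff{u}_h^{n+1}|^2\bff{u}_h^{n+1}-|\bff{u}^{n+1}|^2\bff{u}^{n+1}\bigr\|_{\bb{L}^2}
\lesssim \|\bff{\theta}^{n+1}+\bff{\rho}^{n+1}\|_{\bb{L}^2},
\end{equation*}
and the estimate then follows from Cauchy--Schwarz, Young, and \eqref{equ:Ritz ineq}. For \eqref{equ:psi xi Delta euler}, I would first use \eqref{equ:disc laplacian} to integrate by parts, writing
\begin{equation*}
\bigl\langle|\bff{u}_h^{n+1}|^2\bff{u}_h^{n+1}-|\bff{u}^{n+1}|^2\bff{u}^{n+1},\Delta_h\bff{\zeta}\bigr\rangle
= -\bigl\langle\nabla\Pi_h\bigl(|\bff{u}_h^{n+1}|^2\bff{u}_h^{n+1}-|\bff{u}^{n+1}|^2\bff{u}^{n+1}\bigr),\nabla\bff{\zeta}\bigr\rangle,
\end{equation*}
expand the cubic difference as $|\bff{u}_h^{n+1}|^2(\bff{u}_h^{n+1}-\bff{u}^{n+1})+(|\bff{u}_h^{n+1}|^2-|\bff{u}^{n+1}|^2)\bff{u}^{n+1}$, distribute the gradient (using \eqref{equ:H1 stab proj} to retain $\Pi_h$), and bound each factor via H\"older with $\|\nabla\bff{u}_h^{n+1}\|_{\bb{L}^6}\lesssim\|\Delta_h\bff{u}_h^{n+1}\|_{\bb{L}^2}$ from \eqref{equ:disc lapl L6}--\eqref{equ:stab Delta uh euler}, together with \eqref{equ:ass 2 u} for $\nabla\bff{u}^{n+1}$ and $\partial_t\bff{u}^{n+1}$-type quantities; the mandatory $h^{2r}$ loss arises because $\nabla\bff{\rho}^{n+1}$ only enjoys the bound $\|\nabla\bff{\rho}^{n+1}\|_{\bb{L}^2}\lesssim h^r$ from \eqref{equ:Ritz ineq}.

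The main obstacle is precisely \eqref{equ:psi xi Delta euler}: one must route the gradient carefully so as (i) never to need $\|\bff{u}_h^{n+1}\|_{\bb{L}^\infty}$ beyond what Sobolev embedding plus \eqref{equ:stab L4 H1 euler} provides, (ii) to invoke \eqref{equ:disc lapl L6}--\eqref{equ:stab Delta uh euler} only on $\bff{u}_h^{n+1}$ and never on $\bff{\theta}^{n+1}$, and (iii) to correctly account for the weaker order of $\|\nabla\bff{\rho}^{n+1}\|_{\bb{L}^2}$ compared with $\|\bff{\rho}^{n+1}\|_{\bb{L}^2}$. Once the decomposition has been organised so that each $\nabla$ lands on a factor whose estimate has already been recorded, the remaining arithmetic is routine.
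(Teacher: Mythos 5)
Your proposal is correct and follows essentially the same route as the paper: the first four estimates are obtained by rerunning the arguments of Lemma~\ref{lem:nonlinear est} at time level $n+1$ via the expansions \eqref{equ:psi u2u euler} and \eqref{equ:uhn2 minus un2}, with the discrete stability bounds \eqref{equ:stab L4 H1 euler}, \eqref{equ:stab uh L infty euler}, and \eqref{equ:Ritz stab infty} in place of their semi-discrete counterparts, and \eqref{equ:psi xi Delta euler} is proved exactly as you describe, by passing to $\inpro{\nabla\Pi_h S}{\nabla\bff{\zeta}}$ and bounding $\norm{\nabla S}{\bb{L}^2}\lesssim\norm{\bff{\theta}^{n+1}}{\bb{H}^1}+h^r$ with the product rule, H\"older, and \eqref{equ:H1 stab proj}. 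The only cosmetic difference is that you make the use of \eqref{equ:disc lapl L6}--\eqref{equ:stab Delta uh euler} for $\norm{\nabla\bff{u}_h^{n+1}}{\bb{L}^6}$ explicit where the paper leaves it implicit.
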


\begin{proof}
	First, we write
	\begin{align}\label{equ:psi u2u euler}
		\nonumber
		&\abs{\bff{u}_h^{n+1}}^2 \bff{u}_h^{n+1}  - \abs{\bff{u}^{n+1}}^2 \bff{u}^{n+1}
		\\
		&=
		\abs{\bff{u}_h^{n+1}}^2  \left(\bff{\theta}^{n+1} + \bff{\rho}^{n+1}\right) 
		+
		\left(\bff{\theta}^{n+1} + \bff{\rho}^{n+1}\right) \cdot \left( \bff{u}_h^{n+1}+ \bff{u}^{n+1} \right)  \bff{u}^{n+1}.
	\end{align}
	The proof of \eqref{equ:uh2uh theta disc euler} then follows the same argument as that of \eqref{equ:uh2uh theta}.

	Next, noting $\bff{u}_h^{n+1}=\bff{\theta}^{n+1}+ R_h \bff{u}^{n+1}$ and following \eqref{equ:nonlinear Rh u}, we write
	\begin{align}\label{equ:uhn2 minus un2}
		\nonumber
		\abs{\bff{u}_h^{n+1}}^2 \bff{u}_h^{n+1}  - \abs{\bff{u}^{n+1}}^2 \bff{u}^{n+1}
		&=
		\big(\bff{\theta}^{n+1} \cdot (\bff{\theta}^{n+1}+ 2 R_h \bff{u}^{n+1}) \big) \bff{u}_h^{n+1}
		+
		\abs{R_h \bff{u}^{n+1}}^2 \big(\bff{\theta}^{n+1}+\bff{\rho}^{n+1}\big)
		\\
		&\quad
		+
		\big( \bff{\rho}^{n+1} \cdot (R_h \bff{u}^{n+1} + \bff{u}^{n+1}) \big) \bff{u}^{n+1}.
	\end{align}
	Applying the same argument as in the proof of \eqref{equ:uh2uh xi}, we then obtain \eqref{equ:uh2uh xi disc euler}. The proof of \eqref{equ:uh2uh dt theta euler} follows along the line of \eqref{equ:uh2uh dt theta} (by replacing $\bff{\theta}$, $\bff{\rho}$, $\bff{u}$, and $\partial_t \bff{\theta}$ with $\bff{\theta}^{n+1}$, $\bff{\rho}^{n+1}$, $\bff{u}^{n+1}$, and $\delta \bff{\theta}^{n+1}$ respectively).

	Next, if the triangulation $\mathcal{T}_h$ is quasi-uniform, then \eqref{equ:stab uh L infty euler} holds. As such, we can bound the $\bb{L}^\infty$ norms of $\bff{u}_h^n$ and $\bff{u}_h^{n+1}$ appearing in \eqref{equ:psi u2u euler} (uniformly in $n$ and $h$), giving
	\begin{align*}
		\norm{\abs{\bff{u}_h^{n+1}}^2 \bff{u}_h^{n+1}  - \abs{\bff{u}^{n+1}}^2 \bff{u}^{n+1}}{\bb{L}^2}
		\lesssim
		\norm{\bff{\theta}^{n+1}}{\bb{L}^2}
		+ \norm{\bff{\rho}^{n+1}}{\bb{L}^2}.
	\end{align*} 
	The estimate \eqref{equ:psi xi disc euler} then follows by Young's inequality.
	
	It remains to prove \eqref{equ:psi xi Delta euler}. Let $S$ be the right-hand side of \eqref{equ:psi u2u euler}. Then we have
	\begin{align}\label{equ:inpro S Delta euler}
		\inpro{S}{\Delta_h \bff{\zeta}}
		=
		\inpro{\nabla \Pi_h S}{\nabla \bff{\zeta}}.
	\end{align}
	We will proceed by estimating $\norm{\nabla S}{\bb{L}^2}$ using the expression \eqref{equ:uhn2 minus un2}. By the product rule for gradient,
	\begin{align*}
		\nabla S
		&=
		2\left(\bff{u}_h^{n+1}\cdot \nabla \bff{u}_h^{n+1}\right) \left(\bff{\theta}^{n+1}+\bff{\rho}^{n+1}\right)
		+
		\abs{\bff{u}_h^{n+1}}^2 \left(\nabla \bff{\theta}^{n+1}+\nabla \bff{\rho}^{n+1}\right)
		\\
		&\quad
		+
		\left(\nabla \bff{\theta}^{n+1}+\nabla \bff{\rho}^{n+1}\right) \cdot \left(\bff{u}_h^{n+1}+\bff{u}^{n+1}\right) \bff{u}^{n+1}
		+
		\left(\bff{\theta}^{n+1}+\bff{\rho}^{n+1}\right)\cdot \left(\nabla \bff{u}_h^{n+1}+\nabla \bff{u}^{n+1}\right) \bff{u}^{n+1}
		\\
		&\quad
		+
		\left(\bff{\theta}^{n+1}+\bff{\rho}^{n+1}\right) \cdot \left(\bff{u}_h^{n+1}+\bff{u}^{n+1}\right) \nabla \bff{u}^{n+1}.
	\end{align*}
	Therefore, by H\"older's inequalities and Sobolev embedding $\bb{H}^1\hookrightarrow \bb{L}^6$ (and noting \eqref{equ:stab uh L infty euler}), it is straightforward to see that
	\begin{align*}
		\norm{\nabla S}{\bb{L}^2} \lesssim \norm{\bff{\theta}^{n+1}}{\bb{H}^1} + \norm{\bff{\rho}^{n+1}}{\bb{H}^1}
		\lesssim
		\norm{\bff{\theta}^{n+1}}{\bb{H}^1} + h^r.
	\end{align*}
	Inequality~\eqref{equ:psi xi Delta euler} then follows by applying Young's inequality and \eqref{equ:H1 stab proj} to \eqref{equ:inpro S Delta euler}.
\end{proof}

\begin{lemma}\label{lem:psi psi dt un euler}
	Let $\epsilon>0$ be arbitrary.
	\begin{align}\label{equ:psi psi dt un euler}
		\nonumber
		&\left| \inpro{\frac{\abs{\bff{u}_h^{n+1}}^2 \bff{u}_h^{n+1}- \abs{\bff{u}_h^n}^2 \bff{u}_h^n}{k} - \partial_t \left(\abs{\bff{u}^{n+1}}^2 \bff{u}^{n+1}\right)}{\bff{\xi}^{n+1}} \right| 
		\\
		\nonumber
		&\quad
		\lesssim
		h^{2(r+1)} + k^2
		+
		\left(1+ \norm{\bff{\xi}^{n+1}}{\bb{H}^1}^2 + \norm{\bff{\xi}^n}{\bb{H}^1}^2\right) \left(\norm{\bff{\theta}^{n+1}}{\bb{H}^1}^2 + \norm{\bff{\theta}^n}{\bb{H}^1}^2 \right)
		\\
		&\qquad
		+
		\norm{\bff{\xi}^n}{\bb{L}^2}^2
		+
		\epsilon \norm{\bff{\xi}^{n+1}}{\bb{L}^2}^2
		+
		\epsilon \norm{\delta \bff{\theta}^{n+1}}{\bb{L}^2}^2.
	\end{align}
	Moreover, if the triangulation $\mathcal{T}_h$ is globally quasi-uniform, then for any $\bff{\zeta}\in \bb{V}_h$,
	\begin{align}\label{equ:dt psi zeta euler}
		\nonumber
		\left| \inpro{\frac{\abs{\bff{u}_h^{n+1}}^2 \bff{u}_h^{n+1}- \abs{\bff{u}_h^n}^2 \bff{u}_h^n}{k} - \partial_t \left(\abs{\bff{u}^{n+1}}^2 \bff{u}^{n+1}\right)}{\bff{\zeta}} \right| 
		&\lesssim
		h^{2(r+1)}
		+ 
		\norm{\bff{\theta}^n}{\bb{L}^2}^2 
		+ 
		\norm{\bff{\theta}^{n+1}}{\bb{L}^2}^2 
		\\
		&\quad
		+
		\norm{\delta \bff{\theta}^{n+1}}{\bb{L}^2}^2 
		+
		\epsilon \norm{\bff{\zeta}}{\bb{H}^1}^2.
	\end{align}
\end{lemma}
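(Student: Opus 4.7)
The plan is to follow the strategy used in the proof of \eqref{equ:dt uh2uh xi} and \eqref{equ:dt uh2uh} from Lemma~\ref{lem:nonlinear est}, with two modifications: (a) replace continuous time derivatives $\partial_t$ with the discrete difference $\delta$, and (b) introduce a consistency error obtained by comparing $\frac{\mathcal{N}(\bff{u}^{n+1})-\mathcal{N}(\bff{u}^n)}{k}$ with $\partial_t\mathcal{N}(\bff{u}^{n+1})$, where $\mathcal{N}(\bff{v}) := |\bff{v}|^2 \bff{v}$. Concretely, I would write
\begin{align*}
	\frac{\mathcal{N}(\bff{u}_h^{n+1})-\mathcal{N}(\bff{u}_h^n)}{k} - \partial_t \mathcal{N}(\bff{u}^{n+1})
	= Q_1 + Q_2,
\end{align*}
where
\begin{align*}
	Q_1 &:= \frac{\big[\mathcal{N}(\bff{u}_h^{n+1}) - \mathcal{N}(\bff{u}^{n+1})\big] - \big[\mathcal{N}(\bff{u}_h^n) - \mathcal{N}(\bff{u}^n)\big]}{k},\\
	Q_2 &:= \frac{\mathcal{N}(\bff{u}^{n+1}) - \mathcal{N}(\bff{u}^n)}{k} - \partial_t \mathcal{N}(\bff{u}^{n+1}).
\end{align*}

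The term $Q_2$ is purely a time-consistency error. Since the regularity assumption \eqref{equ:ass 2 u} together with the Sobolev embedding $\bb{H}^{r+1}\hookrightarrow \bb{L}^\infty$ (for $r\geq 1$) imply $\|\partial_{tt}\mathcal{N}(\bff{u})\|_{L^\infty(\bb{L}^2)}\leq C$, Taylor expansion yields $\|Q_2\|_{\bb{L}^2}\lesssim k$, and hence $|\langle Q_2,\bff{\xi}^{n+1}\rangle|\lesssim k^2 + \epsilon \|\bff{\xi}^{n+1}\|_{\bb{L}^2}^2$ and $|\langle Q_2,\bff{\zeta}\rangle|\lesssim k^2 + \epsilon \|\bff{\zeta}\|_{\bb{L}^2}^2$ via Young's inequality. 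This accounts for the $k^2$ contribution on the right-hand side of both \eqref{equ:psi psi dt un euler} and \eqref{equ:dt psi zeta euler}.

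For $Q_1$, I would apply the identity \eqref{equ:nonlinear Rh u} to $\mathcal{N}(\bff{u}_h^m)-\mathcal{N}(\bff{u}^m)$ at both $m=n$ and $m=n+1$, producing three types of summands (in short: $(\bff{\theta}^m \cdot(\bff{\theta}^m+2R_h\bff{u}^m))\bff{u}_h^m$, $|R_h\bff{u}^m|^2(\bff{\theta}^m+\bff{\rho}^m)$, and $(\bff{\rho}^m\cdot(R_h\bff{u}^m+\bff{u}^m))\bff{u}^m$). Using the discrete product rule $\delta(a^{n+1}b^{n+1}) = (\delta a^{n+1})b^{n+1} + a^n (\delta b^{n+1})$ repeatedly, each summand of $Q_1$ turns into a finite sum of products in which one of the factors is a discrete difference. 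The key point is that all discrete differences other than $\delta\bff{\theta}^{n+1}$ are \emph{uniformly controlled}: $\|\delta\bff{\rho}^{n+1}\|_{\bb{L}^2}\lesssim h^{r+1}$ and $\|\delta \bff{u}^{n+1}\|_{\bb{L}^p}\lesssim 1$ by \eqref{equ:Ritz ineq} and \eqref{equ:ass 2 u}, while $\delta \bff{u}_h^{n+1}$ is split as $\delta\bff{\theta}^{n+1}+\delta\bff{\rho}^{n+1}+\delta\bff{u}^{n+1}$. After this splitting, each resulting term is bounded by repeating the Hölder/Young/Sobolev/Gagliardo--Nirenberg estimates of the eight-term decomposition $|T_1|,\ldots,|T_8|$ appearing in the proof of \eqref{equ:dt uh2uh xi}, now evaluated at both $n$ and $n+1$; this produces exactly the combination $(1+\|\bff{\xi}^{n+1}\|_{\bb{H}^1}^2+\|\bff{\xi}^n\|_{\bb{H}^1}^2)(\|\bff{\theta}^{n+1}\|_{\bb{H}^1}^2+\|\bff{\theta}^n\|_{\bb{H}^1}^2)$ together with $\|\bff{\xi}^n\|_{\bb{L}^2}^2 + \epsilon\|\bff{\xi}^{n+1}\|_{\bb{L}^2}^2 + \epsilon\|\delta\bff{\theta}^{n+1}\|_{\bb{L}^2}^2$. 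For the quasi-uniform case \eqref{equ:dt psi zeta euler}, the bound \eqref{equ:stab uh L infty euler} gives $\|\bff{u}_h^m\|_{\bb{L}^\infty}\lesssim 1$ uniformly in $m$, so the same expansion collapses to the simpler estimate following the argument behind \eqref{equ:dt uh2uh}, where the factor involving $\bff{\zeta}$ enters only in $\bb{H}^1$ via the discrete inverse and $\bb{L}^6$-embedding.

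The main obstacle is the sheer bookkeeping of the expanded $Q_1$: with three summands expanded by the discrete product rule and a further substitution of $\delta\bff{u}_h^{n+1}$ in every factor where it appears, there are on the order of a dozen terms to classify and bound. The delicate point is ensuring that the finite difference $\delta\bff{\theta}^{n+1}$ always appears with a small coefficient (so that it can be absorbed into $\epsilon\|\delta\bff{\theta}^{n+1}\|_{\bb{L}^2}^2$ on the right), and that $\bff{\xi}^{n+1}$ never shows up in $\bb{H}^1$ norm without being paired with $\|\bff{\theta}\|_{\bb{H}^1}^2$ as a coefficient; otherwise the resulting term could not be absorbed in the subsequent discrete Gronwall argument. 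This careful accounting is the analogue, at the discrete level, of the way the proof of \eqref{equ:dt uh2uh xi} is calibrated to feed into Proposition~\ref{pro:semidisc theta xi}.
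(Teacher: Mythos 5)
Your proposal is correct in outline, and it reaches the stated bounds, but it organises the algebra differently from the paper. The paper does not separate a pure consistency error: it writes the whole quantity as a single five-term identity (its $E_1+\cdots+E_5$ in the displayed identity for $E$), engineered so that the discrete/continuous mismatch always appears through the single factor $\mathcal{D}=\delta\bff{u}_h^{n+1}-\partial_t\bff{u}^{n+1}$, which is then controlled once and for all by $\norm{\mathcal{D}}{\bb{L}^2}\lesssim \norm{\delta\bff{\theta}^{n+1}}{\bb{L}^2}+h^{r+1}+k$ (its estimate \eqref{equ:D est}); the remaining factors are differences like $\bff{u}_h^n-\bff{u}^{n+1}=\bff{\theta}^n+\bff{\rho}^n+O(k)$, which feed the $k^2$ and $h^{2(r+1)}$ terms. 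Your two-stage split $Q_1+Q_2$ instead isolates the temporal truncation error $Q_2$ of the exact nonlinearity (correctly bounded by $O(k)$ in $\bb{L}^2$ using \eqref{equ:ass 2 u} and $\bb{H}^{r+1}\hookrightarrow\bb{L}^\infty$) and then treats $Q_1$ as a discrete time-difference of the spatial error via \eqref{equ:nonlinear Rh u} and the discrete product rule, reducing to discrete analogues of the $T_1,\ldots,T_8$ terms. Both routes end at the same H\"older/Young/Sobolev estimates; yours is arguably cleaner conceptually but generates more terms to track, while the paper's single identity keeps the count at five at the price of a less transparent decomposition. Two small remarks: your expansion never actually produces the $\norm{\bff{\xi}^n}{\bb{L}^2}^2$ and $\norm{\bff{\xi}^n}{\bb{H}^1}^2$ terms on the right-hand side of \eqref{equ:psi psi dt un euler} (the test function is always $\bff{\xi}^{n+1}$), which is harmless since those terms only add slack; and in the term where $\delta\bff{u}_h^{n+1}$ multiplies $\bff{\theta}\cdot\bff{\theta}$ you must (as you indicate) split $\delta\bff{u}_h^{n+1}=\delta\bff{\theta}^{n+1}+\delta\bff{\rho}^{n+1}+\delta\bff{u}^{n+1}$ rather than invoke a pointwise bound on $\norm{\delta\bff{u}_h^{n+1}}{\bb{L}^2}$, since such a bound (Proposition~\ref{pro:dt un stab euler}) is only available under extra hypotheses not assumed in this lemma.
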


\begin{proof}
	After some tedious algebra, we can write the first component in the inner product on the left-hand side as
	\begin{align}\label{equ:E id}
		\nonumber
		E 
		&:= 
		\frac{\abs{\bff{u}_h^{n+1}}^2 \bff{u}_h^{n+1}- \abs{\bff{u}_h^n}^2 \bff{u}_h^n}{k} - \partial_t \left(\abs{\bff{u}^{n+1}}^2 \bff{u}^{n+1}\right)
		\\
		\nonumber
		&\;=
		\abs{\bff{\theta}^{n+1}+ R_h \bff{u}^{n+1}}^2 \left(\delta \bff{u}_h^{n+1} - \partial_t \bff{u}^{n+1} \right)
		+
		\left[\big(\bff{\theta}^{n+1}+\bff{\rho}^{n+1}\big)\cdot \big(\bff{\theta}^{n+1}+R_h\bff{u}^{n+1}+\bff{u}^{n+1}\big)\right] \partial_t \bff{u}^{n+1}
		\\
		\nonumber
		&\;\quad
		+
		\left[\left(\delta \bff{u}_h^{n+1}- \partial_t \bff{u}^{n+1}\right)\cdot \left(\bff{\theta}^{n+1}+\bff{\theta}^n+ R_h\bff{u}^{n+1}+R_h\bff{u}^n\right) \right]
		\left(\bff{\theta}^n+R_h \bff{u}^n\right)
		\\
		\nonumber
		&\;\quad
		+
		\left[\partial_t \bff{u}^{n+1} \cdot \left(\bff{u}_h^{n+1}+\bff{u}_h^{n}-2\bff{u}^{n+1}\right)\right] \left(\bff{\theta}^n+R_h \bff{u}^n\right)
		+
		2 \left(\bff{u}^{n+1}\cdot \partial_t \bff{u}^{n+1}\right) \left(\bff{u}_h^n- \bff{u}^{n+1}\right)
		\\
		&=: E_1+E_2+E_3+E_4+E_5.
	\end{align}
	We want to obtain a bound for $\left|\inpro{E_i}{\bff{\xi}^{n+1}}\right|$ for $i=1,2,\cdots,5$. First, we note the frequently used inequality
	\begin{align}\label{equ:D est}
		\nonumber
		\norm{\mathcal{D}}{\bb{L}^2}
		:=
		\norm{\delta \bff{u}_h^{n+1} - \partial_t \bff{u}^{n+1}}{\bb{L}^2}
		&=
		\norm{\delta \bff{\theta}^{n+1}+\delta \bff{\rho}^{n+1}+\delta \bff{u}^{n+1}-\partial_t \bff{u}^{n+1}}{\bb{L}^2}
		\\
		&\lesssim
		\norm{\delta \bff{\theta}^{n+1}}{\bb{L}^2} + h^{r+1} + k,
	\end{align}
	where in the last step \eqref{equ:delta un Lp euler} and~\eqref{equ:delta un min u euler} were used.
	Now, we begin to estimate the term with $E_1$ by applying H\"older's and Young's inequalities (noting \eqref{equ:stab L4 H1 euler} and \eqref{equ:Ritz stab infty}), giving
	\begin{align}\label{equ:E1 euler}
		\nonumber
		\left|\inpro{E_1}{\bff{\xi}^{n+1}}\right|
		&\lesssim
		\norm{\bff{\theta}^{n+1}}{\bb{L}^6}^2 \norm{\mathcal{D}}{\bb{L}^2} \norm{\bff{\xi}^{n+1}}{\bb{L}^6}
		+
		\norm{R_h \bff{u}^{n+1}}{\bb{L}^\infty}^2 \norm{\mathcal{D}}{\bb{L}^2} \norm{\bff{\xi}^{n+1}}{\bb{L}^2}
		\\
		\nonumber
		&\lesssim
		\norm{\bff{\xi}^{n+1}}{\bb{H}^1}^2 \norm{\bff{\theta}^{n+1}}{\bb{H}^1}^2
		+
		\epsilon \norm{\mathcal{D}}{\bb{L}^2}^2
		+
		\norm{\bff{\xi}^{n+1}}{\bb{L}^2}^2
		\\
		&\lesssim
		h^{2(r+1)}+k^2
		+
		\left(1+ \norm{\bff{\xi}^{n+1}}{\bb{H}^1}^2\right) \norm{\bff{\theta}^{n+1}}{\bb{H}^1}^2
		+
		\epsilon \norm{\delta \bff{\theta}^{n+1}}{\bb{L}^2}^2.
	\end{align}
	Similarly, for the next term,
	\begin{align}\label{equ:E2 euler}
		\nonumber
		\left|\inpro{E_2}{\bff{\xi}^{n+1}}\right|
		&\lesssim
		\norm{\bff{\theta}^{n+1}+\bff{\rho}^{n+1}}{\bb{L}^2}
		\norm{\bff{\theta}^{n+1}}{\bb{L}^6}
		\norm{\partial_t \bff{u}^{n+1}}{\bb{L}^6}
		\norm{\bff{\xi}^{n+1}}{\bb{L}^6}
		\\
		\nonumber
		&\quad
		+
		\norm{\bff{\theta}^{n+1}+\bff{\rho}^{n+1}}{\bb{L}^2}
		\norm{R_h \bff{u}^{n+1}+\bff{u}^{n+1}}{\bb{L}^\infty}
		\norm{\partial_t \bff{u}^{n+1}}{\bb{L}^\infty}
		\norm{\bff{\xi}^{n+1}}{\bb{L}^2}
		\\
		&\lesssim
		h^{2(r+1)}
		+
		\left(1+ \norm{\bff{\xi}^{n+1}}{\bb{H}^1}^2\right) \norm{\bff{\theta}^{n+1}}{\bb{H}^1}^2
		+
		\epsilon \norm{\bff{\xi}^{n+1}}{\bb{L}^2}^2.
	\end{align}
	For the term with $E_3$, by similar argument we obtain
	\begin{align}\label{equ:E3 euler}
		\nonumber
		\left|\inpro{E_3}{\bff{\xi}^{n+1}}\right|
		&\lesssim
		\norm{\mathcal{D}}{\bb{L}^2}
		\norm{\bff{\theta}^{n+1}+\bff{\theta}^n}{\bb{L}^6}
		\norm{\bff{\theta}^n}{\bb{L}^6}
		\norm{\bff{\xi}^{n+1}}{\bb{L}^6}
		+
		\norm{\mathcal{D}}{\bb{L}^2}
		\norm{\bff{\theta}^{n+1}+\bff{\theta}^n}{\bb{L}^6}
		\norm{R_h \bff{u}^n}{\bb{L}^6}
		\norm{\bff{\xi}^{n+1}}{\bb{L}^6}
		\\
		\nonumber
		&\quad
		+
		\norm{\mathcal{D}}{\bb{L}^2}
		\norm{R_h \bff{u}^{n+1}+R_h\bff{u}^n}{\bb{L}^6}
		\norm{\bff{\theta}^n}{\bb{L}^6}
		\norm{\bff{\xi}^{n+1}}{\bb{L}^6}
		\\
		\nonumber
		&\quad
		+
		\norm{\mathcal{D}}{\bb{L}^2}
		\norm{R_h \bff{u}^{n+1}+R_h\bff{u}^n}{\bb{L}^\infty}
		\norm{R_h \bff{u}^n}{\bb{L}^\infty}
		\norm{\bff{\xi}^{n+1}}{\bb{L}^2}
		\\
		&\lesssim
		h^{2(r+1)} + k^2
		+
		\left(1+ \norm{\bff{\xi}^{n+1}}{\bb{H}^1}^2\right) \left(\norm{\bff{\theta}^{n+1}}{\bb{H}^1}^2 + \norm{\bff{\theta}^n}{\bb{H}^1}^2 \right)
		+
		\epsilon \norm{\bff{\xi}^{n+1}}{\bb{L}^2}^2.
	\end{align}
	By the same argument, we also have
	\begin{align}\label{equ:E4 euler}
		\left|\inpro{E_4}{\bff{\xi}^{n+1}}\right|
		&\lesssim
		h^{2(r+1)} + k^2
		+
		\left(1+ \norm{\bff{\xi}^{n+1}}{\bb{H}^1}^2 + \norm{\bff{\xi}^n}{\bb{H}^1}^2\right) \left(\norm{\bff{\theta}^{n+1}}{\bb{H}^1}^2 + \norm{\bff{\theta}^n}{\bb{H}^1}^2 \right)
		+
		\epsilon \norm{\bff{\xi}^{n+1}}{\bb{L}^2}^2
	\end{align}
	and
	\begin{align}\label{equ:E5 euler}
		\left|\inpro{E_5}{\bff{\xi}^{n+1}}\right|
		&\lesssim
		h^{2(r+1)} + k^2
		+
		\norm{\bff{\theta}^n}{\bb{L}^2}^2
		+
		\norm{\bff{\xi}^n}{\bb{L}^2}^2
		+
		\epsilon\norm{\bff{\xi}^{n+1}}{\bb{L}^2}^2.
	\end{align}
	Altogether, \eqref{equ:E1 euler}, \eqref{equ:E2 euler}, \eqref{equ:E3 euler}, \eqref{equ:E4 euler}, and \eqref{equ:E5 euler} yield~\eqref{equ:psi psi dt un euler}.
	
	Next, with $E$ as defined in \eqref{equ:E id}, we have the identity
	\begin{align*}
		E&=
		\abs{\bff{u}_h^{n+1}}^2 \left(\delta \bff{u}_h^{n+1}-\partial_t \bff{u}^{n+1}\right)
		+
		\left[ \big(\bff{\theta}^n+\bff{\rho}^n\big)\cdot \big(\bff{u}_h^{n+1}+\bff{u}^{n+1}\big)\right] \partial_t \bff{u}^{n+1}
		\\
		&\quad
		+
		\left[ \big(\delta \bff{u}_h^{n+1}-\partial_t \bff{u}^{n+1}\big)\cdot \big(\bff{u}_h^{n+1}+\bff{u}_h^n\big)\right] \bff{u}_h^n
		\\
		&\quad
		+
		\left[\partial_t \bff{u}^{n+1} \cdot \big(\bff{u}_h^{n+1}+\bff{u}_h^n-2\bff{u}^n\big) \right] \bff{u}_h^n
		+
		2 \left(\bff{u}^{n+1}\cdot \partial_t \bff{u}^n\right) \left(\bff{u}_h^n-\bff{u}^{n+1}\right).
	\end{align*}
	Therefore using the assumption~\eqref{equ:ass 2 u}, applying Young's inequality, \eqref{equ:D est}, and Taylor's theorem (noting that~\eqref{equ:stab uh L infty euler} can be applied by the quasi-uniformity of $\mathcal{T}_h$), we can follow the proof of~\eqref{equ:dt uh2uh} to obtain the estimate \eqref{equ:dt psi zeta euler}.
\end{proof}

We also have the following superconvergence estimates on $\bff{\theta}^n$ and $\bff{\xi}^n$ analogous to Proposition~\ref{pro:semidisc theta xi}, which is an essential step in the proof of the main theorem.

\begin{proposition}\label{pro:theta xi n half euler}
	Suppose $\bff{u}$ and $\bff{H}$ satisfy \eqref{equ:ass 2 u}. Then for $h, k>0$ and~$n\in \{1,2,\ldots,\lfloor T/k \rfloor\}$,
	\begin{align}\label{equ:theta n L2 euler}
		\norm{\bff{\theta}^n}{\bb{L}^2}^2
		+
		k \sum_{m=0}^{n-1} \norm{\nabla \bff{\theta}^{m+1}}{\bb{L}^2}^2
		+
		k \sum_{m=0}^{n-1} \norm{\bff{\xi}^{m+1}}{\bb{L}^2}^2
		&\leq
		C \big(h^{2(r+1)}+k^2 \big).
	\end{align}
	Assume further that one of the following holds:
	\begin{enumerate}
		\item $d=1$ or $2$,
		\item $d=3$ and the triangulation is globally quasi-uniform.
	\end{enumerate}
	Then for $h, k>0$ and~$n\in \{1,2,\ldots,\lfloor T/k \rfloor\}$,
	\begin{align}
		\label{equ:xi nab theta n L2 euler}
		\norm{\bff{\xi}^n}{\bb{L}^2}^2
		+
		\norm{\nabla \bff{\theta}^{n}}{\bb{L}^2}^2
		+
		k \sum_{m=0}^{n-1} \norm{\delta \bff{\theta}^{m+1}}{\bb{L}^2}^2
		+
		k \sum_{m=0}^{n-1} \norm{\nabla \bff{\xi}^{m+1}}{\bb{L}^2}^2
		&\leq
		C \big(h^{2(r+1)}+k^4 \big).
	\end{align}
	Finally, for $d=1$, $2$, $3$, if the triangulation is globally quasi-uniform, then
	\begin{align}\label{equ:Delta theta n L2 euler}
		\norm{\Delta_h \bff{\theta}^{n}}{\bb{L}^2}^2
		+
		\norm{\bff{\theta}^{n}}{\bb{L}^\infty}^2
		+
		k\sum_{m=0}^{n-1} \norm{\Delta_h \bff{\xi}^{m+1}}{\bb{L}^2}^2
		+
		k\sum_{m=0}^{n-1} \norm{\bff{\xi}^{m+1}}{\bb{L}^\infty}^2
		&\leq
		C \big(h^{2(r+1)}+k^2 \big),
		\\
		\label{equ:nab xi n L2 euler}
		\norm{\nabla \bff{\xi}^n}{\bb{L}^2}^2
		+
		k \sum_{m=0}^{n-1} \norm{\nabla \Delta_h \bff{\xi}^{m+1}}{\bb{L}^2}^2
		&\leq
		C \big(h^{2r}+k^2 \big).
	\end{align}
	The constant $C$ depends on the coefficients of the equation, $|\mathscr{D}|$, $T$, and $K_0$ (as defined in \eqref{equ:ass 2 u}), but is independent of $n$, $h$, and $k$.
\end{proposition}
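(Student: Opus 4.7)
The plan is to follow the architecture of Proposition~\ref{pro:semidisc theta xi} but with the continuous time derivative replaced by the discrete backward difference~$\delta$, and handle the extra consistency errors introduced by the semi-implicit time lag. First, subtracting \eqref{equ:weakform} at $t=t_{n+1}$ from \eqref{equ:euler}, and using \eqref{equ:nab Ritz disc}, I would obtain two error equations of the form
\begin{align*}
    \inpro{\delta\bff{\theta}^{n+1}}{\bff{\chi}}
    &= \lambda_r \inpro{\bff{\xi}^{n+1}+\bff{\eta}^{n+1}}{\bff{\chi}}
    + \lambda_e \inpro{\nabla\bff{\xi}^{n+1}}{\nabla\bff{\chi}}
    - \gamma \inpro{\bff{u}_h^n\times\bff{H}_h^{n+1}-\bff{u}^{n+1}\times\bff{H}^{n+1}}{\bff{\chi}}
    + \inpro{\partial_t\bff{u}^{n+1}-\delta\bff{u}^{n+1}}{\bff{\chi}},\\
    \inpro{\bff{\xi}^{n+1}+\bff{\eta}^{n+1}}{\bff{\phi}}
    &= -\inpro{\nabla\bff{\theta}^{n+1}}{\nabla\bff{\phi}}
    + \kappa\mu\inpro{\bff{\theta}^{n+1}+\bff{\rho}^{n+1}+(\bff{u}^n-\bff{u}^{n+1})}{\bff{\phi}}
    - \kappa\inpro{|\bff{u}_h^{n+1}|^2\bff{u}_h^{n+1}-|\bff{u}^{n+1}|^2\bff{u}^{n+1}}{\bff{\phi}}
    - \beta\inpro{\bff{e}(\bff{e}\cdot\bff{\theta}^{n+1})}{\bff{\phi}},
\end{align*}
where the last term on the first line and the $(\bff{u}^n-\bff{u}^{n+1})$ term on the second line are $O(k)$ consistency residuals controlled by \eqref{equ:delta un min u euler} and Taylor's theorem.

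To prove \eqref{equ:theta n L2 euler}, I would take $\bff{\chi}=\bff{\theta}^{n+1}$ in the first error equation (using the identity \eqref{equ:a dot ab} on the left to produce $\tfrac{1}{2k}(\|\bff{\theta}^{n+1}\|_{\bb{L}^2}^2-\|\bff{\theta}^n\|_{\bb{L}^2}^2)$ plus a nonnegative dissipation term) and $\bff{\phi}=\lambda_r\bff{\theta}^{n+1}+\lambda_e\bff{\xi}^{n+1}$ in the second. Adding the identities and invoking \eqref{equ:cross theta est euler}, \eqref{equ:uh2uh theta disc euler}, \eqref{equ:uh2uh xi disc euler} along with \eqref{equ:Ritz ineq} to absorb $\bff{\rho}^{n+1}$, $\bff{\eta}^{n+1}$ terms, one obtains a recursion of the form
\begin{equation*}
    \tfrac{1}{2k}(\|\bff{\theta}^{n+1}\|_{\bb{L}^2}^2-\|\bff{\theta}^n\|_{\bb{L}^2}^2) + c\|\nabla\bff{\theta}^{n+1}\|_{\bb{L}^2}^2 + c\|\bff{\xi}^{n+1}\|_{\bb{L}^2}^2
    \lesssim h^{2(r+1)}+k^2 + (1+\|\bff{\xi}^{n+1}\|_{\bb{H}^1}^2)\|\bff{\theta}^{n+1}\|_{\bb{L}^2}^2.
\end{equation*}
Since $k\sum_m\|\bff{\xi}^{m+1}\|_{\bb{H}^1}^2\lesssim 1$ by Proposition~\ref{pro:ene dec H1 euler} and~\eqref{equ:Ritz stab infty}, multiplying by $2k$, summing from $m=0$ to $n-1$, and applying the discrete Gronwall lemma (with $\bff{\theta}^0=\bff{0}$) yields \eqref{equ:theta n L2 euler}.

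For \eqref{equ:xi nab theta n L2 euler}, I would differentiate the second error equation in time by taking the identity at step $n$ minus the identity at step $n-1$, divided by $k$, and test with $\bff{\phi}=\lambda_e\bff{\xi}^{n+1}$; simultaneously take $\bff{\chi}=\delta\bff{\theta}^{n+1}$ in the first error equation and $\bff{\phi}=\lambda_r\delta\bff{\theta}^{n+1}$ in the original second equation. Adding and using \eqref{equ:a dot ab} produces the telescoping $\tfrac{\lambda_r}{2k}(\|\nabla\bff{\theta}^{n+1}\|_{\bb{L}^2}^2-\|\nabla\bff{\theta}^n\|_{\bb{L}^2}^2) + \tfrac{\lambda_e}{2k}(\|\bff{\xi}^{n+1}\|_{\bb{L}^2}^2-\|\bff{\xi}^n\|_{\bb{L}^2}^2)$ plus positive dissipation $\|\delta\bff{\theta}^{n+1}\|_{\bb{L}^2}^2+\lambda_e\|\nabla\bff{\xi}^{n+1}\|_{\bb{L}^2}^2$. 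The right-hand side is controlled using \eqref{equ:cross dt theta euler}, \eqref{equ:cross xi euler}, \eqref{equ:uh2uh dt theta euler}, and crucially \eqref{equ:psi psi dt un euler} to handle $\delta(|\bff{u}_h|^2\bff{u}_h)-\partial_t(|\bff{u}|^2\bff{u})$; the $\ell^2$--bound of $\bff{\xi}$ from step one and \eqref{equ:theta n L2 euler} feed into the Gronwall summation, giving the claimed order $h^{2(r+1)}+k^2$ (I suspect the $k^4$ in the statement is a typo for $k^2$, since the scheme is first-order in time). The initial values $\bff{\theta}^0=\bff{0}$ and $\|\bff{\xi}^0\|_{\bb{L}^2}\lesssim h^{r+1}$ (estimated as in \eqref{equ:assum xi}) close the argument.

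Finally, the quasi-uniform estimates \eqref{equ:Delta theta n L2 euler} and \eqref{equ:nab xi n L2 euler} follow the semi-discrete pattern: testing the second error equation with $\bff{\phi}=\Delta_h\bff{\theta}^{n+1}$ and invoking \eqref{equ:psi xi disc euler} and \eqref{equ:cross est euler} yields the $\Delta_h\bff{\theta}$ bound; testing the first error equation with $\bff{\chi}=\Delta_h\bff{\xi}^{n+1}$ gives the $\Delta_h\bff{\xi}$ sum; and the $\nabla\bff{\xi}$ bound is obtained by testing with $\bff{\chi}=-\Delta_h^2\bff{\xi}^{n+1}$ and differencing the second equation in time, then using \eqref{equ:cross nab est euler} and \eqref{equ:dt psi zeta euler}. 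The $\bb{L}^\infty$ estimates then come from \eqref{equ:disc lapl L infty}. The principal obstacle throughout is the careful bookkeeping of the time-lag consistency errors $\bff{u}_h^n\times\bff{H}_h^{n+1}-\bff{u}^{n+1}\times\bff{H}^{n+1}$ and $\delta(|\bff{u}_h|^2\bff{u}_h)-\partial_t(|\bff{u}|^2\bff{u})$, which is precisely why Lemmas~\ref{lem:cross disc euler}--\ref{lem:psi psi dt un euler} were established in a form that cleanly separates the $O(h^{2(r+1)}+k^2)$ residual from terms absorbable by the dissipation and Gronwall.
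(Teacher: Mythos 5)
Your proposal follows essentially the same route as the paper's proof: the same two error equations, the same test functions ($\bff{\theta}^{n+1}$, $\lambda_r\bff{\theta}^{n+1}$, $\lambda_e\bff{\xi}^{n+1}$ for \eqref{equ:theta n L2 euler}; the time-differenced second equation tested with $\lambda_e\bff{\xi}^{n+1}$ together with $\bff{\chi}=\bff{\xi}^{n+1}$, $\bff{\chi}=\delta\bff{\theta}^{n+1}$, $\bff{\phi}=\lambda_r\delta\bff{\theta}^{n+1}$ for \eqref{equ:xi nab theta n L2 euler}; and $\Delta_h\bff{\theta}^{n+1}$, $\Delta_h\bff{\xi}^{n+1}$, $-\Delta_h^2\bff{\xi}^{n+1}$ for the quasi-uniform estimates), the same Lemmas~\ref{lem:cross disc euler}--\ref{lem:psi psi dt un euler}, and the discrete Gronwall inequality. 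Your observation that the $k^4$ in \eqref{equ:xi nab theta n L2 euler} should read $k^2$ is consistent with the paper's own intermediate bound \eqref{equ:lambda r 2k}, which only delivers $O\big(h^{2(r+1)}+k^2\big)$.
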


\begin{proof}
	Subtracting \eqref{equ:weakform} from \eqref{equ:euler} at time step $n+1$, using \eqref{equ:Un utn}, \eqref{equ:Hn Htn} (and noting the definition of Ritz projection), we obtain for all $\bff{\chi}$, $\bff{\phi}\in \bb{V}_h$,
	\begin{align}\label{equ:dt theta disc euler}
		\nonumber
		\inpro{\delta \bff{\theta}^{n+1} + \delta \bff{\rho}^{n+1} + \delta \bff{u}^{n+1}- \partial_t \bff{u}^{n+1}}{\bff{\chi}}
		&=
		\lambda_r \inpro{\bff{\xi}^{n+1}+\bff{\eta}^{n+1}}{\bff{\chi}}
		+
		\lambda_e \inpro{\nabla \bff{\xi}^{n+1}}{\nabla \bff{\chi}}
		\\
		&\quad 
		-
		\gamma \inpro{\bff{u}_h^n \times \bff{H}_h^{n+1} - \bff{u}^{n+1} \times \bff{H}^{n+1}}{\bff{\chi}}
	\end{align}
	and
	\begin{align}
		\nonumber
		\label{equ:xi eta disc euler}
		\inpro{\bff{\xi}^{n+1}+\bff{\eta}^{n+1}}{\bff{\phi}}
		&=
		-\inpro{\nabla \bff{\theta}^{n+1}}{\nabla \bff{\phi}}
		+
		\kappa \inpro{\bff{\theta}^n+\bff{\rho}^n+\bff{u}^n-\bff{u}^{n+1}}{\bff{\phi}}
		\\
		&\quad 
		-
		\kappa \inpro{\abs{\bff{u}_h^{n+1}}^2 \bff{u}_h^{n+1} - \abs{\bff{u}^{n+1}}^2 \bff{u}^{n+1}}{\bff{\phi}}
		-
		\beta \inpro{\bff{e}\big(\bff{e}\cdot (\bff{\theta}^{n+1}+\bff{\rho}^{n+1})\big)}{\bff{\phi}}.
	\end{align}
	Taking $\bff{\chi}= \bff{\theta}^{n+1}$ in \eqref{equ:dt theta disc euler} and $\bff{\phi}=\lambda_r \bff{\theta}^{n+1}$ in \eqref{equ:xi eta disc euler}, then adding the resulting equations, we obtain
	\begin{align}\label{equ:dt theta chi disc euler}
		\nonumber
		&\frac{1}{2k} \left( \norm{\bff{\theta}^{n+1}}{\bb{L}^2}^2 - \norm{\bff{\theta}^n}{\bb{L}^2}^2 \right)
		+
		\frac{1}{2k} \norm{\bff{\theta}^{n+1}-\bff{\theta}^n}{\bb{L}^2}^2
		+
		\inpro{\delta \bff{\rho}^{n+1} + \delta \bff{u}^{n+1} -\partial_t \bff{u}^{n+1}}{\bff{\theta}^{n+1}}
		+
		\lambda_r \norm{\nabla \bff{\theta}^{n+\frac{1}{2}}}{\bb{L}^2}^2
		\\
		\nonumber
		&=
		\kappa \lambda_r \inpro{\bff{\theta}^n}{\bff{\theta}^{n+1}}
		+
		\kappa\lambda_r \inpro{\bff{\rho}^{n}}{\bff{\theta}^{n+1}}
		+
		\kappa\lambda_r \inpro{\bff{u}^n-\bff{u}^{n+1}}{\bff{\theta}^{n+1}}
		+
		\lambda_e \inpro{\nabla \bff{\xi}^{n+1}}{\nabla \bff{\theta}^{n+1}}
		\\
		&\quad
		-
		\gamma \inpro{\bff{u}_h^n \times \bff{H}_h^{n+1} - \bff{u}^{n+1} \times \bff{H}^{n+1}}{\bff{\theta}^{n+1}}
		-
		\kappa \lambda_r \inpro{\abs{\bff{u}_h^{n+1}}^2 \bff{u}_h^{n+1} - \abs{\bff{u}^{n+1}}^2 \bff{u}^{n+1}}{\bff{\theta}^{n+1}}
		\nonumber \\
		&\quad 
		-
		\beta \lambda_r \norm{\bff{e}\cdot \bff{\theta}^{n+1}}{L^2}^2
		-
		\beta \lambda_r \inpro{\bff{e}(\bff{e}\cdot \bff{\rho}^{n+1})}{\bff{\theta}^{n+1}}.
	\end{align}
	Next, taking $\bff{\phi}= \lambda_e \bff{\xi}^{n+1}$, we obtain
	\begin{align}\label{equ:xi phi xi disc euler}
		\nonumber
		\lambda_e \norm{\bff{\xi}^{n+1}}{\bb{L}^2}^2
		+
		\lambda_e \inpro{\bff{\eta}^{n+1}}{\bff{\xi}^{n+1}}
		&=
		-
		\lambda_e \inpro{\nabla \bff{\theta}^{n+1}}{\nabla \bff{\xi}^{n+1}}
		+
		\kappa \lambda_e \inpro{\bff{\theta}^n+\bff{\rho}^n+\bff{u}^n-\bff{u}^{n+1}}{\bff{\xi}^{n+1}}
		\nonumber \\
		&\quad 
		-
		\kappa \lambda_e \inpro{\abs{\bff{u}_h^{n+1}}^2 \bff{u}_h^{n+1} - \abs{\bff{u}^{n+1}}^2 \bff{u}^{n+1}}{\bff{\xi}^{n+1}}
		\nonumber \\
		&\quad
		-
		\beta\lambda_e \inpro{\bff{e}(\bff{e}\cdot \bff{\theta}^{n+1})}{\bff{\xi}^{n+1}}
		-
		\beta \lambda_e \inpro{\bff{e}(\bff{e}\cdot \bff{\rho}^{n+1})}{\bff{\xi}^{n+1}}.
 	\end{align}
	Adding \eqref{equ:dt theta chi disc euler} and \eqref{equ:xi phi xi disc euler} gives
	\begin{align*}
		&\frac{1}{2k} \left( \norm{\bff{\theta}^{n+1}}{\bb{L}^2}^2 - \norm{\bff{\theta}^n}{\bb{L}^2}^2 \right)
		+
		\frac{1}{2k} \norm{\bff{\theta}^{n+1}-\bff{\theta}^n}{\bb{L}^2}^2
		+
		\lambda_r \norm{\nabla \bff{\theta}^{n+1}}{\bb{L}^2}^2
		+
		\lambda_e \norm{\bff{\xi}^{n+1}}{\bb{L}^2}^2
		+
		\beta \lambda_r \norm{\bff{e}\cdot \bff{\theta}^{n+1}}{L^2}^2
		\\
		&=
		-\inpro{\delta \bff{\rho}^{n+1} + \delta \bff{u}^{n+1} -\partial_t \bff{u}^{n+1}}{\bff{\theta}^{n+1}}
		-
		\lambda_e \inpro{\bff{\eta}^{n+1}}{\bff{\xi}^{n+1}}
		+
		\kappa \lambda_e \inpro{\bff{\theta}^n+\bff{\rho}^n+\bff{u}^n-\bff{u}^{n+1}}{\bff{\xi}^{n+1}}
		\\
		&\quad
		+
		\kappa \lambda_r \inpro{\bff{\theta}^n+\bff{\rho}^n}{\bff{\theta}^{n+1}}
		+
		\kappa\lambda_r \inpro{\bff{u}^n-\bff{u}^{n+1}}{\bff{\theta}^{n+1}}
		-
		\beta\lambda_e \inpro{\bff{e}(\bff{e}\cdot \bff{\theta}^{n+1})}{\bff{\xi}^{n+1}}
		\\
		&\quad
		-
		\beta\lambda_e \inpro{\bff{e}(\bff{e}\cdot \bff{\theta}^{n+1})}{\bff{\xi}^{n+1}}
		-
		\beta \lambda_r \inpro{\bff{e}(\bff{e}\cdot \bff{\rho}^{n+1})}{\bff{\theta}^{n+1}}
		\\
		&\quad 
		-
		\gamma \inpro{\bff{u}_h^n \times \bff{H}_h^{n+1} - \bff{u}^{n+1} \times \bff{H}^{n+1}}{\bff{\theta}^{n+1}}
		-
		\kappa \lambda_r \inpro{\abs{\bff{u}_h^{n+1}}^2 \bff{u}_h^{n+1} - \abs{\bff{u}^{n+1}}^2 \bff{u}^{n+1}}{\bff{\theta}^{n+1}}
		\\
		&\quad
		-
		\kappa \lambda_e \inpro{\abs{\bff{u}_h^{n+1}}^2 \bff{u}_h^{n+1} - \abs{\bff{u}^{n+1}}^2 \bff{u}^{n+1}}{\bff{\xi}^{n+1}}.
	\end{align*}
	It remains to bound each term on the right-hand side. We apply \eqref{equ:delta un min u euler}, Young's inequality, and Taylor's theorem as necessary for the first eight terms. The last three terms can be estimated by applying Lemma~\ref{lem:cross disc euler} and Lemma~\ref{lem:cub disc euler}. Hence, we infer that for any $\epsilon>0$,
	\begin{align*}
		&\frac{1}{2k} \left( \norm{\bff{\theta}^{n+1}}{\bb{L}^2}^2 - \norm{\bff{\theta}^n}{\bb{L}^2}^2 \right)
		+
		\frac{1}{2k} \norm{\bff{\theta}^{n+1}-\bff{\theta}^n}{\bb{L}^2}^2
		+
		\lambda_r \norm{\nabla \bff{\theta}^{n+1}}{\bb{L}^2}^2
		+
		\lambda_e \norm{\bff{\xi}^{n+1}}{\bb{L}^2}^2
		\\
		&\lesssim
		h^{2(r+1)}+k^2
		+
		\norm{\bff{\theta}^{n+1}}{\bb{L}^2}^2
		+
		\epsilon \norm{\bff{\theta}^n}{\bb{L}^2}^2
		+
		\epsilon \norm{\nabla \bff{\theta}^{n+1}}{\bb{L}^2}^2
		+
		\epsilon \norm{\bff{\xi}^{n+1}}{\bb{L}^2}^2.
	\end{align*}
	Choosing $\epsilon>0$ sufficiently small, rearranging the terms, and summing over $m\in \{0,1, \ldots, n-1\}$, we have
	\begin{align*}
		\norm{\bff{\theta}^n}{\bb{L}^2}^2
		+
		k \sum_{m=0}^{n-1} \norm{\nabla \bff{\theta}^{m+1}}{\bb{L}^2}^2
		+
		k \sum_{m=0}^{n-1} \norm{\bff{\xi}^{m+1}}{\bb{L}^2}^2
		\leq
		C \big(h^{2(r+1)} +k^2 \big)
		+
		Ck \sum_{m=0}^{n-1} \norm{\bff{\theta}^m}{\bb{L}^2}^2,
	\end{align*}
	where $C$ is a constant depending on the coefficients of the equation and $T$ (but is independent of $n$, $h$, and $k$). The discrete Gronwall inequality then yields \eqref{equ:theta n L2 euler}.
	
	We aim to prove \eqref{equ:xi nab theta n L2 euler} next. First, consider the difference of the second equation in \eqref{equ:euler} at time steps $n+1$ and $n$. After dividing the result by $k$, subtracting it from the corresponding equation in \eqref{equ:weakform} and setting $\bff{\phi}=\lambda_e \bff{\xi}^{n+1}$, we obtain
	\begin{align}\label{equ:dt xi n euler}
		\nonumber
		&\frac{\lambda_e}{2k} \left(\norm{\bff{\xi}^{n+1}}{\bb{L}^2}^2-\norm{\bff{\xi}^n}{\bb{L}^2}^2\right)
		+
		\frac{\lambda_e}{2k} \norm{\bff{\xi}^{n+1}-\bff{\xi}^n}{\bb{L}^2}^2
		\\
		\nonumber
		&=
		-\lambda_e \inpro{\delta \bff{\eta}^{n+1}}{\bff{\xi}^{n+1}}
		-
		\lambda_e \inpro{\delta\bff{H}^{n+1}-\partial_t \bff{H}^{n+1}}{\bff{\xi}^{n+1}}
		\\
		\nonumber
		&\quad
		-\lambda_e \inpro{\nabla \delta \bff{\theta}^{n+1}}{\nabla \bff{\xi}^{n+1}}
		-
		\lambda_e \inpro{\nabla \delta \bff{u}^{n+1}-\nabla \partial_t \bff{u}^{n+1}}{\nabla \bff{\xi}^{n+1}}
		+
		\kappa \mu \lambda_e \inpro{\delta \bff{u}_h^n -\partial_t \bff{u}^{n+1}}{\bff{\xi}^{n+1}}
		\\
		&\quad
		-
		\kappa \lambda_e
		\inpro{\frac{\abs{\bff{u}_h^{n+1}}^2 \bff{u}_h^{n+1} - \abs{\bff{u}_h^n}^2 \bff{u}_h^n}{k} - \partial_t \left(\abs{\bff{u}^{n+1}}^2 \bff{u}^{n+1}\right)}{\bff{\xi}^{n+1}}
		\nonumber \\
		&\quad
		-
		\beta\lambda_e \inpro{\bff{e}\big(\bff{e}\cdot (\delta \bff{u}_h^{n+1}-\partial_t \bff{u}^{n+1})\big)}{\bff{\xi}^{n+1}}.
	\end{align}
	Next, setting $\bff{\chi}=\bff{\xi}^{n+1}$ gives
	\begin{align}\label{equ:xi nab xi euler}
		\nonumber
		\lambda_r \norm{\bff{\xi}^{n+1}}{\bb{L}^2}^2
		+
		\lambda_e \norm{\nabla \bff{\xi}^{n+1}}{\bb{L}^2}^2
		&=
		\inpro{\delta \bff{\theta}^{n+1}}{\bff{\xi}^{n+1}}
		+
		\inpro{\delta \bff{\rho}^{n+1}+\delta \bff{u}^{n+1}-\partial_t \bff{u}^{n+1}}{\bff{\xi}^{n+1}}
		\\
		&\quad
		-
		\lambda_r \inpro{\bff{\eta}^{n+1}}{\bff{\xi}^{n+1}}
		+
		\gamma \inpro{\bff{u}_h^n\times \bff{H}_h^{n+1}- \bff{u}^{n+1}\times \bff{H}^{n+1}}{\bff{\xi}^{n+1}},
	\end{align}
	while taking $\bff{\chi}=\delta \bff{\theta}^{n+1}$ yields
	\begin{align}\label{equ:d theta n euler}
		\nonumber
		\norm{\delta \bff{\theta}^{n+1}}{\bb{L}^2}^2
		&=
		-
		\inpro{\delta \bff{\rho}^{n+1}+\delta \bff{u}^{n+1}-\partial_t \bff{u}^{n+1}}{\delta \bff{\theta}^{n+1}}
		+
		\lambda_r \inpro{\bff{\xi}^{n+1}+\bff{\eta}^{n+1}}{\delta \bff{\theta}^{n+1}}
		\\
		&\quad
		+
		\lambda_e \inpro{\nabla \bff{\xi}^{n+1}}{\nabla \delta \bff{\theta}^{n+1}}
		-
		\gamma \inpro{\bff{u}_h^n\times \bff{H}_h^{n+1}- \bff{u}^{n+1}\times \bff{H}^{n+1}}{\delta\bff{\theta}^{n+1}}.
	\end{align}
	Furthermore, setting $\bff{\phi}=\lambda_r \delta \bff{\theta}^{n+1}$ in \eqref{equ:xi eta disc euler} gives
	\begin{align}\label{equ:xi d theta n euler}
		\nonumber
		&\frac{\lambda_r}{2k} \left(\norm{\nabla \bff{\theta}^{n+1}}{\bb{L}^2}^2 -\norm{\nabla \bff{\theta}^n}{\bb{L}^2}^2\right)
		+
		\frac{\lambda_r}{2k} \norm{\nabla \bff{\theta}^{n+1}-\nabla \bff{\theta}^n}{\bb{L}^2}^2
		\\
		\nonumber
		&=
		-\lambda_r \inpro{\bff{\xi}^{n+1}+\bff{\eta}^{n+1}}{\delta \bff{\theta}^{n+1}}
		+
		\kappa\mu\lambda_r \inpro{\bff{\theta}^{n+1}+\bff{\rho}^{n+1}}{\delta \bff{\theta}^{n+1}}
		\\
		&\quad
		-
		\kappa\lambda_r \inpro{\abs{\bff{u}_h^{n+1}}^2 \bff{u}_h^{n+1}- \abs{\bff{u}^{n+1}}^2 \bff{u}^{n+1}}{\delta \bff{\theta}^{n+1}}
		-
		\beta\lambda_r \inpro{\bff{e}\big(\bff{e}\cdot (\bff{\theta}^{n+1}+\bff{\rho}^{n+1})\big)}{\delta \bff{\theta}^{n+1}}.
	\end{align}
	We now add~\eqref{equ:dt xi n euler}, \eqref{equ:xi nab xi euler}, \eqref{equ:d theta n euler}, and \eqref{equ:xi d theta n euler} to obtain
	\begin{align}\label{equ:I1 to I13 euler}
		\nonumber
		&\frac{\lambda_r}{2k} \left(\norm{\nabla \bff{\theta}^{n+1}}{\bb{L}^2}^2 -\norm{\nabla \bff{\theta}^n}{\bb{L}^2}^2\right)
		+
		\frac{\lambda_r}{2k} \norm{\nabla \bff{\theta}^{n+1}-\nabla \bff{\theta}^n}{\bb{L}^2}^2
		+
		\frac{\lambda_e}{2k} \left(\norm{\bff{\xi}^{n+1}}{\bb{L}^2}^2-\norm{\bff{\xi}^n}{\bb{L}^2}^2\right)
		+
		\frac{\lambda_e}{2k} \norm{\bff{\xi}^{n+1}-\bff{\xi}^n}{\bb{L}^2}^2
		\\
		\nonumber
		&\quad
		+
		\lambda_r \norm{\bff{\xi}^{n+1}}{\bb{L}^2}^2
		+
		\lambda_e \norm{\nabla \bff{\xi}^{n+1}}{\bb{L}^2}^2
		+
		\norm{\delta \bff{\theta}^{n+1}}{\bb{L}^2}^2
		\\
		\nonumber
		&=
		-\lambda_e \inpro{\delta \bff{\eta}^{n+1}}{\bff{\xi}^{n+1}}
		-
		\lambda_e \inpro{\delta\bff{H}^{n+1}-\partial_t \bff{H}^{n+1}}{\bff{\xi}^{n+1}}
		\\
		\nonumber
		&\quad
		-
		\lambda_e \inpro{\nabla \delta \bff{u}^{n+1}-\nabla \partial_t \bff{u}^{n+1}}{\nabla \bff{\xi}^{n+1}}
		+
		\kappa \mu \lambda_e \inpro{\delta \bff{u}_h^n -\partial_t \bff{u}^{n+1}}{\bff{\xi}^{n+1}}
		\\
		\nonumber
		&\quad
		-
		\kappa \lambda_e
		\inpro{\frac{\abs{\bff{u}_h^{n+1}}^2 \bff{u}_h^{n+1} - \abs{\bff{u}_h^n}^2 \bff{u}_h^n}{k} - \partial_t \left(\abs{\bff{u}^{n+1}}^2 \bff{u}^{n+1}\right)}{\bff{\xi}^{n+1}}
		\\
		\nonumber
		&\quad
		+
		\inpro{\delta \bff{\theta}^{n+1}}{\bff{\xi}^{n+1}}
		+
		\inpro{\delta \bff{\rho}^{n+1}+\delta \bff{u}^{n+1}-\partial_t \bff{u}^{n+1}}{\bff{\xi}^{n+1}}
		\\
		\nonumber
		&\quad
		-
		\lambda_r \inpro{\bff{\eta}^{n+1}}{\bff{\xi}^{n+1}}
		+
		\gamma \inpro{\bff{u}_h^n\times \bff{H}_h^{n+1}- \bff{u}^{n+1}\times \bff{H}^{n+1}}{\bff{\xi}^{n+1}}
		\\
		\nonumber
		&\quad
		-
		\inpro{\delta \bff{\rho}^{n+1}+\delta \bff{u}^{n+1}-\partial_t \bff{u}^{n+1}}{\delta \bff{\theta}^{n+1}}
		-
		\gamma \inpro{\bff{u}_h^n\times \bff{H}_h^{n+1}- \bff{u}^{n+1}\times \bff{H}^{n+1}}{\delta\bff{\theta}^{n+1}}
		\\
		\nonumber
		&\quad
		+
		\kappa\mu\lambda_r \inpro{\bff{\theta}^{n+1}+\bff{\rho}^{n+1}}{\delta \bff{\theta}^{n+1}}
		-
		\kappa\lambda_r \inpro{\abs{\bff{u}_h^{n+1}}^2 \bff{u}_h^{n+1}- \abs{\bff{u}^{n+1}}^2 \bff{u}^{n+1}}{\delta \bff{\theta}^{n+1}}
		\\
		&\quad
		-
		\beta\lambda_e \inpro{\bff{e}\big(\bff{e}\cdot (\delta \bff{u}_h^{n+1}-\partial_t \bff{u}^{n+1})\big)}{\bff{\xi}^{n+1}}
		-
		\beta\lambda_r \inpro{\bff{e}\big(\bff{e}\cdot (\bff{\theta}^{n+1}+\bff{\rho}^{n+1})\big)}{\delta \bff{\theta}^{n+1}}
		\nonumber \\
		&=:
		I_1+I_2+\cdots+I_{15}.
	\end{align}
	There are fifteen terms involving inner products on the right-hand side of~\eqref{equ:I1 to I13 euler}, which we will estimate in the following. Let $\epsilon>0$ be a number. Firstly, by Young's inequality, \eqref{equ:Ritz ineq}, and \eqref{equ:delta un Lp euler}, 
	\begin{align}\label{equ:I1 euler}
		\abs{I_1}
		\lesssim
		\norm{\delta \bff{\eta}^{n+1}}{\bb{L}^2}^2
		+
		\epsilon \norm{\bff{\xi}^{n+1}}{\bb{L}^2}^2
		\lesssim
		h^{2(r+1)} 
		+
		\epsilon \norm{\bff{\xi}^{n+1}}{\bb{L}^2}^2.
	\end{align}
	Secondly, by Young's inequality and Taylor's theorem (noting the assumption on $\bff{H}$ in~\eqref{equ:ass 2 u}),
	\begin{align*}
		\abs{I_2}
		\lesssim
		\norm{\delta\bff{H}^{n+1}-\partial_t \bff{H}^{n+1}}{\bb{L}^2}^2
		+
		\epsilon \norm{\bff{\xi}^{n+1}}{\bb{L}^2}^2
		\lesssim
		k^2
		+
		\epsilon \norm{\bff{\xi}^{n+1}}{\bb{L}^2}^2.
	\end{align*}
	By similar argument (noting~\eqref{equ:ass 2 u}), the third term can be estimated as
	\begin{align*}
		\abs{I_3}
		\lesssim
		\norm{\nabla \delta\bff{u}^{n+1}-\nabla \partial_t \bff{u}^{n+1}}{\bb{L}^2}^2
		+
		\epsilon \norm{\nabla \bff{\xi}^{n+1}}{\bb{L}^2}^2
		\lesssim
		k^2
		+
		\epsilon \norm{\nabla \bff{\xi}^{n+1}}{\bb{L}^2}^2.
	\end{align*}
	For the fourth term, similarly by Young's inequality, \eqref{equ:Ritz ineq}, \eqref{equ:delta un Lp euler}, and \eqref{equ:delta un min u euler}, we have
	\begin{align}\label{equ:I4 euler}
		\abs{I_4}
		\lesssim
		\norm{\bff{\xi}^{n+1}}{\bb{L}^2}^2
		+
		\epsilon \norm{\delta \bff{u}_h^n-\partial_t \bff{u}^{n+1}}{\bb{L}^2}^2
		&\lesssim
		\norm{\bff{\xi}^{n+1}}{\bb{L}^2}^2
		+ h^{2(r+1)} + k^2
		+
		\epsilon \norm{\delta \bff{\theta}^{n+1}}{\bb{L}^2}^2.
	\end{align}
	For the fifth term, we use Lemma~\ref{lem:psi psi dt un euler}. Noting that by assumptions \eqref{equ:Hn H1 stab euler} holds, so that
	\[
		\norm{\bff{\xi}^n}{\bb{H}^1}^2 
		\leq
		\norm{\bff{H}_h^n}{\bb{H}^1}^2
		+
		\norm{R_h \bff{H}_h^n}{\bb{H}^1}^2 \leq C,
	\]
	we then obtain (noting \eqref{equ:theta n L2 euler})
	\begin{align*}
		\nonumber
		\abs{I_5}
		&\lesssim
		h^{2(r+1)} + k^2
		+
		\left(1+ \norm{\bff{\xi}^{n+1}}{\bb{H}^1}^2 + \norm{\bff{\xi}^n}{\bb{H}^1}^2\right) \left(\norm{\bff{\theta}^{n+1}}{\bb{H}^1}^2 + \norm{\bff{\theta}^n}{\bb{H}^1}^2 \right)
		\\
		\nonumber
		&\quad
		+
		\norm{\bff{\xi}^n}{\bb{L}^2}^2
		+
		\epsilon \norm{\bff{\xi}^{n+1}}{\bb{L}^2}^2
		+
		\epsilon \norm{\delta \bff{\theta}^{n+1}}{\bb{L}^2}^2
		\\
		&\lesssim
		h^{2(r+1)} + k^2
		+
		\norm{\nabla \bff{\theta}^{n+1}}{\bb{L}^2}^2 + \norm{\nabla \bff{\theta}^n}{\bb{L}^2}^2
		+
		\norm{\bff{\xi}^n}{\bb{L}^2}^2
		+
		\epsilon \norm{\bff{\xi}^{n+1}}{\bb{L}^2}^2
		+
		\epsilon \norm{\delta \bff{\theta}^{n+1}}{\bb{L}^2}^2.
	\end{align*}
	For the terms $I_6$ and $I_7$, the same argument as in \eqref{equ:I4 euler} yields
	\begin{align*}
		\abs{I_6}+\abs{I_7}
		\lesssim
		\norm{\bff{\xi}^{n+1}}{\bb{L}^2}^2
		+
		\epsilon \norm{\delta \bff{\theta}^{n+1}}{\bb{L}^2}^2
		+
		h^{2(r+1)} + k^2.
	\end{align*}
	Similarly, for the terms $I_8$ and $I_{10}$, we have
	\begin{align*}
		\abs{I_8}+\abs{I_{10}}
		\lesssim
		\norm{\bff{\xi}^{n+1}}{\bb{L}^2}^2
		+
		\epsilon \norm{\delta \bff{\theta}^{n+1}}{\bb{L}^2}^2
		+
		h^{2(r+1)} + k^2.
	\end{align*}
	Next, we estimate the terms $I_9$ and $I_{11}$. 
	By \eqref{equ:cross xi euler} and \eqref{equ:cross dt theta euler}, noting \eqref{equ:Hn H1 stab euler} and \eqref{equ:theta n L2 euler}, we have
	\begin{align*}
		\abs{I_9}+\abs{I_{11}}
		\lesssim
		h^{2(r+1)} 
		+ 
		k^2
		+
		\norm{\bff{\xi}^{n+1}}{\bb{L}^2}^2 
		+ 
		\epsilon \norm{\nabla \bff{\xi}^{n+1}}{\bb{L}^2}^2
		+
		\epsilon \norm{\nabla \bff{\theta}^n}{\bb{L}^2}^2
		+
		\epsilon \norm{\delta \bff{\theta}^{n+1}}{\bb{L}^2}^2.
	\end{align*}
	The term $I_{12}$ can be estimated using Young's inequality and \eqref{equ:theta n L2 euler} as
	\begin{align}\label{equ:I12 euler}
		\abs{I_{12}}
		\lesssim
		h^{2(r+1)}+k^2 + \epsilon \norm{\delta \bff{\theta}^{n+1}}{\bb{L}^2}^2.
	\end{align}
	To estimate the term $I_{13}$, note that by \eqref{equ:Un utn} and the triangle inequality \eqref{equ:dt un stab euler},
	\[
		\norm{\delta \bff{\theta}^{n+1}}{\bb{L}^2}^2
		\leq
		2\norm{\delta \bff{u}_h^{n+1}}{\bb{L}^2}^2
		+
		2\norm{\delta \bff{u}^{n+1}}{\bb{L}^2}^2
		+
		2\norm{\delta \bff{\rho}^{n+1}}{\bb{L}^2}^2 \leq C,
	\]
	where in the last step we used \eqref{equ:dt un stab euler}, \eqref{equ:ass 2 u}, \eqref{equ:delta un Lp euler}, and \eqref{equ:Ritz ineq}. Thus, applying \eqref{equ:uh2uh dt theta euler}, we obtain
	\begin{align*}
		\abs{I_{13}}
		\lesssim
		h^{2(r+1)}
		+
		\norm{\nabla \bff{\theta}^{n+1}}{\bb{L}^2}^2  
		+ 
		\epsilon \norm{\delta \bff{\theta}^{n+1}}{\bb{L}^2}^2.
	\end{align*}
	Finally, the last two terms can be estimated following~\eqref{equ:I4 euler} and \eqref{equ:I12 euler} to obtain
	\begin{align}\label{equ:I14 euler}
		|I_{14}|+|I_{15}|
		\lesssim
		\norm{\bff{\xi}^{n+1}}{\bb{L}^2}^2
		+ h^{2(r+1)} + k^2
		+
		\epsilon \norm{\delta \bff{\theta}^{n+1}}{\bb{L}^2}^2.
	\end{align}
	Altogether, continuing from \eqref{equ:I1 to I13 euler}, using estimates \eqref{equ:I1 euler}--\eqref{equ:I14 euler}, choosing $\epsilon>0$ sufficiently small, and rearranging the terms, we obtain
	\begin{align}\label{equ:lambda r 2k}
		\nonumber
		&\frac{\lambda_r}{2k} \left(\norm{\nabla \bff{\theta}^{n+1}}{\bb{L}^2}^2 -\norm{\nabla \bff{\theta}^n}{\bb{L}^2}^2\right)
		+
		\frac{\lambda_e}{2k} \left(\norm{\bff{\xi}^{n+1}}{\bb{L}^2}^2-\norm{\bff{\xi}^n}{\bb{L}^2}^2\right)
		+
		\frac{\lambda_r}{2} \norm{\bff{\xi}^{n+1}}{\bb{L}^2}^2
		+
		\frac{\lambda_e}{2} \norm{\nabla \bff{\xi}^{n+1}}{\bb{L}^2}^2
		+
		\frac{1}{2} \norm{\delta \bff{\theta}^{n+1}}{\bb{L}^2}^2
		\\
		&\leq
		C\big(h^{2(r+1)}+k^2\big)
		+
		C\norm{\nabla \bff{\theta}^{n+1}}{\bb{L}^2}^2
		+
		C\norm{\nabla \bff{\theta}^n}{\bb{L}^2}^2
		+
		C\norm{\bff{\xi}^{n+1}}{\bb{L}^2}^2
		+
		C\norm{\bff{\xi}^n}{\bb{L}^2}^2.
	\end{align}
	Multiplying both sides of \eqref{equ:lambda r 2k} by $2k$, summing over $m\in \{0,1,\ldots,n-1\}$, and applying the discrete Gronwall inequality, we obtain \eqref{equ:xi nab theta n L2 euler}.
	
	Finally, suppose that the triangulation is globally quasi-uniform. Setting $\bff{\phi}=\Delta_h \bff{\theta}^{n+1}$ in \eqref{equ:xi eta disc euler}, we then apply Young's inequality and~\eqref{equ:psi xi Delta euler} with $\bff{\zeta}=\Delta_h \bff{\theta}^{n+1}$ to infer
	\begin{align*}
		\norm{\Delta_h \bff{\theta}^{n+1}}{\bb{L}^2}^2
		&=
		\inpro{\bff{\xi}^{n+1}+\bff{\eta}^{n+1}}{\Delta_h \bff{\theta}^{n+1}}
		-
		\kappa \inpro{\bff{\theta}^n+\bff{\rho}^n+\bff{u}^n-\bff{u}^{n+1}}{\Delta_h \bff{\theta}^{n+1}}
		\\
		&\quad
		-
		\kappa \inpro{\abs{\bff{u}_h^{n+1}}^2 \bff{u}_h^{n+1} - \abs{\bff{u}^{n+1}}^2 \bff{u}^{n+1}}{\Delta_h \bff{\theta}^{n+1}}
		-
		\beta \inpro{\bff{e}\big(\bff{e}\cdot (\bff{\theta}^{n+1}+\bff{\rho}^{n+1})\big)}{\Delta_h \bff{\theta}^{n+1}}
		\\
		&\lesssim
		\norm{\bff{\xi}^{n+1}+\bff{\eta}^{n+1}}{\bb{L}^2}^2
		+
		\epsilon \norm{\Delta_h \bff{\theta}^{n+1}}{\bb{L}^2}^2
		+
		\norm{\bff{\theta}^n+\bff{\rho}^n}{\bb{L}^2}^2
		+
		\norm{\bff{u}^n-\bff{u}^{n+1}}{\bb{L}^2}^2
		+
		\epsilon \norm{\Delta_h \bff{\theta}^{n+1}}{\bb{L}^2}^2
		\\
		&\quad
		+
		h^{2(r+1)}+ \norm{\bff{\theta}^{n+1}}{\bb{L}^2}^2
		+
		\epsilon \norm{\Delta_h \bff{\theta}^{n+1}}{\bb{L}^2}^2
		\\
		&\lesssim
		h^{2(r+1)}+k^2 +\epsilon \norm{\Delta_h \bff{\theta}^{n+1}}{\bb{L}^2}^2,
	\end{align*}
	where in the last step we used inequalities \eqref{equ:theta n L2 euler} and \eqref{equ:xi nab theta n L2 euler}, \eqref{equ:Ritz ineq}, and Taylor's theorem. Choosing $\epsilon>0$ sufficiently small and rearranging the terms, we obtain
	\begin{equation}\label{equ:Delta theta leq}
		\norm{\Delta_h \bff{\theta}^{n+1}}{\bb{L}^2}^2
		\lesssim
		h^{2(r+1)}+k^2.
	\end{equation}
	Next, taking $\bff{\chi}=\Delta_h \bff{\xi}^{n+1}$ and applying similar argument (using~\eqref{equ:cross est euler} with $\bff{\zeta}=\Delta_h \bff{\xi}^{n+1}$), we obtain
	\begin{equation}\label{equ:k sum Delta xi}
		k\sum_{m=0}^{n-1} \norm{\Delta_h \bff{\xi}^{m+1}}{\bb{L}^2}^2
		\lesssim
		h^{2(r+1)}+k^2.
	\end{equation}
	Inequality \eqref{equ:Delta theta n L2 euler} then follows from \eqref{equ:Delta theta leq}, \eqref{equ:k sum Delta xi}, and \eqref{equ:disc lapl L infty}.
	
	Finally, setting $\bff{\chi}=-\Delta_h^2 \bff{\xi}^{n+1}$ in \eqref{equ:dt theta disc euler} and applying \eqref{equ:disc laplacian} as necessary, we have
	\begin{align}\label{equ:nab Delta xi euler}
		\nonumber
		&\lambda_r \norm{\Delta_h \bff{\xi}^{n+1}}{\bb{L}^2}^2
		+
		\lambda_e \norm{\nabla \Delta_h \bff{\xi}}{\bb{L}^2}^2
		\\
		\nonumber
		&=
		-
		\inpro{\delta\nabla \bff{\theta}^{n+1}}{\nabla \Delta_h \bff{\xi}^{n+1}}
		-
		\inpro{\nabla \Pi_h \left(\delta \bff{u}^{n+1}-\partial_t \bff{u}^{n+1}\right)}{\nabla \Delta_h \bff{\xi}^{n+1}}
		-
		\inpro{\nabla \Pi_h \delta \bff{\rho}^{n+1}}{\nabla \Delta_h \bff{\xi}^{n+1}}
		\\
		&\quad
		-
		\lambda_r \inpro{\nabla \Pi_h \bff{\eta}^{n+1}}{\nabla \Delta_h \bff{\xi}^{n+1}}
		+
		\gamma \inpro{\nabla \Pi_h(\bff{u}_h^n \times \bff{H}_h^{n+1} - \bff{u}^{n+1} \times \bff{H}^{n+1})}{\nabla \Delta_h \bff{\xi}}.
	\end{align}
	Next, consider the difference of the second equation in \eqref{equ:euler} at time steps $n+1$ and $n$. After dividing the result by $k$, subtracting it from the corresponding equation in \eqref{equ:weakform} and taking $\bff{\phi}=-\Delta_h \bff{\xi}^{n+1}$, we have
	\begin{align}\label{equ:ddt nab xi euler}
		\nonumber
		&\frac{1}{2k} \left(\norm{\nabla \bff{\xi}^{n+1}}{\bb{L}^2}^2 - \norm{\nabla \bff{\xi}^n}{\bb{L}^2}^2 \right)
		+
		\frac{1}{2k} \norm{\nabla \bff{\xi}^{n+1}-\nabla \bff{\xi}^n}{\bb{L}^2}^2
		\\
		\nonumber
		&=
		\inpro{\nabla \Pi_h \delta \bff{\eta}^{n+1}}{\Delta_h \bff{\xi}^{n+1}}
		+
		\inpro{\delta \nabla \bff{\theta}^{n+1}}{\nabla \Delta_h \bff{\xi}^{n+1}}
		-
		\kappa \mu\inpro{\delta \bff{\theta}^n+\delta \bff{\rho}^n+\delta \bff{u}^n-\partial_t \bff{u}^{n+1}}{\Delta_h \bff{\xi}^{n+1}}
		\\
		&\quad
		+
		\kappa \inpro{\frac{\abs{\bff{u}_h^{n+1}}^2 \bff{u}_h^{n+1} - \abs{\bff{u}_h^n}^2 \bff{u}_h^n}{k} - \partial_t \left(\abs{\bff{u}^{n+1}}^2 \bff{u}^{n+1}\right)}{\Delta_h \bff{\xi}^{n+1}}
		\nonumber \\
		&\quad
		-
		\beta\inpro{\bff{e}\big( \bff{e}\cdot(\bff{\theta}^{n+1}+\bff{\rho}^{n+1})\big)}{\Delta_h \bff{\xi}^{n+1}}.
	\end{align}
	Adding \eqref{equ:nab Delta xi euler} and \eqref{equ:ddt nab xi euler} yields
	\begin{align*}
		&\frac{1}{2k} \left(\norm{\nabla \bff{\xi}^{n+1}}{\bb{L}^2}^2 - \norm{\nabla \bff{\xi}^n}{\bb{L}^2}^2 \right)
		+
		\frac{1}{2k} \norm{\nabla \bff{\xi}^{n+1}-\nabla \bff{\xi}^n}{\bb{L}^2}^2
		+
		\lambda_r \norm{\Delta_h \bff{\xi}^{n+1}}{\bb{L}^2}^2
		+
		\lambda_e \norm{\nabla \Delta_h \bff{\xi}}{\bb{L}^2}^2
		\\
		&=
		-
		\inpro{\nabla \Pi_h \left(\delta \bff{u}^{n+1}-\partial_t \bff{u}^{n+1}\right)}{\nabla \Delta_h \bff{\xi}^{n+1}}
		-
		\inpro{\nabla \Pi_h \delta \bff{\rho}^{n+1}}{\nabla \Delta_h \bff{\xi}^{n+1}}
		\\
		&\quad
		-
		\lambda_r \inpro{\nabla \Pi_h \bff{\eta}^{n+1}}{\nabla \Delta_h \bff{\xi}^{n+1}}
		+
		\gamma \inpro{\nabla \Pi_h(\bff{u}_h^n \times \bff{H}_h^{n+1} - \bff{u}^{n+1} \times \bff{H}^{n+1})}{\nabla \Delta_h \bff{\xi}}
		\\
		&\quad
		-
		\inpro{\nabla \Pi_h \delta \bff{\eta}^{n+1}}{\Delta_h \bff{\xi}^{n+1}}
		-
		\kappa \mu\inpro{\delta \bff{\theta}^n+\delta \bff{\rho}^n+\delta \bff{u}^n-\partial_t \bff{u}^{n+1}}{\Delta_h \bff{\xi}^{n+1}}
		\\
		&\quad
		+
		\kappa \inpro{\frac{\abs{\bff{u}_h^{n+1}}^2 \bff{u}_h^{n+1} - \abs{\bff{u}_h^n}^2 \bff{u}_h^n}{k} - \partial_t \left(\abs{\bff{u}^{n+1}}^2 \bff{u}^{n+1}\right)}{\Delta_h \bff{\xi}^{n+1}}
		\\
		&\quad
		-
		\beta\inpro{\bff{e}\big( \bff{e}\cdot(\bff{\theta}^{n+1}+\bff{\rho}^{n+1})\big)}{\Delta_h \bff{\xi}^{n+1}}.
	\end{align*}
	The first three terms on the right-hand side can be estimated using Young's inequality (together with \eqref{equ:Ritz ineq} and \eqref{equ:delta un min u euler}) in a straightforward manner. The fifth and the sixth terms can also be bounded similarly. To estimate the remaining terms, we use \eqref{equ:cross nab est euler} and \eqref{equ:dt psi zeta euler}. 
	We then have for any $\epsilon>0$,
	\begin{align*}
		&\frac{1}{2k} \left(\norm{\nabla \bff{\xi}^{n+1}}{\bb{L}^2}^2 - \norm{\nabla \bff{\xi}^n}{\bb{L}^2}^2 \right)
		+
		\frac{1}{2k} \norm{\nabla \bff{\xi}^{n+1}-\nabla \bff{\xi}^n}{\bb{L}^2}^2
		+
		\lambda_r \norm{\Delta_h \bff{\xi}^{n+1}}{\bb{L}^2}^2
		+
		\lambda_e \norm{\nabla \Delta_h \bff{\xi}}{\bb{L}^2}^2
		\\
		&\lesssim
		h^{2r}
		+
		\norm{\delta \bff{\theta}^{n+1}}{\bb{L}^2}^2
		+
		\norm{\bff{\theta}^n}{\bb{H}^1}^2
		+
		\norm{\bff{\theta}^{n+1}}{\bb{H}^1}^2
		+
		\norm{\bff{\xi}^{n+1}}{\bb{H}^1}^2
		+
		\epsilon \norm{\Delta_h \bff{\xi}^{n+1}}{\bb{L}^2}^2
		+
		\epsilon \norm{\nabla\Delta_h \bff{\xi}^{n+1}}{\bb{L}^2}^2.
	\end{align*}
	Choosing $\epsilon>0$ sufficiently small, multiplying both sides by $2k$, then using \eqref{equ:theta n L2 euler} and \eqref{equ:xi nab theta n L2 euler}), we obtain~\eqref{equ:nab xi n L2 euler}. This completes the proof of the proposition.
\end{proof}

We can now state the main result on the rate of convergence of the scheme \eqref{equ:euler}.

\begin{theorem}\label{the:euler rate}
	Let~$(\bff{u},\bff{H})$ be the solution of \eqref{equ:weakform} as described in Section~\ref{sec:exact sol}, and let~$(\bff{u}_h^n,\bff{H}_h^n)\in \bb{V}_h\times \bb{V}_h$ be the solution of \eqref{equ:euler} with initial data~$\bff{u}_h^0$. Then for $h, k>0$ and~$n\in \{1,2,\ldots,\lfloor T/k \rfloor\}$,
	\begin{align*}
		\norm{\bff{u}_h^n - \bff{u}^n}{\bb{L}^2}^2
		+
		k \sum_{m=0}^{n-1} \norm{\bff{H}_h^{m+1} - \bff{H}^{m+1}}{\bb{L}^2}^2
		&\leq
		C \big(h^{2(r+1)}+k^2 \big),
		\\
		k \sum_{m=0}^{n-1} \norm{\nabla \bff{u}_h^{m+1} - \nabla \bff{u}^{m+1}}{\bb{L}^2}^2
		&\leq
		C \big(h^{2r}+k^2 \big).
	\end{align*}
	Assume further that one of the following holds:
	\begin{enumerate}
		\item $d=1$ or $2$,
		\item $d=3$ and the triangulation is globally quasi-uniform.
	\end{enumerate}
	Then for $h, k>0$ and~$n\in \{1,2,\ldots,\lfloor T/k \rfloor\}$,
	\begin{align*}
		\norm{\bff{H}_h^n-\bff{H}^n}{\bb{L}^2}^2
		&\leq
		C \big(h^{2(r+1)}+k \big),
		\\
		\norm{\nabla \bff{u}_h^n-\nabla \bff{u}^n}{\bb{L}^2}^2
		+
		k \sum_{m=0}^{n-1} \norm{\nabla\bff{H}_h^{m+1} - \nabla\bff{H}^{m+1}}{\bb{L}^2}^2
		&\leq
		C \big(h^{2r}+k \big).
	\end{align*}
	Finally, for $d=1$, $2$, $3$, if the triangulation is globally quasi-uniform, then
	\begin{align*}
		\norm{\nabla \bff{H}_h^n- \nabla \bff{H}^n}{\bb{L}^2}^2
		&\leq
		C \big(h^{2r}+k^2 \big),
		\\
		\norm{\bff{u}_h^n-\bff{u}^n}{\bb{L}^\infty}^2
		+
		k\sum_{m=0}^{n-1} \norm{\bff{H}_h^m- \bff{H}^m}{\bb{L}^\infty}^2
		&\leq
		C \big(h^{2(r+1)} \abs{\ln h} +k^2 \big).
	\end{align*}
	The constant $C$ depends on the coefficients of the equation, $|\mathscr{D}|$, $T$, and $K_0$ (as defined in \eqref{equ:ass 2 u}), but is independent of $n$, $h$, and $k$.
\end{theorem}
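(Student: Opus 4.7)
The plan is to reduce everything to Proposition~\ref{pro:theta xi n half euler} via the standard error splittings~\eqref{equ:Un utn} and~\eqref{equ:Hn Htn}, combined with the triangle inequality and the Ritz projection bounds~\eqref{equ:Ritz ineq} and~\eqref{equ:Ritz ineq L infty}. All the analytical effort has already been invested in establishing the superconvergence estimates for $\bff{\theta}^n$ and $\bff{\xi}^n$; what remains is essentially bookkeeping.

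For the first group of estimates (no assumption on the triangulation beyond shape regularity), I would write
\[
  \bff{u}_h^n - \bff{u}^n = \bff{\theta}^n + \bff{\rho}^n,
  \qquad
  \bff{H}_h^n - \bff{H}^n = \bff{\xi}^n + \bff{\eta}^n,
\]
apply $(a+b)^2 \leq 2a^2+2b^2$, and then invoke~\eqref{equ:theta n L2 euler} to bound $\norm{\bff{\theta}^n}{\bb{L}^2}^2$ and $k\sum \norm{\bff{\xi}^{m+1}}{\bb{L}^2}^2$ by $C(h^{2(r+1)}+k^2)$, while~\eqref{equ:Ritz ineq} with $s=0$ gives $\norm{\bff{\rho}^n}{\bb{L}^2}^2,\; \norm{\bff{\eta}^{m+1}}{\bb{L}^2}^2 \lesssim h^{2(r+1)}$. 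Summing the bounds yields the first displayed inequality. The estimate on $k\sum \norm{\nabla \bff{u}_h^{m+1} - \nabla \bff{u}^{m+1}}{\bb{L}^2}^2$ follows identically, except that the Ritz bound on $\norm{\nabla \bff{\rho}^{m+1}}{\bb{L}^2}^2$ is only $\lesssim h^{2r}$ (using~\eqref{equ:Ritz ineq} with $s=1$), which explains the degradation from $h^{2(r+1)}$ to $h^{2r}$.

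For the second group, under the hypothesis that $d\in\{1,2\}$ or $d=3$ with quasi-uniform triangulation, the relevant superconvergence estimates~\eqref{equ:xi nab theta n L2 euler} provide $\norm{\bff{\xi}^n}{\bb{L}^2}^2$, $\norm{\nabla\bff{\theta}^n}{\bb{L}^2}^2$, and $k\sum \norm{\nabla\bff{\xi}^{m+1}}{\bb{L}^2}^2$ bounded by $C(h^{2(r+1)}+k^4)$ (so certainly by $C(h^{2(r+1)}+k)$, recovering the stated order-$k$ bound, and analogously $C(h^{2r}+k)$ once a gradient is applied to $\bff{\rho}^n$ via~\eqref{equ:Ritz ineq} with $s=1$). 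Triangle inequality and the corresponding Ritz bounds then produce the two stated inequalities.

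The final group of estimates requires global quasi-uniformity of $\mathcal{T}_h$. Here I would invoke~\eqref{equ:Delta theta n L2 euler} to bound $\norm{\bff{\theta}^n}{\bb{L}^\infty}^2$ and $k\sum \norm{\bff{\xi}^{m+1}}{\bb{L}^\infty}^2$ by $C(h^{2(r+1)}+k^2)$, together with the logarithmic Ritz bound~\eqref{equ:Ritz ineq L infty} giving $\norm{\bff{\rho}^n}{\bb{L}^\infty}^2, \norm{\bff{\eta}^{m+1}}{\bb{L}^\infty}^2 \lesssim h^{2(r+1)}\abs{\ln h}$. For the $\norm{\nabla \bff{H}_h^n-\nabla \bff{H}^n}{\bb{L}^2}^2$ bound, I would combine~\eqref{equ:nab xi n L2 euler} with the Ritz estimate $\norm{\nabla \bff{\eta}^n}{\bb{L}^2}^2 \lesssim h^{2r}$. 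None of these steps is difficult; the only mild subtlety is keeping track of which superconvergence bound (i.e., $h^{2(r+1)}$ or $h^{2r}$) survives after the triangle inequality when the gradient forces one of the two halves to drop to~$h^{2r}$.
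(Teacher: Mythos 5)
Your proposal is correct and follows essentially the same route as the paper, which likewise deduces the theorem from the error splittings \eqref{equ:Un utn}--\eqref{equ:Hn Htn}, the superconvergence bounds of Proposition~\ref{pro:theta xi n half euler}, the Ritz estimates \eqref{equ:Ritz ineq} and \eqref{equ:Ritz ineq L infty}, and the triangle inequality (the paper's one-line proof cites Proposition~\ref{pro:semidisc theta xi}, evidently a typo for Proposition~\ref{pro:theta xi n half euler}). Your bookkeeping of which terms degrade to $h^{2r}$ once a gradient lands on $\bff{\rho}$ or $\bff{\eta}$ matches the stated rates.
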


\begin{proof}
	The results follow from Proposition \ref{pro:semidisc theta xi}, equations \eqref{equ:theta rho} and \eqref{equ:xi eta}, estimates \eqref{equ:Ritz ineq}, \eqref{equ:Ritz ineq L infty}, and the triangle inequality.
\end{proof}

\section{A Fully Discrete Scheme Based on the Crank--Nicolson Method}\label{sec:Crank}

We now propose a second-order in time numerical scheme for the LLBar equation. Let $k$ be the time step and $\bff{u}_h^n$ be the approximation in $\bb{V}_h$ of $\bff{u}(t)$ at time $t=t_n:=nk$, where $n=0,1,2,\ldots, \lfloor T/k \rfloor$. 
For any function $\bff{v}$, we denote $\bff{v}^n:= \bff{v}(t_n)$. In this section, we define
\begin{align*}
\delta \bff{v}^{n+\frac{1}{2}} :=
\frac{\bff{v}^{n+1}-\bff{v}^{n}}{k}, 
\quad 
\bff{v}^{n+\frac{1}{2}}:=
\frac{\bff{v}^{n+1}+\bff{v}^n}{2},
\quad 
\text{for } n=0,1,\ldots.
\end{align*}
Moreover, define
\begin{align}\label{equ:v n min half}
\delta \bff{v}^n :=
\frac{\bff{v}^{n+\frac{1}{2}} - \bff{v}^{n-\frac{1}{2}}}{k},
\quad
\widehat{\bff{v}}^{n-\frac{1}{2}}
:=
\frac{3\bff{v}^n-\bff{v}^{n-1}}{2},
\quad
\text{for } n=1,2,\ldots.
\end{align}
A fully discrete scheme based on the Crank--Nicolson time-stepping method can be described as follows. We start with $\bff{u}_h^0= R_h \bff{u}(0) \in \bb{V}_h$. For $t_n\in [0,T]$, $n\in \bb{N}$, given $\bff{u}_h^{n-1}, \bff{u}_h^n \in \bb{V}_h$, define $\bff{u}_h^{n+1}$ and $\bff{H}_h^{n+\frac{1}{2}}$ by
\begin{align}\label{equ:cranknic}
	\left\{
	\begin{alignedat}{1}
		\inpro{\delta \bff{u}_h^{n+\frac{1}{2}}}{\bff{\chi}}
		&=
		\lambda_r \inpro{\bff{H}_h^{n+\frac{1}{2}}}{\bff{\chi}}
		+
		\lambda_e \inpro{\nabla \bff{H}_h^{n+\frac{1}{2}}}{\nabla \bff{\chi}}
		-
		\gamma \inpro{\widehat{\bff{u}}_h^{n-\frac{1}{2}} \times \bff{H}_h^{n+\frac{1}{2}}}{\bff{\chi}},
		\\
		\inpro{\bff{H}_h^{n+\frac{1}{2}}}{\bff{\phi}} 
		&=
		-
		\inpro{\nabla \bff{u}_h^{n+\frac{1}{2}}}{\nabla \bff{\phi}}
		+
		\kappa \mu \inpro{\bff{u}_h^{n+\frac{1}{2}}}{\bff{\phi}}
		-
		\kappa \inpro{\psi \big(\bff{u}_h^n, \bff{u}_h^{n+1}\big)}{\bff{\phi}}
		-
		\beta \inpro{\bff{e}(\bff{e}\cdot \bff{u}_h^{n+\frac12})}{\bff{\phi}},
	\end{alignedat}
	\right.
\end{align}
for all $\bff{\chi}, \bff{\phi}\in \bb{H}^1$, where
\begin{align}\label{equ:psi un}
	\psi \big(\bff{u}_h^n, \bff{u}_h^{n+1}\big)
	:=
	\frac{1}{2} \left(\abs{\bff{u}_h^{n+1}}^2 + \abs{\bff{u}_h^n}^2\right) \bff{u}_h^{n+\frac{1}{2}}.
\end{align} 
Due to the presence of $\widehat{\bff{u}}_h^{n-\frac{1}{2}}$ term in~\eqref{equ:cranknic}, this is a two-step scheme and we need to prescribe $\bff{u}_h^1$ and $\bff{H}_h^{\frac{1}{2}}$ separately. This can be done, for instance, by solving with sufficiently small time step size,
\begin{align}\label{equ:cranknic first}
	\left\{
	\begin{alignedat}{1}
		\inpro{\delta \bff{u}_h^{\frac{1}{2}}}{\bff{\chi}}
		&=
		\lambda_r \inpro{\bff{H}_h^{\frac{1}{2}}}{\bff{\chi}}
		+
		\lambda_e \inpro{\nabla \bff{H}_h^{\frac{1}{2}}}{\nabla \bff{\chi}}
		-
		\gamma \inpro{{\bff{u}}_h^{\frac{1}{2}} \times \bff{H}_h^{\frac{1}{2}}}{\bff{\chi}},
		\\
		\inpro{\bff{H}_h^{\frac{1}{2}}}{\bff{\phi}} 
		&=
		-
		\inpro{\nabla \bff{u}_h^{\frac{1}{2}}}{\nabla \bff{\phi}}
		+
		\kappa \mu \inpro{\bff{u}_h^{\frac{1}{2}}}{\bff{\phi}}
		-
		\kappa \inpro{\psi \big(\bff{u}_h^0, \bff{u}_h^1\big)}{\bff{\phi}}.
	\end{alignedat}
	\right.
\end{align}
One could then show by arguments similar to those in this section that 
\begin{align*}
	\norm{\bff{u}_h^1- \bff{u}(t_1)}{\bb{H}^1} + \norm{\bff{H}_h^{\frac{1}{2}}- \bff{H}\big(t_{\frac{1}{2}}\big)}{\bb{H}^1} &\leq C \big(h^{r}+k^2 \big),
\end{align*}
where $C$ depends on the coefficients of the equation, $K_0$, and $|\mathscr{D}|$. This scheme is well-posed by similar argument as in Proposition~\ref{pro:wellpos euler}. The details are omitted for brevity. Subsequently, the proof in this section will focus on the general scheme \eqref{equ:cranknic} where $n\geq 1$, to avoid repetitive and lengthy arguments.

Next, we show some stability results analogous to Proposition~\ref{pro:ene dec H1 euler}. The energy dissipation property is also satisfied unconditionally.

\begin{proposition}\label{pro:ene dec H1 crank}
Let $\bff{u}_h^0 \in \bb{V}_h$ be given and let $\left(\bff{u}_h^n, \bff{H}_h^n\right)$ be defined by
\eqref{equ:cranknic}. Then for any $k>0$ and~$n\in \bb{N}$,
\begin{align}\label{equ:ene decrease}
		\mathcal{E}(\bff{u}_h^{n+1}) \leq \mathcal{E}(\bff{u}_h^n),
\end{align}
where $\mathcal{E}$ was defined in \eqref{equ:energy}. Moreover,
\begin{align}\label{equ:stab L4 H1}
	\norm{\bff{u}_h^n}{\bb{L}^4}^4
	+
	\norm{\bff{u}_h^n}{\bb{H}^1}^2
	+
	k\lambda_r \sum_{m=0}^n \norm{\bff{H}_h^{m+\frac{1}{2}}}{\bb{L}^2}^2
	+
	k\lambda_e \sum_{m=0}^n \norm{\nabla \bff{H}_h^{m+\frac{1}{2}}}{\bb{L}^2}^2
	\leq
	C \left( \norm{\bff{u}_h^0}{\bb{H}^1}^2 + |\mathscr{D}| \right),
\end{align}
where $C$ depends only on $\kappa$ and $\mu$.
\end{proposition}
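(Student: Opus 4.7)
The strategy mirrors that of Proposition~\ref{pro:ene dec H1 euler}, but the Crank--Nicolson time-stepping together with the midpoint nonlinearity~\eqref{equ:psi un} is designed so that the discrete energy identity becomes an exact equality (rather than an inequality with non-negative dissipative remainder terms as in the Euler case). First I would test the first equation of~\eqref{equ:cranknic} with $\bff{\chi}=\bff{H}_h^{n+\frac{1}{2}}$; the cross-product term $\gamma\langle \widehat{\bff{u}}_h^{n-\frac{1}{2}}\times \bff{H}_h^{n+\frac{1}{2}},\bff{H}_h^{n+\frac{1}{2}}\rangle$ vanishes by the orthogonality of the cross product, giving
\[
\inpro{\delta \bff{u}_h^{n+\frac{1}{2}}}{\bff{H}_h^{n+\frac{1}{2}}}
=\lambda_r\norm{\bff{H}_h^{n+\frac{1}{2}}}{\bb{L}^2}^2+\lambda_e\norm{\nabla \bff{H}_h^{n+\frac{1}{2}}}{\bb{L}^2}^2.
\]

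Next I would test the second equation with $\bff{\phi}=\delta \bff{u}_h^{n+\frac{1}{2}}$. Using $\bff{u}_h^{n+\frac{1}{2}}=(\bff{u}_h^{n+1}+\bff{u}_h^n)/2$ and $\delta\bff{u}_h^{n+\frac12}=(\bff{u}_h^{n+1}-\bff{u}_h^n)/k$, each of the quadratic terms $\langle\nabla\bff{u}_h^{n+\frac12},\nabla\delta\bff{u}_h^{n+\frac12}\rangle$, $\langle\bff{u}_h^{n+\frac12},\delta\bff{u}_h^{n+\frac12}\rangle$, and $\langle\bff{e}(\bff{e}\cdot\bff{u}_h^{n+\frac12}),\delta\bff{u}_h^{n+\frac12}\rangle$ telescopes perfectly (the "difference of squares" identity). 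The key computation, and the whole reason for choosing $\psi$ as in~\eqref{equ:psi un}, is the quartic telescoping identity
\[
\inpro{\psi(\bff{u}_h^n,\bff{u}_h^{n+1})}{\delta\bff{u}_h^{n+\frac12}}
=\frac{1}{4k}\int_{\mathscr{D}}\!\big(\abs{\bff{u}_h^{n+1}}^2+\abs{\bff{u}_h^n}^2\big)\big(\abs{\bff{u}_h^{n+1}}^2-\abs{\bff{u}_h^n}^2\big)\,\dx
=\frac{1}{4k}\big(\norm{\bff{u}_h^{n+1}}{\bb{L}^4}^4-\norm{\bff{u}_h^n}{\bb{L}^4}^4\big),
\]
which follows directly from $(\bff{u}_h^{n+1}+\bff{u}_h^n)\cdot(\bff{u}_h^{n+1}-\bff{u}_h^n)=|\bff{u}_h^{n+1}|^2-|\bff{u}_h^n|^2$.

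Combining these two equations, after rearranging all telescopic pieces and using the identity $\norm{|\bff{v}|^2-\mu}{\bb{L}^2}^2=\norm{\bff{v}}{\bb{L}^4}^4-2\mu\norm{\bff{v}}{\bb{L}^2}^2+\mu^2|\mathscr{D}|$ to reassemble $\mathcal{E}(\bff{u}_h^{n+1})-\mathcal{E}(\bff{u}_h^n)$ from~\eqref{equ:energy}, I obtain the exact discrete energy identity
\[
\mathcal{E}(\bff{u}_h^{n+1})-\mathcal{E}(\bff{u}_h^n)
+k\lambda_r\norm{\bff{H}_h^{n+\frac{1}{2}}}{\bb{L}^2}^2
+k\lambda_e\norm{\nabla \bff{H}_h^{n+\frac{1}{2}}}{\bb{L}^2}^2
=0,
\]
from which~\eqref{equ:ene decrease} follows immediately.

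For~\eqref{equ:stab L4 H1}, I would first note that $\mathcal{E}$ is bounded below: the quartic term dominates the anisotropy and Zeeman-type contributions via Young's inequality, giving $\mathcal{E}(\bff{v})\geq c_1\norm{\nabla\bff{v}}{\bb{L}^2}^2+c_2\norm{\bff{v}}{\bb{L}^4}^4-C|\mathscr{D}|$ for constants $c_1,c_2>0$ depending only on $\kappa,\mu,\beta$ (this is the discrete analogue of the manipulation leading to~\eqref{equ:norm uh L4}, with $\tfrac{\beta}{2}\norm{\bff{e}\cdot\bff{v}}{L^2}^2\geq \tfrac{\beta}{2}\norm{\bff{v}}{\bb{L}^2}^2$ absorbed into the quartic via $\norm{\bff{v}}{\bb{L}^2}^2\leq\epsilon\norm{\bff{v}}{\bb{L}^4}^4+C_\epsilon|\mathscr{D}|$). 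Summing the exact identity from $m=0$ to $n-1$ then yields
\[
c_1\norm{\nabla\bff{u}_h^n}{\bb{L}^2}^2+c_2\norm{\bff{u}_h^n}{\bb{L}^4}^4
+k\lambda_r\!\sum_{m=0}^{n-1}\!\norm{\bff{H}_h^{m+\frac12}}{\bb{L}^2}^2
+k\lambda_e\!\sum_{m=0}^{n-1}\!\norm{\nabla\bff{H}_h^{m+\frac12}}{\bb{L}^2}^2
\leq \mathcal{E}(\bff{u}_h^0)+C|\mathscr{D}|,
\]
from which~\eqref{equ:stab L4 H1} follows after controlling $\norm{\bff{u}_h^n}{\bb{L}^2}^2$ by $\norm{\bff{u}_h^n}{\bb{L}^4}^4+|\mathscr{D}|$, recovering the full $\bb{H}^1$ norm. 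The only mildly subtle point is verifying the quartic identity above for $\psi$ and lining up algebra so that the constant $\tfrac{1}{4}\kappa\mu^2|\mathscr{D}|$ in~\eqref{equ:energy} cancels as expected; everything else is a straightforward bookkeeping of telescoping sums, and the scheme is unconditionally energy stable for any $k>0$.
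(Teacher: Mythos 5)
Your proposal is correct and follows essentially the same route as the paper: testing with $\bff{\chi}=\bff{H}_h^{n+\frac12}$ and $\bff{\phi}=\delta\bff{u}_h^{n+\frac12}$, exploiting the midpoint structure of $\psi$ to telescope the quartic term exactly, obtaining the exact discrete energy identity, and then summing and lower-bounding the energy as in~\eqref{equ:nab uh eq}--\eqref{equ:norm uh L4}. The only detail you omit is the separate treatment of the first step $n=0$ via the initialisation scheme~\eqref{equ:cranknic first}, which the paper handles by the identical computation.
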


\begin{proof}
Firstly, taking $\bff{\chi}=\bff{H}_h^{\frac{1}{2}}$ and $\bff{\phi}=\delta \bff{u}_h^{\frac{1}{2}}$ in \eqref{equ:cranknic first}, then subtracting the results give 
\begin{align*}
	\frac{1}{2} \big(\norm{\nabla \bff{u}_h^{1}}{\bb{L}^2}^2 - \norm{\nabla \bff{u}_h^0}{\bb{L}^2}^2 \big)
	&+
	\frac{\kappa}{4} \big( \norm{\bff{u}_h^{1}}{\bb{L}^4}^4 - \norm{\bff{u}_h^0}{\bb{L}^4}^4 \big)
	-
	\frac{\kappa\mu}{2} \big(\norm{\bff{u}_h^{1}}{\bb{L}^2}^2 - \norm{\bff{u}_h^0}{\bb{L}^2}^2\big)
	\\
	&
	+
	\frac{\beta}{2} \left(\norm{\bff{e}\cdot\bff{u}_h^1}{L^2}^2 - \norm{\bff{e}\cdot \bff{u}_h^0}{L^2}^2\right)
	+
	k\lambda_r \norm{\bff{H}_h^{\frac{1}{2}}}{\bb{L}^2}^2
	+
	k\lambda_e \norm{\nabla \bff{H}_h^{\frac{1}{2}}}{\bb{L}^2}^2
	= 0,
\end{align*}
which implies \eqref{equ:ene decrease} for $n=0$.

Similarly, for $n\geq 1$, setting $\bff{\chi}=\bff{H}_h^{n+\frac{1}{2}}$ in \eqref{equ:cranknic} gives
\begin{align}\label{equ:ene stab Un Hn}
	\inpro{\delta \bff{u}_h^{n+\frac{1}{2}}}{\bff{H}_h^{n+\frac{1}{2}}}
	&=
	\lambda_r \norm{\bff{H}_h^{n+\frac{1}{2}}}{\bb{L}^2}^2
	+
	\lambda_e \norm{\nabla \bff{H}_h^{n+\frac{1}{2}}}{\bb{L}^2}^2,
\end{align}
while setting $\bff{\phi}=\delta \bff{u}_h^{n+\frac{1}{2}}$ gives
\begin{align}\label{equ:ene stab Hn Un}
	\nonumber
	\inpro{\bff{H}_h^{n+\frac{1}{2}}}{\delta \bff{u}_h^{n+\frac{1}{2}}}
	&=
	-\frac{1}{2k} \big(\norm{\nabla \bff{u}_h^{n+1}}{\bb{L}^2}^2 - \norm{\nabla \bff{u}_h^n}{\bb{L}^2}^2 \big)
	+
	\frac{\kappa\mu}{2k} \big(\norm{\bff{u}_h^{n+1}}{\bb{L}^2}^2 - \norm{\bff{u}_h^n}{\bb{L}^2}^2\big)
	\\
	&\quad
	-
	\frac{\kappa}{4k} \big( \norm{\bff{u}_h^{n+1}}{\bb{L}^4}^4 - \norm{\bff{u}_h^n}{\bb{L}^4}^4 \big)
	-
	\frac{\beta}{2k} \left(\norm{\bff{e}\cdot\bff{u}_h^1}{L^2}^2 - \norm{\bff{e}\cdot \bff{u}_h^0}{L^2}^2\right).
\end{align}
Substituting \eqref{equ:ene stab Hn Un} into \eqref{equ:ene stab Un Hn} and rearranging yield the identity
\begin{align*}
	\frac{1}{2} \big(\norm{\nabla \bff{u}_h^{n+1}}{\bb{L}^2}^2 - \norm{\nabla \bff{u}_h^n}{\bb{L}^2}^2 \big)
	&+
	\frac{\kappa}{4} \big( \norm{\bff{u}_h^{n+1}}{\bb{L}^4}^4 - \norm{\bff{u}_h^n}{\bb{L}^4}^4 \big)
	-
	\frac{\kappa\mu}{2} \big(\norm{\bff{u}_h^{n+1}}{\bb{L}^2}^2 - \norm{\bff{u}_h^n}{\bb{L}^2}^2\big)
	\\
	&
	+
	\frac{\beta}{2} \left(\norm{\bff{e}\cdot\bff{u}_h^{n+1}}{L^2}^2 - \norm{\bff{e}\cdot \bff{u}_h^n}{L^2}^2\right)
	+
	k\lambda_r \norm{\bff{H}_h^{n+\frac{1}{2}}}{\bb{L}^2}^2
	+
	k\lambda_e \norm{\nabla \bff{H}_h^{n+\frac{1}{2}}}{\bb{L}^2}^2
	= 0,
\end{align*}
which implies \eqref{equ:ene decrease} and~\eqref{equ:stab L4 H1}.
\end{proof}

We will also derive the stability of $\bff{H}_h^{n+\frac{1}{2}}$ in $\ell^\infty(\bb{L}^2)$ norm. The following identity will be used in the proof (where $\psi$ was defined in~\eqref{equ:psi un}):
\begin{align}
	\nonumber
	\label{equ:identity psi}
	2 \big(\psi(\bff{u}_h^n, \bff{u}_h^{n+1})-\psi(\bff{u}_h^{n-1},\bff{u}_h^n) \big)
	&=
	\big( |\bff{u}_h^{n+1}|^2 + |\bff{u}_h^n|^2 \big) \big(\bff{u}_h^{n+\frac{1}{2}} - \bff{u}_h^{n-\frac{1}{2}} \big)
	\\
	&\quad 
	+
	\big(\bff{u}_h^{n+1}- \bff{u}_h^{n-1}\big) \cdot \big(\bff{u}_h^{n+1}+\bff{u}_h^{n-1}\big) \bff{u}_h^{n-\frac{1}{2}}.
\end{align}

\begin{proposition}
Let $\bff{u}_h^0 \in \bb{V}_h$ be given and let $\bff{u}_h^n$, $\bff{H}_h^{n+\frac{1}{2}}$ be defined by \eqref{equ:cranknic}. Then for any $k>0$ and~$n\in \bb{N}$,
\begin{align}\label{equ:Hn L2 stab}
	\norm{\bff{H}_h^{n+\frac{1}{2}}}{\bb{L}^2}^2
	+
	k \sum_{m=1}^n \norm{\delta \bff{u}_h^m}{\bb{L}^2}^2
	\leq
	C,
\end{align}
where $C$ depends only on the coefficients of the equation, $K_0$, and $|\mathscr{D}|$.
\end{proposition}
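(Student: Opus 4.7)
The plan is to mimic the Euler proof \eqref{equ:Hn L2 stab euler} but with the appropriate Crank--Nicolson pairing so that the second equation telescopes cleanly. The key observation is that for any $\bff{a}, \bff{b}$, we have $2\inpro{\bff{a}-\bff{b}}{\bff{a}+\bff{b}}=\norm{\bff{a}}{\bb{L}^2}^2-\norm{\bff{b}}{\bb{L}^2}^2$, with no leftover squared-difference term, which is the natural Crank--Nicolson analogue of \eqref{equ:a dot ab}. Set $\bar{\bff{H}}^n:= \bigl(\bff{H}_h^{n+\frac{1}{2}}+\bff{H}_h^{n-\frac{1}{2}}\bigr)/2$. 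Averaging the first equation of~\eqref{equ:cranknic} at index $n$ and $n-1$ produces $\inpro{\delta \bff{u}_h^n}{\bff{\chi}}$ on the left (using the identity $\tfrac12\bigl(\delta\bff{u}_h^{n+\frac12}+\delta\bff{u}_h^{n-\frac12}\bigr)=\delta \bff{u}_h^n$ from~\eqref{equ:v n min half}); taking $\bff{\chi}=\delta \bff{u}_h^n$ yields
\begin{align*}
\norm{\delta \bff{u}_h^n}{\bb{L}^2}^2
&= \lambda_r \inpro{\bar{\bff{H}}^n}{\delta \bff{u}_h^n} + \lambda_e \inpro{\nabla \bar{\bff{H}}^n}{\nabla \delta \bff{u}_h^n}
-\tfrac{\gamma}{2}\inpro{\widehat{\bff{u}}_h^{n-\frac12}\times\bff{H}_h^{n+\frac12}+\widehat{\bff{u}}_h^{n-\frac32}\times\bff{H}_h^{n-\frac12}}{\delta\bff{u}_h^n}.
\end{align*}
Independently, subtract the second equation at indices $n$ and $n-1$, divide by $k$, and test with $\bff{\phi}=\lambda_e \bar{\bff{H}}^n$; the inner product on the left telescopes to $\tfrac{\lambda_e}{2k}\bigl(\norm{\bff{H}_h^{n+\frac12}}{\bb{L}^2}^2-\norm{\bff{H}_h^{n-\frac12}}{\bb{L}^2}^2\bigr)$, and the right produces $-\lambda_e\inpro{\nabla\delta\bff{u}_h^n}{\nabla\bar{\bff{H}}^n} + \kappa\mu\lambda_e\inpro{\delta\bff{u}_h^n}{\bar{\bff{H}}^n} - \kappa\lambda_e\inpro{\delta\psi^n}{\bar{\bff{H}}^n}-\beta\lambda_e\inpro{\bff{e}(\bff{e}\cdot\delta\bff{u}_h^n)}{\bar{\bff{H}}^n}$, where $\delta\psi^n:=k^{-1}\bigl(\psi(\bff{u}_h^n,\bff{u}_h^{n+1})-\psi(\bff{u}_h^{n-1},\bff{u}_h^n)\bigr)$.

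Adding these two identities, the mixed $\nabla$-inner-products cancel exactly, leaving
\begin{align*}
\norm{\delta\bff{u}_h^n}{\bb{L}^2}^2 + \tfrac{\lambda_e}{2k}\bigl(\norm{\bff{H}_h^{n+\frac12}}{\bb{L}^2}^2-\norm{\bff{H}_h^{n-\frac12}}{\bb{L}^2}^2\bigr) = R_1+R_2+R_3+R_4,
\end{align*}
where $R_1,\ldots,R_4$ are the remaining four inner-product terms. The plan is to bound each $|R_i|$ by $C\norm{\bff{H}_h^{n\pm\frac12}}{\bb{H}^1}^2 + \epsilon\norm{\delta\bff{u}_h^n}{\bb{L}^2}^2$. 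For $R_1$ (Zeeman-type) and the anisotropy term this is immediate from H\"older and Young. For the cross-product term, H\"older with $\bb{L}^4\!\times\!\bb{L}^4\!\times\!\bb{L}^2$ plus Sobolev $\bb{H}^1\hookrightarrow\bb{L}^4$ and the uniform $\bb{H}^1$ bound on $\widehat{\bff{u}}_h^{n-\frac12},\widehat{\bff{u}}_h^{n-\frac32}$ from~\eqref{equ:stab L4 H1} yields the required estimate. The principal obstacle is controlling $\inpro{\delta\psi^n}{\bar{\bff{H}}^n}$, and the key is the identity~\eqref{equ:identity psi}: it shows that $2k\,\delta\psi^n$ is a sum of two terms, each of which factors through $\bff{u}_h^{n+1}-\bff{u}_h^{n-1}=2k\,\delta\bff{u}_h^n$. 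Hence
\begin{align*}
\delta\psi^n = \tfrac12\bigl(|\bff{u}_h^{n+1}|^2+|\bff{u}_h^n|^2\bigr)\delta\bff{u}_h^n + \bigl(\delta\bff{u}_h^n\cdot(\bff{u}_h^{n+1}+\bff{u}_h^{n-1})\bigr)\bff{u}_h^{n-\frac12},
\end{align*}
which by H\"older with $\bb{L}^6\!\times\!\bb{L}^6\!\times\!\bb{L}^2\!\times\!\bb{L}^6$, Sobolev embedding, and the uniform $\bb{H}^1$ bound on $\bff{u}_h^m$ from~\eqref{equ:stab L4 H1}, gives $|R_3|\lesssim\norm{\bar{\bff{H}}^n}{\bb{H}^1}^2 + \epsilon\norm{\delta\bff{u}_h^n}{\bb{L}^2}^2$.

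Choosing $\epsilon$ small enough to absorb the $\delta\bff{u}_h^n$ terms, multiplying by $k$, and summing from $m=1$ to $n$, the telescoping yields
\begin{align*}
k\sum_{m=1}^n\norm{\delta\bff{u}_h^m}{\bb{L}^2}^2 + \lambda_e\norm{\bff{H}_h^{n+\frac12}}{\bb{L}^2}^2 \leq \lambda_e\norm{\bff{H}_h^{\frac12}}{\bb{L}^2}^2 + Ck\sum_{m=0}^n\norm{\bff{H}_h^{m+\frac12}}{\bb{H}^1}^2.
\end{align*}
The last sum is controlled by~\eqref{equ:stab L4 H1}, while $\norm{\bff{H}_h^{\frac12}}{\bb{L}^2}$ is bounded uniformly via the preparatory analysis of the start-up step~\eqref{equ:cranknic first} (which, as noted in the paper, can be treated by the same arguments after replacing $\widehat{\bff{u}}_h^{-\frac12}$ by $\bff{u}_h^{\frac12}$). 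This proves~\eqref{equ:Hn L2 stab}.
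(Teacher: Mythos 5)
Your proof is correct and follows essentially the same route as the paper: the paper likewise pairs the sum of the first equation at steps $n+\frac12$ and $n-\frac12$ tested against $\bff{u}_h^{n+\frac12}-\bff{u}_h^{n-\frac12}$ with the difference of the second equation tested against $\lambda_e(\bff{H}_h^{n+\frac12}+\bff{H}_h^{n-\frac12})$, cancels the mixed gradient terms, uses \eqref{equ:identity psi} to factor the cubic difference through $\delta\bff{u}_h^n$, and closes via \eqref{equ:stab L4 H1} and telescoping; your version only differs by harmless normalising factors of $2$ and $k$. (The displayed polarization identity $2\inpro{\bff{a}-\bff{b}}{\bff{a}+\bff{b}}=\norm{\bff{a}}{\bb{L}^2}^2-\norm{\bff{b}}{\bb{L}^2}^2$ has a spurious factor of $2$, but your subsequent computation uses the correct form.)
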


\begin{proof}
Adding the first equation in \eqref{equ:cranknic} at time steps $n+\frac{1}{2}$ and $n-\frac{1}{2}$ gives
\begin{align}\label{equ:delta n half}
	\nonumber
	\inpro{\delta \bff{u}_h^{n+\frac{1}{2}} + \delta \bff{u}_h^{n-\frac{1}{2}}}{\bff{\chi}}
	&=
	\lambda_r \inpro{\bff{H}_h^{n+\frac{1}{2}}+\bff{H}_h^{n-\frac{1}{2}}}{\bff{\chi}}
	+
	\lambda_e \inpro{\nabla \bff{H}_h^{n+\frac{1}{2}}+ \nabla \bff{H}_h^{n-\frac{1}{2}}}{\nabla \bff{\chi}}
	\\
	&\quad 
	-
	\gamma \inpro{\widehat{\bff{u}}_h^{n-\frac{1}{2}} \times \bff{H}_h^{n+\frac{1}{2}} + \widehat{\bff{u}}_h^{n-\frac{3}{2}} \times \bff{H}_h^{n-\frac{1}{2}}}{\bff{\chi}}.
\end{align}
Subtracting the second equation in \eqref{equ:cranknic} at time step $n-\frac{1}{2}$ from the same equation at time step $n+\frac{1}{2}$ gives
\begin{align}\label{equ:H n half}
	\nonumber
	\inpro{\bff{H}_h^{n+\frac{1}{2}}-\bff{H}_h^{n-\frac{1}{2}}}{\bff{\phi}}
	&=
	-\inpro{\nabla \bff{u}_h^{n+\frac{1}{2}}- \nabla \bff{u}_h^{n-\frac{1}{2}}}{\nabla \bff{\phi}}
	+
	\kappa\mu \inpro{\bff{u}_h^{n+\frac{1}{2}}-\bff{u}_h^{n-\frac{1}{2}}}{\bff{\phi}}
	\\
	&\quad 
	-
	\kappa \inpro{\psi(\bff{u}_h^n, \bff{u}_h^{n+1})-\psi(\bff{u}_h^{n-1},\bff{u}_h^n)}{\bff{\phi}}
	-
	\beta \inpro{\bff{e}\big(\bff{e}\cdot (\bff{u}_h^{n+\frac12}-\bff{u}_h^{n-\frac12})\big)}{\bff{\phi}}.
\end{align}
Taking $\bff{\chi}=\bff{u}_h^{n+\frac{1}{2}}-\bff{u}_h^{n-\frac{1}{2}}$ in \eqref{equ:delta n half} and $\bff{\phi}= \lambda_e \big(\bff{H}_h^{n+\frac{1}{2}}+ \bff{H}_h^{n-\frac{1}{2}} \big)$ in \eqref{equ:H n half}, then adding the resulting equations, and noting the identity
\begin{align*}
	2 \big(\bff{u}_h^{n+\frac{1}{2}}-\bff{u}_h^{n-\frac{1}{2}} \big)
	&=
	\bff{u}_h^{n+1}-\bff{u}_h^{n-1},
\end{align*}
we obtain
\begin{align*}
	&\lambda_e \left(\norm{\bff{H}_h^{n+\frac{1}{2}}}{\bb{L}^2}^2 - \norm{\bff{H}_h^{n-\frac{1}{2}}}{\bb{L}^2}^2 \right)
	+
	\frac{4}{k} \norm{\bff{u}_h^{n+\frac{1}{2}}- \bff{u}_h^{n-\frac{1}{2}}}{\bb{L}^2}^2
	\\
	&=
	(\lambda_r + \kappa\mu \lambda_e) \inpro{\bff{H}_h^{n+\frac{1}{2}}+\bff{H}_h^{n-\frac{1}{2}}}{\bff{u}_h^{n+\frac{1}{2}}- \bff{u}_h^{n-\frac{1}{2}}}
	-
	\gamma \inpro{\widehat{\bff{u}}_h^{n-\frac{1}{2}} \times \bff{H}_h^{n+\frac{1}{2}} + \widehat{\bff{u}}_h^{n-\frac{3}{2}} \times \bff{H}_h^{n-\frac{1}{2}}}{\bff{u}_h^{n+\frac{1}{2}}-\bff{u}_h^{n-\frac{1}{2}}}
	\\
	&\quad 
	-
	\kappa \lambda_e \inpro{\psi(\bff{u}_h^n, \bff{u}_h^{n+1})-\psi(\bff{u}_h^{n-1},\bff{u}_h^n)}{\bff{H}_h^{n+\frac{1}{2}}+ \bff{H}_h^{n-\frac{1}{2}}}
	-
	\beta \lambda_e \inpro{\bff{e}\big(\bff{e}\cdot (\bff{u}_h^{n+\frac12}-\bff{u}_h^{n-\frac12})\big)}{\bff{H}_h^{n+\frac{1}{2}}+\bff{H}_h^{n-\frac{1}{2}}}
	\\
	&=: S_1 + S_2 + S_3+ S_4.
\end{align*}
We will estimate each term on the right-hand side by applying H\"older's and Young's inequalities. Firstly,
\begin{align}\label{equ:S1}
	|S_1| + |S_4|
	&\leq 
	Ck \norm{\bff{H}_h^{n+\frac{1}{2}}}{\bb{L}^2}^2 
	+ 
	Ck \norm{\bff{H}_h^{n-\frac{1}{2}}}{\bb{L}^2}^2
	+
	\frac{1}{k} \norm{\bff{u}_h^{n+\frac{1}{2}}- \bff{u}_h^{n-\frac{1}{2}}}{\bb{L}^2}^2.
\end{align}
For the second term, similarly we have
\begin{align}\label{equ:S2}
	\nonumber
	|S_2|
	&\leq
	\gamma \left( \norm{\widehat{\bff{u}}_h^{n-\frac{1}{2}}}{\bb{L}^4} \norm{\bff{H}_h^{n+\frac{1}{2}}}{\bb{L}^4}
	+
	\norm{\widehat{\bff{u}}_h^{n-\frac{3}{2}}}{\bb{L}^4} \norm{\bff{H}_h^{n-\frac{1}{2}}}{\bb{L}^4} \right)
	\norm{\bff{u}_h^{n+\frac{1}{2}}- \bff{u}_h^{n-\frac{1}{2}}}{\bb{L}^2}
	\\
	&\leq
	Ck \norm{\bff{H}_h^{n+\frac{1}{2}}}{\bb{H}^1}^2
	+
	Ck \norm{\bff{H}_h^{n-\frac{1}{2}}}{\bb{H}^1}^2
	+
	\frac{1}{k} \norm{\bff{u}_h^{n+\frac{1}{2}}- \bff{u}_h^{n-\frac{1}{2}}}{\bb{L}^2}^2,
\end{align}
where in the last step we used \eqref{equ:stab L4 H1} and the Sobolev embedding $\bb{H}^1 \hookrightarrow \bb{L}^4$. For the term in $S_3$, noting the identities \eqref{equ:identity psi} and \eqref{equ:H n half}, we have
\begin{align}\label{equ:S3}
	\nonumber
	|S_3|
	&\leq
	C \left( \norm{\bff{u}_h^{n+1}}{\bb{L}^6}^2 + \norm{\bff{u}_h^n}{\bb{L}^6}^2
	+ \norm{\bff{u}_h^{n+1}}{\bb{L}^6} \norm{\bff{u}_h^{n-\frac{1}{2}}}{\bb{L}^6}
	+ \norm{\bff{u}_h^{n-1}}{\bb{L}^6} \norm{\bff{u}_h^{n-\frac{1}{2}}}{\bb{L}^6} \right)
	\\
	\nonumber
	&\quad 
	\norm{\bff{u}_h^{n+\frac{1}{2}} - \bff{u}_h^{n-\frac{1}{2}}}{\bb{L}^2}
	\norm{\bff{H}_h^{n+\frac{1}{2}}+ \bff{H}_h^{n-\frac{1}{2}}}{\bb{L}^6}
	\\
	&\leq
	Ck \norm{\bff{H}_h^{n+\frac{1}{2}}}{\bb{H}^1}^2 
	+ 
	Ck \norm{\bff{H}_h^{n-\frac{1}{2}}}{\bb{H}^1}^2
	+
	\frac{1}{k} \norm{\bff{u}_h^{n+\frac{1}{2}}- \bff{u}_h^{n-\frac{1}{2}}}{\bb{L}^2}^2,
\end{align}
where in the final step we used \eqref{equ:stab L4 H1}, Young's inequality, and the Sobolev embedding. Altogether, upon rearranging and summing the terms, \eqref{equ:S1}, \eqref{equ:S2}, and \eqref{equ:S3} imply
\begin{align*}
	\lambda_e \norm{\bff{H}_h^{n+\frac{1}{2}}}{\bb{L}^2}^2 
	+
	\frac{1}{k} \sum_{m=1}^n \norm{\bff{u}_h^{m+\frac{1}{2}}- \bff{u}_h^{m-\frac{1}{2}}}{\bb{L}^2}^2
	&\leq
	\lambda_e \norm{\bff{H}_h^{\frac{1}{2}}}{\bb{L}^2}^2
	+
	Ck \sum_{m=1}^n \left( \norm{\bff{H}_h^{m+\frac{1}{2}}}{\bb{H}^1}^2 + \norm{\bff{H}_h^{m-\frac{1}{2}}}{\bb{H}^1}^2 \right)
	\leq C,
\end{align*}
where in the last step we used \eqref{equ:stab L4 H1}. This implies the required inequality.
\end{proof}

The above proposition implies the stability of $\bff{u}_h^n$ in $\ell^\infty(\bb{L}^\infty)$ norm under an additional assumption that the triangulation is quasi-uniform.

\begin{proposition}
Let $\bff{u}_h^0 \in \bb{V}_h$ be given and let $\left(\bff{u}_h^n, \bff{H}_h^n\right)$ be defined by
\eqref{equ:cranknic}. Then for any $k>0$ and~$n\in \bb{N}$,
\begin{align}\label{equ:stab Delta uh}
	\norm{\Delta_h \bff{u}_h^n}{\bb{L}^2}^2 \leq C.
\end{align}
Moreover, if the triangulation $\mathcal{T}_h$ is (globally) quasi-uniform, then
\begin{align}\label{equ:stab uh L infty}
	\norm{\bff{u}_h^n}{\bb{L}^\infty}^2 \leq C.
\end{align}
Here, the constant $C$ depends only on the coefficients of the equation, $\norm{\bff{u}_h^0}{\bb{H}^1}$, $\norm{\bff{H}_h^0}{\bb{L}^2}$, and $|\mathscr{D}|$.
\end{proposition}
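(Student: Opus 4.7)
The plan is to mirror the proof of the corresponding Euler-based bound \eqref{equ:stab Delta uh euler}, exploiting the elliptic structure of the second equation of \eqref{equ:cranknic}. The natural choice of test function is $\bff{\phi} = \Delta_h \bff{u}_h^{n+\frac{1}{2}}\in\bb{V}_h$: by the defining identity \eqref{equ:disc laplacian}, the pairing $-\inpro{\nabla \bff{u}_h^{n+\frac{1}{2}}}{\nabla \Delta_h \bff{u}_h^{n+\frac{1}{2}}}$ produced on the right-hand side collapses into $\norm{\Delta_h \bff{u}_h^{n+\frac{1}{2}}}{\bb{L}^2}^2$, which after rearrangement becomes the quantity we want to control. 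This yields
\[
\norm{\Delta_h \bff{u}_h^{n+\frac{1}{2}}}{\bb{L}^2}^2 = \inpro{\bff{H}_h^{n+\frac{1}{2}}}{\Delta_h \bff{u}_h^{n+\frac{1}{2}}} - \kappa\mu\inpro{\bff{u}_h^{n+\frac{1}{2}}}{\Delta_h \bff{u}_h^{n+\frac{1}{2}}} + \kappa\inpro{\psi(\bff{u}_h^n,\bff{u}_h^{n+1})}{\Delta_h \bff{u}_h^{n+\frac{1}{2}}} + \beta\inpro{\bff{e}(\bff{e}\cdot\bff{u}_h^{n+\frac{1}{2}})}{\Delta_h \bff{u}_h^{n+\frac{1}{2}}}.
\]
I would then estimate each inner product on the right by Cauchy--Schwarz followed by Young's inequality with a small parameter, absorbing a fraction of $\norm{\Delta_h\bff{u}_h^{n+\frac{1}{2}}}{\bb{L}^2}^2$ back to the left. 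The two linear terms are handled by the uniform $\bb{L}^2$ bounds on $\bff{H}_h^{n+\frac{1}{2}}$ and $\bff{u}_h^{n+\frac{1}{2}}$ supplied by \eqref{equ:Hn L2 stab} and \eqref{equ:stab L4 H1}, while the cubic nonlinearity is controlled via
\[
\norm{\psi(\bff{u}_h^n,\bff{u}_h^{n+1})}{\bb{L}^2} \lesssim \bigl(\norm{\bff{u}_h^{n+1}}{\bb{L}^6}^2+\norm{\bff{u}_h^n}{\bb{L}^6}^2\bigr)\norm{\bff{u}_h^{n+\frac{1}{2}}}{\bb{L}^6},
\]
which is uniformly bounded thanks to \eqref{equ:stab L4 H1} and the Sobolev embedding $\bb{H}^1\hookrightarrow\bb{L}^6$. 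Collecting terms yields $\norm{\Delta_h \bff{u}_h^{n+\frac{1}{2}}}{\bb{L}^2}^2 \leq C$.

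From here I would pass to the individual estimate \eqref{equ:stab Delta uh} by exploiting $\Delta_h \bff{u}_h^{n+1} = 2\Delta_h \bff{u}_h^{n+\frac{1}{2}} - \Delta_h \bff{u}_h^n$ together with the initial regularity coming from $\bff{u}_h^0\in\bb{V}_h$. I expect this to be the main obstacle: a naive triangle inequality would only give $\norm{\Delta_h \bff{u}_h^{n+1}}{\bb{L}^2} \leq 2C + \norm{\Delta_h \bff{u}_h^n}{\bb{L}^2}$, which grows linearly in $n$. The way around this is to couple the midpoint bound above with the $\ell^2$-in-time control $k\sum_m \norm{\delta\bff{u}_h^m}{\bb{L}^2}^2 \leq C$ from \eqref{equ:Hn L2 stab}: applying the same elliptic rearrangement to the difference of the second equation of \eqref{equ:cranknic} at steps $n+\frac{1}{2}$ and $n-\frac{1}{2}$ produces a summable bound on the jumps $\norm{\Delta_h \bff{u}_h^{n+1} - \Delta_h \bff{u}_h^n}{\bb{L}^2}$, after which a discrete Gronwall argument, started from the finite value $\norm{\Delta_h \bff{u}_h^0}{\bb{L}^2}$, closes \eqref{equ:stab Delta uh}.

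Finally, \eqref{equ:stab uh L infty} will follow immediately from \eqref{equ:stab Delta uh} combined with the uniform $\bb{H}^1$ bound \eqref{equ:stab L4 H1} via the discrete Gagliardo--Nirenberg-type inequality \eqref{equ:disc lapl L infty}, whose hypotheses are met under the assumed quasi-uniformity of $\mathcal{T}_h$.
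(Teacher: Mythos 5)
Your first step --- testing the second equation of \eqref{equ:cranknic} with $\bff{\phi}=\Delta_h\bff{u}_h^{n+\frac12}$, absorbing via Young's inequality, and controlling the cubic term through \eqref{equ:stab L4 H1} and the embedding $\bb{H}^1\hookrightarrow\bb{L}^6$ --- is exactly the paper's argument, and your final passage to \eqref{equ:stab uh L infty} via \eqref{equ:disc lapl L infty} also coincides with it. You are moreover right to flag that this computation only bounds $\norm{\Delta_h\bff{u}_h^{n+\frac12}}{\bb{L}^2}=\tfrac12\norm{\Delta_h\bff{u}_h^{n+1}+\Delta_h\bff{u}_h^{n}}{\bb{L}^2}$, whereas \eqref{equ:stab Delta uh} is stated (and later used, e.g.\ for $\norm{\bff{u}_h^n}{\bb{L}^\infty}$ and $\norm{\bff{u}_h^{n+1}}{\bb{L}^\infty}$ in Lemmas~\ref{lem:cross disc} and~\ref{lem:cub disc}) at integer time levels; the paper's own proof passes over this point without comment.

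Your proposed repair, however, does not close the gap. First, differencing the second equation of \eqref{equ:cranknic} at steps $n+\frac12$ and $n-\frac12$ controls $\Delta_h\big(\bff{u}_h^{n+\frac12}-\bff{u}_h^{n-\frac12}\big)=\tfrac12\Delta_h\big(\bff{u}_h^{n+1}-\bff{u}_h^{n-1}\big)$, i.e.\ the two-step jump, not $\Delta_h\big(\bff{u}_h^{n+1}-\bff{u}_h^{n}\big)$. Second, the elliptic rearrangement applied to that difference puts $\norm{\bff{H}_h^{n+\frac12}-\bff{H}_h^{n-\frac12}}{\bb{L}^2}^2$ on the right-hand side, and no summable bound on these increments is available: unlike the Euler scheme, where the proof of \eqref{equ:Hn L2 stab euler} retains $\lambda_e\sum_m\norm{\bff{H}_h^{m+1}-\bff{H}_h^m}{\bb{L}^2}^2$ on the left, the Crank--Nicolson energy argument behind \eqref{equ:Hn L2 stab} tests with $\bff{H}_h^{n+\frac12}+\bff{H}_h^{n-\frac12}$ and telescopes exactly, leaving no such dissipation term. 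Third, even granting $\ell^2$-in-$n$ summability of the squared jumps, telescoping from $\Delta_h\bff{u}_h^0$ only gives $\norm{\Delta_h\bff{u}_h^n}{\bb{L}^2}\lesssim\sqrt{n}$ by Cauchy--Schwarz, and the recursion $\norm{\Delta_h\bff{u}_h^{n+1}}{\bb{L}^2}\le 2C+\norm{\Delta_h\bff{u}_h^{n}}{\bb{L}^2}$ is additive rather than of the form $a_{n+1}\le(1+Ck)a_n+b_n$, so no discrete Gronwall lemma applies. As written, your argument (like the paper's) establishes the stated bounds only for the averages $\bff{u}_h^{n+\frac12}$; obtaining them uniformly at integer levels requires a genuinely different mechanism.
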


\begin{proof}
Taking $\bff{\phi}=\Delta_h \bff{u}_h^{n+\frac12}$ and applying Young's and H\"older's inequalities, we have
\begin{align*}
	\norm{\Delta_h \bff{u}_h^{n+\frac12}}{\bb{L}^2}^2
	&=
	\kappa \norm{\nabla \bff{u}_h^{n+\frac12}}{\bb{L}^2}^2
	+
	\inpro{\bff{H}_h^{n+\frac{1}{2}}}{\Delta_h \bff{u}_h^{n+\frac12}}
	+
	\kappa \inpro{\psi \big(\bff{u}_h^n, \bff{u}_h^{n+1}\big)}{\Delta_h \bff{u}_h^{n+\frac12}}
	\\
	&\quad
	+
	\beta\inpro{\bff{e}\big(\bff{e}\cdot \bff{u}_h^{n+\frac12}\big)}{\Delta_h \bff{u}_h^{n+\frac12}}
	\\
	&\leq
	C\norm{\bff{u}_h^{n+\frac12}}{\bb{H}^1}^2
	+
	\frac{1}{2} \norm{\Delta_h \bff{u}_h^{n+\frac12}}{\bb{L}^2}^2
	+
	4 \norm{\bff{H}_h^{n+\frac{1}{2}}}{\bb{L}^2}^2
	+
	C \left( \norm{\bff{u}_h^{n+1}}{\bb{L}^6}^2 + \norm{\bff{u}_h^n}{\bb{L}^6}^2 \right) \norm{\bff{u}_h^{n+\frac{1}{2}}}{\bb{L}^6}.
\end{align*}
Therefore, rearranging the terms, using the Sobolev embedding $\bb{H}^1 \hookrightarrow \bb{L}^6$ (noting \eqref{equ:stab L4 H1} and \eqref{equ:Hn L2 stab}), we inequality \eqref{equ:stab Delta uh}.
Inequality \eqref{equ:stab uh L infty} then follows from \eqref{equ:disc lapl L infty}, completing the proof of the proposition.
\end{proof}

Before proceeding to prove various estimates leading to the main theorem, we note the following inequalities. For $s=0$ or $1$, by Taylor's theorem,
\begin{align}\label{equ:u hat u half}
	\norm{\widehat{\bff{u}}^{n-\frac{1}{2}}- \bff{u}\big(t_{n+\frac{1}{2}}\big)}{\bb{H}^s}
	\leq
	\norm{\bff{u}^n- \frac{\bff{u}^{n+1}+\bff{u}^{n-1}}{2}}{\bb{H}^s}
	+
	\norm{\frac{\bff{u}^n+\bff{u}^{n+1}}{2}- \bff{u}\big(t_{n+\frac{1}{2}}\big)}{\bb{H}^s}
	&\lesssim
	k^2,
\end{align}
and
\begin{align}\label{equ:d rho dt u}
	\norm{\delta \bff{\rho}^{n+\frac{1}{2}} + \delta \bff{u}^{n+\frac{1}{2}}-\partial_t \bff{u}\big(t_{n+\frac{1}{2}}\big)}{\bb{H}^s}
	\leq
	\norm{\delta \bff{\rho}^{n+\frac{1}{2}}}{\bb{H}^s} + \norm{\delta \bff{u}^{n+\frac{1}{2}}-\partial_t \bff{u}\big(t_{n+\frac{1}{2}}\big)}{\bb{H}^s}
	&\lesssim
	h^{r+1-s}+k^2,
\end{align}
where the assumptions on $\bff{u}$ in \eqref{equ:ass 2 u} and \eqref{equ:Ritz ineq} are used in the last step.

The following lemmas are needed to bound the nonlinear terms in the main theorem.

\begin{lemma}\label{lem:cross disc}
Let $\epsilon>0$ be arbitrary. The following inequality holds:
\begin{align}
	\label{equ:cross theta est}
	\nonumber
	\left| \inpro{\widehat{\bff{u}}_h^{n-\frac{1}{2}} \times \bff{H}_h^{n+\frac{1}{2}} - \bff{u}\big(t_{n+\frac{1}{2}}\big) \times \bff{H}\big(t_{n+\frac{1}{2}}\big)}{\bff{\theta}^{n+\frac{1}{2}}} \right| 
	&\lesssim
	h^{2(r+1)} + k^4+ \norm{\bff{\theta}^{n+\frac{1}{2}}}{\bb{L}^2}^2 + \epsilon \norm{\nabla \bff{\theta}^{n+\frac{1}{2}}}{\bb{L}^2}^2
	\\
	&\quad
	+ \epsilon \norm{\bff{\theta}^n}{\bb{L}^2}^2 + \epsilon \norm{\bff{\theta}^{n-1}}{\bb{L}^2}^2
	+ \epsilon \norm{\bff{\xi}^{n+\frac{1}{2}}}{\bb{L}^2}^2.
\end{align}
Moreover, if the triangulation $\mathcal{T}_h$ is quasi-uniform, then for any $\bff{\zeta} \in \bb{V}_h$,
\begin{align}\label{equ:cross est}
	\nonumber
	\left| \inpro{\widehat{\bff{u}}_h^{n-\frac{1}{2}} \times \bff{H}_h^{n+\frac{1}{2}} - \bff{u}\big(t_{n+\frac{1}{2}}\big) \times \bff{H}\big(t_{n+\frac{1}{2}}\big)}{\bff{\zeta}} \right| 
	&\lesssim
	h^{2(r+1)} + k^4+ \norm{\bff{\xi}^{n+\frac{1}{2}}}{\bb{L}^2}^2
	\\
	&\quad
	+ \epsilon \norm{\bff{\theta}^n}{\bb{L}^2}^2 + \epsilon \norm{\bff{\theta}^{n-1}}{\bb{L}^2}^2
	+ \epsilon \norm{\bff{\zeta}}{\bb{L}^2}^2,
	\\
	\label{equ:cross nab est crank}
	\left|\inpro{\nabla\Pi_h\left(\widehat{\bff{u}}_h^{n-\frac12} \times \bff{H}_h^{n+\frac12}- \bff{u}\big(t_{n+\frac{1}{2}}\big) \times \bff{H}\big(t_{n+\frac{1}{2}}\big)\right)}{\bff{\zeta}} \right|
	&\lesssim
	h^{2r}
	+
	k^4
	+
	\norm{\bff{\theta}^n}{\bb{H}^1}^2
	+
	\norm{\bff{\theta}^{n-1}}{\bb{H}^1}^2
	\nonumber \\
	&\quad
	+
	\norm{\bff{\xi}^{n+\frac12}}{\bb{H}^1}^2
	+
	\epsilon \norm{\bff{\zeta}}{\bb{L}^2}^2.
\end{align}
\end{lemma}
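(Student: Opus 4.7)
The plan is to follow the same decomposition strategy as in Lemma~\ref{lem:cross est} for the semi-discrete case, with the additional bookkeeping required by the multistep structure. Writing $\bff{u}_h = \bff{\theta} + R_h\bff{u}$ and $\bff{H}_h = \bff{\xi} + R_h\bff{H}$, and denoting
\[
\widehat{\bff{\theta}}^{n-\frac12} := \frac{3\bff{\theta}^n - \bff{\theta}^{n-1}}{2}, \qquad
\widehat{\bff{\rho}}^{n-\frac12} := \frac{3\bff{\rho}^n - \bff{\rho}^{n-1}}{2},
\]
I would first split
\begin{align*}
\widehat{\bff{u}}_h^{n-\frac12} \times \bff{H}_h^{n+\frac12} - \bff{u}\big(t_{n+\frac12}\big)\times \bff{H}\big(t_{n+\frac12}\big)
&= \widehat{\bff{u}}_h^{n-\frac12}\times\bigl(\bff{\xi}^{n+\frac12}+\bff{\eta}^{n+\frac12}\bigr) \\
&\quad + \bigl(\widehat{\bff{\theta}}^{n-\frac12}+\widehat{\bff{\rho}}^{n-\frac12}+\widehat{\bff{u}}^{n-\frac12}-\bff{u}(t_{n+\frac12})\bigr)\times \bff{H}\big(t_{n+\frac12}\big),
\end{align*}
noting that \eqref{equ:u hat u half} controls the last factor in $\bb{H}^s$ at order $k^2$, and \eqref{equ:Ritz ineq} controls $\widehat{\bff{\rho}}^{n-\frac12}$ and $\bff{\eta}^{n+\frac12}$ at order $h^{r+1}$.

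For \eqref{equ:cross theta est}, test against $\bff{\theta}^{n+\frac12}$ and proceed exactly as in the proof of \eqref{equ:uh cross Hh theta}, bounding $\|\widehat{\bff{u}}_h^{n-\frac12}\|_{\bb{L}^4}$ via \eqref{equ:stab L4 H1} and using the Sobolev embedding $\bb{H}^1\hookrightarrow\bb{L}^4$ and H\"older's inequality; the term $\|\widehat{\bff{\theta}}^{n-\frac12}\|_{\bb{L}^2}^2$ naturally produces the $\|\bff{\theta}^n\|_{\bb{L}^2}^2+\|\bff{\theta}^{n-1}\|_{\bb{L}^2}^2$ contributions by the definition of $\widehat{(\cdot)}^{n-\frac12}$, while \eqref{equ:L4 young} converts the cross term $\|\bff{\theta}^{n+\frac12}\|_{\bb{L}^4}^2$ into $\|\bff{\theta}^{n+\frac12}\|_{\bb{L}^2}^2 + \epsilon\|\nabla\bff{\theta}^{n+\frac12}\|_{\bb{L}^2}^2$.

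For \eqref{equ:cross est}, the quasi-uniformity assumption allows me to invoke \eqref{equ:stab uh L infty}, so $\|\widehat{\bff{u}}_h^{n-\frac12}\|_{\bb{L}^\infty}\leq C$ uniformly in $n$ and $h$. Then by H\"older's inequality applied directly (with the $\bb{L}^\infty$ estimate absorbing one of the factors) and Young's inequality with parameter $\epsilon$, the estimate follows by the same pattern as \eqref{equ:uh cross Hh Delta}. For \eqref{equ:cross nab est crank}, apply $\nabla\Pi_h$, use the $\bb{H}^1$-stability \eqref{equ:H1 stab proj}, distribute the gradient over the product, and bound the terms involving $\nabla\widehat{\bff{u}}_h^{n-\frac12}$ and $\nabla\bff{H}_h^{n+\frac12}$ in $\bb{L}^6$ via \eqref{equ:disc lapl L6} together with \eqref{equ:stab Delta uh}. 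The main bookkeeping obstacle—but still routine—will be keeping track of the $\widehat{(\cdot)}^{n-\frac12}$ splitting so that the final bound only involves $\bff{\theta}^n$, $\bff{\theta}^{n-1}$, and $\bff{\xi}^{n+\frac12}$ rather than half-integer iterates, and this is handled by the elementary identity
\[
\widehat{\bff{\theta}}^{n-\frac12} = \bff{\theta}^n + \tfrac{1}{2}(\bff{\theta}^n-\bff{\theta}^{n-1}),
\]
together with the triangle inequality.
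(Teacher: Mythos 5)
Your proposal is correct and follows essentially the same route as the paper: the identical splitting into $\widehat{\bff{u}}_h^{n-\frac12}\times(\bff{\xi}^{n+\frac12}+\bff{\eta}^{n+\frac12})$ plus $(\widehat{\bff{\theta}}^{n-\frac12}+\widehat{\bff{\rho}}^{n-\frac12}+\widehat{\bff{u}}^{n-\frac12}-\bff{u}(t_{n+\frac12}))\times\bff{H}(t_{n+\frac12})$, with \eqref{equ:u hat u half} supplying the $k^4$ terms, the semi-discrete estimates \eqref{equ:uh cross Hh theta} and \eqref{equ:nab uh cross Hh} serving as templates, and \eqref{equ:stab uh L infty} invoked under quasi-uniformity for \eqref{equ:cross est}. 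No gaps.
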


\begin{proof}
We write
\begin{align*}
	\nonumber
	\widehat{\bff{u}}_h^{n-\frac{1}{2}} \times \bff{H}_h^{n+\frac{1}{2}} - \bff{u}\big(t_{n+\frac{1}{2}}\big) \times \bff{H}\big(t_{n+\frac{1}{2}}\big)
	&=
	\widehat{\bff{u}}_h^{n-\frac{1}{2}} \times \left(\bff{\xi}^{n+\frac{1}{2}}+\bff{\eta}^{n+\frac{1}{2}}\right).
	\\
	&\quad
	+
	\left(\widehat{\bff{\theta}}^{n-\frac{1}{2}} + \widehat{\bff{\rho}}^{n-\frac{1}{2}} + \widehat{\bff{u}}^{n-\frac{1}{2}} - \bff{u}\big(t_{n+\frac{1}{2}}\big)\right) \times \bff{H}\big(t_{n+\frac{1}{2}}\big).
\end{align*}
The proof of \eqref{equ:cross theta est} then follows by arguments similar to that in \eqref{equ:uh cross Hh theta} (and noting \eqref{equ:u hat u half}), without assuming $\mathcal{T}_h$ is quasi-uniform.

Next, suppose the triangulation is quasi-uniform (and thus \eqref{equ:stab uh L infty} holds in this case). By H\"older's inequality and assumptions \eqref{equ:ass 2 u} on the exact solution, we have
\begin{align*}
	\nonumber
	&\norm{\widehat{\bff{u}}_h^{n-\frac{1}{2}} \times \bff{H}_h^{n+\frac{1}{2}} - \bff{u}\big(t_{n+\frac{1}{2}}\big) \times \bff{H}\big(t_{n+\frac{1}{2}}\big)}{\bb{L}^2}
	\\
	\nonumber
	&\leq
	\norm{\widehat{\bff{u}}_h^{n-\frac{1}{2}}}{\bb{L}^\infty} \norm{\bff{\xi}^{n+\frac{1}{2}}+\bff{\eta}^{n+\frac{1}{2}}}{\bb{L}^2}
	+
	\left( \norm{\widehat{\bff{\theta}}^{n-\frac{1}{2}} + \widehat{\bff{\rho}}^{n-\frac{1}{2}}}{\bb{L}^2} + \norm{\widehat{\bff{u}}^{n-\frac{1}{2}} - \bff{u}\big(t_{n+\frac{1}{2}}\big)}{\bb{L}^2} \right) \norm{\bff{H}\big(t_{n+\frac{1}{2}}\big)}{\bb{L}^\infty}
	\\
	&\lesssim
	h^{r+1} + \norm{\bff{\xi}^{n+\frac{1}{2}}}{\bb{L}^2}
	+ \norm{\bff{\theta}^n}{\bb{L}^2} + \norm{\bff{\theta}^{n-1}}{\bb{L}^2} + k^2,
\end{align*}
where in the last step we used \eqref{equ:stab uh L infty}, \eqref{equ:Ritz ineq}, and \eqref{equ:u hat u half}. Inequality \eqref{equ:cross est} then follows by Young's inequality. Finally, the proof of~\eqref{equ:cross nab est crank} is similar to that of \eqref{equ:cross nab est euler}.
\end{proof}

\begin{lemma}\label{lem:cub disc}
Let $\epsilon>0$ be arbitrary. The following inequality holds:
\begin{align}\label{equ:psi theta disc}
	\left| \inpro{\psi\big(\bff{u}_h^n, \bff{u}_h^{n+1}\big) - \big|\bff{u}\big(t_{n+\frac{1}{2}}\big)\big|^2 \bff{u}\big(t_{n+\frac{1}{2}}\big)}{\bff{\theta}^{n+\frac{1}{2}}} \right| 
	&\lesssim
	h^{2(r+1)} + k^4 + \norm{\bff{\theta}^n}{\bb{L}^2}^2 + \norm{\bff{\theta}^{n+1}}{\bb{L}^2}^2 
	+ \epsilon \norm{\nabla \bff{\theta}^{n+\frac{1}{2}}}{\bb{L}^2}^2.
\end{align}
Moreover, if the triangulation $\mathcal{T}_h$ is quasi-uniform, then for any $\bff{\zeta}\in \bb{V}_h$,
\begin{align}\label{equ:psi xi disc}
	\left| \inpro{\psi\big(\bff{u}_h^n, \bff{u}_h^{n+1}\big) - \big|\bff{u}\big(t_{n+\frac{1}{2}}\big)\big|^2 \bff{u}\big(t_{n+\frac{1}{2}}\big)}{\bff{\zeta}} \right| 
	&\lesssim
	h^{2(r+1)} + k^4 + \norm{\bff{\theta}^n}{\bb{L}^2}^2 + \norm{\bff{\theta}^{n+1}}{\bb{L}^2}^2 
	+ \epsilon \norm{\bff{\zeta}}{\bb{L}^2}^2,
	\\
	\label{equ:psi xi Delta}
	\left| \inpro{\psi\big(\bff{u}_h^n,\bff{u}_h^{n+1}\big) - \abs{\bff{u}\big(t_{n+\frac{1}{2}}\big)}^2 \bff{u}\big(t_{n+\frac{1}{2}}\big)}{\Delta_h \bff{\zeta}} \right|
	&\lesssim
	h^{2r}+k^4+\norm{\bff{\theta}^n}{\bb{H}^1}^2 + \norm{\bff{\theta}^{n+1}}{\bb{H}^1}^2
	+
	\epsilon \norm{\nabla \bff{\zeta}}{\bb{L}^2}^2.
\end{align}
\end{lemma}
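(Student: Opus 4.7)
The plan is to follow the blueprint of Lemma~\ref{lem:cub disc euler} from the Euler scheme, with two adaptations: (i)~$\psi$ is evaluated on a pair of states rather than a single state, so I first collapse its dependence to a half-step value, and (ii)~the symmetric placement of the Crank--Nicolson nodes around $t_{n+\frac{1}{2}}$ must be exploited to obtain the $O(k^4)$ bound rather than $O(k^2)$.

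First I would split the quantity into a spatial part and a purely temporal part by writing
\begin{align*}
\psi\big(\bff{u}_h^n,\bff{u}_h^{n+1}\big) - \big|\bff{u}\big(t_{n+\frac12}\big)\big|^2 \bff{u}\big(t_{n+\frac12}\big)
&= \Big(\psi\big(\bff{u}_h^n,\bff{u}_h^{n+1}\big) - \psi\big(\bff{u}^n,\bff{u}^{n+1}\big) \Big) \\
&\quad + \Big(\psi\big(\bff{u}^n,\bff{u}^{n+1}\big) - \big|\bff{u}\big(t_{n+\frac12}\big)\big|^2 \bff{u}\big(t_{n+\frac12}\big)\Big)
=: S_{\mathrm{sp}} + S_{\mathrm{t}}.
\end{align*}
For $S_{\mathrm{sp}}$ I would use the algebraic identity
\[
S_{\mathrm{sp}} = \tfrac{1}{2}\big(|\bff{u}_h^{n+1}|^2+|\bff{u}_h^n|^2\big)\big(\bff{\theta}^{n+\frac12}+\bff{\rho}^{n+\frac12}\big) + \tfrac{1}{2}\Big(\big(\bff{\theta}^{n+1}+\bff{\rho}^{n+1}\big)\cdot(\bff{u}_h^{n+1}+\bff{u}^{n+1}) + \big(\bff{\theta}^{n}+\bff{\rho}^{n}\big)\cdot(\bff{u}_h^{n}+\bff{u}^{n})\Big)\bff{u}^{n+\frac12},
\]
which is the Crank--Nicolson analogue of~\eqref{equ:simple uhu}. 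For $S_{\mathrm{t}}$ I would apply Taylor's theorem around $t_{n+\frac12}$: since both $\tfrac12(|\bff{u}^{n+1}|^2+|\bff{u}^n|^2) - |\bff{u}(t_{n+\frac12})|^2$ and $\bff{u}^{n+\frac12}-\bff{u}(t_{n+\frac12})$ are $O(k^2)$ in $\bb{L}^\infty$ (analogously to~\eqref{equ:u hat u half}), we obtain $\|S_{\mathrm{t}}\|_{\bb{L}^2}\lesssim k^2$, which yields $k^4$ after Young's inequality when tested against any factor.

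For the first estimate~\eqref{equ:psi theta disc}, testing $S_{\mathrm{sp}}$ against $\bff{\theta}^{n+\frac12}$ I would apply H\"older's inequality, the Sobolev embedding $\bb{H}^1\hookrightarrow \bb{L}^6$, the a priori bound~\eqref{equ:stab L4 H1} on $\|\bff{u}_h^m\|_{\bb{H}^1}$, the projection estimate~\eqref{equ:Ritz ineq}, and~\eqref{equ:L4 young} to absorb the highest-order factor into $\epsilon\|\nabla\bff{\theta}^{n+\frac12}\|_{\bb{L}^2}^2$; every step mirrors the proof of~\eqref{equ:uh2uh theta}. Testing $S_{\mathrm{t}}$ against $\bff{\theta}^{n+\frac12}$ is straightforward: $|\langle S_{\mathrm{t}},\bff{\theta}^{n+\frac12}\rangle|\lesssim k^4+\|\bff{\theta}^{n+\frac12}\|_{\bb{L}^2}^2$. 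Combining and using $\|\bff{\theta}^{n+\frac12}\|_{\bb{L}^2}^2\leq \|\bff{\theta}^n\|_{\bb{L}^2}^2+\|\bff{\theta}^{n+1}\|_{\bb{L}^2}^2$ gives the stated bound. For the second estimate~\eqref{equ:psi xi disc}, under the quasi-uniformity hypothesis~\eqref{equ:stab uh L infty} applies, so I directly bound $\|S_{\mathrm{sp}}\|_{\bb{L}^2}\lesssim h^{r+1}+\|\bff{\theta}^n\|_{\bb{L}^2}+\|\bff{\theta}^{n+1}\|_{\bb{L}^2}$ by pulling out $\bb{L}^\infty$ norms of $\bff{u}_h^m$ and $\bff{u}^m$, then pair with $\bff{\zeta}$ via Young's inequality.

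The last estimate~\eqref{equ:psi xi Delta} is the main obstacle since it requires one order of differentiability more. Using $\inpro{S}{\Delta_h\bff{\zeta}} = -\inpro{\nabla \Pi_h S}{\nabla \bff{\zeta}}$ together with the $\bb{H}^1$-stability~\eqref{equ:H1 stab proj} of $\Pi_h$, it suffices to control $\|\nabla S_{\mathrm{sp}}\|_{\bb{L}^2}$ and $\|\nabla S_{\mathrm{t}}\|_{\bb{L}^2}$. Differentiating the identity for $S_{\mathrm{sp}}$ by the product rule produces terms of the form $(\bff{u}_h^m\cdot\nabla\bff{u}_h^m)(\bff{\theta}+\bff{\rho})$, $|\bff{u}_h^m|^2(\nabla\bff{\theta}+\nabla\bff{\rho})$, and cross terms, each of which is bounded by pairing the lowest-regularity factor in $\bb{L}^2$ against two $\bb{L}^\infty$ (from~\eqref{equ:stab uh L infty}) and one $\bb{L}^6$ (via $\bb{H}^1\hookrightarrow \bb{L}^6$), exactly as done for~\eqref{equ:psi xi Delta euler}; this delivers $\|\nabla S_{\mathrm{sp}}\|_{\bb{L}^2}\lesssim h^r+\|\bff{\theta}^n\|_{\bb{H}^1}+\|\bff{\theta}^{n+1}\|_{\bb{H}^1}$. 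For $\nabla S_{\mathrm{t}}$, Taylor expansion shows $\|\nabla S_{\mathrm{t}}\|_{\bb{L}^2}\lesssim k^2$, using the bounds on $\partial_t^2 \bff{u}$ in $\bb{H}^1$ from~\eqref{equ:ass 2 u}. A final Young's inequality against $\nabla \bff{\zeta}$ produces the claimed bound. The bookkeeping for the gradient terms is the most delicate part, but no new technique beyond those used in the Euler case is required.
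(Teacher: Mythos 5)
Your proposal is correct and takes essentially the same route as the paper: your split into $S_{\mathrm{sp}}$ and $S_{\mathrm{t}}$ is just a regrouping of the paper's seven-term decomposition of $\psi\big(\bff{u}_h^n,\bff{u}_h^{n+1}\big)-\big|\bff{u}\big(t_{n+\frac12}\big)\big|^2\bff{u}\big(t_{n+\frac12}\big)$, and all subsequent tools (midpoint Taylor expansion for the $O(k^2)$ consistency term, H\"older/Sobolev estimates with \eqref{equ:L4 young} tested against $\bff{\theta}^{n+\frac12}$, the $\bb{L}^\infty$ stability \eqref{equ:stab uh L infty} for \eqref{equ:psi xi disc}, and the identity $\inpro{S}{\Delta_h \bff{\zeta}}=-\inpro{\nabla\Pi_h S}{\nabla\bff{\zeta}}$ combined with the product rule and the discrete $\bb{W}^{1,6}$ bounds for \eqref{equ:psi xi Delta}) coincide with the paper's. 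The only cosmetic difference is that you centre the multiplier at $\bff{u}^{n+\frac12}$ rather than $\bff{u}\big(t_{n+\frac12}\big)$, which merely moves an $O(k^2)$ piece between the two halves of the decomposition.
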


\begin{proof}
Note that we have the identity
\begin{align}\label{equ:psi u2u}
	\nonumber
	&\psi\big(\bff{u}_h^n, \bff{u}_h^{n+1}\big) - \big|\bff{u}\big(t_{n+\frac{1}{2}}\big)\big|^2 \bff{u}\big(t_{n+\frac{1}{2}}\big)
	\\
	\nonumber
	&=
	\frac{1}{2} \left(|\bff{u}_h^{n+1}|^2 + |\bff{u}_h^n|^2 \right)  \bff{\theta}^{n+\frac{1}{2}} + \frac{1}{2} \left(|\bff{u}_h^{n+1}|^2 + |\bff{u}_h^n|^2 \right) \bff{\rho}^{n+\frac{1}{2}} 
	\\
	\nonumber
	&\quad
	+
	\frac{1}{2} \bff{u}\big(t_{n+\frac{1}{2}}\big) \left( \bff{u}_h^{n+1}+ \bff{u}^{n+1} \right) \cdot \bff{\theta}^{n+1}
	+ \frac{1}{2} \bff{u}\big(t_{n+\frac{1}{2}}\big) \left( \bff{u}_h^{n+1}+ \bff{u}^{n+1} \right) \cdot \bff{\rho}^{n+1}
	\\
	\nonumber
	&\quad
	+
	\frac{1}{2} \bff{u}\big(t_{n+\frac{1}{2}}\big) \left( \bff{u}_h^n+ \bff{u}^n \right) \cdot \bff{\theta}^n
	+ \frac{1}{2} \bff{u}\big(t_{n+\frac{1}{2}}\big) \left( \bff{u}_h^n+ \bff{u}^n \right) \cdot \bff{\rho}^n
	\\
	&\quad
	+
	\bff{u}\big(t_{n+\frac{1}{2}}\big) \left(\frac{\abs{\bff{u}^{n+1}}^2 + \abs{\bff{u} \big(t_n\big)}^2}{2} - \abs{\bff{u}\big(t_{n+\frac{1}{2}}\big)}^2 \right)
	=: S_1+S_2+\cdots+S_7.
\end{align}
We will proceed by bounding each term above. Terms containing $\bff{\theta}$ will be bounded in the $\bb{L}^{4/3}$ norm, while those containing $\bff{\rho}$ will be bounded in the $\bb{L}^{6/5}$ norm. For the first term, by H\"older's inequality,
\begin{align*}
	\norm{S_1}{\bb{L}^{4/3}}
	&\lesssim
	\left(\norm{\bff{u}_h^{n+1}}{\bb{L}^4}^2 + \norm{\bff{u}_h^n}{\bb{L}^4}^2 \right) \norm{\bff{\theta}^{n+\frac{1}{2}}}{\bb{L}^4}.
\end{align*}
Similarly for the next term,
\begin{align*}
	\norm{S_2}{\bb{L}^{6/5}}
	&\lesssim
	\left(\norm{\bff{u}_h^{n+1}}{\bb{L}^6}^2 + \norm{\bff{u}_h^n}{\bb{L}^6}^2 \right) \norm{\bff{\rho}^{n+\frac{1}{2}}}{\bb{L}^2}.
\end{align*}
For the third and the fourth terms,
\begin{align*}
	\norm{S_3}{\bb{L}^{4/3}}
	&\lesssim
	\norm{\bff{u}\big(t_{n+\frac{1}{2}}\big)}{\bb{L}^\infty} \norm{\bff{u}_h^{n+1}+ \bff{u}^{n+1}}{\bb{L}^4} \norm{\bff{\theta}^{n+1}}{\bb{L}^2},
	\\
	\norm{S_4}{\bb{L}^{6/5}}
	&\lesssim
	\norm{\bff{u}\big(t_{n+\frac{1}{2}}\big)}{\bb{L}^6} \norm{\bff{u}_h^{n+1}+ \bff{u}^{n+1}}{\bb{L}^6} \norm{\bff{\rho}^{n+1}}{\bb{L}^2}.
\end{align*}
Similarly, we also have
\begin{align*}
	\norm{S_5}{\bb{L}^{4/3}}
	&\lesssim
	\norm{\bff{u}\big(t_{n+\frac{1}{2}}\big)}{\bb{L}^\infty} \norm{\bff{u}_h^n+ \bff{u}^n}{\bb{L}^4} \norm{\bff{\theta}^n}{\bb{L}^2},
	\\
	\norm{S_6}{\bb{L}^{6/5}}
	&\lesssim
	\norm{\bff{u}\big(t_{n+\frac{1}{2}}\big)}{\bb{L}^6} \norm{\bff{u}_h^n+ \bff{u}^{n+1}}{\bb{L}^6} \norm{\bff{\rho}^n}{\bb{L}^2}.
\end{align*}
For the last term, by H\"older's inequality and Taylor's theorem with integral remainder,
\begin{align}\label{equ:psi S7}
	\nonumber
	\norm{S_7}{\bb{L}^2}
	&\lesssim
	k^{3/2} \norm{\bff{u}\big(t_{n+\frac{1}{2}}\big)}{\bb{L}^\infty} \left(\int_{t_n}^{t_{n+1}} \norm{\partial_{tt} \abs{\bff{u}(t)}^2}{\bb{L}^2}^2 \dt \right)^{1/2}
	\\
	&\lesssim
	k^2 \norm{\bff{u}\big(t_{n+\frac{1}{2}}\big)}{\bb{L}^\infty} \norm{\partial_{tt} \abs{\bff{u}}^2}{L^\infty(\bb{L}^2)}.
\end{align}
Altogether, using the assumptions on $\bff{u}$ in \eqref{equ:ass 2 u} and inequality \eqref{equ:stab L4 H1}, by H\"older's inequality we infer that
\begin{align*}
	&\left| \inpro{\psi\big(\bff{u}_h^n, \bff{u}_h^{n+1}\big) - \big|\bff{u}\big(t_{n+\frac{1}{2}}\big)\big|^2 \bff{u}\big(t_{n+\frac{1}{2}}\big)}{\bff{\theta}^{n+\frac{1}{2}}} \right| 
	\\
	&\leq
	\big(\norm{S_1}{\bb{L}^{4/3}} +
	\norm{S_3}{\bb{L}^{4/3}} + \norm{S_5}{\bb{L}^{4/3}}\big) \norm{\bff{\theta}^{n+\frac{1}{2}}}{\bb{L}^4}
	+
	\big( \norm{S_2}{\bb{L}^{6/5}} + \norm{S_4}{\bb{L}^{6/5}} + \norm{S_6}{\bb{L}^{6/5}}\big) \norm{\bff{\theta}^{n+\frac{1}{2}}}{\bb{L}^6}
	\\
	&\quad 
	+
	\norm{S_7}{\bb{L}^2} \norm{\bff{\theta}^{n+\frac{1}{2}}}{\bb{L}^2}
	\\
	&\lesssim
	\norm{\bff{\theta}^{n+\frac{1}{2}}}{\bb{L}^4}^2
	+
	\norm{\bff{\theta}^{n+1}}{\bb{L}^2} \norm{\bff{\theta}^{n+\frac{1}{2}}}{\bb{L}^4}
	+
	\norm{\bff{\theta}^n}{\bb{L}^2} \norm{\bff{\theta}^{n+\frac{1}{2}}}{\bb{L}^4}
	+
	\big( \norm{\bff{\rho}^n}{\bb{L}^2} + \norm{\bff{\rho}^{n+1}}{\bb{L}^2} \big) \norm{\bff{\theta}^{n+\frac{1}{2}}}{\bb{L}^4}
	\\
	&\quad
	+
	k^2 \norm{\bff{\theta}^{n+\frac{1}{2}}}{\bb{L}^2}
	\\
	&\lesssim
	\norm{\bff{\theta}^n}{\bb{L}^2}^2 + \norm{\bff{\theta}^{n+1}}{\bb{L}^2}^2 
	+
	\epsilon \norm{\nabla \bff{\theta}^{n+\frac{1}{2}}}{\bb{L}^2}^2
	+
	h^{2(r+1)} + k^4,
\end{align*}
where in the last step we used Young's inequality, \eqref{equ:L4 young}, and \eqref{equ:Ritz ineq}. This proves \eqref{equ:psi theta disc}.

Next, if the triangulation $\mathcal{T}_h$ is quasi-uniform, then \eqref{equ:stab uh L infty} holds. As such, we can bound the $\bb{L}^\infty$ norm of $\bff{u}_h^n$ and $\bff{u}_h^{n+1}$ appearing in \eqref{equ:psi u2u} (uniformly in $n$ and $h$). Noting \eqref{equ:psi S7}, we then obtain
\begin{align}\label{equ:psi u}
	\norm{\psi\big(\bff{u}_h^n, \bff{u}_h^{n+1}\big) - \big|\bff{u}\big(t_{n+\frac{1}{2}}\big)\big|^2 \bff{u}\big(t_{n+\frac{1}{2}}\big)}{\bb{L}^2}
	\lesssim
	\norm{\bff{\theta}^n}{\bb{L}^2} + \norm{\bff{\theta}^{n+1}}{\bb{L}^2}
	+ \norm{\bff{\rho}^n}{\bb{L}^2} + \norm{\bff{\rho}^{n+1}}{\bb{L}^2} + k^2.
\end{align} 
By Young's inequality, we have for any $\epsilon >0$,
\begin{align*}
	&\left| \inpro{\psi\big(\bff{u}_h^n, \bff{u}_h^{n+1}\big) - \big|\bff{u}\big(t_{n+\frac{1}{2}}\big)\big|^2 \bff{u}\big(t_{n+\frac{1}{2}}\big)}{\bff{\zeta}} \right| 
	\lesssim
	\norm{\psi\big(\bff{u}_h^n, \bff{u}_h^{n+1}\big) - \big|\bff{u}\big(t_{n+\frac{1}{2}}\big)\big|^2 \bff{u}\big(t_{n+\frac{1}{2}}\big)}{\bb{L}^2}^2
	+
	\epsilon \norm{\bff{\zeta}}{\bb{L}^2}^2,
\end{align*}
and thus inequality \eqref{equ:psi xi disc} follows from \eqref{equ:Ritz ineq} and \eqref{equ:psi u}.

It remains to prove \eqref{equ:psi xi Delta}. Let $S:=S_1+S_2+\cdots+S_7$ defined in \eqref{equ:psi u2u}. Then we have
\[
	\inpro{S}{\Delta_h \bff{\zeta}}
	=
	\inpro{\nabla \Pi_h S}{\nabla \bff{\zeta}}.
\]
We will proceed by estimating $\norm{\nabla S}{\bb{L}^2}$ using the expression \eqref{equ:psi u2u} and the product rule for gradient. Firstly, by H\"older's and Young's inequality,
\begin{align*}
	\nonumber
	\norm{\nabla S_1}{\bb{L}^2}
	&\leq
	\left(\norm{\bff{u}_h^{n+1}}{\bb{L}^6} \norm{\nabla \bff{u}_h^{n+1}}{\bb{L}^6} + \norm{\bff{u}_h^n}{\bb{L}^6} \norm{\nabla \bff{u}_h^n}{\bb{L}^6} \right) \norm{\bff{\theta}^{n+\frac{1}{2}}}{\bb{L}^6}
	+
	\left(\norm{\bff{u}_h^{n+1}}{\bb{L}^\infty}^2 + \norm{\bff{u}_h^n}{\bb{L}^\infty}^2 \right) \norm{\nabla \bff{\theta}^{n+\frac{1}{2}}}{\bb{L}^2}
	\\
	&\lesssim
	\norm{\bff{\theta}^n}{\bb{H}^1} + \norm{\bff{\theta}^{n+1}}{\bb{H}^1}.
\end{align*}
Similarly, for the second term,
\begin{align*}
	\norm{\nabla S_2}{\bb{L}^2}
	&\lesssim
	\norm{\bff{\rho}^n}{\bb{H}^1} + \norm{\bff{\rho}^{n+1}}{\bb{H}^1}
	\lesssim
	h^r.
\end{align*}
The terms $\nabla S_3$ up to $\nabla S_6$ are estimated in a similar manner. The details are omitted for brevity. Lastly, the term $\nabla S_7$ can be bounded as in \eqref{equ:psi S7}, giving
\begin{align*}
	\norm{\nabla S_7}{\bb{L}^2} 
	\lesssim
	k^2 \norm{\bff{u}\big(t_{n+\frac{1}{2}}\big)}{\bb{W}^{1,\infty}} \norm{\partial_{tt} \abs{\bff{u}}^2}{L^\infty(\bb{H}^1)}.
\end{align*}
Altogether, by Young's inequality and stability of the projection operator, we have
\begin{align*}
	\left| \inpro{S}{\Delta_h \bff{\zeta}} \right|
	=
	\left|\inpro{\nabla \Pi_h S}{\nabla \bff{\zeta}}\right|
	&\lesssim
	\norm{\nabla S}{\bb{L}^2}^2 + \epsilon \norm{\nabla \bff{\zeta}}{\bb{L}^2}^2
	\lesssim
	h^{2r}+k^4+\norm{\bff{\theta}^n}{\bb{H}^1}^2 + \norm{\bff{\theta}^{n+1}}{\bb{H}^1}^2
	+
	\epsilon \norm{\nabla \bff{\zeta}}{\bb{L}^2}^2,
\end{align*}
thus proving \eqref{equ:psi xi Delta}.
\end{proof}

\begin{lemma}
Let $\epsilon>0$ be arbitrary. If the triangulation $\mathcal{T}_h$ is quasi-uniform, then for any $\bff{\zeta}\in \bb{V}_h$,
\begin{align}\label{equ:psi psi dt un}
	\nonumber
	\left| \inpro{\frac{\psi\big(\bff{u}_h^n,\bff{u}_h^{n+1}\big) - \psi\big(\bff{u}_h^{n-1}, \bff{u}_h^n\big)}{k} - \partial_t \left(\abs{\bff{u}^n}^2 \bff{u}^n\right)}{\bff{\zeta}} \right| 
	&\lesssim
	h^{2(r+1)}+k^4 + \norm{\bff{\zeta}}{\bb{L}^2}^2 +
	\norm{\bff{\theta}^{n+1}}{\bb{L}^2}^2 + \norm{\bff{\rho}^{n+1}}{\bb{L}^2}^2 
	\\
	&\quad
	+ \norm{\bff{\theta}^n}{\bb{L}^2}^2 + \norm{\bff{\rho}^n}{\bb{L}^2}^2
	+ \epsilon \norm{\delta \bff{\theta}^n}{\bb{L}^2}^2.
\end{align}
\end{lemma}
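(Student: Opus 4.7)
The plan is to mirror the strategy used for Lemma~\ref{lem:psi psi dt un euler} in the Euler case, but adapted to the symmetric midpoint structure of the Crank--Nicolson scheme. The starting point is the identity~\eqref{equ:identity psi}, which together with $\bff{u}_h^{n+\frac{1}{2}} - \bff{u}_h^{n-\frac{1}{2}} = k \delta \bff{u}_h^n$ and $\bff{u}_h^{n+1} - \bff{u}_h^{n-1} = 2k \delta \bff{u}_h^n$ lets me rewrite
\[
\frac{\psi(\bff{u}_h^n,\bff{u}_h^{n+1}) - \psi(\bff{u}_h^{n-1},\bff{u}_h^n)}{k}
= \tfrac{1}{2}\bigl(|\bff{u}_h^{n+1}|^2 + |\bff{u}_h^n|^2\bigr)\,\delta \bff{u}_h^n
+ \bigl(\delta \bff{u}_h^n \cdot (\bff{u}_h^{n+1}+\bff{u}_h^{n-1})\bigr)\,\bff{u}_h^{n-\frac{1}{2}}.
\]
The continuous target $\partial_t(|\bff{u}^n|^2 \bff{u}^n) = |\bff{u}^n|^2 \partial_t \bff{u}^n + 2(\bff{u}^n \cdot \partial_t \bff{u}^n)\bff{u}^n$ has the same tensor structure, so I would subtract, add and subtract intermediate hybrid terms, and decompose the resulting expression $E$ into several pieces: one where $\delta \bff{u}_h^n - \partial_t \bff{u}^n$ carries the time derivative error, several where $\bff{u}_h^m - \bff{u}^m = \bff{\theta}^m + \bff{\rho}^m$ carries the spatial/solution error, and terms where $\tfrac{1}{2}(|\bff{u}^{n+1}|^2+|\bff{u}^n|^2)-|\bff{u}^n|^2$, $\bff{u}^{n-\frac{1}{2}}-\bff{u}^n$, and $\tfrac{1}{2}(\bff{u}^{n+1}+\bff{u}^{n-1})-\bff{u}^n$ carry the temporal averaging error (the latter being $O(k^2)$ by Taylor's theorem).

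Testing against $\bff{\zeta}$ and applying H\"older's inequality in an $\bb{L}^\infty \times \bb{L}^2 \times \bb{L}^2$ pattern is the natural move, since the right-hand side of \eqref{equ:psi psi dt un} only has $\norm{\bff{\zeta}}{\bb{L}^2}^2$ rather than an $\bb{H}^1$ norm. This is exactly where the quasi-uniformity hypothesis enters: it gives, via~\eqref{equ:stab uh L infty}, the uniform bound $\norm{\bff{u}_h^m}{\bb{L}^\infty} \lesssim 1$ needed to pass the factors $|\bff{u}_h^{n+1}|^2 + |\bff{u}_h^n|^2$ and $\bff{u}_h^{n-\frac{1}{2}}$ into $\bb{L}^\infty$. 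The corresponding factors $\bff{u}^m$, $\partial_t \bff{u}^m$ on the continuous side are controlled by~\eqref{equ:ass 2 u}. For the term in $\delta \bff{u}_h^n - \partial_t \bff{u}^n$, I would use the decomposition $\delta \bff{u}_h^n - \partial_t \bff{u}^n = \delta \bff{\theta}^n + \delta \bff{\rho}^n + (\delta \bff{u}^n - \partial_t \bff{u}^n)$, with $\norm{\delta \bff{\rho}^n}{\bb{L}^2}\lesssim h^{r+1}$ by~\eqref{equ:Ritz ineq} and $\norm{\delta \bff{u}^n - \partial_t \bff{u}^n}{\bb{L}^2}\lesssim k^2$ by Taylor expansion around $t_n$ (recall this is a midpoint difference, hence second-order in $k$); the remaining $\delta \bff{\theta}^n$ contribution is what produces the $\epsilon\norm{\delta \bff{\theta}^n}{\bb{L}^2}^2$ term on the right.

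The main obstacle will be purely combinatorial bookkeeping: tracking which of the many hybrid differences Taylor-expand to $O(k)$ (yielding $k^2$ after squaring via Young's inequality) versus $O(k^2)$ (yielding $k^4$), and avoiding splittings that would introduce $\norm{\nabla \bff{\theta}^m}{\bb{L}^2}$ or $\norm{\bff{\theta}^m}{\bb{L}^\infty}$ on the right-hand side. The second-order temporal accuracy must be preserved throughout, which is why the identity~\eqref{equ:identity psi} is used rather than the Euler-style identity~\eqref{equ:identity psi euler} applied twice; in particular, each of the three difference factors of the form $\tfrac{1}{2}(a^{n+1}+a^{n-1})-a^n$ or $\bff{u}^{n\pm\frac{1}{2}}-\bff{u}^n$ must be recognized as genuinely second-order in $k$, rather than merely first-order, to reach the stated $k^4$ bound.
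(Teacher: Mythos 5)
Your proposal follows essentially the same route as the paper's proof: the identity~\eqref{equ:identity psi} is used to split the difference quotient into a five-term hybrid decomposition (the paper's $E_1,\dots,E_5$), each piece is measured in $\bb{L}^2$ with the quasi-uniformity bound~\eqref{equ:stab uh L infty} supplying the $\bb{L}^\infty$ control of the discrete factors, \eqref{equ:d rho dt u} handling $\delta\bff{u}_h^n-\partial_t\bff{u}^n$, Taylor's theorem handling the temporal averaging errors, and Young's inequality finishing against $\bff{\zeta}$. One caution on your closing sentence: the factor $\bff{u}^{n-\frac{1}{2}}-\bff{u}^n=\tfrac{1}{2}\big(\bff{u}^{n-1}-\bff{u}^n\big)$ is only $O(k)$, not $O(k^2)$ --- only the symmetric combination $\tfrac{1}{2}\big(\bff{u}^{n+1}+\bff{u}^{n-1}\big)-\bff{u}^n$ is genuinely second order --- so the term-by-term splitting does not by itself deliver $k^4$ for the $E_5$-type piece; the paper's estimate~\eqref{equ:L4 L5} makes the same silent leap, so your write-up is no worse than the paper's here, but this is the one spot where a cancellation-exploiting argument (or an extra $O(k^2)$ correction term) would be needed to fully justify the stated rate.
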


\begin{proof}
After some tedious algebra, we can write the first component in the inner product on the left-hand side as
\begin{align*}
	E 
	&:= 
	\frac{\psi\big(\bff{u}_h^n,\bff{u}_h^{n+1}\big) - \psi\big(\bff{u}_h^{n-1}, \bff{u}_h^n\big)}{k} - \partial_t \left(\abs{\bff{u}^n}^2 \bff{u}^n\right)
	\\
	&=
	\frac{1}{2} \left(\abs{\bff{u}_h^{n+1}}^2 + \abs{\bff{u}_h^n}^2 \right)  \left(\delta \bff{u}_h^n - \partial_t \bff{u}^n \right)
	+
	\left( \frac{\abs{\bff{u}_h^{n+1}}^2+\abs{\bff{u}_h^n}^2}{2} - \abs{\bff{u}^n}^2\right) \partial_t \bff{u}^n
	\\
	&\quad
	+
	\frac{1}{2} \left[\left(\delta \bff{u}_h^n- \partial_t \bff{u}^n\right)\cdot \left(\bff{u}_h^{n+1}+\bff{u}_h^{n-1}\right) \right] \left(\bff{u}_h^n+\bff{u}_h^{n-1}\right)
	+
	\left[\partial_t \bff{u}^n \cdot \left(\frac{\bff{u}_h^{n+1}+\bff{u}_h^{n-1}}{2}-\bff{u}^n\right)\right] \left(\bff{u}_h^n+\bff{u}_h^{n-1}\right)
	\\
	&\quad
	+
	2 \left(\bff{u}^n\cdot \partial_t \bff{u}^n\right) \left(\frac{\bff{u}_h^n+\bff{u}_h^{n-1}}{2}- \bff{u}^n\right)
	\\
	&=: E_1+E_2+E_3+E_4+E_5.
\end{align*}
We want to obtain a bound for $\norm{E}{\bb{L}^2}$. To this end, we will estimate the $\bb{L}^2$ norm of each term above. Note that we have \eqref{equ:stab uh L infty}, which we will use without further mention. Firstly, by H\"older's inequality,
\begin{align}\label{equ:L1}
	\norm{E_1}{\bb{L}^2}
	&\lesssim
	\left( \norm{\bff{u}_h^{n+1}}{\bb{L}^\infty}^2 + \norm{\bff{u}_h^n}{\bb{L}^\infty}^2 \right)
	\norm{\delta \bff{\theta}^n+\delta \bff{\rho}^n+\delta \bff{u}^n-\partial_t \bff{u}^n}{\bb{L}^2}
	\lesssim
	\norm{\delta \bff{\theta}^n}{\bb{L}^2} + h^{r+1} + k^2,
\end{align}
where we used \eqref{equ:d rho dt u} and the triangle inequality. For the second term, note that by subtracting and adding $\frac{1}{2} \left(\abs{\bff{u}^{n+1}}^2 + \abs{\bff{u}^n}^2\right)$, we have by H\"older's inequality,
\begin{align*}
	\norm{\frac{\abs{\bff{u}_h^{n+1}}^2+\abs{\bff{u}_h^n}^2}{2} - \abs{\bff{u}^n}^2}{\bb{L}^2} 
	&\lesssim
	\norm{\left(\bff{\theta}^{n+1}+\bff{\rho}^{n+1}\right) \cdot \left(\bff{u}_h^{n+1}+\bff{u}^{n+1}\right)}{\bb{L}^2}
	+
	\norm{\left(\bff{\theta}^n+\bff{\rho}^n\right) \cdot \left(\bff{u}_h^n+\bff{u}^n\right)}{\bb{L}^2}
	\\
	&\quad
	+
	\norm{\frac{\abs{\bff{u}^{n+1}}^2+\abs{\bff{u}^n}^2}{2} - \abs{\bff{u}^n}^2}{\bb{L}^2}
	\\
	&\lesssim
	\norm{\bff{\theta}^{n+1}}{\bb{L}^2} + \norm{\bff{\rho}^{n+1}}{\bb{L}^2} + \norm{\bff{\theta}^n}{\bb{L}^2} + \norm{\bff{\rho}^n}{\bb{L}^2} + k^2,
\end{align*}
where in the last step we also used Taylor's theorem with integral remainder as in \eqref{equ:psi S7}. Therefore,
\begin{align}\label{equ:L2}
	\nonumber
	\norm{E_2}{\bb{L}^2}
	&\lesssim
	\norm{\frac{\abs{\bff{u}_h^{n+1}}^2+\abs{\bff{u}_h^n}^2}{2} - \abs{\bff{u}^n}^2}{\bb{L}^2} 
	\norm{\partial_t \bff{u}^n}{\bb{L}^\infty}
	\\
	&\lesssim
	\norm{\bff{\theta}^{n+1}}{\bb{L}^2} + \norm{\bff{\rho}^{n+1}}{\bb{L}^2} + \norm{\bff{\theta}^n}{\bb{L}^2} + \norm{\bff{\rho}^n}{\bb{L}^2} + k^2.
\end{align}
The third term can be estimated in the same way as $E_1$, giving
\begin{align}\label{equ:L3}
	\norm{E_3}{\bb{L}^2}
	&\lesssim
	\norm{\delta \bff{\theta}^n}{\bb{L}^2} + h^{r+1} + k^2,
\end{align}
while the terms $E_4$ and $E_5$ can be estimated in a similar manner as $E_2$, giving
\begin{align}\label{equ:L4 L5}
	\norm{E_4}{\bb{L}^2} + \norm{E_5}{\bb{L}^2}
	\lesssim
	\norm{\bff{\theta}^{n+1}}{\bb{L}^2} + \norm{\bff{\rho}^{n+1}}{\bb{L}^2} + \norm{\bff{\theta}^n}{\bb{L}^2} + \norm{\bff{\rho}^n}{\bb{L}^2} + k^2.
\end{align}
Altogether, \eqref{equ:L1}, \eqref{equ:L2}, \eqref{equ:L3}, \eqref{equ:L4 L5}, and Young's inequality yield the required result.
\end{proof}

We have the following superconvergence estimates on $\bff{\theta}^n$ and $\bff{\xi}^n$, which form an essential step in the proof of the main theorem, analogous to Proposition~\ref{pro:semidisc theta xi}.

\begin{proposition}\label{pro:theta xi n half}
Assume that $\bff{u}$ and $\bff{H}$ satisfisfy \eqref{equ:ass 2 u}. Then for $h, k>0$ and~$n\in \{1,2,\ldots,\lfloor T/k \rfloor\}$,
\begin{align}\label{equ:theta n L2}
	\norm{\bff{\theta}^n}{\bb{L}^2}^2
	+
	k \sum_{m=0}^{n} \norm{\nabla \bff{\theta}^{m+\frac{1}{2}}}{\bb{L}^2}^2
	+
	k \sum_{m=0}^{n} \norm{\bff{\xi}^{m+\frac{1}{2}}}{\bb{L}^2}^2
	&\leq
	C \big(h^{2(r+1)}+k^4 \big).
\end{align} 
Moreover, if the triangulation $\mathcal{T}_h$ is globally quasi-uniform, then
\begin{align}
	\label{equ:xi nab theta n L2}
	\norm{\bff{\xi}^{n+\frac{1}{2}}}{\bb{L}^2}^2
	+
	\norm{\nabla \bff{\theta}^{n+\frac{1}{2}}}{\bb{L}^2}^2
	+
	k \sum_{m=1}^{n-1} \norm{\delta \bff{\theta}^m}{\bb{L}^2}^2
	+
	k \sum_{m=1}^{n-1} \norm{\nabla \bff{\xi}^{m+\frac{1}{2}} +\nabla \bff{\xi}^{m-\frac{1}{2}}}{\bb{L}^2}^2
	&\leq
	C \big(h^{2(r+1)}+k^4 \big),
	\\
	\label{equ:Delta theta n L2}
	\norm{\Delta_h \bff{\theta}^{n+\frac{1}{2}}}{\bb{L}^2}^2
	+
	\norm{\bff{\theta}^{n+\frac{1}{2}}}{\bb{L}^\infty}^2
	&\leq
	C \big(h^{2(r+1)}+k^4 \big),
\end{align}
where $C$ depends on the coefficients of the equation, $|\mathscr{D}|$, $T$, and $K_0$ (as defined in \eqref{equ:ass 2 u}), but is independent of $n$, $h$, and $k$.
\end{proposition}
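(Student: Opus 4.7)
The strategy mirrors Proposition~\ref{pro:semidisc theta xi} and Proposition~\ref{pro:theta xi n half euler}. The starting point is to subtract the weak formulation \eqref{equ:weakform} evaluated at~$t_{n+\frac12}$ from \eqref{equ:cranknic}, using the decompositions \eqref{equ:Un utn}--\eqref{equ:Hn Htn} and the Ritz projection identity \eqref{equ:nab Ritz disc}. This produces, for all $\bff{\chi},\bff{\phi}\in \bb{V}_h$, error equations of the schematic form
\begin{align*}
\inpro{\delta\bff{\theta}^{n+\frac12}}{\bff{\chi}}
+\inpro{\mathcal{R}_1^{n+\frac12}}{\bff{\chi}}
&=
\lambda_r\inpro{\bff{\xi}^{n+\frac12}}{\bff{\chi}}
+\lambda_e\inpro{\nabla\bff{\xi}^{n+\frac12}}{\nabla\bff{\chi}}
-\gamma\inpro{\widehat{\bff{u}}_h^{n-\frac12}\times\bff{H}_h^{n+\frac12}-\bff{u}(t_{n+\frac12})\times\bff{H}(t_{n+\frac12})}{\bff{\chi}},
\\
\inpro{\bff{\xi}^{n+\frac12}}{\bff{\phi}}
&=
-\inpro{\nabla\bff{\theta}^{n+\frac12}}{\nabla\bff{\phi}}
+\kappa\mu\inpro{\bff{\theta}^{n+\frac12}}{\bff{\phi}}
-\kappa\inpro{\psi(\bff{u}_h^n,\bff{u}_h^{n+1})-|\bff{u}(t_{n+\frac12})|^2\bff{u}(t_{n+\frac12})}{\bff{\phi}}
\\
&\quad-\beta\inpro{\bff{e}(\bff{e}\cdot\bff{\theta}^{n+\frac12})}{\bff{\phi}}
+\inpro{\mathcal{R}_2^{n+\frac12}}{\bff{\phi}},
\end{align*}
where $\mathcal{R}_1^{n+\frac12}=\delta\bff{\rho}^{n+\frac12}+\delta\bff{u}^{n+\frac12}-\partial_t\bff{u}(t_{n+\frac12})$ is $O(h^{r+1}+k^2)$ by \eqref{equ:d rho dt u}, and the consistency error $\mathcal{R}_2^{n+\frac12}$ coming from $\tfrac12(\bff{u}^{n+1}+\bff{u}^n)-\bff{u}(t_{n+\frac12})$ is $O(k^2)$ by Taylor expansion at the midpoint.

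To prove \eqref{equ:theta n L2}, I plan to take $\bff{\chi}=\bff{\theta}^{n+\frac12}$ in the first error equation, $\bff{\phi}=\lambda_r\bff{\theta}^{n+\frac12}$ and $\bff{\phi}=\lambda_e\bff{\xi}^{n+\frac12}$ in the second, and add them. The key telescoping identity is $\inpro{\delta\bff{\theta}^{n+\frac12}}{\bff{\theta}^{n+\frac12}}=\tfrac{1}{2k}(\norm{\bff{\theta}^{n+1}}{\bb{L}^2}^2-\norm{\bff{\theta}^n}{\bb{L}^2}^2)$. After applying H\"older and Young's inequalities to the consistency terms and invoking \eqref{equ:cross theta est} and \eqref{equ:psi theta disc} for the nonlinear contributions (absorbing the $\epsilon\norm{\nabla\bff{\theta}^{n+\frac12}}{\bb{L}^2}^2$ and $\epsilon\norm{\bff{\xi}^{n+\frac12}}{\bb{L}^2}^2$ terms into the left-hand side), summing over $m\in\{1,\dots,n-1\}$ and applying the discrete Gronwall inequality yields \eqref{equ:theta n L2}. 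The initialisation term arising at $m=0$ is controlled using the stated error bound for $\bff{u}_h^1$ and $\bff{H}_h^{1/2}$.

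For \eqref{equ:xi nab theta n L2}, one imitates the semi-discrete derivation in Proposition~\ref{pro:semidisc theta xi}, working with differences between consecutive time steps so that $\delta$-operators appear naturally. Concretely, subtracting the second error equation at step $n-\frac12$ from that at step $n+\frac12$ and dividing by $k$ produces a discrete time-derivative equation; testing with $\bff{\phi}=\lambda_e(\bff{\xi}^{n+\frac12}+\bff{\xi}^{n-\frac12})/2$ and adding the first error equation tested with $\bff{\chi}=\delta\bff{\theta}^n$ and with $\bff{\chi}=(\lambda_r+\kappa\mu\lambda_e)(\bff{\xi}^{n+\frac12}+\bff{\xi}^{n-\frac12})/2$, one obtains a discrete energy identity with the telescoping quantities $\norm{\nabla\bff{\theta}^{n+\frac12}}{\bb{L}^2}^2$ and $\norm{\bff{\xi}^{n+\frac12}}{\bb{L}^2}^2$. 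The nonlinear terms are bounded using \eqref{equ:cross est}--\eqref{equ:cross nab est crank}, \eqref{equ:psi xi disc}, and crucially \eqref{equ:psi psi dt un} (to extract $O(k^2)$ from the time-difference of the cubic nonlinearity, which is where second-order accuracy comes from); the cross-product discrepancy is handled using the identity $2(\widehat{\bff{u}}_h^{n-\frac12}-\bff{u}(t_{n+\frac12}))=3\bff{u}_h^n-\bff{u}_h^{n-1}-2\bff{u}(t_{n+\frac12})$ combined with \eqref{equ:u hat u half}. Finally, \eqref{equ:Delta theta n L2} follows by testing the second error equation with $\bff{\phi}=\Delta_h\bff{\theta}^{n+\frac12}$, applying \eqref{equ:psi xi Delta} and \eqref{equ:xi nab theta n L2}, absorbing $\epsilon\norm{\Delta_h\bff{\theta}^{n+\frac12}}{\bb{L}^2}^2$, and then invoking \eqref{equ:disc lapl L infty} for the $\bb{L}^\infty$-bound.

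The main obstacle is the second estimate \eqref{equ:xi nab theta n L2}. Unlike the semi-discrete case where one simply differentiates in~$t$, here each time-difference of the nonlinearities $\widehat{\bff{u}}_h^{n-\frac12}\times\bff{H}_h^{n+\frac12}$ and $\psi(\bff{u}_h^n,\bff{u}_h^{n+1})$ generates consistency remainders that must be shown to be $O(k^2)$ rather than $O(k)$; this is the role of the carefully arranged identities \eqref{equ:identity psi} and \eqref{equ:u hat u half}. Additional care is needed because cross terms of the form $\inpro{\delta\bff{\theta}^n}{\bff{\xi}^{n+\frac12}}$ couple the evolution of $\bff{\theta}$ and $\bff{\xi}$, forcing one to track both quantities simultaneously in the discrete Gronwall argument; the $\ell^\infty(\bb{H}^1)$-stability of $\bff{H}_h^{n+\frac12}$ (proved analogously to Proposition~\ref{pro:Hn H1 stab euler}) and the quasi-uniformity assumption are used to bound the factors $\norm{\bff{\xi}^{n+\frac12}}{\bb{H}^1}^2$ and $\norm{\bff{u}_h^n}{\bb{L}^\infty}^2$ appearing in those estimates uniformly in~$n$.
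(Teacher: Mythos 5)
Your plan follows the paper's proof essentially step for step: the same error equations, the same test functions ($\bff{\theta}^{n+\frac12}$, $\lambda_r\bff{\theta}^{n+\frac12}$, $\lambda_e\bff{\xi}^{n+\frac12}$ for \eqref{equ:theta n L2}; the differenced second equation tested with $\lambda_e(\bff{\xi}^{n+\frac12}+\bff{\xi}^{n-\frac12})$ together with $\bff{\chi}=\delta\bff{\theta}^n$ and $\bff{\chi}$ proportional to $\bff{\xi}^{n+\frac12}+\bff{\xi}^{n-\frac12}$ for \eqref{equ:xi nab theta n L2}), the same reliance on \eqref{equ:u hat u half}, \eqref{equ:identity psi}, \eqref{equ:psi psi dt un}, and the same discrete Gronwall closure, so the route is not genuinely different.

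One concrete issue in your treatment of \eqref{equ:Delta theta n L2}: you propose to bound the cubic term via \eqref{equ:psi xi Delta}, but that estimate only yields $h^{2r}+\norm{\bff{\theta}^n}{\bb{H}^1}^2+\norm{\bff{\theta}^{n+1}}{\bb{H}^1}^2$ on the right, which after Proposition~\ref{pro:nab theta n xi n} gives $O(h^{2r}+k^4)$ for $\norm{\Delta_h\bff{\theta}^{n+\frac12}}{\bb{L}^2}^2$ --- one power of $h$ short of the claimed $h^{2(r+1)}$. The paper instead applies \eqref{equ:psi xi disc} with $\bff{\zeta}=\Delta_h\bff{\theta}^{n+\frac12}$, which costs only $\epsilon\norm{\Delta_h\bff{\theta}^{n+\frac12}}{\bb{L}^2}^2$ (absorbable) and retains the $h^{2(r+1)}$ rate together with $\norm{\bff{\theta}^n}{\bb{L}^2}^2+\norm{\bff{\theta}^{n+1}}{\bb{L}^2}^2$, which \eqref{equ:theta n L2} already controls at the optimal order. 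Substituting that lemma repairs the argument; everything else in your plan is sound.
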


\begin{proof}
Subtracting \eqref{equ:weakform} from \eqref{equ:cranknic} at time step $n+\frac{1}{2}$, using \eqref{equ:Un utn}, \eqref{equ:Hn Htn} (and noting the definition of Ritz projection), we obtain for all $\bff{\chi}$, $\bff{\phi}\in \bb{V}_h$,
\begin{align}\label{equ:dt theta disc}
	\nonumber
	\inpro{\delta \bff{\theta}^{n+\frac{1}{2}} + \delta \bff{\rho}^{n+\frac{1}{2}} + \delta \bff{u}^{n+\frac{1}{2}}- \partial_t \bff{u}\big(t_{n+\frac{1}{2}}\big)}{\bff{\chi}}
	&=
	\lambda_r \inpro{\bff{\xi}^{n+\frac{1}{2}}+\bff{\eta}^{n+\frac{1}{2}}}{\bff{\chi}}
	+
	\lambda_e \inpro{\nabla \bff{\xi}^{n+\frac{1}{2}}}{\nabla \bff{\chi}}
	\\
	&\quad 
	-
	\gamma \inpro{\widehat{\bff{u}}_h^{n-\frac{1}{2}} \times \bff{H}_h^{n+\frac{1}{2}} - \bff{u}\big(t_{n+\frac{1}{2}}\big) \times \bff{H}\big(t_{n+\frac{1}{2}}\big)}{\bff{\chi}}
\end{align}
and
\begin{align}
	\nonumber
	\label{equ:xi eta disc}
	\inpro{\bff{\xi}^{n+\frac{1}{2}}+\bff{\eta}^{n+\frac{1}{2}}}{\bff{\phi}}
	&=
	-\inpro{\nabla \bff{\theta}^{n+\frac{1}{2}}}{\nabla \bff{\phi}}
	+
	\kappa\mu \inpro{\bff{\theta}^{n+\frac{1}{2}}+\bff{\rho}^{n+\frac{1}{2}}}{\bff{\phi}}
		-
	\beta \inpro{\bff{e}\big(\bff{e}\cdot (\bff{\theta}^{n+\frac12}+\bff{\rho}^{n+\frac12})\big)}{\bff{\phi}}
	\\
	&\quad 
	-
	\kappa \inpro{\psi\big(\bff{u}_h^n, \bff{u}_h^{n+1}\big) - \big|\bff{u}\big(t_{n+\frac{1}{2}}\big)\big|^2 \bff{u}\big(t_{n+\frac{1}{2}}\big)}{\bff{\phi}}.
\end{align}
Taking $\bff{\chi}= \bff{\theta}^{n+\frac{1}{2}}$ in \eqref{equ:dt theta disc} and $\bff{\phi}=\lambda_r \bff{\theta}^{n+\frac{1}{2}}$ in \eqref{equ:xi eta disc}, then adding the resulting expressions, we obtain
\begin{align}\label{equ:dt theta chi disc}
	\nonumber
	&\frac{1}{2k} \left( \norm{\bff{\theta}^{n+1}}{\bb{L}^2}^2 - \norm{\bff{\theta}^n}{\bb{L}^2}^2 \right)
	+
	\inpro{\delta \bff{\rho}^{n+\frac{1}{2}} + \delta \bff{u}^{n+\frac{1}{2}} -\partial_t \bff{u}\big(t_{n+\frac{1}{2}}\big)}{\bff{\theta}^{n+\frac{1}{2}}}
	+
	\lambda_r \norm{\nabla \bff{\theta}^{n+\frac{1}{2}}}{\bb{L}^2}^2
	\\
	\nonumber
	&=
	\kappa\mu \lambda_r \norm{\bff{\theta}^{n+\frac{1}{2}}}{\bb{L}^2}^2
	+
	\kappa\mu \lambda_r \inpro{\bff{\rho}^{n+\frac{1}{2}}}{\bff{\theta}^{n+\frac{1}{2}}}
	+
	\lambda_e \inpro{\nabla \bff{\xi}^{n+\frac{1}{2}}}{\nabla \bff{\theta}^{n+\frac{1}{2}}}
	\\
	\nonumber
	&\quad
	-
	\gamma \inpro{\widehat{\bff{u}}_h^{n-\frac{1}{2}} \times \bff{H}_h^{n+\frac{1}{2}} - \bff{u}\big(t_{n+\frac{1}{2}}\big) \times \bff{H}\big(t_{n+\frac{1}{2}}\big)}{\bff{\theta}^{n+\frac{1}{2}}}
	\\
	&\quad 
	-
	\kappa \lambda_r \inpro{\psi\big(\bff{u}_h^n, \bff{u}_h^{n+1}\big) - \big|\bff{u}\big(t_{n+\frac{1}{2}}\big)\big|^2 \bff{u}\big(t_{n+\frac{1}{2}}\big)}{\bff{\theta}^{n+\frac{1}{2}}}
	-
	\beta\lambda_r \inpro{\bff{e}\big(\bff{e}\cdot (\bff{\theta}^{n+\frac12}+\bff{\rho}^{n+\frac12})\big)}{\bff{\theta}^{n+\frac12}}.
\end{align}
Next, taking $\bff{\phi}= \lambda_e \bff{\xi}^{n+\frac{1}{2}}$, we obtain
\begin{align}\label{equ:xi phi xi disc}
	\nonumber
	&\lambda_e \norm{\bff{\xi}^{n+\frac{1}{2}}}{\bb{L}^2}^2
	+
	\lambda_e \inpro{\bff{\eta}^{n+\frac{1}{2}}}{\bff{\xi}^{n+\frac{1}{2}}}
	\\
	&=
	-
	\lambda_e \inpro{\nabla \bff{\theta}^{n+\frac{1}{2}}}{\nabla \bff{\xi}^{n+\frac{1}{2}}}
	+
	\kappa\mu \lambda_e \inpro{\bff{\theta}^{n+\frac{1}{2}}+\bff{\rho}^{n+\frac{1}{2}}}{\bff{\xi}^{n+\frac{1}{2}}}
	\nonumber \\
	&\quad 
	-
	\kappa \lambda_e \inpro{\psi\big(\bff{u}_h^n, \bff{u}_h^{n+1}\big) - \big|\bff{u}\big(t_{n+\frac{1}{2}}\big)\big|^2 \bff{u}\big(t_{n+\frac{1}{2}}\big)}{\bff{\xi}^{n+\frac{1}{2}}}
	-
	\beta\lambda_e \inpro{\bff{e}\big(\bff{e}\cdot (\bff{\theta}^{n+\frac12}+\bff{\rho}^{n+\frac12})\big)}{\bff{\xi}^{n+\frac12}}.
\end{align}
Adding \eqref{equ:dt theta chi disc} and \eqref{equ:xi phi xi disc} gives
\begin{align*}
	&\frac{1}{2k} \left( \norm{\bff{\theta}^{n+1}}{\bb{L}^2}^2 - \norm{\bff{\theta}^n}{\bb{L}^2}^2 \right)
	+
	\lambda_r \norm{\nabla \bff{\theta}^{n+\frac{1}{2}}}{\bb{L}^2}^2
	+
	\lambda_e \norm{\bff{\xi}^{n+\frac{1}{2}}}{\bb{L}^2}^2
	\\
	&=
	-\inpro{\delta \bff{\rho}^{n+\frac{1}{2}} + \delta \bff{u}^{n+\frac{1}{2}} -\partial_t \bff{u}\big(t_{n+\frac{1}{2}}\big)}{\bff{\theta}^{n+\frac{1}{2}}}
	-
	\lambda_e \inpro{\bff{\eta}^{n+\frac{1}{2}}}{\bff{\xi}^{n+\frac{1}{2}}}
	+
	\kappa\mu \lambda_e \inpro{\bff{\theta}^{n+\frac{1}{2}}+\bff{\rho}^{n+\frac{1}{2}}}{\bff{\xi}^{n+\frac{1}{2}}}
	\\
	&\quad
	+
	\kappa\mu \lambda_r \norm{\bff{\theta}^{n+\frac{1}{2}}}{\bb{L}^2}^2
	+
	\kappa\mu\lambda_r \inpro{\bff{\rho}^{n+\frac{1}{2}}}{\bff{\theta}^{n+\frac{1}{2}}}
	-
	\gamma \inpro{\widehat{\bff{u}}_h^{n-\frac{1}{2}} \times \bff{H}_h^{n+\frac{1}{2}} - \bff{u}\big(t_{n+\frac{1}{2}}\big) \times \bff{H}\big(t_{n+\frac{1}{2}}\big)}{\bff{\theta}^{n+\frac{1}{2}}}
	\\
	&\quad 
	-
	\kappa \lambda_r \inpro{\psi\big(\bff{u}_h^n, \bff{u}_h^{n+1}\big) - \big|\bff{u}\big(t_{n+\frac{1}{2}}\big)\big|^2 \bff{u}\big(t_{n+\frac{1}{2}}\big)}{\bff{\theta}^{n+\frac{1}{2}}}
	-
	\beta\lambda_r \inpro{\bff{e}\big(\bff{e}\cdot (\bff{\theta}^{n+\frac12}+\bff{\rho}^{n+\frac12})\big)}{\bff{\theta}^{n+\frac12}}\\
	&\quad
	-
	\kappa \lambda_e \inpro{\psi\big(\bff{u}_h^n, \bff{u}_h^{n+1}\big) - \big|\bff{u}\big(t_{n+\frac{1}{2}}\big)\big|^2 \bff{u}\big(t_{n+\frac{1}{2}}\big)}{\bff{\xi}^{n+\frac{1}{2}}}
	-
	\beta\lambda_e \inpro{\bff{e}\big(\bff{e}\cdot (\bff{\theta}^{n+\frac12}+\bff{\rho}^{n+\frac12})\big)}{\bff{\xi}^{n+\frac12}}.
\end{align*}
It remains to bound each term on the right-hand side. We apply \eqref{equ:d rho dt u} and Young's inequality as necessary for the first five terms. The last four terms can be estimated by applying Lemma~\ref{lem:cross disc} and Lemma~\ref{lem:cub disc}. We then infer that for any $\epsilon>0$,
\begin{align*}
	&\frac{1}{2k} \left( \norm{\bff{\theta}^{n+1}}{\bb{L}^2}^2 - \norm{\bff{\theta}^n}{\bb{L}^2}^2 \right)
	+
	\lambda_r \norm{\nabla \bff{\theta}^{n+\frac{1}{2}}}{\bb{L}^2}^2
	+
	\lambda_e \norm{\bff{\xi}^{n+\frac{1}{2}}}{\bb{L}^2}^2
	\\
	&\lesssim
	h^{2(r+1)}+k^4
	+
	\norm{\bff{\theta}^n}{\bb{L}^2}^2
	+
	\norm{\bff{\theta}^{n+1}}{\bb{L}^2}^2
	+
	\epsilon \norm{\nabla \bff{\theta}^{n+\frac{1}{2}}}{\bb{L}^2}^2
	+
	\epsilon \norm{\bff{\xi}^{n+\frac{1}{2}}}{\bb{L}^2}^2.
\end{align*}
Choosing $\epsilon>0$ sufficiently small, rearranging the terms, and summing over $m\in \{0,1, \ldots, n-1\}$, we have
\begin{align}\label{equ:sum theta n}
	\norm{\bff{\theta}^n}{\bb{L}^2}^2
	+
	k \sum_{m=0}^{n-1} \norm{\nabla \bff{\theta}^{m+\frac{1}{2}}}{\bb{L}^2}^2
	+
	k \sum_{m=0}^{n-1} \norm{\bff{\xi}^{m+\frac{1}{2}}}{\bb{L}^2}^2
	\leq
	C \big(h^{2(r+1)} +k^4 \big)
	+
	Ck \sum_{m=0}^{n-1} \norm{\bff{\theta}^m}{\bb{L}^2}^2,
\end{align}
where $C$ is a constant depending on the coefficients of the equation and $T$ (but is independent of $n$, $h$, and $k$). The discrete Gronwall inequality then yields \eqref{equ:theta n L2}.

We aim to prove \eqref{equ:xi nab theta n L2} next.
First, we consider the difference of the second equation in \eqref{equ:cranknic} at time steps $n+\frac{1}{2}$ and $n-\frac{1}{2}$. After dividing the result by $k$, then subtracting it from the corresponding equation in \eqref{equ:weakform} and taking $\bff{\phi}= \lambda_e \big(\bff{\xi}^{n+\frac{1}{2}}+\bff{\xi}^{n-\frac{1}{2}}\big)$, we obtain (noting \eqref{equ:nab Ritz disc} and notations defined in \eqref{equ:v n min half}),
\begin{align}\label{equ:dt xi n half}
	\nonumber
	\lambda_e \left(\frac{\norm{\bff{\xi}^{n+\frac{1}{2}}}{\bb{L}^2}^2 - \norm{\bff{\xi}^{n-\frac{1}{2}}}{\bb{L}^2}^2}{k}\right) 
	&=
	-
	\kappa \lambda_e \inpro{\delta \bff{\eta}^n}{\bff{\xi}^{n+\frac{1}{2}}+\bff{\xi}^{n-\frac{1}{2}}}
	-
	\lambda_e \inpro{\delta \bff{H}(t_n) - \partial_t \bff{H}^n}{\bff{\xi}^{n+\frac{1}{2}}+\bff{\xi}^{n-\frac{1}{2}}}
	\\
	\nonumber
	&\quad
	-
	\lambda_e \inpro{\nabla \delta \bff{\theta}^n}{\nabla \bff{\xi}^{n+\frac{1}{2}}+\nabla \bff{\xi}^{n-\frac{1}{2}}}
	+
	\kappa\mu \lambda_e \inpro{\delta \bff{u}_h^n-\partial_t \bff{u}^n}{\bff{\xi}^{n+\frac{1}{2}}+ \bff{\xi}^{n-\frac{1}{2}}}
	\\
	\nonumber
	&\quad
	-
	\lambda_e \inpro{\nabla \delta \bff{u}(t_n) - \nabla\partial_t \bff{u}^n}{\nabla \bff{\xi}^{n+\frac{1}{2}}+ \nabla \bff{\xi}^{n-\frac{1}{2}}}
	\\
	&\quad
	-
	\kappa \lambda_e
	\inpro{\frac{\psi\big(\bff{u}_h^n,\bff{u}_h^{n+1}\big) - \psi\big(\bff{u}_h^{n-1}, \bff{u}_h^n\big)}{k} - \partial_t \left(\abs{\bff{u}^n}^2 \bff{u}^n\right)}{\bff{\xi}^{n+\frac{1}{2}}+ \bff{\xi}^{n-\frac{1}{2}}}
	\nonumber \\
	&\quad
	-
	\beta\lambda_e \inpro{\bff{e}\big(\bff{e}\cdot (\delta\bff{u}_h^n-\partial_t \bff{u}^n)\big)}{\bff{\xi}^{n+\frac12}+\bff{\xi}^{n-\frac12}}.
\end{align}
By similar arguments, taking $\bff{\chi}=\bff{\xi}^{n+\frac{1}{2}}+\bff{\xi}^{n-\frac{1}{2}}$, we have
\begin{align}\label{equ:xi nab xi disc}
	\nonumber
	&\lambda_r \norm{\bff{\xi}^{n+\frac{1}{2}}+\bff{\xi}^{n-\frac{1}{2}}}{\bb{L}^2}^2 
	+
	\lambda_e \norm{\nabla \bff{\xi}^{n+\frac{1}{2}}+\nabla \bff{\xi}^{n-\frac{1}{2}}}{\bb{L}^2}^2
	\\
	\nonumber
	&=
	2\inpro{\delta \bff{\theta}^n}{\bff{\xi}^{n+\frac{1}{2}}+\bff{\xi}^{n-\frac{1}{2}}}
	+
	\inpro{\delta \bff{\rho}^{n+\frac{1}{2}}+ \delta \bff{u}^{n+\frac{1}{2}} - \partial_t \bff{u}\big(t_{n+\frac{1}{2}}\big)}{\bff{\xi}^{n+\frac{1}{2}}+\bff{\xi}^{n-\frac{1}{2}}}
	\\
	\nonumber
	&\quad
	+
	\inpro{\delta \bff{\rho}^{n-\frac{1}{2}} + \delta \bff{u}^{n-\frac{1}{2}} - \partial_t \bff{u}\big(t_{n-\frac{1}{2}}\big)}{\bff{\xi}^{n+\frac{1}{2}}+\bff{\xi}^{n-\frac{1}{2}}}
	-
	\lambda_r \inpro{\bff{\eta}^{n+\frac{1}{2}}+\bff{\eta}^{n-\frac{1}{2}}}{\bff{\xi}^{n+\frac{1}{2}}+\bff{\xi}^{n-\frac{1}{2}}}
	\\
	&\;
	+
	\gamma \inpro{\widehat{\bff{u}}_h^{n-\frac{1}{2}} \times \bff{H}_h^{n+\frac{1}{2}} - \bff{u}\big(t_{n+\frac{1}{2}}\big)\times \bff{H}\big(t_{n+\frac{1}{2}}\big)
	+ \widehat{\bff{u}}_h^{n-\frac{3}{2}} \times \bff{H}_h^{n-\frac{1}{2}} - \bff{u}\big(t_{n-\frac{1}{2}}\big)\times \bff{H}\big(t_{n-\frac{1}{2}}\big)}{\bff{\xi}^{n+\frac{1}{2}}+\bff{\xi}^{n-\frac{1}{2}}}.
\end{align}
Next, we add \eqref{equ:dt theta disc} at time step $n+\frac{1}{2}$ and $n-\frac{1}{2}$. Taking $\bff{\chi}= \delta \bff{\theta}^n= \big(\bff{\theta}^{n+\frac{1}{2}}- \bff{\theta}^{n-\frac{1}{2}}\big)/k$, we have
\begin{align}\label{equ:d theta n disc}
	\nonumber
	2 \norm{\delta \bff{\theta}^n}{\bb{L}^2}^2
	&=
	- \inpro{\delta \bff{\rho}^{n+\frac{1}{2}}+ \delta \bff{u}^{n+\frac{1}{2}}- \partial_t \bff{u}\big(t_{n+\frac{1}{2}}\big) + \delta \bff{\rho}^{n-\frac{1}{2}}+ \delta \bff{u}^{n-\frac{1}{2}}- \partial_t \bff{u}\big(t_{n-\frac{1}{2}}\big)}{\delta \bff{\theta}^n}
	\\
	\nonumber
	&\quad
	+
	\lambda_r \inpro{\bff{\xi}^{n+\frac{1}{2}}+ \bff{\xi}^{n-\frac{1}{2}} + \bff{\eta}^{n+\frac{1}{2}} + \bff{\eta}^{n-\frac{1}{2}}}{\delta \bff{\theta}^n}
	+
	\lambda_e \inpro{\nabla \bff{\xi}^{n+\frac{1}{2}} + \nabla \bff{\xi}^{n-\frac{1}{2}}}{\nabla \delta \bff{\theta}^n}
	\\
	&\;
	-
	\gamma \inpro{\widehat{\bff{u}}_h^{n-\frac{1}{2}} \times \bff{H}_h^{n+\frac{1}{2}} - \bff{u}\big(t_{n+\frac{1}{2}}\big)\times \bff{H}\big(t_{n+\frac{1}{2}}\big)
	+ \widehat{\bff{u}}_h^{n-\frac{3}{2}} \times \bff{H}_h^{n-\frac{1}{2}} - \bff{u}\big(t_{n-\frac{1}{2}}\big)\times \bff{H}\big(t_{n-\frac{1}{2}}\big)}{\delta \bff{\theta}^n}.
\end{align}
Furthermore, we add \eqref{equ:xi eta disc} at time steps $n+\frac{1}{2}$ and $n-\frac{1}{2}$. Taking $\bff{\phi}=\lambda_r \delta \bff{\theta}^n$ and rearranging the terms, we have
\begin{align}\label{equ:nab d theta n half disc}
	\nonumber
	&\lambda_r \left(\frac{\norm{\nabla \bff{\theta}^{n+\frac{1}{2}}}{\bb{L}^2}^2 - \norm{\nabla \bff{\theta}^{n-\frac{1}{2}}}{\bb{L}^2}^2}{k}\right)
	+
	\lambda_r \inpro{\bff{\xi}^{n+\frac{1}{2}}+ \bff{\xi}^{n-\frac{1}{2}} + \bff{\eta}^{n+\frac{1}{2}} + \bff{\eta}^{n-\frac{1}{2}}}{\delta \bff{\theta}^n}
	\\
	\nonumber
	&=
	\kappa\mu \lambda_r \inpro{\bff{\theta}^{n+\frac{1}{2}}+ \bff{\rho}^{n+\frac{1}{2}} + \bff{\theta}^{n-\frac{1}{2}} + \bff{\rho}^{n-\frac{1}{2}}}{\delta \bff{\theta}^n}
	-
	\beta\lambda_r \inpro{\bff{e}\big(\bff{e}\cdot (\bff{\theta}^{n+\frac{1}{2}}+ \bff{\rho}^{n+\frac{1}{2}} + \bff{\theta}^{n-\frac{1}{2}} + \bff{\rho}^{n-\frac{1}{2}})\big)}{\delta \bff{\theta}^n}
	\\
	&\quad
	-\kappa \lambda_r \inpro{\psi\big(\bff{u}_h^n,\bff{u}_h^{n+1}\big) - \abs{\bff{u}\big(t_{n+\frac{1}{2}}\big)}^2 \bff{u}\big(t_{n+\frac{1}{2}}\big) + \psi\big(\bff{u}_h^{n-1}, \bff{u}_h^n\big) - \abs{\bff{u}\big(t_{n-\frac{1}{2}}\big)}^2 \bff{u}\big(t_{n-\frac{1}{2}}\big)}{\delta \bff{\theta}^n}.
\end{align}
Adding \eqref{equ:dt xi n half}, \eqref{equ:xi nab xi disc}, \eqref{equ:d theta n disc}, and \eqref{equ:nab d theta n half disc}, we then obtain, for any $n\in \bb{N}$,
\begin{align}\label{equ:xi nab theta}
	\nonumber
	&\lambda_e \left(\frac{\norm{\bff{\xi}^{n+\frac{1}{2}}}{\bb{L}^2}^2 - \norm{\bff{\xi}^{n-\frac{1}{2}}}{\bb{L}^2}^2}{k}\right) 
	+
	\lambda_r \left(\frac{\norm{\nabla \bff{\theta}^{n+\frac{1}{2}}}{\bb{L}^2}^2 - \norm{\nabla \bff{\theta}^{n-\frac{1}{2}}}{\bb{L}^2}^2}{k}\right)
	\\
	\nonumber
	&\quad
	+
	\norm{\delta \bff{\theta}^n}{\bb{L}^2}^2
	+
	\lambda_r \norm{\bff{\xi}^{n+\frac{1}{2}}+\bff{\xi}^{n-\frac{1}{2}}}{\bb{L}^2}^2 
	+
	\lambda_e \norm{\nabla \bff{\xi}^{n+\frac{1}{2}}+\nabla \bff{\xi}^{n-\frac{1}{2}}}{\bb{L}^2}^2
	\\
	\nonumber
	&=
	-
	\kappa \lambda_e \inpro{\delta \bff{\eta}^n}{\bff{\xi}^{n+\frac{1}{2}}+\bff{\xi}^{n-\frac{1}{2}}}
	-
	\lambda_e \inpro{\delta \bff{H}(t_n) - \partial_t \bff{H}^n}{\bff{\xi}^{n+\frac{1}{2}}+\bff{\xi}^{n-\frac{1}{2}}}
	\\
	\nonumber
	&\quad
	+
	\kappa\mu \lambda_e \inpro{\delta \bff{u}_h^n-\partial_t \bff{u}^n}{\bff{\xi}^{n+\frac{1}{2}}+ \bff{\xi}^{n-\frac{1}{2}}}
	-
	\lambda_e \inpro{\nabla \delta \bff{u}(t_n) - \nabla\partial_t \bff{u}^n}{\nabla \bff{\xi}^{n+\frac{1}{2}}+ \nabla \bff{\xi}^{n-\frac{1}{2}}}
	\\
	\nonumber
	&\quad
	-
	\kappa \lambda_e
	\inpro{\frac{\psi\big(\bff{u}_h^n,\bff{u}_h^{n+1}\big) - \psi\big(\bff{u}_h^{n-1}, \bff{u}_h^n\big)}{k} - \partial_t \left(\abs{\bff{u}^n}^2 \bff{u}^n\right)}{\bff{\xi}^{n+\frac{1}{2}}+ \bff{\xi}^{n-\frac{1}{2}}}
	\\
	\nonumber
	&\quad
	-
	\beta\lambda_e \inpro{\bff{e}\big(\bff{e}\cdot (\delta\bff{u}_h^n-\partial_t \bff{u}^n)\big)}{\bff{\xi}^{n+\frac12}+\bff{\xi}^{n-\frac12}}
	\\
	\nonumber
	&\quad
	+
	2\inpro{\delta \bff{\theta}^n}{\bff{\xi}^{n+\frac{1}{2}}+\bff{\xi}^{n-\frac{1}{2}}}
	+
	\inpro{\delta \bff{\rho}^{n+\frac{1}{2}}+ \delta \bff{u}^{n+\frac{1}{2}} - \partial_t \bff{u}\big(t_{n+\frac{1}{2}}\big)}{\bff{\xi}^{n+\frac{1}{2}}+\bff{\xi}^{n-\frac{1}{2}}}
	\\
	\nonumber
	&\quad
	+
	\inpro{\delta \bff{\rho}^{n-\frac{1}{2}} + \delta \bff{u}^{n-\frac{1}{2}} - \partial_t \bff{u}\big(t_{n-\frac{1}{2}}\big)}{\bff{\xi}^{n+\frac{1}{2}}+\bff{\xi}^{n-\frac{1}{2}}}
	\\
	\nonumber
	&\quad
	-
	\lambda_r \inpro{\bff{\eta}^{n+\frac{1}{2}}+\bff{\eta}^{n-\frac{1}{2}}}{\bff{\xi}^{n+\frac{1}{2}}+\bff{\xi}^{n-\frac{1}{2}}}
	\\
	\nonumber
	&\quad
	+
	\gamma \inpro{\widehat{\bff{u}}_h^{n-\frac{1}{2}} \times \bff{H}_h^{n+\frac{1}{2}} - \bff{u}\big(t_{n+\frac{1}{2}}\big)\times \bff{H}\big(t_{n+\frac{1}{2}}\big)
	+ \widehat{\bff{u}}_h^{n-\frac{3}{2}} \times \bff{H}_h^{n-\frac{1}{2}} - \bff{u}\big(t_{n-\frac{1}{2}}\big)\times \bff{H}\big(t_{n-\frac{1}{2}}\big)}{\bff{\xi}^{n+\frac{1}{2}}+\bff{\xi}^{n-\frac{1}{2}}}
	\\
	\nonumber
	&\quad 
	- 
	\inpro{\delta \bff{\rho}^{n+\frac{1}{2}}+ \delta \bff{u}^{n+\frac{1}{2}}- \partial_t \bff{u}\big(t_{n+\frac{1}{2}}\big) + \delta \bff{\rho}^{n-\frac{1}{2}}+ \delta \bff{u}^{n-\frac{1}{2}}- \partial_t \bff{u}\big(t_{n-\frac{1}{2}}\big)}{\delta \bff{\theta}^n}
	\\
	\nonumber
	&\quad
	-
	\gamma \inpro{\widehat{\bff{u}}_h^{n-\frac{1}{2}} \times \bff{H}_h^{n+\frac{1}{2}} - \bff{u}\big(t_{n+\frac{1}{2}}\big)\times \bff{H}\big(t_{n+\frac{1}{2}}\big)
	+ 
	\widehat{\bff{u}}_h^{n-\frac{3}{2}} \times \bff{H}_h^{n-\frac{1}{2}} - \bff{u}\big(t_{n-\frac{1}{2}}\big)\times \bff{H}\big(t_{n-\frac{1}{2}}\big)}{\delta \bff{\theta}^n}
	\\
	\nonumber
	&\quad
	+
	\kappa\mu \lambda_r \inpro{\bff{\theta}^{n+\frac{1}{2}}+ \bff{\rho}^{n+\frac{1}{2}} + \bff{\theta}^{n-\frac{1}{2}} + \bff{\rho}^{n-\frac{1}{2}}}{\delta \bff{\theta}^n}
	-
	\beta\lambda_r \inpro{\bff{e}\big(\bff{e}\cdot (\bff{\theta}^{n+\frac{1}{2}}+ \bff{\rho}^{n+\frac{1}{2}} + \bff{\theta}^{n-\frac{1}{2}} + \bff{\rho}^{n-\frac{1}{2}})\big)}{\delta \bff{\theta}^n}
	\\
	\nonumber
	&\quad
	-\kappa \lambda_r \inpro{\psi\big(\bff{u}_h^n,\bff{u}_h^{n+1}\big) - \abs{\bff{u}\big(t_{n+\frac{1}{2}}\big)}^2 \bff{u}\big(t_{n+\frac{1}{2}}\big) + \psi\big(\bff{u}_h^{n-1}, \bff{u}_h^n\big) - \abs{\bff{u}\big(t_{n-\frac{1}{2}}\big)}^2 \bff{u}\big(t_{n-\frac{1}{2}}\big)}{\delta \bff{\theta}^n}
	\\
	&=: I_1+I_2+ \cdots + I_{16}.
\end{align}
There are sixteen terms involving inner products on the right-hand side of \eqref{equ:nab d theta n half disc} which will be estimated in the following. Firstly, by Young's inequality and \eqref{equ:Ritz ineq},
\begin{align}\label{equ:I1}
	\abs{I_1} 
	&\lesssim 
	\norm{\delta \bff{\eta}^n}{\bb{L}^2}^2
	+
	\epsilon \norm{\bff{\xi}^{n+\frac{1}{2}}+\bff{\xi}^{n-\frac{1}{2}}}{\bb{L}^2}^2
	\lesssim
	h^{2(r+1)}+ \epsilon\norm{\bff{\xi}^{n+\frac{1}{2}}+\bff{\xi}^{n-\frac{1}{2}}}{\bb{L}^2}^2.
\end{align}
Secondly, by Young's inequality and Taylor's theorem (noting \eqref{equ:ass 2 u}),
\begin{align*}
	\abs{I_2}
	&\lesssim
	\norm{\delta \bff{H}(t_n) - \partial_t \bff{H}^n}{\bb{L}^2}^2
	+
	\epsilon \norm{\bff{\xi}^{n+\frac{1}{2}}+\bff{\xi}^{n-\frac{1}{2}}}{\bb{L}^2}^2
	\lesssim
	k^4 + \epsilon\norm{\bff{\xi}^{n+\frac{1}{2}}+\bff{\xi}^{n-\frac{1}{2}}}{\bb{L}^2}^2.
\end{align*}
For the terms $I_3$ and $I_6$, by writing $\bff{u}_h^n= \bff{\theta}^n+\bff{\rho}^n+\bff{u}^n$ then applying Young's inequality, we have
\begin{align*}
	\nonumber
	\abs{I_3}+\abs{I_6}
	&\lesssim
	\norm{\bff{\xi}^{n+\frac{1}{2}}+\bff{\xi}^{n-\frac{1}{2}}}{\bb{L}^2}^2
	+
	\epsilon \norm{\delta \bff{\theta}^n}{\bb{L}^2}^2
	+
	\epsilon \norm{\delta \bff{\rho}^n+\delta \bff{u}(t_n)-\partial_t \bff{u}^n}{\bb{L}^2}^2
	\\
	&\lesssim
	\norm{\bff{\xi}^{n+\frac{1}{2}}}{\bb{L}^2}^2 + \norm{\bff{\xi}^{n-\frac{1}{2}}}{\bb{L}^2}^2
	+
	\epsilon \norm{\delta \bff{\theta}^n}{\bb{L}^2}^2
	+
	h^{2(r+1)} + k^4.
\end{align*}
The term $I_4$ can be estimated in the same way as the term $I_2$ (noting \eqref{equ:ass 2 u}), giving
\begin{align*}
	\abs{I_4} \lesssim k^4 + \epsilon\norm{\bff{\xi}^{n+\frac{1}{2}}+\bff{\xi}^{n-\frac{1}{2}}}{\bb{L}^2}^2.
\end{align*}
The term $I_5$ can be estimated by using \eqref{equ:psi psi dt un} with $\bff{\zeta}= \bff{\xi}^{n+\frac{1}{2}}+\bff{\xi}^{n-\frac{1}{2}}$, giving
\begin{align}\label{equ:I5}
	\nonumber
	\abs{I_5}
	&\lesssim
	h^{2(r+1)}+k^4 + \norm{\bff{\xi}^{n+\frac{1}{2}} + \bff{\xi}^{n-\frac{1}{2}}}{\bb{L}^2}^2 +
	\norm{\bff{\theta}^{n+1}}{\bb{L}^2}^2 + \norm{\bff{\rho}^{n+1}}{\bb{L}^2}^2 
	+ \norm{\bff{\theta}^n}{\bb{L}^2}^2 + \norm{\bff{\rho}^n}{\bb{L}^2}^2
	+ \epsilon \norm{\delta \bff{\theta}^n}{\bb{L}^2}^2
	\\
	&\lesssim
	h^{2(r+1)}+k^4 + \norm{\bff{\xi}^{n+\frac{1}{2}} + \bff{\xi}^{n-\frac{1}{2}}}{\bb{L}^2}^2
	+ \epsilon \norm{\delta \bff{\theta}^n}{\bb{L}^2}^2,
\end{align}
where in the last step we used \eqref{equ:Ritz ineq} and \eqref{equ:theta n L2}.
The terms $I_7$ can be estimated directly by applying Young's inequality to obtain
\begin{align*}
	\abs{I_7}
	\lesssim
	\epsilon \norm{\delta \bff{\theta}^n}{\bb{L}^2}^2 
	+
	\norm{\bff{\xi}^{n+\frac{1}{2}}}{\bb{L}^2}^2 + \norm{\bff{\xi}^{n-\frac{1}{2}}}{\bb{L}^2}^2.
\end{align*}
The next two terms can be estimated using Young's inequality and \eqref{equ:d rho dt u}, yielding
\begin{align*}
	\abs{I_8} + \abs{I_9}
	\lesssim
	h^{2(r+1)}+k^4 + \epsilon \norm{\bff{\xi}^{n+\frac{1}{2}}+\bff{\xi}^{n-\frac{1}{2}}}{\bb{L}^2}^2.
\end{align*}
Next, by Young's inequality and \eqref{equ:Ritz ineq},
\begin{align*}
	\abs{I_{10}}
	\lesssim
	\norm{\bff{\eta}^{n+\frac{1}{2}}+\bff{\eta}^{n-\frac{1}{2}}}{\bb{L}^2}^2
	+
	\epsilon \norm{\bff{\xi}^{n+\frac{1}{2}}+\bff{\xi}^{n-\frac{1}{2}}}{\bb{L}^2}^2
	\lesssim
	h^{2(r+1)} + \epsilon \norm{\bff{\xi}^{n+\frac{1}{2}}+\bff{\xi}^{n-\frac{1}{2}}}{\bb{L}^2}^2.
\end{align*}
Now, the term $I_{12}$ can be bounded in a similar way as the terms $I_8$ and $I_9$ to give
\begin{align*}
	\abs{I_{12}}
	\lesssim
	h^{2(r+1)}+k^4 + \epsilon \norm{\delta \bff{\theta}^n}{\bb{L}^2}^2,
\end{align*}
while the terms $I_{11}$ and $I_{13}$ can be estimated by applying \eqref{equ:cross est} and \eqref{equ:theta n L2} to obtain
\begin{align*}
	\abs{I_{11}}+\abs{I_{13}}
	\lesssim
	h^{2(r+1)} + \norm{\bff{\xi}^{n+\frac{1}{2}}}{\bb{L}^2}^2
	+ \norm{\bff{\xi}^{n-\frac{1}{2}}}{\bb{L}^2}^2 + \epsilon \norm{\delta \bff{\theta}^n}{\bb{L}^2}^2.
\end{align*}
The next two terms are straightforward to estimate:
\begin{align*}
	\abs{I_{14}} + \abs{I_{15}}
	\lesssim
	h^{2(r+1)}+ \norm{\bff{\theta}^{n+\frac{1}{2}}}{\bb{L}^2}^2 + \norm{\bff{\theta}^{n-\frac{1}{2}}}{\bb{L}^2}^2
	+ \epsilon \norm{\delta \bff{\theta}^n}{\bb{L}^2}^2
	\lesssim
	h^{2(r+1)}+k^4+ \epsilon \norm{\delta \bff{\theta}^n}{\bb{L}^2}^2.
\end{align*}
Finally, the last term can be estimated by using \eqref{equ:psi xi disc} to give
\begin{align*}
	\abs{I_{16}}
	\lesssim
	h^{2(r+1)}+k^4+ \norm{\bff{\theta}^{n+1}}{\bb{L}^2}^2 + \norm{\bff{\theta}^n}{\bb{L}^2}^2
	+\norm{\bff{\theta}^{n-1}}{\bb{L}^2}^2 + \epsilon \norm{\delta \bff{\theta}^n}{\bb{L}^2}^2
	\lesssim
	h^{2(r+1)}+ k^4+ \epsilon \norm{\delta \bff{\theta}^n}{\bb{L}^2}^2.
\end{align*}
Altogether, applying the above estimates for $I_j$, for $j=1,2,\ldots, 16$, to \eqref{equ:xi nab theta}, choosing $\epsilon>0$ sufficiently small, and rearranging the terms yield
\begin{align*}
	&\lambda_e \left(\frac{\norm{\bff{\xi}^{n+\frac{1}{2}}}{\bb{L}^2}^2 - \norm{\bff{\xi}^{n-\frac{1}{2}}}{\bb{L}^2}^2}{k}\right) 
	+
	\lambda_r \left(\frac{\norm{\nabla \bff{\theta}^{n+\frac{1}{2}}}{\bb{L}^2}^2 - \norm{\nabla \bff{\theta}^{n-\frac{1}{2}}}{\bb{L}^2}^2}{k}\right)
	\\
	&\quad
	+
	\norm{\delta \bff{\theta}^n}{\bb{L}^2}^2
	+
	\lambda_r \norm{\bff{\xi}^{n+\frac{1}{2}}+\bff{\xi}^{n-\frac{1}{2}}}{\bb{L}^2}^2 
	+
	\lambda_e \norm{\nabla \bff{\xi}^{n+\frac{1}{2}}+\nabla \bff{\xi}^{n-\frac{1}{2}}}{\bb{L}^2}^2
\lesssim
	h^{2(r+1)} + k^4 + \norm{\bff{\xi}^{n+\frac{1}{2}}}{\bb{L}^2}^2 + \norm{\bff{\xi}^{n-\frac{1}{2}}}{\bb{L}^2}^2.
\end{align*}
Summing over $m\in \{1,2,\ldots,n-1\}$, and noting the fact that $\bff{u}_h^0=R_h\bff{u}(0)$, so that 
\[
\norm{\bff{\xi}^{\frac{1}{2}}}{\bb{L}^2}^2 + \norm{\nabla \bff{\theta}^{\frac{1}{2}}}{\bb{L}^2}^2 \lesssim h^{2(r+1)}+k^4,
\]
we obtain
\begin{align*}
	\lambda_e \norm{\bff{\xi}^{n-\frac{1}{2}}}{\bb{L}^2}^2
	+
	\lambda_r \norm{\nabla \bff{\theta}^{n-\frac{1}{2}}}{\bb{L}^2}^2
	&+
	k \sum_{m=1}^{n-1} \left( \norm{\delta \bff{\theta}^m}{\bb{L}^2}^2 + \lambda_r \norm{\bff{\xi}^{m+\frac{1}{2}}+\bff{\xi}^{m-\frac{1}{2}}}{\bb{L}^2}^2 
	+
	\lambda_e \norm{\nabla \bff{\xi}^{m+\frac{1}{2}}+\nabla \bff{\xi}^{m-\frac{1}{2}}}{\bb{L}^2}^2 \right)
	\\
	&\lesssim
	h^{2(r+1)}+k^4+ \norm{\bff{\xi}^{\frac{1}{2}}}{\bb{L}^2}^2 + \norm{\nabla \bff{\theta}^{\frac{1}{2}}}{\bb{L}^2}^2
	+
	k \sum_{m=1}^{n-1} \left(\norm{\bff{\xi}^{m+\frac{1}{2}}}{\bb{L}^2}^2 + \norm{\bff{\xi}^{m-\frac{1}{2}}}{\bb{L}^2}^2\right)
	\\
	&\lesssim
	h^{2(r+1)}+k^4,
\end{align*}
where in the last step we also used \eqref{equ:theta n L2}. This implies inequality \eqref{equ:xi nab theta n L2}.

Finally, we will show \eqref{equ:Delta theta n L2}. Taking $\bff{\phi}=\Delta_h \bff{\theta}^{n+\frac{1}{2}}$, rearranging the terms, and applying Young's inequality, we obtain
\begin{align*}
	\norm{\Delta_h \bff{\theta}^{n+\frac{1}{2}}}{\bb{L}^2}^2
	&=
	\kappa\mu \norm{\nabla \bff{\theta}^{n+\frac{1}{2}}}{\bb{L}^2}^2
	+
	\inpro{\bff{\xi}^{n+\frac{1}{2}}}{\Delta_h \bff{\theta}^{n+\frac{1}{2}}}
	+
	\inpro{\bff{\eta}^{n+\frac{1}{2}}}{\Delta_h \bff{\theta}^{n+\frac{1}{2}}}
	-
	\kappa \inpro{\bff{\rho}^{n+\frac{1}{2}}}{\Delta_h \bff{\theta}^{n+\frac{1}{2}}}
	\\
	&\quad
	+
	\kappa \inpro{\psi(\bff{u}_h^n,\bff{u}_h^{n+1})- \big|\bff{u}(t_{n+\frac12})\big|^2 \bff{u}(t_{n+\frac12})}{\Delta_h \bff{\theta}^{n+\frac12}}
	+
	\beta \inpro{\bff{e}\big(\bff{e}\cdot (\bff{\theta}^{n+\frac12}+\bff{\rho}^{n+\frac12})\big)}{\Delta_h \bff{\theta}^{n+\frac12}}
	\\
	&\leq
	C \norm{\bff{\theta}^{n+\frac{1}{2}}}{\bb{H}^1}^2
	+
	C\norm{\bff{\xi}^{n+\frac{1}{2}}}{\bb{L}^2}^2
	+
	C\norm{\bff{\eta}^{n+\frac{1}{2}}}{\bb{L}^2}^2
	+
	C\norm{\bff{\rho}^{n+\frac{1}{2}}}{\bb{L}^2}^2
	+
	\frac{1}{2} \norm{\Delta_h \bff{\theta}^{n+\frac{1}{2}}}{\bb{L}^2}^2
	\\
	&\leq
	C \big(h^{2(r+1)}+ k^4\big) +\frac{1}{2} \norm{\Delta_h \bff{\theta}^{n+\frac{1}{2}}}{\bb{L}^2}^2,
\end{align*}
where in the last step we used \eqref{equ:Ritz ineq}, \eqref{equ:psi xi disc}, and inequality \eqref{equ:xi nab theta n L2} (which has been shown previously).
This, together with inequalities \eqref{equ:theta n L2} and \eqref{equ:disc lapl L infty}, yields \eqref{equ:Delta theta n L2}. The proof is now complete.
\end{proof}

\begin{proposition}\label{pro:nab theta n xi n}
Suppose that the triangulation $\mathcal{T}_h$ is globally quasi-uniform. Then for $h, k>0$ and~$n\in \{1,2,\ldots,\lfloor T/k \rfloor\}$,
\begin{align*}
	\norm{\nabla \bff{\theta}^n}{\bb{L}^2}^2
	+
	k \sum_{m=0}^n \norm{\nabla \bff{\xi}^{m+\frac{1}{2}}}{\bb{L}^2}^2
	\leq
	C \big(h^{2r}+k^4 \big),
\end{align*}
where $C$ depends on the coefficients of the equation, $|\mathscr{D}|$, $T$, and $K_0$, but is independent of $n$, $h$, and $k$.
\end{proposition}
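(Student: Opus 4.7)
The strategy mirrors the Euler analogue Proposition~\ref{pro:theta xi n half euler}\eqref{equ:nab xi n L2 euler}, adapted to the symmetric Crank--Nicolson indexing while exploiting the sharper half-step bounds already established in Proposition~\ref{pro:theta xi n half}. For the integer-step bound $\norm{\nabla \bff{\theta}^n}{\bb{L}^2}^2 \lesssim h^{2r}+k^4$, I would use the identity $\bff{\theta}^n = \bff{\theta}^{n-\frac12}+\tfrac{k}{2}\delta\bff{\theta}^{n-\frac12}$ with the forward half-step difference $\delta\bff{\theta}^{n-\frac12}=(\bff{\theta}^n-\bff{\theta}^{n-1})/k$, so that $\norm{\nabla \bff{\theta}^n}{\bb{L}^2}$ is controlled by $\norm{\nabla \bff{\theta}^{n-\frac12}}{\bb{L}^2} \lesssim h^{r+1}+k^2$ (from~\eqref{equ:xi nab theta n L2}) plus the correction $\tfrac{k}{2}\norm{\nabla \delta \bff{\theta}^{n-\frac12}}{\bb{L}^2}$; the latter can be handled by the inverse inequality on the quasi-uniform mesh combined with a pointwise bound on $\norm{\delta\bff{\theta}^{n-\frac12}}{\bb{L}^2}$ derived by testing~\eqref{equ:dt theta disc} with $\bff{\chi}=\delta\bff{\theta}^{n-\frac12}$ and using the stability estimates of Propositions~\ref{pro:ene dec H1 crank}--\ref{pro:theta xi n half}.

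For $k\sum_m \norm{\nabla \bff{\xi}^{m+\frac12}}{\bb{L}^2}^2$, I would test~\eqref{equ:dt theta disc} with $\bff{\chi}=\bff{\xi}^{n+\frac12}$, which isolates $\lambda_e\norm{\nabla \bff{\xi}^{n+\frac12}}{\bb{L}^2}^2$ and exposes four other terms on the right: the consistency residual controlled by~\eqref{equ:d rho dt u}, the Ritz term $\lambda_r\inpro{\bff{\eta}^{n+\frac12}}{\bff{\xi}^{n+\frac12}}$, the cross-product term treated via an analogue of~\eqref{equ:cross theta est} with $\bff{\xi}^{n+\frac12}$ as test factor (derivable along the lines of~\eqref{equ:uh cross Hh xi} from the semi-discrete case), and the time-derivative term $\inpro{\delta\bff{\theta}^{n+\frac12}}{\bff{\xi}^{n+\frac12}}$. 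After Young's inequality and the estimates of Proposition~\ref{pro:theta xi n half}, the first three contribute at most $C(h^{2(r+1)}+k^4)$.

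The time-derivative term is handled by summation by parts after multiplying by $k$ and summing over $n$: using $\bff{\theta}^0=\bff{0}$, it becomes $\inpro{\bff{\theta}^N}{\bff{\xi}^{N-\frac12}}-k\sum_n\inpro{\bff{\theta}^n}{\delta\bff{\xi}^n}$ with $\delta\bff{\xi}^n$ the central difference of~\eqref{equ:v n min half}; the boundary contribution is bounded by~\eqref{equ:theta n L2}--\eqref{equ:xi nab theta n L2}, and the main sum by Cauchy--Schwarz together with an estimate on $k\sum\norm{\delta\bff{\xi}^n}{\bb{L}^2}^2$ derived by time-differencing~\eqref{equ:xi eta disc} and testing with $\delta\bff{\xi}^n$ (mimicking the derivation of~\eqref{equ:xi nab theta n L2}). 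The main technical obstacle lies in the two-step Crank--Nicolson indexing: the forward half-step difference $\delta\bff{\theta}^{n+\frac12}$ arising in~\eqref{equ:dt theta disc} and the central integer-step difference $\delta\bff{\theta}^n$ of~\eqref{equ:v n min half} are related only by the averaging $\delta\bff{\theta}^n=\tfrac12(\delta\bff{\theta}^{n+\frac12}+\delta\bff{\theta}^{n-\frac12})$, so that the summed bounds of Proposition~\ref{pro:theta xi n half} do not immediately control the former; disentangling them while preserving the sharp $h^{2r}+k^4$ rate requires a discrete Gronwall argument with careful tracking of summation-by-parts boundary terms, analogous to the handling of $k\sum\norm{\nabla\bff{\xi}^{m+\frac12}+\nabla\bff{\xi}^{m-\frac12}}{\bb{L}^2}^2$ in the proof of~\eqref{equ:xi nab theta n L2}.
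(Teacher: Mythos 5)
There is a genuine gap, and it sits exactly where you locate ``the main technical obstacle'': neither of your two routes actually closes. For the integer-step bound, writing $\bff{\theta}^n=\bff{\theta}^{n-\frac12}+\tfrac{k}{2}\,\delta\bff{\theta}^{n-\frac12}$ and invoking the inverse inequality costs a factor $k/h$: even granting an (unproved) pointwise bound $\norm{\delta\bff{\theta}^{n-\frac12}}{\bb{L}^2}\lesssim h^{r+1}+k^2$, the correction term is of size $kh^{r}+k^3/h$, and $k^3/h\lesssim h^r+k^2$ only under a coupling of the form $k\lesssim h$, whereas the proposition is claimed for all $h,k>0$. Moreover, the information available from Proposition~\ref{pro:theta xi n half} is $k\sum_m\norm{\delta\bff{\theta}^m}{\bb{L}^2}^2\lesssim h^{2(r+1)}+k^4$ for the \emph{central} difference only; this gives neither a pointwise bound nor control of the forward half-step difference, since $\delta\bff{\theta}^n=\tfrac12\bigl(\delta\bff{\theta}^{n+\frac12}+\delta\bff{\theta}^{n-\frac12}\bigr)$ permits cancellation between the two. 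For the second part, your summation by parts trades $\inpro{\delta\bff{\theta}^{n+\frac12}}{\bff{\xi}^{n+\frac12}}$ for a bound on $k\sum_n\norm{\delta\bff{\xi}^n}{\bb{L}^2}^2$, but the proposed derivation of the latter (time-differencing \eqref{equ:xi eta disc} and testing with $\delta\bff{\xi}^n$) produces the term $\inpro{\nabla\delta\bff{\theta}^n}{\nabla\delta\bff{\xi}^n}$, which is not controlled by anything established so far; the argument does not close.

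The missing idea, and what the paper actually does, is to test with discrete Laplacians rather than with $\bff{\xi}^{n+\frac12}$ or $\delta\bff{\theta}^{n\pm\frac12}$: take $\bff{\chi}=\Delta_h\bff{\theta}^{n+\frac12}$ in \eqref{equ:dt theta disc} together with $\bff{\phi}=\lambda_r\Delta_h\bff{\theta}^{n+\frac12}$ and $\bff{\phi}=\lambda_e\Delta_h\bff{\xi}^{n+\frac12}$ in \eqref{equ:xi eta disc}. Then $\inpro{\delta\bff{\theta}^{n+\frac12}}{\Delta_h\bff{\theta}^{n+\frac12}}=-\tfrac{1}{2k}\bigl(\norm{\nabla\bff{\theta}^{n+1}}{\bb{L}^2}^2-\norm{\nabla\bff{\theta}^{n}}{\bb{L}^2}^2\bigr)$ telescopes at integer steps with no need for any bound on $\delta\bff{\theta}^{n+\frac12}$, the mixed terms $\pm\inpro{\Delta_h\bff{\theta}^{n+\frac12}}{\Delta_h\bff{\xi}^{n+\frac12}}$ cancel upon adding, and the left-hand side collects $\lambda_r\norm{\Delta_h\bff{\theta}^{n+\frac12}}{\bb{L}^2}^2+\lambda_e\norm{\nabla\bff{\xi}^{n+\frac12}}{\bb{L}^2}^2$, i.e.\ both target quantities at once. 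The remaining terms are then handled by \eqref{equ:d rho dt u}, \eqref{equ:cross est}, \eqref{equ:psi xi disc}, \eqref{equ:psi xi Delta} and the discrete Gronwall inequality. Your plan would need to be reorganised around this choice of test functions to reach the stated $h^{2r}+k^4$ rate without a mesh-coupling condition.
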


\begin{proof}
Successively taking $\bff{\chi}=\Delta_h \bff{\theta}^{n+\frac{1}{2}}$ and $\bff{\phi}= \lambda_r \Delta_h \bff{\theta}^{n+\frac{1}{2}}$ in \eqref{equ:xi eta disc}, then taking $\bff{\phi}=\lambda_e \Delta_h \bff{\xi}^{n+\frac{1}{2}}$ in \eqref{equ:dt theta disc}, and adding the resulting equations give
\begin{align*}
	\nonumber
	&\frac{1}{2k} \left(\norm{\nabla \bff{\theta}^{n+1}}{\bb{L}^2}^2 - \norm{\nabla \bff{\theta}^n}{\bb{L}^2}^2 \right) 
	+
	\lambda_r \norm{\Delta_h \bff{\theta}^{n+\frac{1}{2}}}{\bb{L}^2}^2
	+
	\lambda_e \norm{\nabla \bff{\xi}^{n+\frac{1}{2}}}{\bb{L}^2}^2
	\\
	\nonumber
	&=
	\inpro{\delta \bff{\rho}^{n+\frac{1}{2}}+\delta \bff{u}^{n+\frac{1}{2}}- \partial_t \bff{u}\big(t_{n+\frac{1}{2}}\big)}{\Delta_h \bff{\theta}^{n+\frac{1}{2}}}
	+
	\lambda_r \norm{\nabla \bff{\theta}^{n+\frac{1}{2}}}{\bb{L}^2}^2
	\\
	\nonumber
	&\quad 
	-
	\kappa\mu \lambda_r \inpro{\bff{\rho}^{n+\frac{1}{2}}}{\Delta_h \bff{\theta}^{n+\frac{1}{2}}}
	+
	\gamma \inpro{\widehat{\bff{u}}_h^{n-\frac{1}{2}}\times \bff{H}_h^{n+\frac{1}{2}}- \bff{u}\big(t_{n+\frac{1}{2}}\big)\times \bff{H}\big(t_{n+\frac{1}{2}}\big)}{\Delta_h \bff{\theta}^{n+\frac{1}{2}}}
	\\
	\nonumber
	&\quad
	+
	\kappa \lambda_r \inpro{\psi\big(\bff{u}_h^n,\bff{u}_h^{n+1}\big) - \abs{\bff{u}\big(t_{n+\frac{1}{2}}\big)}^2 \bff{u}\big(t_{n+\frac{1}{2}}\big)}{\Delta_h \bff{\theta}^{n+\frac{1}{2}}}
	-
	\beta\lambda_r \inpro{\bff{e}\big(\bff{e}\cdot (\bff{\theta}^{n+\frac12}+\bff{\rho}^{n+\frac12})\big)}{\Delta_h \bff{\theta}^{n+\frac12}}
	\\
	\nonumber
	&\quad 
	+
	\lambda_e \inpro{\bff{\eta}^{n+\frac{1}{2}}}{\Delta_h \bff{\xi}^{n+\frac{1}{2}}}
	-
	\kappa \mu\lambda_e \inpro{\bff{\theta}^{n+\frac{1}{2}}}{\Delta_h \bff{\xi}^{n+\frac{1}{2}}}
	-
	\kappa\mu \lambda_e \inpro{\bff{\rho}^{n+\frac{1}{2}}}{\Delta_h \bff{\xi}^{n+\frac{1}{2}}}
	\\
	\nonumber
	&\quad 
	+
	\kappa \lambda_e \inpro{\psi\big(\bff{u}_h^n,\bff{u}_h^{n+1}\big) - \abs{\bff{u}\big(t_{n+\frac{1}{2}}\big)}^2 \bff{u}\big(t_{n+\frac{1}{2}}\big)}{\Delta_h \bff{\xi}^{n+\frac{1}{2}}}
	-
	\beta\lambda_e \inpro{\bff{e}\big(\bff{e}\cdot (\bff{\theta}^{n+\frac12}+\bff{\rho}^{n+\frac12})\big)}{\Delta_h \bff{\xi}^{n+\frac12}}
	\\
	&=: J_1+J_2+\cdots + J_{11}.
\end{align*}
We will estimate each term on the last line. For the first term, by Young's inequality and \eqref{equ:d rho dt u}, we have
\begin{align*}
	\abs{J_1} \lesssim h^{2(r+1)}+k^4+ \epsilon \norm{\Delta_h \bff{\theta}^{n+\frac{1}{2}}}{\bb{L}^2}^2.
\end{align*}
The term $J_2$ will be left as is. For the third term, by Young's inequality
\begin{align*}
	\abs{J_3} \lesssim \norm{\bff{\rho}^{n+\frac{1}{2}}}{\bb{L}^2}^2 + \epsilon \norm{\Delta_h \bff{\theta}^{n+\frac{1}{2}}}{\bb{L}^2}^2
	\lesssim
	h^{2(r+1)}+ \epsilon \norm{\Delta_h \bff{\theta}^{n+\frac{1}{2}}}{\bb{L}^2}^2.
\end{align*}
The next two terms will be bounded by using \eqref{equ:cross est} and \eqref{equ:psi xi disc} respectively, giving
\begin{align*}
	\abs{J_4}
	\lesssim
	h^{2(r+1)} + \norm{\bff{\theta}^{n+\frac{1}{2}}}{\bb{L}^2}^2 + \norm{\bff{\xi}^{n+\frac{1}{2}}}{\bb{L}^2}^2
	+ \epsilon \norm{\Delta_h \bff{\theta}^{n+\frac{1}{2}}}{\bb{L}^2}^2,
	\\
	\abs{J_5}
	\lesssim
	h^{2(r+1)} + k^4 + \norm{\bff{\theta}^n}{\bb{L}^2}^2 + \norm{\bff{\theta}^{n+1}}{\bb{L}^2}^2 
	+ \epsilon \norm{\Delta_h \bff{\theta}^{n+\frac{1}{2}}}{\bb{L}^2}^2.
\end{align*}
The term $J_6$ can be estimated using Young's inequality, giving
\begin{align*}
	\abs{J_6}
	\lesssim
	\norm{\bff{\theta}^{n+\frac12}+\bff{\rho}^{n+\frac12}}{\bb{L}^2}^2
	+
	\epsilon \norm{\Delta_h \bff{\theta}^{n+\frac{1}{2}}}{\bb{L}^2}^2
	\lesssim
	h^{2(r+1)} 
	+
	\epsilon \norm{\Delta_h \bff{\theta}^{n+\frac{1}{2}}}{\bb{L}^2}^2.
\end{align*}
Next, by using \eqref{equ:orth proj} and \eqref{equ:disc laplacian} (noting \eqref{equ:nab Ritz disc}), the term $J_7$ can be written as
\begin{align*}
	J_7= \lambda_e \inpro{\Pi_h \bff{\eta}^{n+\frac{1}{2}}}{\Delta_h \bff{\xi}^{n+\frac{1}{2}}}
	=
	-\lambda_e \inpro{\nabla \Pi_h \bff{\eta}^{n+\frac{1}{2}}-\nabla \bff{\eta}^{n+\frac{1}{2}}}{\nabla \bff{\xi}^{n+\frac{1}{2}}},
\end{align*}
and similarly for the term $J_9$.
Therefore, by Young's inequality and \eqref{equ:proj approx},
\begin{align*}
	\abs{J_7}+\abs{J_9} 
	\lesssim 
	h^{2r}+ \epsilon \norm{\nabla \bff{\xi}^{n+\frac{1}{2}}}{\bb{L}^2}^2.
\end{align*}
Next, the terms $J_8$ and $J_{11}$ can be estimated by using \eqref{equ:disc laplacian} and Young's inequality as
\begin{align*}
	\abs{J_8}+\abs{J_{11}}
	\lesssim
	h^{2r}+
	\norm{\nabla \bff{\theta}^{n+\frac{1}{2}}}{\bb{L}^2}^2 + \epsilon \norm{\nabla \bff{\xi}^{n+\frac{1}{2}}}{\bb{L}^2}^2.
\end{align*}
Finally, the term $J_{10}$ can be bounded by using \eqref{equ:psi xi Delta} with $\bff{\zeta}=\bff{\xi}^{n+\frac{1}{2}}$, giving
\begin{align*}
	\abs{J_9} 
	\lesssim
	h^{2r}+k^4+ \norm{\bff{\theta}^n}{\bb{H}^1}^2 + \norm{\bff{\theta}^{n+1}}{\bb{H}^1}^2
	+ \epsilon \norm{\nabla \bff{\xi}^{n+\frac{1}{2}}}{\bb{L}^2}^2.
\end{align*}
Altogether, we obtain
\begin{align*}
	&\frac{1}{2k} \left(\norm{\nabla \bff{\theta}^{n+1}}{\bb{L}^2}^2 - \norm{\nabla \bff{\theta}^n}{\bb{L}^2}^2 \right) 
	+
	\lambda_r \norm{\Delta_h \bff{\theta}^{n+\frac{1}{2}}}{\bb{L}^2}^2
	+
	\lambda_e \norm{\nabla \bff{\xi}^{n+\frac{1}{2}}}{\bb{L}^2}^2
	\\
	&\lesssim
	h^{2r}+k^4 + \norm{\bff{\theta}^n}{\bb{H}^1}^2 + \norm{\bff{\theta}^{n+1}}{\bb{H}^1}^2
	+
	\norm{\bff{\xi}^{n+\frac{1}{2}}}{\bb{L}^2}^2
	+ 
	\epsilon \norm{\Delta_h \bff{\theta}^{n+\frac{1}{2}}}{\bb{L}^2}^2
	+
	\epsilon \norm{\nabla \bff{\xi}^{n+\frac{1}{2}}}{\bb{L}^2}^2.
\end{align*}
We can now proceed as in \eqref{equ:sum theta n}. Choosing $\epsilon>0$ sufficiently small, rearranging the terms, using \eqref{equ:xi nab theta n L2}, and summing over~$m\in \{0,1,\ldots,n-1\}$, we obtain
\begin{align*}
	\norm{\nabla \bff{\theta}^n}{\bb{L}^2}^2
	+
	k \sum_{m=0}^{n-1} \norm{\Delta_h \bff{\theta}^{n+\frac{1}{2}}}{\bb{L}^2}^2
	+
	k \sum_{m=0}^{n-1} \norm{\nabla \bff{\xi}^{n+\frac{1}{2}}}{\bb{L}^2}^2
	\leq
	C \big(h^{2r}+k^4\big)
	+
	Ck \sum_{m=0}^{n-1} \norm{\nabla \bff{\theta}^m}{\bb{L}^2}^2.
\end{align*}
The required estimate then follows from the discrete Gronwall inequality.
\end{proof}

\begin{proposition}\label{pro:nab xi n half crank}
Suppose that the triangulation $\mathcal{T}_h$ is globally quasi-uniform. Then for $h, k>0$ and~$n\in \{1,2,\ldots,\lfloor T/k \rfloor\}$,
\begin{align*}
	\norm{\nabla \bff{\xi}^{n+\frac{1}{2}}}{\bb{L}^2}^2
	\leq
	C \big(h^{2r}+k^4 \big),
\end{align*}
where $C$ depends on the coefficients of the equation, $|\mathscr{D}|$, $T$, and $K_0$, but is independent of $n$, $h$, and $k$.
\end{proposition}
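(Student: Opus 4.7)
The plan is to extend the Crank--Nicolson $\bb{H}^1$-estimate for $\bff{\xi}$ in exact analogy with \eqref{equ:semidisc xi L2} and \eqref{equ:nab xi n L2 euler}, building a telescoping identity for $\norm{\nabla \bff{\xi}^{n+\frac{1}{2}}}{\bb{L}^2}^2$ and then closing by a discrete Gronwall argument. The results of Proposition~\ref{pro:theta xi n half} and Proposition~\ref{pro:nab theta n xi n} are the key inputs: they already give $k$-weighted sums of $\norm{\delta \bff{\theta}^m}{\bb{L}^2}^2$, $\norm{\nabla \bff{\theta}^{m\pm \frac{1}{2}}}{\bb{L}^2}^2$, and $\norm{\nabla \bff{\xi}^{m\pm \frac{1}{2}}}{\bb{L}^2}^2$ of the desired order $h^{2r}+k^4$, which will be treated as forcing in the Gronwall step.

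First I would take $\bff{\chi} = -\Delta_h^2 \bff{\xi}^{n+\frac{1}{2}}$ in \eqref{equ:dt theta disc}. Repeatedly applying \eqref{equ:disc laplacian} (and the $\bb{L}^2$-projection $\Pi_h$ to represent non-$\bb{V}_h$ quantities) yields
\begin{align*}
\lambda_r \norm{\Delta_h \bff{\xi}^{n+\frac{1}{2}}}{\bb{L}^2}^2 + \lambda_e \norm{\nabla \Delta_h \bff{\xi}^{n+\frac{1}{2}}}{\bb{L}^2}^2
&= -\inpro{\nabla \delta \bff{\theta}^{n+\frac{1}{2}}}{\nabla \Delta_h \bff{\xi}^{n+\frac{1}{2}}} - \inpro{\nabla \Pi_h \bigl(\delta \bff{\rho}^{n+\frac{1}{2}} + \delta \bff{u}^{n+\frac{1}{2}} - \partial_t \bff{u}\bigl(t_{n+\frac{1}{2}}\bigr)\bigr)}{\nabla \Delta_h \bff{\xi}^{n+\frac{1}{2}}} \\
&\quad - \lambda_r \inpro{\nabla \Pi_h \bff{\eta}^{n+\frac{1}{2}}}{\nabla \Delta_h \bff{\xi}^{n+\frac{1}{2}}} + \gamma \inpro{\nabla \Pi_h \bigl(\widehat{\bff{u}}_h^{n-\frac{1}{2}}\times \bff{H}_h^{n+\frac{1}{2}} - \bff{u}\bigl(t_{n+\frac{1}{2}}\bigr)\times \bff{H}\bigl(t_{n+\frac{1}{2}}\bigr)\bigr)}{\nabla \Delta_h \bff{\xi}^{n+\frac{1}{2}}}.
\end{align*}
Each RHS term is then bounded using Young's inequality together with \eqref{equ:Ritz ineq}, \eqref{equ:H1 stab proj}, \eqref{equ:d rho dt u}, and the nonlinear bound \eqref{equ:cross nab est crank}, producing a right-hand side of the form $C\bigl(h^{2r}+k^4+\norm{\nabla \delta \bff{\theta}^{n+\frac{1}{2}}}{\bb{L}^2}^2 + \norm{\bff{\theta}^{n-1}}{\bb{H}^1}^2 + \norm{\bff{\theta}^n}{\bb{H}^1}^2 + \norm{\bff{\xi}^{n+\frac{1}{2}}}{\bb{H}^1}^2\bigr) + \epsilon\bigl(\norm{\Delta_h \bff{\xi}^{n+\frac{1}{2}}}{\bb{L}^2}^2 + \norm{\nabla \Delta_h \bff{\xi}^{n+\frac{1}{2}}}{\bb{L}^2}^2\bigr)$.

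Second, I would subtract the second equation of \eqref{equ:cranknic} at step $n-\frac{1}{2}$ from the one at step $n+\frac{1}{2}$, divide by $k$, subtract from it the time-differentiated constitutive equation at $t_n$, and test with $\bff{\phi} = -\Delta_h\bigl(\bff{\xi}^{n+\frac{1}{2}} + \bff{\xi}^{n-\frac{1}{2}}\bigr)$. The key algebraic observation is
\[
\inpro{\bff{\xi}^{n+\frac{1}{2}}-\bff{\xi}^{n-\frac{1}{2}}}{-\Delta_h \bigl(\bff{\xi}^{n+\frac{1}{2}}+\bff{\xi}^{n-\frac{1}{2}}\bigr)} = \norm{\nabla \bff{\xi}^{n+\frac{1}{2}}}{\bb{L}^2}^2 - \norm{\nabla \bff{\xi}^{n-\frac{1}{2}}}{\bb{L}^2}^2,
\]
which provides the telescoping contribution. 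The RHS terms involving $\delta \bff{\eta}^n$, $\delta \bff{H}(t_n)-\partial_t \bff{H}^n$, $\delta \bff{u}_h^n-\partial_t \bff{u}^n$, and the cubic increment $k^{-1}\bigl(\psi(\bff{u}_h^n, \bff{u}_h^{n+1})-\psi(\bff{u}_h^{n-1}, \bff{u}_h^n)\bigr) - \partial_t(|\bff{u}^n|^2 \bff{u}^n)$ are bounded by \eqref{equ:Ritz ineq}, Taylor's theorem, and \eqref{equ:psi psi dt un}, absorbing $\epsilon \norm{\Delta_h(\bff{\xi}^{n+\frac{1}{2}}+\bff{\xi}^{n-\frac{1}{2}})}{\bb{L}^2}^2$ terms into the LHS using the $\|\Delta_h \bff{\xi}^{n+\frac{1}{2}}\|^2$ coming from the first identity.

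Adding the two identities, multiplying by $k$, summing over $m\in \{1,\ldots,n-1\}$, and using Propositions~\ref{pro:theta xi n half} and~\ref{pro:nab theta n xi n} to control the $\bff{\theta}$- and $\bff{\xi}$-sums on the RHS at order $h^{2r}+k^4$, I arrive at an inequality of the form $\norm{\nabla \bff{\xi}^{n+\frac{1}{2}}}{\bb{L}^2}^2 \leq C(h^{2r}+k^4) + Ck\sum_{m=1}^{n-1}\norm{\nabla \bff{\xi}^{m+\frac{1}{2}}}{\bb{L}^2}^2$, after which the discrete Gronwall lemma finishes the proof. The starter quantity $\norm{\nabla \bff{\xi}^{\frac{1}{2}}}{\bb{L}^2}^2 \leq C(h^{2r}+k^4)$ is obtained by a one-step variant of the same argument applied to \eqref{equ:cranknic first}. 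The main technical obstacle is ensuring that the two Crank--Nicolson-specific remainders---the extrapolation error $\widehat{\bff{u}}_h^{n-\frac{1}{2}}-\bff{u}\bigl(t_{n+\frac{1}{2}}\bigr)$ via \eqref{equ:u hat u half}, and the cubic mid-point error in its $\bb{H}^1$-dual form \eqref{equ:psi xi Delta}---really contribute only $O(h^{2r}+k^4)$ when paired with $\nabla \Delta_h \bff{\xi}^{n+\frac{1}{2}}$, which is why the a priori $\bb{H}^1$-bounds of Proposition~\ref{pro:nab theta n xi n} on $\nabla \bff{\theta}^n$ are crucial and must be combined with the global quasi-uniformity through \eqref{equ:H1 stab proj} before the Gronwall step can be closed.
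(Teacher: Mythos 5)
Your proposal follows essentially the same route as the paper: the telescoping identity for $\norm{\nabla \bff{\xi}^{n+\frac{1}{2}}}{\bb{L}^2}^2$ comes from testing the time-differenced constitutive equation with $\Delta_h\bigl(\bff{\xi}^{n+\frac{1}{2}}+\bff{\xi}^{n-\frac{1}{2}}\bigr)$, the dissipative $\Delta_h\bff{\xi}$ and $\nabla\Delta_h\bff{\xi}$ terms come from a $\Delta_h^2$-type test of the evolution equation, and the remainders are handled by \eqref{equ:cross nab est crank}, \eqref{equ:psi psi dt un}, \eqref{equ:d rho dt u}, and the sums already controlled in Propositions~\ref{pro:theta xi n half} and~\ref{pro:nab theta n xi n}. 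The only (harmless) deviation is that you test with $-\Delta_h^2\bff{\xi}^{n+\frac{1}{2}}$ rather than the paper's averaged $\tfrac12\Delta_h^2\bigl(\bff{\xi}^{n+\frac{1}{2}}+\bff{\xi}^{n-\frac{1}{2}}\bigr)$, which merely shifts the absorption of the $\epsilon\norm{\Delta_h(\bff{\xi}^{n+\frac{1}{2}}+\bff{\xi}^{n-\frac{1}{2}})}{\bb{L}^2}^2$ pieces from each step to after the summation over $m$.
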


\begin{proof}
Firstly, we follow the same argument leading to \eqref{equ:dt xi n half}, but we now take $\bff{\phi}=\Delta_h \left(\bff{\xi}^{n+\frac{1}{2}}+ \bff{\xi}^{n-\frac{1}{2}}\right)$. Secondly, we follow the argument in \eqref{equ:xi nab xi disc}, but with $\bff{\chi}=\frac{1}{2} \Delta_h^2 \left(\bff{\xi}^{n+\frac{1}{2}}+ \bff{\xi}^{n-\frac{1}{2}}\right)$. Adding the resulting equations and applying \eqref{equ:orth proj} and \eqref{equ:disc laplacian} as necessary, we obtain
\begin{align}\label{equ:dt nab xi n half}
	\nonumber
	&\frac{\norm{\nabla \bff{\xi}^{n+\frac{1}{2}}}{\bb{L}^2}^2 - \norm{\nabla \bff{\xi}^{n-\frac{1}{2}}}{\bb{L}^2}^2}{k}
	+\frac{\lambda_r}{2} \norm{\Delta_h\left(\bff{\xi}^{n+\frac{1}{2}}+\bff{\xi}^{n-\frac{1}{2}}\right)}{\bb{L}^2}^2
	+
	\frac{\lambda_e}{2} \norm{\nabla \Delta_h\left(\bff{\xi}^{n+\frac{1}{2}}+\bff{\xi}^{n-\frac{1}{2}}\right)}{\bb{L}^2}^2
	\\
	\nonumber
	&=
	\kappa \inpro{\delta \bff{\eta}^n}{\Delta_h \left(\bff{\xi}^{n+\frac{1}{2}}+ \bff{\xi}^{n-\frac{1}{2}}\right)}
	+
	\inpro{\delta \bff{H}(t_n) - \partial_t \bff{H}^n}{\Delta_h \left(\bff{\xi}^{n+\frac{1}{2}}+ \bff{\xi}^{n-\frac{1}{2}}\right)}
	\\
	\nonumber
	&\quad
	-
	\kappa\mu \inpro{\delta \bff{u}_h^n-\partial_t \bff{u}^n}{\Delta_h \left(\bff{\xi}^{n+\frac{1}{2}}+ \bff{\xi}^{n-\frac{1}{2}}\right)}
	+
	\inpro{\nabla \Pi_h \left(\delta \bff{u}(t_n) - \nabla\partial_t \bff{u}^n\right)}{\nabla \Delta_h \left(\bff{\xi}^{n+\frac{1}{2}}+ \bff{\xi}^{n-\frac{1}{2}}\right)}
	\\
	\nonumber
	&\quad
	+
	\kappa
	\inpro{\frac{\psi\big(\bff{u}_h^n,\bff{u}_h^{n+1}\big) - \psi\big(\bff{u}_h^{n-1}, \bff{u}_h^n\big)}{k} - \partial_t \left(\abs{\bff{u}^n}^2 \bff{u}^n\right)}{\Delta_h \left(\bff{\xi}^{n+\frac{1}{2}}+ \bff{\xi}^{n-\frac{1}{2}}\right)}
	\\
	\nonumber
	&\quad
	+
	\beta
	\inpro{\bff{e}\big(\bff{e}\cdot(\delta\bff{u}_h^n-\partial_t \bff{u}^n)\big)}{\Delta_h \left(\bff{\xi}^{n+\frac{1}{2}}+ \bff{\xi}^{n-\frac{1}{2}}\right)}
	\\
	\nonumber
	&\quad
	+
	\frac{1}{2} \inpro{\nabla \Pi_h \left(\delta \bff{\rho}^{n+\frac{1}{2}}+ \delta \bff{u}^{n+\frac{1}{2}} - \partial_t \bff{u}\big(t_{n+\frac{1}{2}}\big)\right)}{\nabla \Delta_h\left(\bff{\xi}^{n+\frac{1}{2}}+\bff{\xi}^{n-\frac{1}{2}}\right)}
	\\
	\nonumber
	&\quad
	+
	\frac{1}{2} \inpro{\nabla \Pi_h \left(\delta \bff{\rho}^{n-\frac{1}{2}} + \delta \bff{u}^{n-\frac{1}{2}} - \partial_t \bff{u}\big(t_{n-\frac{1}{2}}\big)\right)}{\nabla \Delta_h\left(\bff{\xi}^{n+\frac{1}{2}}+\bff{\xi}^{n-\frac{1}{2}}\right)}
	\\
	\nonumber
	&\quad
	-
	\frac{\lambda_r}{2} \inpro{\nabla \Pi_h \left(\bff{\eta}^{n+\frac{1}{2}}+\bff{\eta}^{n-\frac{1}{2}}\right)}{\nabla \Delta_h\left(\bff{\xi}^{n+\frac{1}{2}}+\bff{\xi}^{n-\frac{1}{2}}\right)}
	\\
	\nonumber
	&\quad
	+
	\frac{\gamma}{2} \inpro{\nabla \Pi_h\left(\widehat{\bff{u}}_h^{n-\frac{1}{2}} \times \bff{H}_h^{n+\frac{1}{2}} - \bff{u}\big(t_{n+\frac{1}{2}}\big)\times \bff{H}\big(t_{n+\frac{1}{2}}\big)\right)}{\nabla \Delta_h\left(\bff{\xi}^{n+\frac{1}{2}}+\bff{\xi}^{n-\frac{1}{2}}\right)}
	\\
	\nonumber
	&\quad
	+
	\frac{\gamma}{2} \inpro{\nabla \Pi_h\left(\widehat{\bff{u}}_h^{n-\frac{3}{2}} \times \bff{H}_h^{n-\frac{1}{2}} - \bff{u}\big(t_{n-\frac{1}{2}}\big)\times \bff{H}\big(t_{n-\frac{1}{2}}\big)\right)}{\nabla \Delta_h\left(\bff{\xi}^{n+\frac{1}{2}}+\bff{\xi}^{n-\frac{1}{2}}\right)}
	\\
	&=:
	M_1+M_2+\cdots+M_{11}.
\end{align}
It remains to estimate each of the eleven terms involving inner products above. For $i=1,2,\ldots, 6$, the terms $M_i$ can be estimated in a similar way as the corresponding terms $I_i$ in \eqref{equ:I1}--\eqref{equ:I5}, giving
\begin{align*}
	\sum_{i=1}^6 \abs{M_i} \lesssim 
	h^{2(r+1)}+ k^4+ \norm{\delta \bff{\theta}^n}{\bb{L}^2}^2
	+
	\epsilon \norm{\Delta_h\left(\bff{\xi}^{n+\frac{1}{2}}+\bff{\xi}^{n-\frac{1}{2}}\right)}{\bb{L}^2}^2
	+
	\epsilon \norm{\nabla \Delta_h\left(\bff{\xi}^{n+\frac{1}{2}}+\bff{\xi}^{n-\frac{1}{2}}\right)}{\bb{L}^2}^2.
\end{align*}
Moreover, by Young's inequality, stability of the projection operator, and \eqref{equ:d rho dt u}, we have
\begin{align*}
	\abs{M_7}+\abs{M_8}+\abs{M_9} 
	\lesssim
	h^{2r}+k^4 + \epsilon \norm{\nabla \Delta_h\left(\bff{\xi}^{n+\frac{1}{2}}+\bff{\xi}^{n-\frac{1}{2}}\right)}{\bb{L}^2}^2.
\end{align*}
Finally, by~\eqref{equ:cross nab est crank} (and noting Proposition~\ref{pro:nab theta n xi n}) we obtain
\begin{align*}
	\abs{M_{10}} + \abs{M_{11}}
	&\lesssim
	h^{2r}
	+
	k^4
	+
	\norm{\bff{\theta}^n}{\bb{H}^1}^2
	+
	\norm{\bff{\theta}^{n-1}}{\bb{H}^1}^2
	+
	\norm{\bff{\theta}^{n-2}}{\bb{H}^1}^2
	+
	\norm{\bff{\xi}^{n+\frac12}}{\bb{H}^1}^2
	+
	\norm{\bff{\xi}^{n-\frac12}}{\bb{H}^1}^2
	\\
	&\quad
	+
	\epsilon \norm{\nabla \Delta_h\left(\bff{\xi}^{n+\frac{1}{2}}+\bff{\xi}^{n-\frac{1}{2}}\right)}{\bb{L}^2}^2
	\\
	&\lesssim
	h^{2r}
	+
	k^4
	+
	\norm{\bff{\xi}^{n+\frac12}}{\bb{H}^1}^2
	+
	\norm{\bff{\xi}^{n-\frac12}}{\bb{H}^1}^2
	+
	\epsilon \norm{\nabla \Delta_h\left(\bff{\xi}^{n+\frac{1}{2}}+\bff{\xi}^{n-\frac{1}{2}}\right)}{\bb{L}^2}^2.
\end{align*}
Substituting these into~\eqref{equ:dt nab xi n half}, choosing $\epsilon>0$ sufficiently small, and summing over~$m\in \{0,1,\ldots,n-1\}$ (noting Proposition~\ref{pro:nab theta n xi n} and inequality~\eqref{equ:xi nab theta n L2}), we obtain the required estimate.
\end{proof}

The following theorem on the rate of convergence of the numerical scheme~\eqref{equ:cranknic} is now an immediate consequence of the previous propositions.

\begin{theorem}\label{the:crank rate}
Let $(\bff{u},\bff{H})$ be the solution of \eqref{equ:weakform} which satisfies \eqref{equ:ass 2 u}, and let~$(\bff{u}_h^n,\bff{H}_h^n)\in \bb{V}_h\times \bb{V}_h$ be the solution of \eqref{equ:cranknic}. Then for $h, k>0$ and~$n\in \{1,2,\ldots,\lfloor T/k \rfloor\}$,
\begin{align*}
	\norm{\bff{u}_h^n - \bff{u}(t_n)}{\bb{L}^2}^2 
	+
	k\sum_{m=0}^{n-1} \norm{\bff{H}_h^{m+\frac12}- \bff{H}\big(t_{m+\frac12}\big)}{\bb{L}^2}^2 
	&\leq
	C \big(h^{2(r+1)} + k^4\big),
	\\
	k \sum_{m=0}^{n-1} \norm{\nabla \bff{u}_h^m- \nabla \bff{u}(t_m)}{\bb{L}^2}^2
	&\leq
	C \big(h^{2r} + k^4\big).
\end{align*}
Moreover, if the triangulation is globally quasi-uniform, then for $h, k>0$ and~$n\in \{1,2,\ldots,\lfloor T/k \rfloor\}$,
\begin{align*}
	\norm{\bff{H}_h^{n+\frac{1}{2}} - \bff{H}\big(t_{n+\frac{1}{2}}\big)}{\bb{L}^2}^2
	&\leq
	C \big(h^{2(r+1)}+k^4 \big),
	\\
	\norm{\nabla \bff{u}_h^n-\nabla \bff{u}(t_n)}{\bb{L}^2}^2
	+
	\norm{\nabla \bff{H}_h^{n+\frac{1}{2}} - \nabla \bff{H}\big(t_{n+\frac{1}{2}}\big)}{\bb{L}^2}^2 
	&\leq
	C \big(h^{2r}+k^4 \big),
	\\
	\norm{\bff{u}_h^{n+\frac12}-\bff{u}\big(t_{n+\frac12}\big)}{\bb{L}^\infty}^2
	+
	k\sum_{m=0}^{n-1} \norm{\bff{H}_h^{m+\frac12}- \bff{H}\big(t_{m+\frac12}\big)}{\bb{L}^\infty}^2
	&\leq
	C \big(h^{2(r+1)} \abs{\ln h} +k^2 \big).
\end{align*}
The constant $C$ depends on the coefficients of the equation, $|\mathscr{D}|$, $T$, and $K_0$ (as defined in \eqref{equ:ass 2 u}), but is independent of $n$, $h$, and $k$.
\end{theorem}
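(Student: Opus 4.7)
The strategy is the same template used in Theorem~\ref{the:semidisc rate} and Theorem~\ref{the:euler rate}: once the superconvergence estimates for the auxiliary quantities $\bff{\theta}^n := \bff{u}_h^n - R_h\bff{u}(t_n)$ and $\bff{\xi}^{n+\frac12} := \bff{H}_h^{n+\frac12} - R_h\bff{H}(t_{n+\frac12})$ are in hand, all the theorem's bounds follow by inserting the decompositions \eqref{equ:Un utn}--\eqref{equ:Hn Htn} and invoking the Ritz projection error estimates \eqref{equ:Ritz ineq} and \eqref{equ:Ritz ineq L infty} together with the triangle inequality. So the proposal is essentially an assembly proof, with the heavy lifting already done in Propositions~\ref{pro:theta xi n half}, \ref{pro:nab theta n xi n}, and \ref{pro:nab xi n half crank}.

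Concretely, for the first pair of bounds (which do not require quasi-uniformity), I would write
\[
\norm{\bff{u}_h^n - \bff{u}(t_n)}{\bb{L}^2} \le \norm{\bff{\theta}^n}{\bb{L}^2} + \norm{\bff{\rho}^n}{\bb{L}^2}
\]
and bound the first term by Proposition~\ref{pro:theta xi n half}, inequality~\eqref{equ:theta n L2}, and the second by~\eqref{equ:Ritz ineq} with $s=0$, yielding the $O(h^{r+1}+k^2)$ rate. The time-averaged $\bb{L}^2$ bound for $\bff{H}_h^{m+\frac12} - \bff{H}(t_{m+\frac12})$ comes from the $k\sum_m \norm{\bff{\xi}^{m+\frac12}}{\bb{L}^2}^2$ piece of \eqref{equ:theta n L2} plus $\norm{\bff{\eta}^{m+\frac12}}{\bb{L}^2}\le Ch^{r+1}$. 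The $\ell^2(\bb{H}^1)$ bound for $\bff{u}$ is obtained analogously, noting that $\nabla \bff{u}^m = \tfrac12(\nabla \bff{u}^{m+\frac12}+\nabla \bff{u}^{m-\frac12})$, so that the control of $k\sum_m \norm{\nabla \bff{\theta}^{m+\frac12}}{\bb{L}^2}^2$ from \eqref{equ:theta n L2} combines with $\norm{\nabla \bff{\rho}^m}{\bb{L}^2}\le Ch^r$ to give the $O(h^r+k^2)$ rate.

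The quasi-uniform estimates are assembled similarly: the pointwise-in-time $\bb{L}^2$ bound for $\bff{H}_h^{n+\frac12}-\bff{H}(t_{n+\frac12})$ uses $\norm{\bff{\xi}^{n+\frac12}}{\bb{L}^2}$ from~\eqref{equ:xi nab theta n L2}, while the $\bb{H}^1$ bounds use Proposition~\ref{pro:nab theta n xi n} (for $\norm{\nabla\bff{\theta}^n}{\bb{L}^2}$) and Proposition~\ref{pro:nab xi n half crank} (for $\norm{\nabla\bff{\xi}^{n+\frac12}}{\bb{L}^2}$), in each case adding the Ritz-projection remainder at rate $h^r$. For the $\bb{L}^\infty$ bounds, the $\bff{\theta}$ and $\bff{\xi}$ parts come from~\eqref{equ:Delta theta n L2} (and, for $\bff{H}$, from the time-averaged $\bb{L}^\infty$ estimate obtained by combining $k\sum\norm{\Delta_h\bff{\xi}^{m+\frac12}}{\bb{L}^2}^2$, which follows from~\eqref{equ:xi nab theta n L2}, with~\eqref{equ:disc lapl L infty}), while the Ritz remainder $\bff{\rho}^n$ and $\bff{\eta}^{n+\frac12}$ contribute via~\eqref{equ:Ritz ineq L infty}, which produces the $\abs{\ln h}$ factor shown in the statement.

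The only mildly delicate point worth flagging is to be careful with the time-averaging bookkeeping: since $\bff{H}_h$ is approximated at half-integer time levels whereas $\bff{u}_h$ lives at integer levels, one must consistently apply $\bff{H}^{n+\frac12} - \bff{H}(t_{n+\frac12})$ where the estimates of Section~\ref{sec:Crank} are stated, and use Taylor's theorem to trade $\tfrac12(\bff{H}^{n+1}+\bff{H}^n)$ for $\bff{H}(t_{n+\frac12})$ at cost $O(k^2)$. All of this is already implicitly absorbed into the stated propositions, so no new estimates are required. The initialization step for $\bff{u}_h^1$, $\bff{H}_h^{\frac12}$ via the auxiliary scheme~\eqref{equ:cranknic first} contributes a term of order $h^{r}+k^2$ consistent with the claim, and will be invoked exactly where Proposition~\ref{pro:theta xi n half}'s base case used $\norm{\bff{\xi}^{\frac12}}{\bb{L}^2}^2 + \norm{\nabla\bff{\theta}^{\frac12}}{\bb{L}^2}^2\lesssim h^{2(r+1)}+k^4$.
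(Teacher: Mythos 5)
Your proposal is correct and follows essentially the same route as the paper: the paper's own proof of Theorem~\ref{the:crank rate} is exactly the assembly argument you describe, citing Propositions~\ref{pro:theta xi n half}, \ref{pro:nab theta n xi n}, \ref{pro:nab xi n half crank} together with \eqref{equ:Ritz ineq}, \eqref{equ:Ritz ineq L infty} and the triangle inequality. Your additional remarks on the half-integer time bookkeeping and the initialization step are consistent with how those issues are absorbed into the cited propositions.
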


\begin{proof}
Note that we have \eqref{equ:Un utn} and \eqref{equ:Hn Htn}. The results then follow immediately by the estimates in Proposition~\ref{pro:theta xi n half}, Proposition~\ref{pro:nab theta n xi n}, Proposition~\ref{pro:nab xi n half crank}, inequalities \eqref{equ:Ritz ineq}, \eqref{equ:Ritz ineq L infty}, and the triangle inequality.
\end{proof}

\begin{remark}
More careful estimates would allow us to obtain the stability of $\bff{H}_h^n$ in $\ell^\infty(\bb{H}^1)$ (as in Proposition~\ref{pro:dt un stab euler}) and to remove the global quasi-uniformity assumption for $d=1$ and $2$ (analogous to Proposition~\ref{pro:theta xi n half euler} and Theorem~\ref{the:euler rate}). Further details are omitted for brevity.
\end{remark}

\section{Approximation of the Regularised LLBloch Equation}\label{sec:bloch}

In this section, we assume $\mu<0$. The aim here is to show that as $\lambda_e \to 0^+$, the solution of the LLBar equation converges to that of the LLBloch equation at a certain rate, thus we can treat the LLBar equation as a \emph{regularised} LLBloch equation. We also outline some modifications needed in the schemes to ensure energy dissipativity at the discrete level for this regularised LLBloch equation (with~$\lambda_e \Delta \bff{H}$ as the regularisation term). Once these are done, we can then conclude that the numerical schemes proposed in Sections~\ref{sec:semidiscrete}, \ref{sec:Euler}, and~\ref{sec:Crank} are also suitable to approximate the solution of the LLBloch equation (by choosing small $\lambda_e$ and sufficiently small $h$ and $k$).

First, we recall some known results about the LLBloch equation above $T_\mathrm{c}$ (equation~\eqref{equ:llbar} with $\lambda_e=0$ and $\mu<0$).
Given $T>0$ and~$\bff{u}_0\in \bb{H}^1 \cap \bb{L}^\infty$, a global weak solution $\bff{u}\in L^\infty(\bb{L}^\infty)\cap L^\infty(\bb{H}^1) \cap L^2(\bb{H}^2)$ exists~\cite{Le16, LeSoeTra24} for $d=1,2,3$. This weak solution satisfies
\begin{equation*}
	\norm{\bff{u}}{L^\infty(\bb{L}^\infty)} + 
	\norm{\bff{u}}{L^\infty(\bb{H}^1)} +
	\norm{\bff{u}}{L^2(\bb{H}^2)} +
	\norm{\partial_t \bff{u}}{L^2(\bb{L}^2)} + 
	\norm{\bff{H}}{L^2(\bb{L}^2)} \leq K_1.
\end{equation*}
Furthermore, if $\bff{u}_0\in \bb{H}^2$, then a strong solution $\bff{u}\in L^\infty(\bb{H}^2)\cap L^2(\bb{H}^3)$ exists, possibly only locally in time for $d=3$ (cf.~\cite{LeSoeTra24}). This strong solution satisfies
\begin{align}\label{equ:u H bloch H2}
	\norm{\bff{u}}{L^\infty(\bb{H}^2)} + \norm{\bff{u}}{L^2(\bb{H}^3)} +
	\norm{\partial_t \bff{u}}{L^\infty(\bb{L}^2)} + \norm{\bff{H}}{L^2(\bb{H}^1)} \leq K_2,
\end{align}
where $K_r$, for $r=1,2$, are positive constants depending on the coefficients of the equation, $T$, and $\norm{\bff{u}_0}{\bb{H}^r}$.

For the LLBar equation with $\lambda_e=\varepsilon$ and initial data $\bff{u}_0^\varepsilon\in \bb{H}^1$, a weak solution~$\bff{u}^\varepsilon\in L^\infty(\bb{H}^1) \cap L^2(\bb{H}^3)$ with a corresponding magnetic field $\bff{H}^\varepsilon\in L^2(\bb{H}^1)$ exist~\cite{SoeTra23}. As implied by Proposition~\ref{pro:semidisc est1}, this weak solution enjoys the estimate
\begin{align}\label{equ:ue est}
	\norm{\bff{u}^\varepsilon}{L^\infty(\bb{H}^1)} 
	+
	\norm{\Delta \bff{u}^\varepsilon}{L^2(\bb{L}^2)}
	+
	\norm{\bff{H}^\varepsilon}{L^2(\bb{L}^2)}
	+
	\sqrt{\varepsilon} \norm{\nabla \bff{H}^\varepsilon}{L^2(\bb{L}^2)}
	\leq
	K_3.
\end{align}
where $K_3$ depends on $T$ and $\norm{\bff{u}_0^\varepsilon}{\bb{H}^1}$, but is independent of $\varepsilon$. Furthermore, if $\bff{u}_0^\varepsilon\in \bb{H}^2$, then we have a strong solution~$\bff{u}^\varepsilon\in L^\infty(\bb{H}^2)\cap L^2(\bb{H}^4)$.

We now show the convergence of the weak solution of the LLBar equation to that of the LLBloch equation as $\varepsilon \to 0^+$.

\begin{theorem}\label{the:bloch to bar}
Let $\bff{u}^\varepsilon$ be a strong solution of the LLBar equation with $\lambda_e=\varepsilon$ and initial data $\bff{u}_0^\varepsilon\in \bb{H}^2$. Let $\bff{u}$ be a strong solution of the LLBloch equation ($\lambda_e=0$) with initial data~$\bff{u}_0\in \bb{H}^2$. Then
\begin{align}\label{equ:est ue u H1}
	\norm{\bff{u}^\varepsilon-\bff{u}}{L^\infty(\bb{H}^1)}
	+
	\norm{\bff{u}^\varepsilon-\bff{u}}{L^2(\bb{H}^2)}
	+
	\norm{\bff{H}^\varepsilon-\bff{H}}{L^2(\bb{L}^2)}
	&\leq
	C \sqrt{\varepsilon}.
\end{align}
\end{theorem}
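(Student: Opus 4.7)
My plan is to derive an energy inequality for $\bff{w} := \bff{u}^\varepsilon - \bff{u}$ and $\bff{G} := \bff{H}^\varepsilon - \bff{H}$ and close it via Gr\"onwall, assuming (as is natural) that $\bff{u}_0^\varepsilon = \bff{u}_0$ so that $\bff{w}(0)=0$. Subtracting the two systems gives
\begin{align*}
\partial_t \bff{w} &= \lambda_r \bff{G} - \varepsilon \Delta \bff{H}^\varepsilon - \gamma\bigl(\bff{u}^\varepsilon\times\bff{H}^\varepsilon - \bff{u}\times\bff{H}\bigr),\\
\bff{G} &= \Delta \bff{w} + \kappa\mu\bff{w} - \kappa\bigl(|\bff{u}^\varepsilon|^2\bff{u}^\varepsilon - |\bff{u}|^2\bff{u}\bigr) - \beta\bff{e}(\bff{e}\cdot\bff{w}).
\end{align*}
The key step is to test the first equation with $\bff{G}$ and to substitute the expression for $\bff{G}$ from the second equation into the resulting left-hand side $\langle\partial_t\bff{w},\bff{G}\rangle$. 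After integration by parts (with Neumann boundary conditions) this yields
\begin{equation*}
-\tfrac12\tfrac{d}{dt}\norm{\nabla\bff{w}}{\bb{L}^2}^2 + \tfrac{\kappa\mu}{2}\tfrac{d}{dt}\norm{\bff{w}}{\bb{L}^2}^2 - \tfrac{\beta}{2}\tfrac{d}{dt}\norm{\bff{e}\cdot\bff{w}}{L^2}^2 - \kappa\langle\partial_t\bff{w},|\bff{u}^\varepsilon|^2\bff{u}^\varepsilon-|\bff{u}|^2\bff{u}\rangle
= \lambda_r\norm{\bff{G}}{\bb{L}^2}^2 + \varepsilon\langle\nabla\bff{H}^\varepsilon,\nabla\bff{G}\rangle - \gamma\langle\bff{u}^\varepsilon\times\bff{H}^\varepsilon-\bff{u}\times\bff{H},\bff{G}\rangle.
\end{equation*}

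The decisive observation is that the cubic term is a total time derivative plus a manageable remainder:
\[
\kappa\langle\partial_t\bff{w},|\bff{u}^\varepsilon|^2\bff{u}^\varepsilon-|\bff{u}|^2\bff{u}\rangle = \tfrac{d}{dt}\mathcal{N}(\bff{w},\bff{u}) + \mathcal{R},\qquad \mathcal{N}(\bff{w},\bff{u}) := \kappa\!\int_\mathscr{D}\!\Bigl[\tfrac14|\bff{u}^\varepsilon|^4 - \tfrac14|\bff{u}|^4 - |\bff{u}|^2\bff{u}\cdot\bff{w}\Bigr]\dx,
\]
where $\mathcal{N}\ge 0$ by strict convexity of $\bff{v}\mapsto\tfrac14|\bff{v}|^4$ (equivalently, one may expand $\bff{u}^\varepsilon=\bff{u}+\bff{w}$ and recognize this as the second-order Taylor remainder), and $\mathcal{R}$ collects terms of the form $\int(\bff{u}\cdot\partial_t\bff{u})|\bff{w}|^2$, $\int(\bff{u}\cdot\bff{w})(\partial_t\bff{u}\cdot\bff{w})$, $\int|\bff{w}|^2\partial_t\bff{u}\cdot\bff{w}$. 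Using the strong-solution regularity \eqref{equ:u H bloch H2} ($\bff{u}\in L^\infty(\bb{L}^\infty)$ in $d\le 3$, $\partial_t\bff{u}\in L^\infty(\bb{L}^2)$) together with $\bb{H}^1\hookrightarrow \bb{L}^6$ and the uniform bound $\bff{w}\in L^\infty(\bb{L}^6)$ from \eqref{equ:ue est}, these satisfy $|\mathcal{R}|\lesssim \norm{\bff{w}}{\bb{H}^1}^2$.

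Next I handle the right-hand side. For the regularisation term, writing $\bff{H}^\varepsilon=\bff{H}+\bff{G}$ gives
\[
\varepsilon\langle\nabla\bff{H}^\varepsilon,\nabla\bff{G}\rangle = \varepsilon\norm{\nabla\bff{G}}{\bb{L}^2}^2 + \varepsilon\langle\nabla\bff{H},\nabla\bff{G}\rangle \ge \tfrac{\varepsilon}{2}\norm{\nabla\bff{G}}{\bb{L}^2}^2 - \tfrac{\varepsilon}{2}\norm{\nabla\bff{H}}{\bb{L}^2}^2,
\]
and the last term is $O(\varepsilon)$ after integrating in time thanks to $\bff{H}\in L^2(\bb{H}^1)$. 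For the cross product, the expansion
$\bff{u}^\varepsilon\times\bff{H}^\varepsilon-\bff{u}\times\bff{H} = \bff{w}\times\bff{H} + \bff{w}\times\bff{G} + \bff{u}\times\bff{G}$
together with the orthogonality identities $\langle\bff{w}\times\bff{G},\bff{G}\rangle=0$ and $\langle\bff{u}\times\bff{G},\bff{G}\rangle=0$ reduces the contribution to $\gamma\langle\bff{w}\times\bff{H},\bff{G}\rangle$, which by H\"older and $\norm{\bff{w}}{\bb{L}^3}\lesssim \norm{\bff{w}}{\bb{L}^2}^{1/2}\norm{\bff{w}}{\bb{H}^1}^{1/2}$ plus Young's inequality is bounded by $\tfrac{\lambda_r}{4}\norm{\bff{G}}{\bb{L}^2}^2 + C\norm{\bff{H}}{\bb{L}^6}^2\norm{\bff{w}}{\bb{H}^1}^2$, with the prefactor lying in $L^1(0,T)$.

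Defining the coercive functional $E(\bff{w}) := \tfrac12\norm{\nabla\bff{w}}{\bb{L}^2}^2 - \tfrac{\kappa\mu}{2}\norm{\bff{w}}{\bb{L}^2}^2 + \tfrac{\beta}{2}\norm{\bff{e}\cdot\bff{w}}{L^2}^2 + \mathcal{N}(\bff{w},\bff{u}) + C_0\norm{\bff{w}}{\bb{L}^2}^2$ for $C_0$ large enough to absorb the sign-indefinite contributions from the anisotropy, the above produces the differential inequality
\[
\tfrac{d}{dt}E(\bff{w}) + \tfrac{\lambda_r}{2}\norm{\bff{G}}{\bb{L}^2}^2 + \tfrac{\varepsilon}{2}\norm{\nabla\bff{G}}{\bb{L}^2}^2 \le \bigl(C+ C\norm{\bff{H}}{\bb{L}^6}^2\bigr) E(\bff{w}) + C\varepsilon\bigl(\norm{\nabla\bff{H}}{\bb{L}^2}^2 + 1\bigr).
\]
Gr\"onwall's inequality (with $E(\bff{w}(0))=0$) yields $\norm{\bff{w}}{L^\infty(\bb{H}^1)}^2 + \int_0^T\norm{\bff{G}}{\bb{L}^2}^2\,dt \le C\varepsilon$. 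Finally, the constitutive relation gives $\Delta\bff{w} = \bff{G} + O_{L^\infty}(\bff{w})$ with $\norm{|\bff{u}^\varepsilon|^2\bff{u}^\varepsilon-|\bff{u}|^2\bff{u}}{\bb{L}^2}\lesssim \norm{\bff{w}}{\bb{H}^1}$, so $\int_0^T\norm{\Delta\bff{w}}{\bb{L}^2}^2\,dt \lesssim \int_0^T(\norm{\bff{G}}{\bb{L}^2}^2 + \norm{\bff{w}}{\bb{H}^1}^2)\,dt \lesssim \varepsilon$, producing the $L^2(\bb{H}^2)$ bound. The main technical obstacle is the cubic nonlinearity in $\kappa\langle\partial_t\bff{w},|\bff{u}^\varepsilon|^2\bff{u}^\varepsilon-|\bff{u}|^2\bff{u}\rangle$: turning it into a time derivative of the convex-Taylor remainder $\mathcal{N}$ with an $\bb{H}^1$-controllable remainder $\mathcal{R}$ is what makes the energy identity close without requiring any $\varepsilon$-uniform bound on $\partial_t\bff{w}$ in $\bb{L}^2$ (which would not be available).
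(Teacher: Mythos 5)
Your argument is correct in its essentials and reaches the same conclusion, but it resolves the central difficulty by a genuinely different device. The paper also sets $\bff{v}=\bff{u}^\varepsilon-\bff{u}$, $\bff{B}=\bff{H}^\varepsilon-\bff{H}$, tests the evolution equation with $\bff{B}$ and the constitutive relation with $-\partial_t\bff{v}$, and kills the cross terms $\inpro{\bff{u}^\varepsilon\times\bff{B}}{\bff{B}}$ by orthogonality exactly as you do; where it diverges is the cubic terms $\kappa\inpro{|\bff{u}^\varepsilon|^2\bff{v}}{\partial_t\bff{v}}$ and $\kappa\inpro{((\bff{u}^\varepsilon+\bff{u})\cdot\bff{v})\bff{u}}{\partial_t\bff{v}}$: the paper substitutes $\partial_t\bff{v}$ back from the evolution equation, which produces (after integration by parts) terms such as $\kappa\varepsilon\inpro{\nabla(|\bff{u}^\varepsilon|^2\bff{v})}{\nabla\bff{H}^\varepsilon}$, estimated using $\norm{\nabla\bff{u}^\varepsilon}{\bb{L}^6}\lesssim\norm{\bff{u}^\varepsilon}{\bb{H}^2}$ and leaving an $\varepsilon\norm{\Delta\bff{v}}{\bb{L}^2}^2$ contribution that is only summable in time via the $\varepsilon$-uniform $L^2(\bb{H}^2)$ bounds on $\bff{u}^\varepsilon$ and $\bff{u}$. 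You instead recognise the full cubic pairing as $\tfrac{d}{dt}\mathcal{N}+\mathcal{R}$ with $\mathcal{N}\ge 0$ the convex Taylor remainder of $\tfrac{\kappa}{4}|\cdot|^4$ and $\mathcal{R}$ consisting of the three terms you list (your identity checks out), controlled by $\norm{\partial_t\bff{u}}{L^\infty(\bb{L}^2)}$, $\norm{\bff{u}}{L^\infty(\bb{L}^\infty)}$ and the uniform $\bb{H}^1$ bound on $\bff{w}$. This is cleaner: it mirrors how the energy law for the equation itself is derived, avoids the back-substitution entirely, and shifts the regularity burden from spatial bounds on $\bff{u}^\varepsilon$ to the time-derivative bound on the limit solution $\bff{u}$ — both of which are available from \eqref{equ:u H bloch H2} and \eqref{equ:ue est}, so nothing is lost.

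One step you should complete: once you add $C_0\norm{\bff{w}}{\bb{L}^2}^2$ to $E$, the quantity $\tfrac{d}{dt}\bigl(C_0\norm{\bff{w}}{\bb{L}^2}^2\bigr)=2C_0\inpro{\partial_t\bff{w}}{\bff{w}}$ is not covered by the identity you derived and must be bounded separately by testing the evolution equation with $\bff{w}$. The dangerous contribution is $\varepsilon\inpro{\nabla\bff{H}^\varepsilon}{\nabla\bff{w}}$: estimating it crudely by $\varepsilon\norm{\nabla\bff{H}^\varepsilon}{\bb{L}^2}\norm{\nabla\bff{w}}{\bb{L}^2}$ and using only $\sqrt{\varepsilon}\,\norm{\nabla\bff{H}^\varepsilon}{L^2(\bb{L}^2)}\le K_3$ would degrade the rate to $\varepsilon^{1/4}$. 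The fix is to split $\nabla\bff{H}^\varepsilon=\nabla\bff{G}+\nabla\bff{H}$, absorb $\tfrac{\varepsilon}{4}\norm{\nabla\bff{G}}{\bb{L}^2}^2$ into the left-hand side, and use the $\varepsilon$-uniform bounds $\norm{\nabla\bff{w}}{L^\infty(\bb{L}^2)}\lesssim 1$ and $\bff{H}\in L^2(\bb{H}^1)$ so the remainder integrates to $O(\varepsilon)$; the cross-product term tested against $\bff{w}$ reduces by orthogonality to $\gamma\inpro{\bff{u}\times\bff{G}}{\bff{w}}$, which is harmless. With that detail supplied, your Gr\"onwall closes at the stated rate, and your derivation of the $L^2(\bb{H}^2)$ bound from the constitutive relation (which the paper leaves implicit) is fine.
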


\begin{proof}
We write $\bff{v}:= \bff{u}^\varepsilon-\bff{u}$ and $\bff{B}:= \bff{H}^\varepsilon-\bff{H}$. Let $\bff{v}_0:= \bff{u}_0^\varepsilon-\bff{u}_0$, so that $\bff{v}(0)=\bff{v}_0$. Since $\bff{u}^\varepsilon$ and $\bff{u}$ are strong solutions of the corresponding equations, we have
\begin{equation}\label{equ:diff v}
	\begin{aligned}
		\partial_t \bff{v}
		&=
		\lambda_r \bff{B}
		- \varepsilon \Delta \bff{H}^\varepsilon 
		- \gamma \bff{u}^\varepsilon \times \bff{B}
		- \gamma \bff{v}\times \bff{H},
		\\
		\bff{B}
		&=
		\Delta \bff{v}
		+ \kappa\mu \bff{v}
		-
		\kappa |\bff{u}^\varepsilon|^2 \bff{v}
		-
		\kappa \big( (\bff{u}^\varepsilon+\bff{u})\cdot \bff{v} \big) \bff{u}
		-
		\beta \bff{e}(\bff{e}\cdot \bff{v}).
	\end{aligned}
\end{equation}
Successively taking the inner product of the first equation in~\eqref{equ:diff v} with $\bff{B}$, then taking the inner product of the second equation in~\eqref{equ:diff v} with $-\partial_t \bff{v}$, we obtain
\begin{align*}
	\inpro{\partial_t \bff{v}}{\bff{B}}
	&=
	\lambda_r \norm{\bff{B}}{\bb{L}^2}^2
	+
	\varepsilon \norm{\nabla \bff{H}^\varepsilon}{\bb{L}^2}^2
	-
	\varepsilon \inpro{\nabla \bff{H}^\varepsilon}{\nabla \bff{H}}
	-
	\gamma \inpro{\bff{v}\times \bff{H}}{\bff{B}}
	\\
	-\inpro{\bff{B}}{\partial_t \bff{v}}
	&=
	\frac12 \ddt \norm{\nabla \bff{v}}{\bb{L}^2}^2
	-
	\frac{\kappa \mu}{2} \ddt \norm{\bff{v}}{\bb{L}^2}^2
	+
	\kappa \inpro{|\bff{u}^\varepsilon|^2 \bff{v}}{\partial_t \bff{v}}
	+
	\kappa \inpro{\big( (\bff{u}^\varepsilon+\bff{u})\cdot \bff{v} \big) \bff{u}}{\partial_t \bff{v}}
	+
	\frac{\beta}{2} \norm{\bff{e}\cdot \bff{v}}{L^2}^2,
\end{align*}
where on the first equation we integrated by parts and used $\inpro{\nabla \bff{H}^\varepsilon}{\nabla \bff{B}}= \norm{\nabla \bff{H}^\varepsilon}{\bb{L}^2}^2 - \inpro{\nabla \bff{H}^\varepsilon}{\nabla \bff{H}}$.
Adding the above equations (noting $\mu<0$) gives
\begin{align}\label{equ:add dtv B}
	&\frac12 \ddt \norm{\nabla \bff{v}}{\bb{L}^2}^2
	-
	\frac{\kappa\mu}{2} \ddt \norm{\bff{v}}{\bb{L}^2}^2
	+
	\frac{\beta}{2} \norm{\bff{e}\cdot \bff{v}}{L^2}^2
	+
	\lambda_r \norm{\bff{B}}{\bb{L}^2}^2
	+
	\varepsilon \norm{\nabla \bff{H}^\varepsilon}{\bb{L}^2}^2
	\nonumber \\
	&=
	\varepsilon \inpro{\nabla \bff{H}^\varepsilon}{\nabla \bff{H}}
	+
	\gamma \inpro{\bff{v}\times \bff{H}}{\bff{B}}
	-
	\kappa \inpro{|\bff{u}^\varepsilon|^2 \bff{v}}{\partial_t \bff{v}}
	-
	\kappa \inpro{\big( (\bff{u}^\varepsilon+\bff{u})\cdot \bff{v} \big) \bff{u}}{\partial_t \bff{v}}
	\nonumber \\
	&=: J_1+J_2+J_3+J_4.
\end{align}
We will estimate each term on the last line. In the following, the constant $C$ is independent of $\varepsilon$. Firstly, by Young's inequality,
\begin{align}\label{equ:J1}
	\abs{J_1}
	&\leq
	\frac{\varepsilon}{4} \norm{\nabla \bff{H}^\varepsilon}{\bb{L}^2}^2
	+
	4\varepsilon \norm{\nabla \bff{H}}{\bb{L}^2}^2.
\end{align}
Next, by Young's inequality and Sobolev embedding,
\begin{align}\label{equ:J2}
	\abs{J_2}
	&\leq
	\frac{\lambda_r}{8} \norm{\bff{B}}{\bb{L}^2}^2
	+
	C \norm{\bff{H}}{\bb{L}^4}^2 \norm{\bff{v}}{\bb{L}^4}^2
	\leq
	\frac{\lambda_r}{8} \norm{\bff{B}}{\bb{L}^2}^2
	+
	C \norm{\bff{H}}{\bb{H}^1}^2 \norm{\bff{v}}{\bb{H}^1}^2.
\end{align}
We now aim to estimate $J_3$. To this end, substituting $\partial_t \bff{v}$ by the first equation in~\eqref{equ:diff v} and integrating by parts as necessary, we have
\begin{align}\label{equ:L123}
	 J_3
	 =
	 -\kappa\lambda_r \inpro{|\bff{u}^\varepsilon|^2 \bff{v}}{\bff{B}}
	 +
	 \kappa \varepsilon \inpro{\nabla\big(|\bff{u}^\varepsilon|^2 \bff{v}\big)}{\nabla \bff{H}^\varepsilon}
	 +
	 \kappa \gamma \inpro{|\bff{u}^\varepsilon|^2 \bff{v}}{\bff{u}^\varepsilon\times \bff{B}}
	 =: J_{3a}+J_{3b}+J_{3c}.
\end{align}
For the terms $J_{3a}$ and $J_{3b}$, by Young's inequality and the Sobolev embedding (noting~\eqref{equ:ue est}) we have
\begin{align*}
	\abs{J_{3a}} 
	\leq
	\kappa \lambda_r \norm{\bff{u}^\varepsilon}{\bb{L}^6}^2 \norm{\bff{v}}{\bb{L}^6} \norm{\bff{B}}{\bb{L}^2}
	\leq
	\frac{\lambda_r}{8} \norm{\bff{B}}{\bb{L}^2}
	+
	C \norm{\bff{v}}{\bb{H}^1}^2.
\end{align*}
For the terms $J_{3b}$ and $J_{3c}$, similarly we obtain
\begin{align*}
	\abs{J_{3b}}
	&\leq
	\frac{\varepsilon}{4} \norm{\nabla \bff{H}^\varepsilon}{\bb{L}^2}^2
	+
	4\varepsilon \norm{\bff{u}^\varepsilon}{\bb{L}^6}^2 \norm{\nabla \bff{u}^\varepsilon}{\bb{L}^6}^2 \norm{\bff{v}}{\bb{L}^6}^2
	+
	4 \varepsilon \norm{\bff{u}^\varepsilon}{\bb{L}^6}^4 \norm{\nabla \bff{v}}{\bb{L}^6}^2
	\\
	&\leq
	\frac{\varepsilon}{4} \norm{\nabla \bff{H}^\varepsilon}{\bb{L}^2}^2
	+
	C\varepsilon \norm{\Delta \bff{u}^\varepsilon}{\bb{L}^2}^2 \norm{\bff{v}}{\bb{H}^1}^2
	+
	C\varepsilon \norm{\Delta \bff{v}}{\bb{L}^2}^2,
	\\
	\abs{J_{3c}}
	&\leq
	\kappa \gamma \norm{\bff{u}^\varepsilon}{\bb{L}^6}^2 \norm{\bff{v}}{\bb{L}^6}
	\norm{\bff{u}^\varepsilon}{\bb{L}^\infty} \norm{\bff{B}}{\bb{L}^2}
	\leq
	\frac{\lambda_r}{8} \norm{\bff{B}}{\bb{L}^2}
	+
	\norm{\bff{u}^\varepsilon}{\bb{H}^2}^2 \norm{\bff{v}}{\bb{H}^1}^2.
\end{align*}
Substituting these estimates into~\eqref{equ:L123}, we obtain
\begin{align}\label{equ:J3}
	\abs{J_3}
	\leq
	\frac{\lambda_r}{4} \norm{\bff{B}}{\bb{L}^2}^2
	+
	\frac{\varepsilon}{4} \norm{\nabla \bff{H}^\varepsilon}{\bb{L}^2}^2
	+
	C\varepsilon \norm{\Delta \bff{v}}{\bb{L}^2}^2
	+
	C\left(1+\norm{\bff{u}^\varepsilon}{\bb{H}^2}^2 \right)\norm{\bff{v}}{\bb{H}^1}^2,
\end{align}
where $C$ is independent of $\varepsilon$.
The term $J_4$ can be estimated in a similar manner as $J_3$, resulting in
\begin{align}\label{equ:J4}
	\abs{J_4}
	\leq
	\frac{\lambda_r}{4} \norm{\bff{B}}{\bb{L}^2}^2
	+
	\frac{\varepsilon}{4} \norm{\nabla \bff{H}^\varepsilon}{\bb{L}^2}^2
	+
	C\varepsilon \norm{\Delta \bff{v}}{\bb{L}^2}^2
	+
	C\left(1+\norm{\bff{u}^\varepsilon}{\bb{H}^2}^2 + \norm{\bff{u}}{\bb{H}^2}^2
	+ \norm{\bff{H}}{\bb{L}^2}^2 \right)\norm{\bff{v}}{\bb{H}^1}^2.
\end{align}
Altogether, substituting the estimates~\eqref{equ:J1}, \eqref{equ:J2}, \eqref{equ:J3}, and~\eqref{equ:J4} into~\eqref{equ:add dtv B}, taking care to absorb relevant terms to the left-hand side, we obtain
\begin{align*}
	\ddt \norm{\bff{v}}{\bb{H}^1}^2
	+
	\norm{\bff{B}}{\bb{L}^2}^2
	\leq
	C\varepsilon \norm{\nabla \bff{H}}{\bb{L}^2}^2
	+
	C\varepsilon \norm{\Delta \bff{v}}{\bb{L}^2}^2
	+
	C\left(1+\norm{\bff{u}^\varepsilon}{\bb{H}^2}^2 + \norm{\bff{u}}{\bb{H}^2}^2
	+ \norm{\bff{H}}{\bb{H}^1}^2 \right)\norm{\bff{v}}{\bb{H}^1}^2.
\end{align*}
We now integrate both sides with respect to $t$. Note that by~\eqref{equ:u H bloch H2} and~\eqref{equ:ue est}, we have
\begin{align*}
	\int_0^t \left(1+ \norm{\bff{u}^\varepsilon(s)}{\bb{H}^2}^2 + \norm{\bff{u}(s)}{\bb{H}^2}^2
	+ \norm{\bff{H}(s)}{\bb{H}^1}^2 \right) \ds \leq 
	T+K_2+K_3.
\end{align*}
Invoking the Gronwall inequality (noting~\eqref{equ:u H bloch H2} and~\eqref{equ:ue est} again), we obtain~\eqref{equ:est ue u H1}. This completes the proof of the theorem.
\end{proof}

Now that we have shown the convergence of strong solution of the LLBar equation to that of the LLBloch equation, the finite element schemes proposed in Section~\ref{sec:Euler} and Section~\ref{sec:Crank} (with small $\lambda_e$) would also be applicable to approximate the LLBloch equation with a small modification to ensure energy dissipativity (since now $\mu<0$), namely:
\begin{enumerate}
	\item For the scheme~\eqref{equ:euler}, the term $\kappa \mu \inpro{\bff{u}_h^n}{\bff{\phi}}$ is replaced by $\kappa \mu \inpro{\bff{u}_h^{n+1}}{\bff{\phi}}$.
	\item The scheme~\eqref{equ:cranknic} can be kept as is.
\end{enumerate}
In this case, for $\mu<0$, one could check that Proposition~\ref{pro:wellpos euler}, Proposition~\ref{pro:ene dec H1 euler}, and Proposition~\ref{pro:ene dec H1 crank} still hold with the same argument. We remark that the schemes remain unconditionally energy-stable even as $\lambda_e\to 0^+$ as the constant $C$ in~\eqref{equ:stab L4 H1 euler} and~\eqref{equ:stab L4 H1} does not depend on $\lambda_e$. The rest of the results in Section~\ref{sec:Euler} and~\ref{sec:Crank} continue to hold almost verbatim, since the sign of $\mu$ is not used in an essential way in the proofs.

\section{Numerical Simulations}\label{sec:simulation}

Numerical simulations for the scheme~\eqref{equ:euler} are performed using the open-source package~\textsc{FEniCS}~\cite{AlnaesEtal15}. Since the exact solution of the equation is not known, we use extrapolation to verify the spatial order of convergence experimentally. To this end, let $\bff{u}_h^{(n)}$ be the finite element solution with spatial step size $h$ and time-step size $k=\lfloor T/n\rfloor$. For $s=0$ or $1$, define the extrapolated order of convergence
\begin{equation*}
	\text{rate}_s :=  \log_2 \left[\frac{\max_n \norm{\bff{e}_{2h}}{\bb{H}^s}}{\max_n \norm{\bff{e}_{h}}{\bb{H}^s}}\right],
\end{equation*}
where $\bff{e}_h(\bff{u}) := \bff{u}_{h}^{(n)}-\bff{u}_{h/2}^{(n)}$ and $\bff{e}_h(\bff{H}) := \bff{H}_{h}^{(n)}-\bff{H}_{h/2}^{(n)}$. We expect that for scheme~\eqref{equ:euler}, when $k$ is sufficiently small,~$\text{rate}_s \approx h^{r+1-s}$. In these simulations, we take the domain $\mathscr{D}= [0,1]^2\subset \bb{R}^2$ and $r=1$, i.e. piecewise linear polynomials.

\subsection{Simulation 1 (LLBar with $\mu>0$)}
We take $k=2.5\times 10^{-3}$. The coefficients in~\eqref{equ:llbar} are taken to be $\lambda_e=1.0, \lambda_r=4.0, \gamma=10.0, \kappa=2.0, \mu=1.0$, and $\beta=-0.1$. The unit vector $\bff{e}=(0,0,1)^\top$. The initial data $\bff{u}_0$ is given by
\begin{equation*}
	\bff{u}_0(x,y)= \big(\cos(2\pi x),\, \sin(2\pi y),\, 2\cos(2\pi x) \sin(2\pi y) \big).
\end{equation*}
Snapshots of the magnetic spin field $\bff{u}$ and the effective magnetic field $\bff{H}$ at selected times are shown in Figure~\ref{fig:snapshots field 2d} and Figure~\ref{fig:snapshots eff 2d}, respectively. The presence of a Bloch wall can be seen in the simulation around time $t=0.075$. Plots of $\bff{e}_h(\bff{u})$ and $\bff{e}_h(\bff{H})$ against $1/h$ are shown in Figure~\ref{fig:order u 1} and Figure~\ref{fig:order H 1}.

\begin{figure}[!htb]
	\centering
	\begin{subfigure}[b]{0.3\textwidth}
		\centering
		\includegraphics[width=\textwidth]{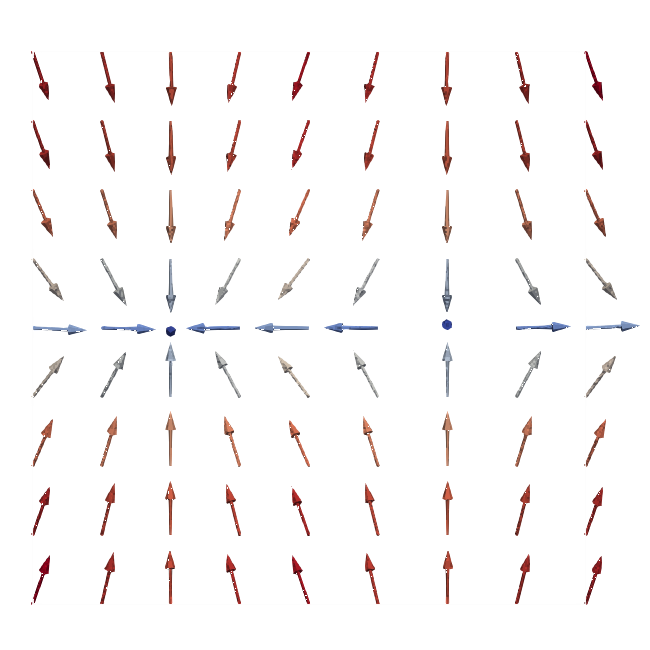}
		\caption{$t=0$}
	\end{subfigure}
	\begin{subfigure}[b]{0.3\textwidth}
		\centering
		\includegraphics[width=\textwidth]{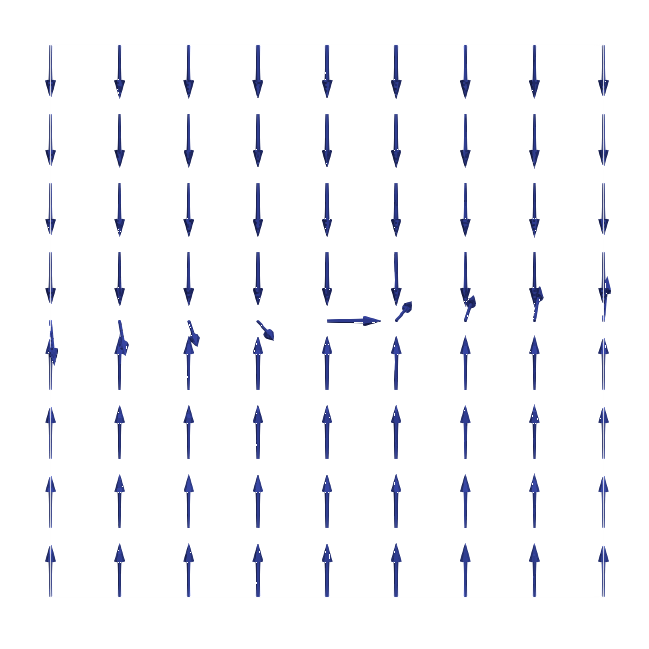}
		\caption{$t=0.05$}
	\end{subfigure}
	\begin{subfigure}[b]{0.3\textwidth}
		\centering
		\includegraphics[width=\textwidth]{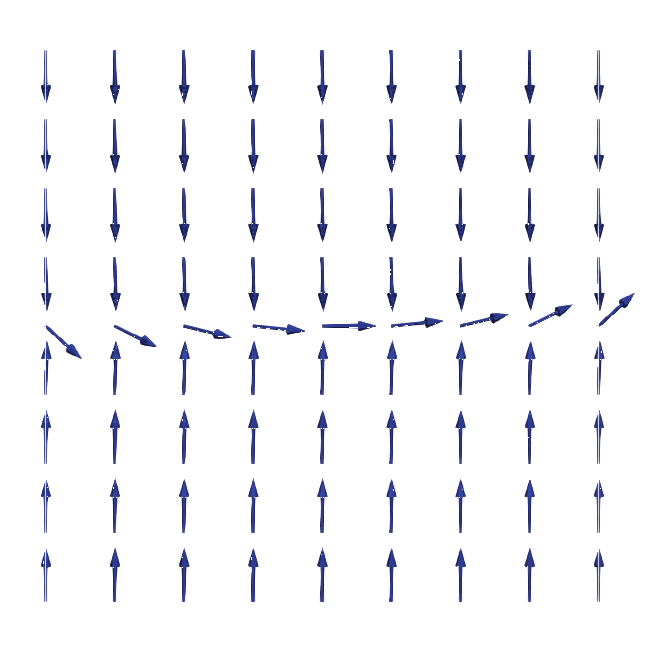}
		\caption{$t=0.075$}
	\end{subfigure}
	\begin{subfigure}[b]{0.3\textwidth}
		\centering
		\includegraphics[width=\textwidth]{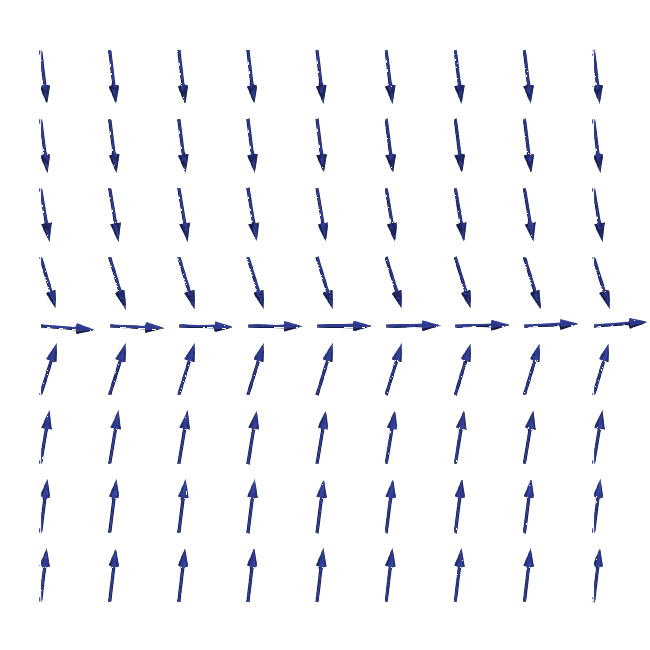}
		\caption{$t=0.1$}
	\end{subfigure}
	\begin{subfigure}[b]{0.3\textwidth}
		\centering
		\includegraphics[width=\textwidth]{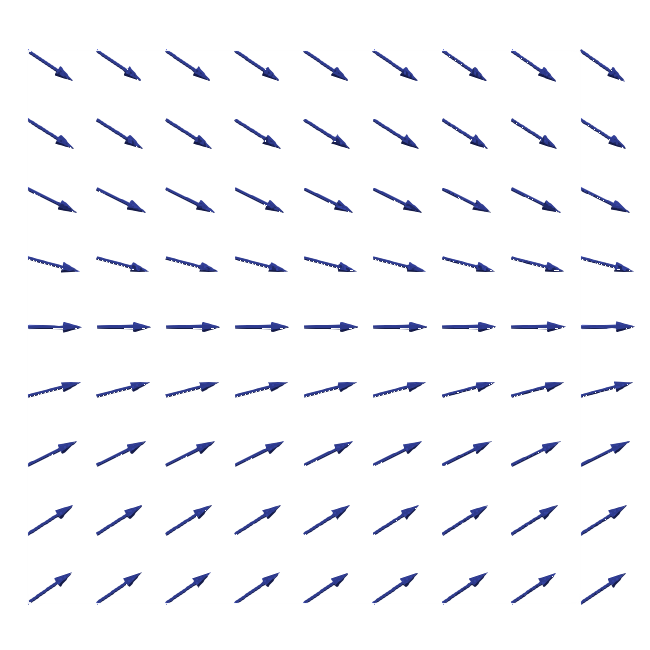}
		\caption{$t=0.125$}
	\end{subfigure}
	\begin{subfigure}[b]{0.3\textwidth}
		\centering
		\includegraphics[width=\textwidth]{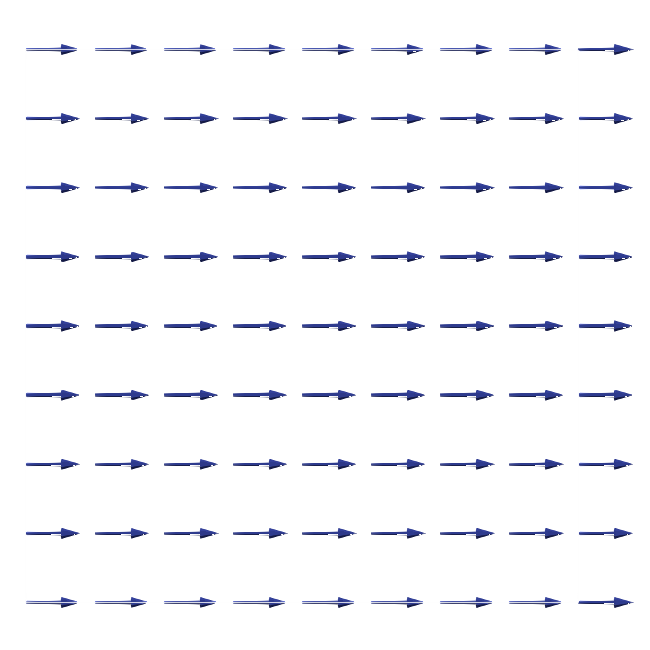}
		\caption{$t=0.5$}
	\end{subfigure}
	\caption{Snapshots of the spin field $\bff{u}$ (projected onto $\bb{R}^2$) for simulation 1.}
	\label{fig:snapshots field 2d}
\end{figure}

\begin{figure}[!htb]
	\centering
	\begin{subfigure}[b]{0.3\textwidth}
		\centering
		\includegraphics[width=\textwidth]{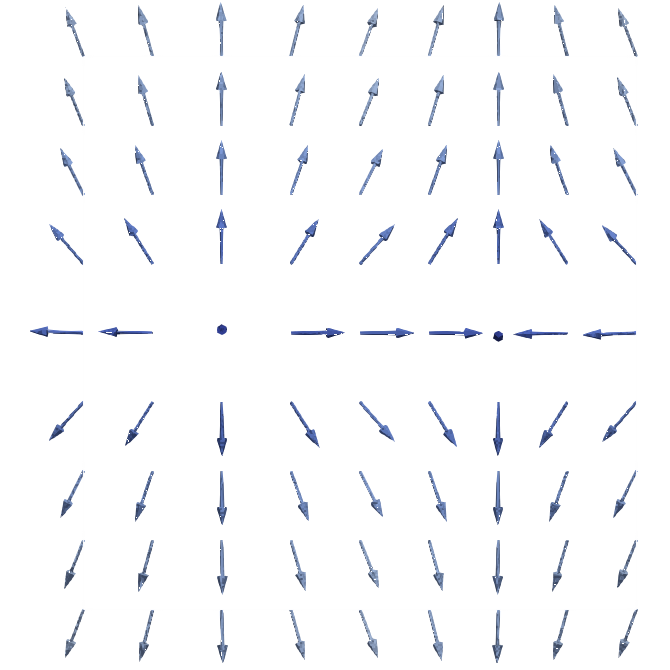}
		\caption{$t=0.0025$}
	\end{subfigure}
	\begin{subfigure}[b]{0.3\textwidth}
		\centering
		\includegraphics[width=\textwidth]{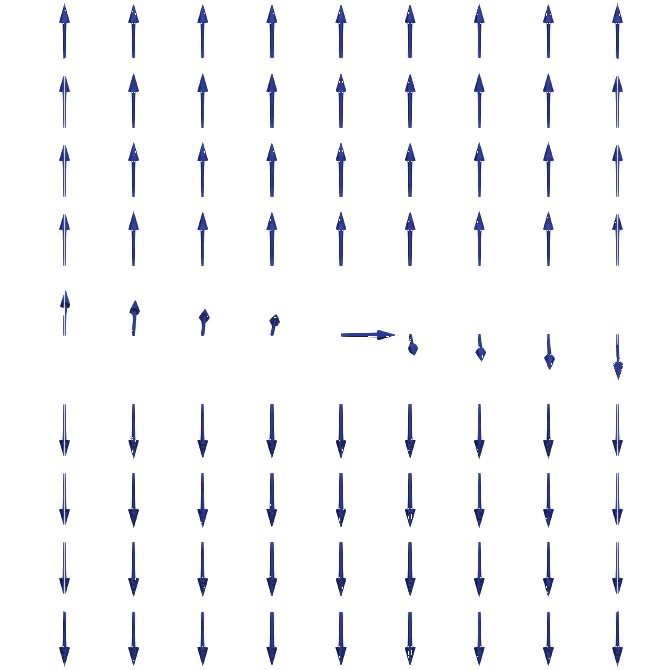}
		\caption{$t=0.05$}
	\end{subfigure}
	\begin{subfigure}[b]{0.3\textwidth}
		\centering
		\includegraphics[width=\textwidth]{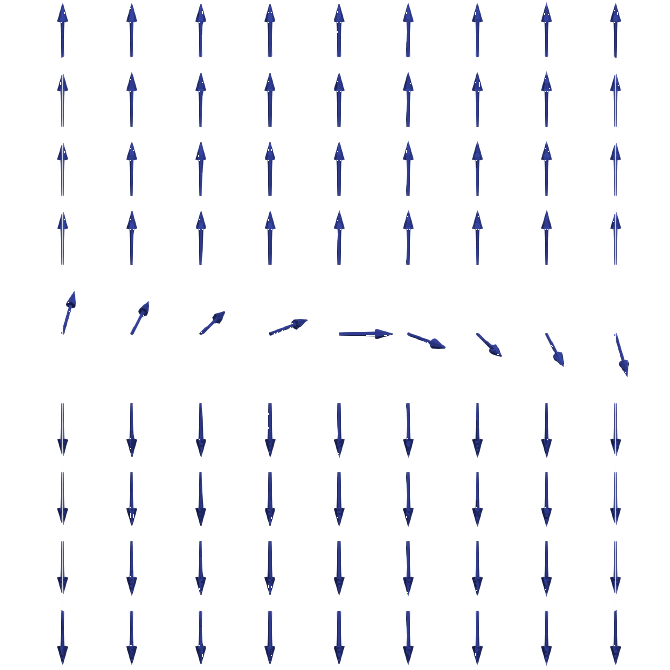}
		\caption{$t=0.075$}
	\end{subfigure}
	\begin{subfigure}[b]{0.3\textwidth}
		\centering
		\includegraphics[width=\textwidth]{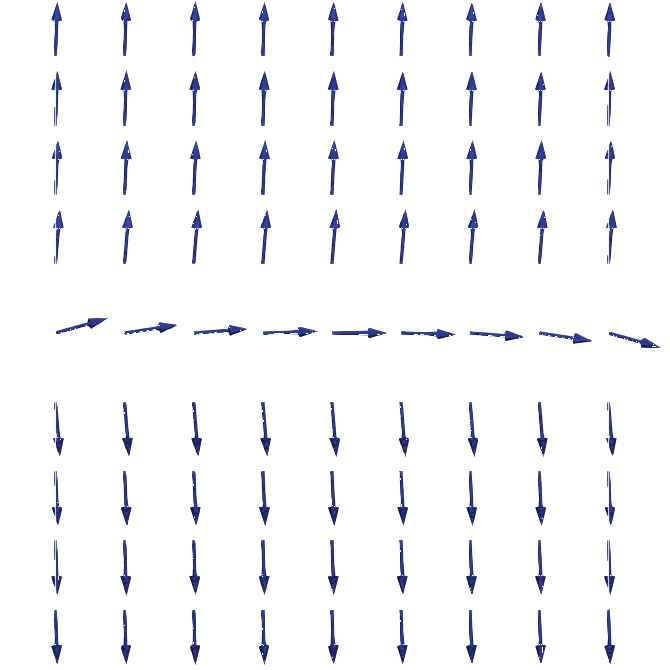}
		\caption{$t=0.1$}
	\end{subfigure}
	\begin{subfigure}[b]{0.3\textwidth}
		\centering
		\includegraphics[width=\textwidth]{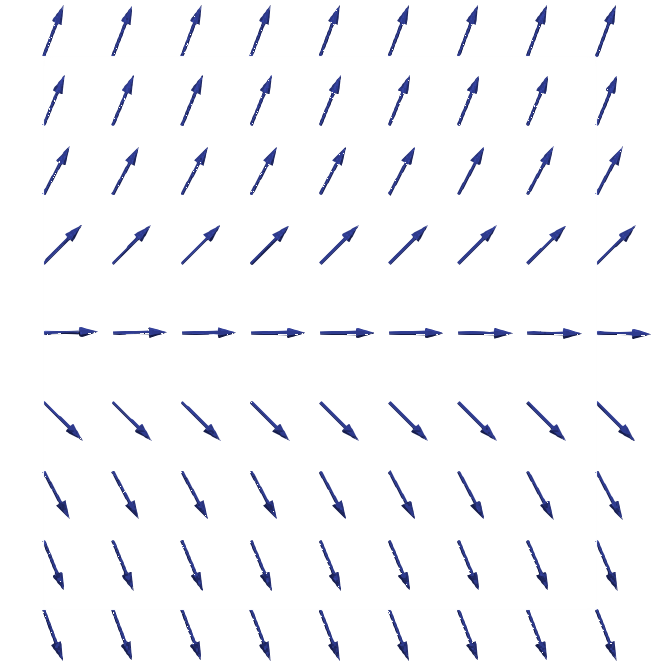}
		\caption{$t=0.125$}
	\end{subfigure}
	\begin{subfigure}[b]{0.3\textwidth}
		\centering
		\includegraphics[width=\textwidth]{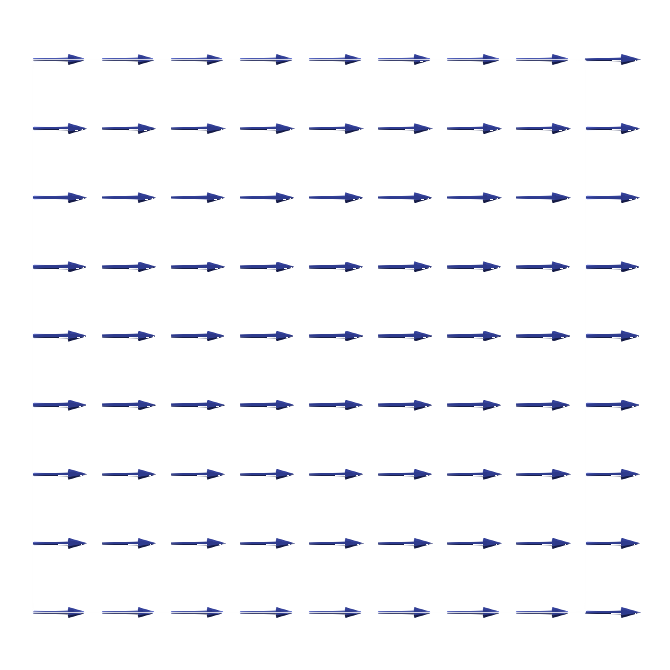}
		\caption{$t=0.5$}
	\end{subfigure}
	\caption{Snapshots of the effective field $\bff{H}$ (projected onto $\bb{R}^2$) for simulation 1.}
	\label{fig:snapshots eff 2d}
\end{figure}

\begin{figure}[!htb]
	\begin{subfigure}[b]{0.45\textwidth}
		\centering
		\begin{tikzpicture}
			\begin{axis}[
				title=Plot of $\bff{e}_h(\bff{u})$ against $1/h$,
				height=1.1\textwidth,
				width=1\textwidth,
				xlabel= $1/h$,
				ylabel= $\bff{e}_h$,
				xmode=log,
				ymode=log,
				legend pos=south west,
				legend cell align=left,
				]
				\addplot+[mark=*,red] coordinates {(4,0.205)(8,0.086)(16,0.041)(32,0.0202)(64,0.0101)};
				\addplot+[mark=*,green] coordinates {(4,0.0756)(8,0.021)(16,0.00574)(32,0.00155)(64,0.000416)};
				\addplot+[mark=*,blue] coordinates {(4,0.0548)(8,0.014)(16,0.00353)(32,0.000886)(64,0.000222)};
				\addplot+[dashed,no marks,red,domain=20:65]{1.25/x};
				\addplot+[dashed,no marks,blue,domain=20:65]{0.45/x^2};
				\legend{\small{$\max_n \norm{\bff{e}_h}{\bb{H}^1}$}, \small{$\max_n \norm{\bff{e}_h}{\bb{L}^\infty}$}, \small{$\max_n \norm{\bff{e}_h}{\bb{L}^2}$}, \small{order 1 line}, \small{order 2 line}}
			\end{axis}
		\end{tikzpicture}
		\caption{Error order of $\bff{u}$ for simulation 1.}
		\label{fig:order u 1}
	\end{subfigure}
	\begin{subfigure}[b]{0.45\textwidth}
		\centering
		\begin{tikzpicture}
			\begin{axis}[
				title=Plot of $\bff{e}_h(\bff{H})$ against $1/h$,
				height=1.1\textwidth,
				width=1\textwidth,
				xlabel= $1/h$,
				ylabel= $\bff{e}_h$,
				xmode=log,
				ymode=log,
				legend pos=south west,
				legend cell align=left,
				]
				\addplot+[mark=*,red] coordinates {(4,1.63)(8,0.75)(16,0.35)(32,0.176)(64,0.0876)};
				\addplot+[mark=*,green] coordinates {(4,0.5)(8,0.135)(16,0.037)(32,0.0101)(64,0.00273)};
				\addplot+[mark=*,blue] coordinates {(4,0.381)(8,0.102)(16,0.0267)(32,0.00677)(64,0.0017)};
				\addplot+[dashed,no marks,red,domain=20:65]{9.5/x};
				\addplot+[dashed,no marks,blue,domain=20:65]{3.2/x^2};
				\legend{\small{$\max_n \norm{\bff{e}_h}{\bb{H}^1}$}, \small{$\max_n \norm{\bff{e}_h}{\bb{L}^\infty}$}, \small{$\max_n \norm{\bff{e}_h}{\bb{L}^2}$}, \small{order 1 line}, \small{order 2 line}}
			\end{axis}
		\end{tikzpicture}
		\caption{Error order of $\bff{H}$ for simulation 1.}
		\label{fig:order H 1}
	\end{subfigure}
\end{figure}

\subsection{Simulation 2 (Regularised LLBloch with $\mu<0$ and small $\lambda_e$)}
We take $k=2.5\times 10^{-3}$. The coefficients in~\eqref{equ:llbar} are taken to be $\lambda_e=0.001, \lambda_r=4.0, \gamma=5.0, \kappa=3.0, \mu=-1.0$, and $\beta=0.2$. The unit vector $\bff{e}=(0,1,0)^\top$. The initial data $\bff{u}_0$ is given by
\begin{equation*}
	\bff{u}_0(x,y)= \big(-2y\cos(2\pi x),\, 4x^2\sin(2\pi y),\, 2\cos(2\pi x) \sin(2\pi y) \big).
\end{equation*}
Snapshots of the magnetic spin field $\bff{u}$ and the effective magnetic field $\bff{H}$ at selected times are shown in Figure~\ref{fig:snapshots field 2d 2} and Figure~\ref{fig:snapshots eff 2d 2}. Plot of $\bff{e}_h(\bff{u})$ and $\bff{e}_h(\bff{H})$ against $1/h$ are shown in Figure~\ref{fig:order u 2} and Figure~\ref{fig:order H 2}. Qualitatively, the magnetisation vectors align and tend to $\bff{0}$ as $t\to\infty$, as predicted by the theory (cf.~\cite{LeSoeTra24}).

\begin{figure}[!htb]
	\centering
	\begin{subfigure}[b]{0.3\textwidth}
		\centering
		\includegraphics[width=\textwidth]{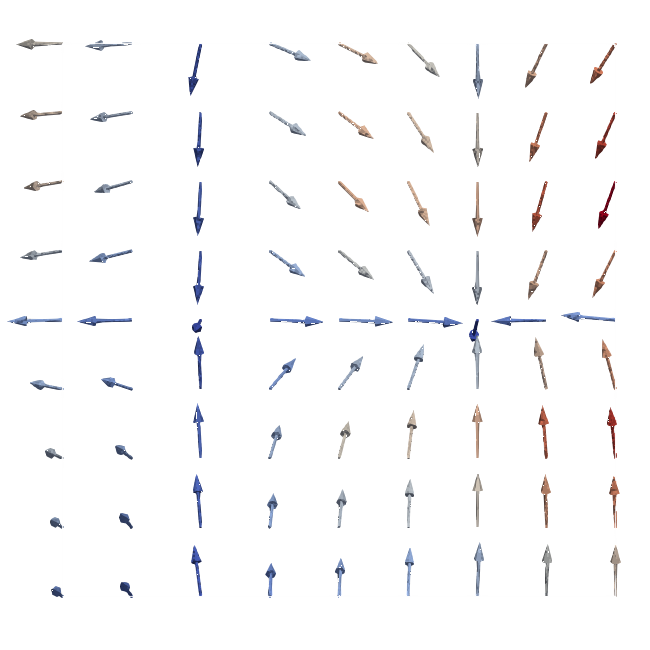}
		\caption{$t=0$}
	\end{subfigure}
	\begin{subfigure}[b]{0.3\textwidth}
		\centering
		\includegraphics[width=\textwidth]{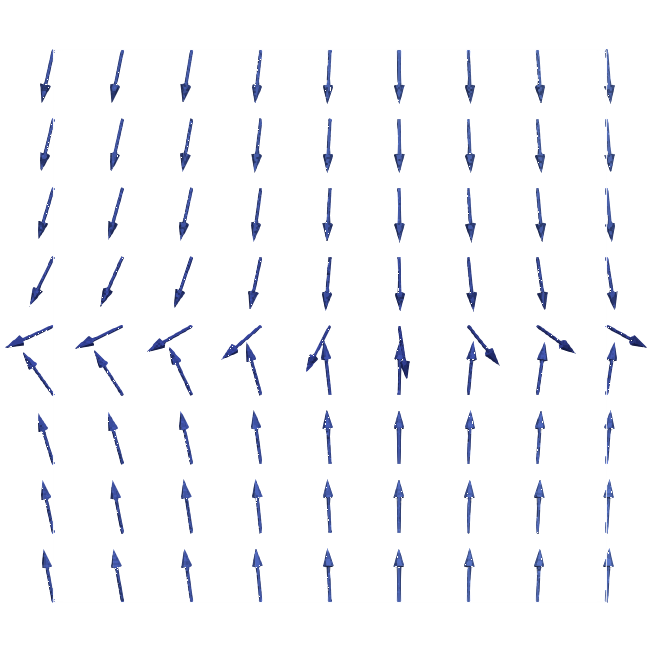}
		\caption{$t=0.025$}
	\end{subfigure}
	\begin{subfigure}[b]{0.3\textwidth}
		\centering
		\includegraphics[width=\textwidth]{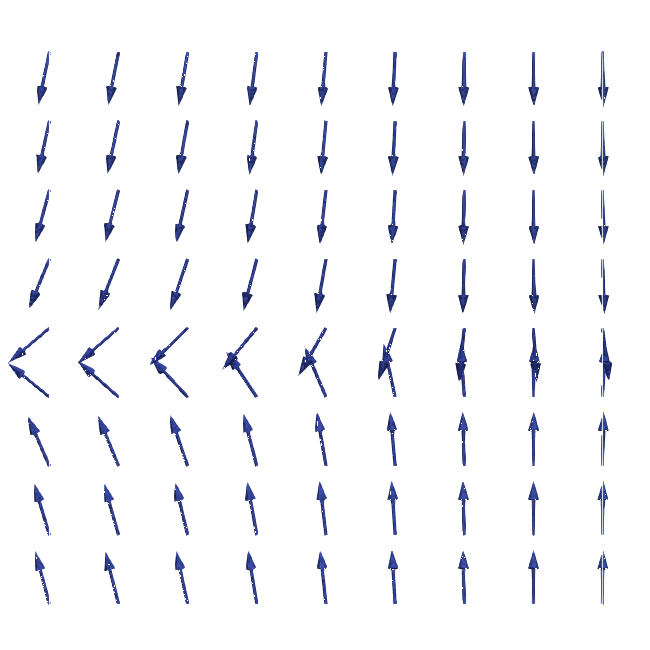}
		\caption{$t=0.05$}
	\end{subfigure}
	\begin{subfigure}[b]{0.3\textwidth}
		\centering
		\includegraphics[width=\textwidth]{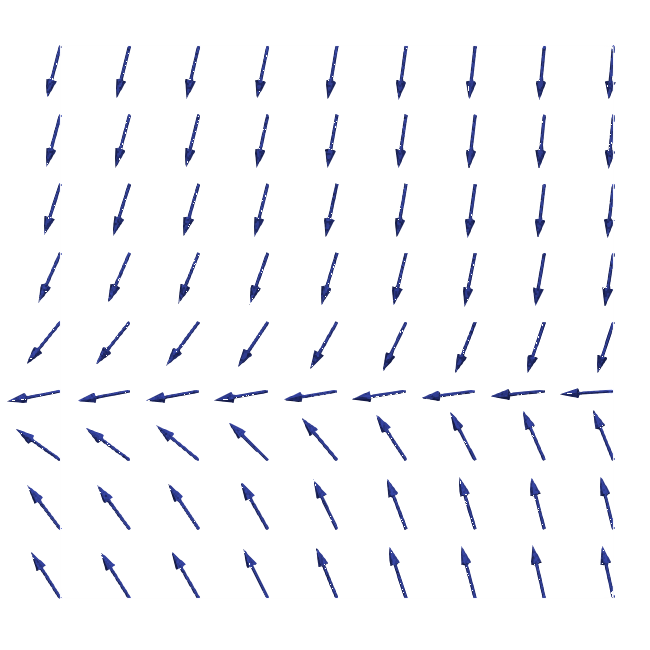}
		\caption{$t=0.075$}
	\end{subfigure}
	\begin{subfigure}[b]{0.3\textwidth}
		\centering
		\includegraphics[width=\textwidth]{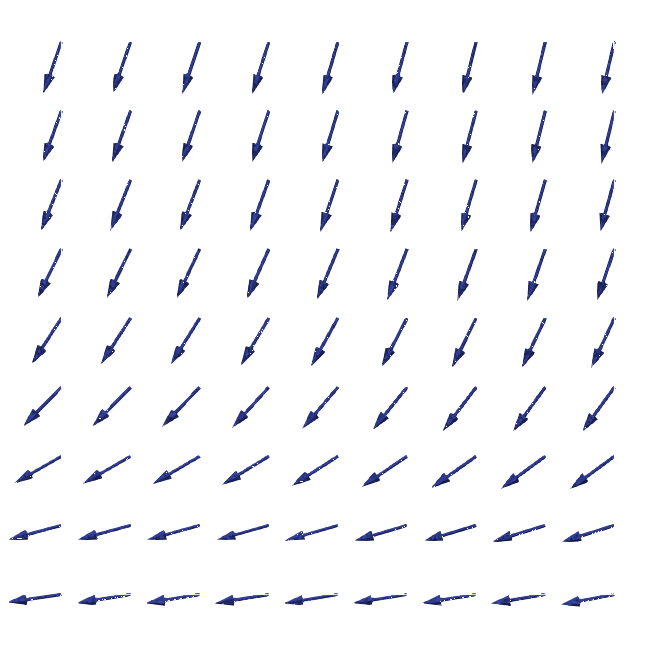}
		\caption{$t=0.1$}
	\end{subfigure}
	\begin{subfigure}[b]{0.3\textwidth}
		\centering
		\includegraphics[width=\textwidth]{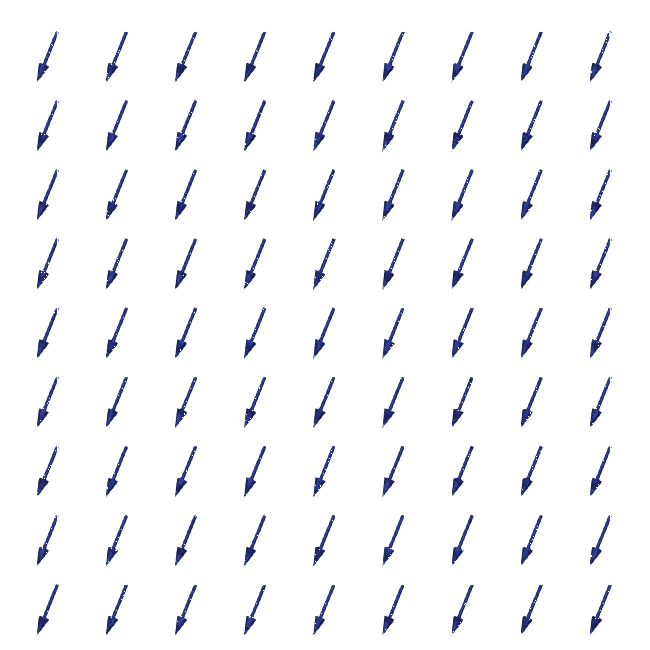}
		\caption{$t=0.5$}
	\end{subfigure}
	\caption{Snapshots of the spin field $\bff{u}$ (projected onto $\bb{R}^2$) for simulation 2.}
	\label{fig:snapshots field 2d 2}
\end{figure}

\begin{figure}[!htb]
	\centering
	\begin{subfigure}[b]{0.3\textwidth}
		\centering
		\includegraphics[width=\textwidth]{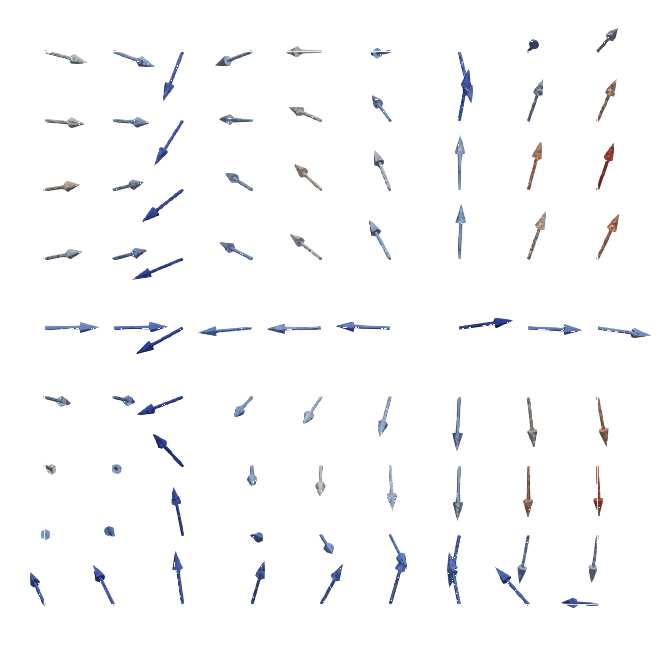}
		\caption{$t=0.0025$}
	\end{subfigure}
	\begin{subfigure}[b]{0.3\textwidth}
		\centering
		\includegraphics[width=\textwidth]{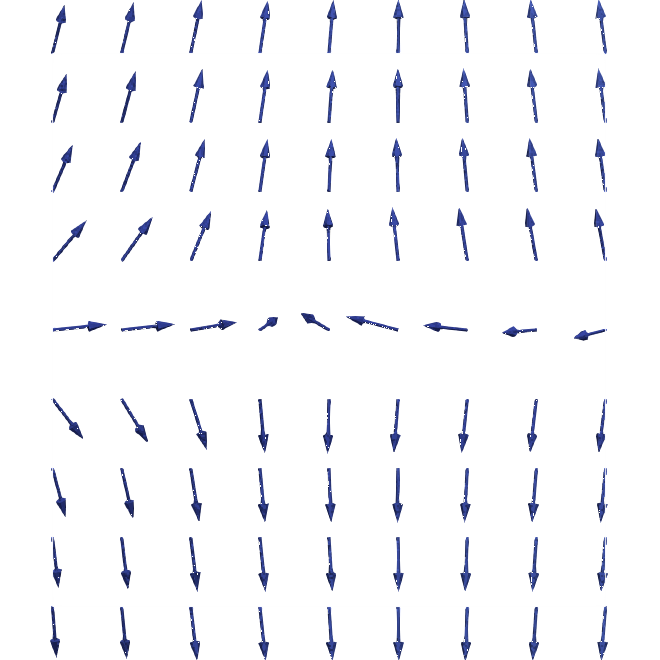}
		\caption{$t=0.025$}
	\end{subfigure}
	\begin{subfigure}[b]{0.3\textwidth}
		\centering
		\includegraphics[width=\textwidth]{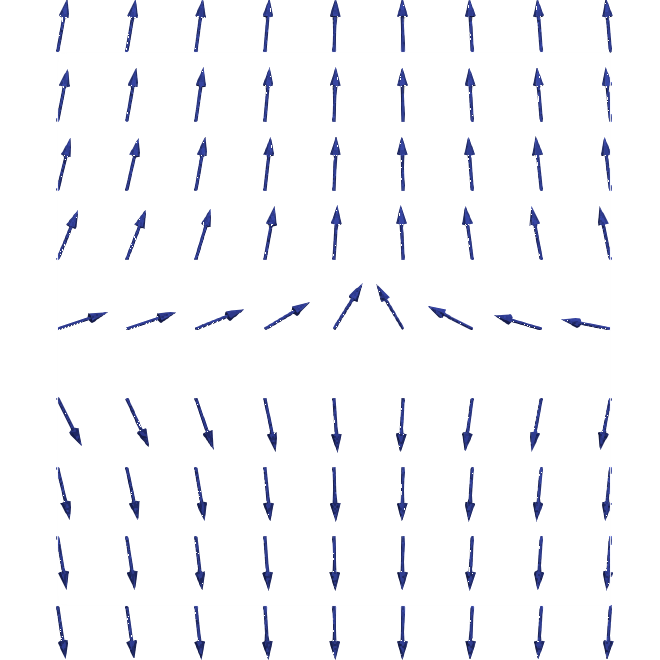}
		\caption{$t=0.05$}
	\end{subfigure}
	\begin{subfigure}[b]{0.3\textwidth}
		\centering
		\includegraphics[width=\textwidth]{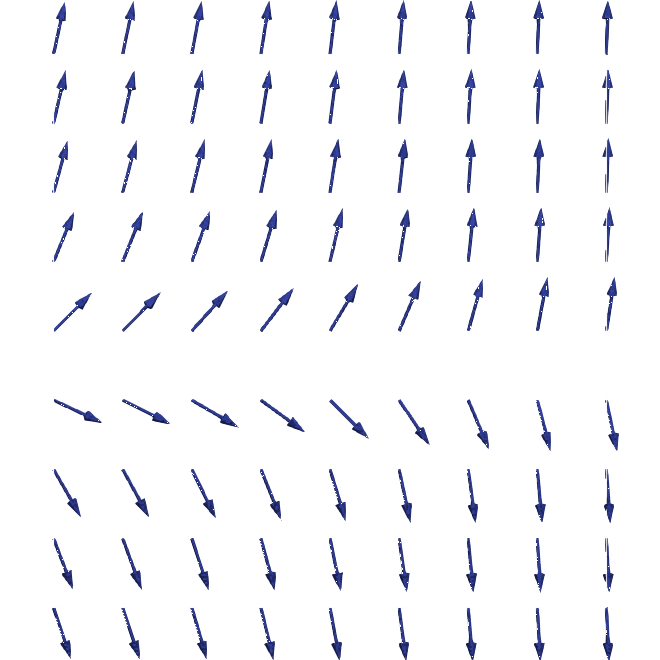}
		\caption{$t=0.1$}
	\end{subfigure}
	\begin{subfigure}[b]{0.3\textwidth}
		\centering
		\includegraphics[width=\textwidth]{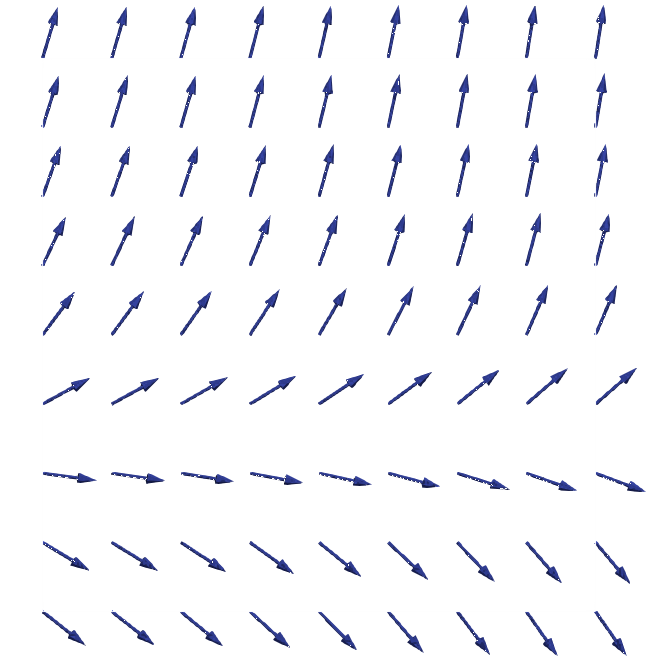}
		\caption{$t=0.125$}
	\end{subfigure}
	\begin{subfigure}[b]{0.3\textwidth}
		\centering
		\includegraphics[width=\textwidth]{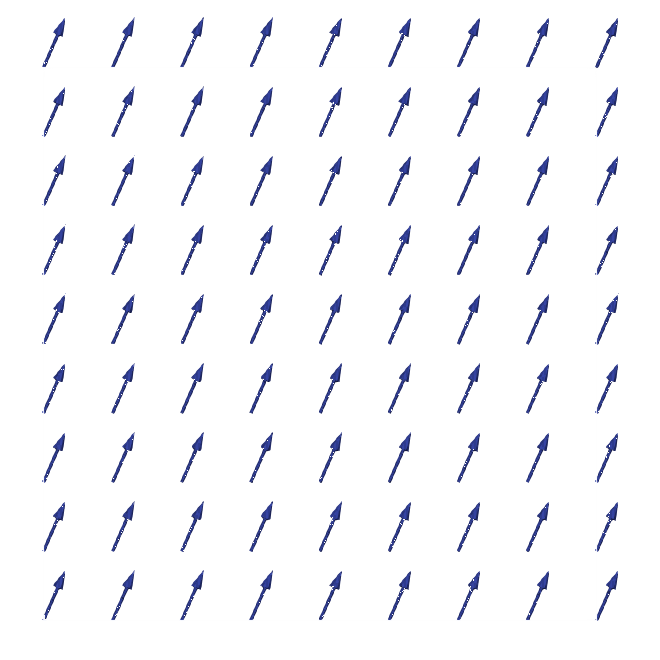}
		\caption{$t=0.5$}
	\end{subfigure}
	\caption{Snapshots of the effective field $\bff{H}$ (projected onto $\bb{R}^2$) for simulation 2.}
	\label{fig:snapshots eff 2d 2}
\end{figure}

\begin{figure}[!htb]
	\begin{subfigure}[b]{0.45\textwidth}
		\centering
		\begin{tikzpicture}
			\begin{axis}[
				title=Plot of $\bff{e}_h(\bff{u})$ against $1/h$,
				height=1.1\textwidth,
				width=1\textwidth,
				xlabel= $1/h$,
				ylabel= $\bff{e}_h$,
				xmode=log,
				ymode=log,
				legend pos=south west,
				legend cell align=left,
				]
				\addplot+[mark=*,red] coordinates {(4,0.958)(8,0.521)(16,0.265)(32,0.133)(64,0.066)};
				\addplot+[mark=*,green] coordinates {(4,0.175)(8,0.041)(16,0.0124)(32,0.00363)(64,0.001)};
				\addplot+[mark=*,blue] coordinates {(4,0.143)(8,0.0425)(16,0.0114)(32,0.00291)(64,0.000727)};
				\addplot+[dashed,no marks,red,domain=20:65]{10/x};
				\addplot+[dashed,no marks,blue,domain=20:65]{1.2/x^2};
				\legend{\small{$\max_n \norm{\bff{e}_h}{\bb{H}^1}$}, \small{$\max_n \norm{\bff{e}_h}{\bb{L}^\infty}$}, \small{$\max_n \norm{\bff{e}_h}{\bb{L}^2}$}, \small{order 1 line}, \small{order 2 line}}
			\end{axis}
		\end{tikzpicture}
		\caption{Error order of $\bff{u}$ for simulation 2.}
		\label{fig:order u 2}
	\end{subfigure}
	\begin{subfigure}[b]{0.45\textwidth}
		\centering
		\begin{tikzpicture}
			\begin{axis}[
				title=Plot of $\bff{e}_h(\bff{H})$ against $1/h$,
				height=1.1\textwidth,
				width=1\textwidth,
				xlabel= $1/h$,
				ylabel= $\bff{e}_h$,
				xmode=log,
				ymode=log,
				legend pos=south west,
				legend cell align=left,
				]
				\addplot+[mark=*,red] coordinates {(4,59.8)(8,36.9)(16,19.1)(32,9.64)(64,4.82)};
				\addplot+[mark=*,green] coordinates {(4,6.03)(8,1.53)(16,0.454)(32,0.164)(64,0.054)};
				\addplot+[mark=*,blue] coordinates {(4,5.18)(8,1.67)(16,0.443)(32,0.112)(64,0.028)};
				\addplot+[dashed,no marks,red,domain=20:65]{650/x};
				\addplot+[dashed,no marks,blue,domain=20:65]{40/x^2};
				\legend{\small{$\max_n \norm{\bff{e}_h}{\bb{H}^1}$}, \small{$\max_n \norm{\bff{e}_h}{\bb{L}^\infty}$}, \small{$\max_n \norm{\bff{e}_h}{\bb{L}^2}$}, \small{order 1 line}, \small{order 2 line}}
			\end{axis}
		\end{tikzpicture}
		\caption{Error order of $\bff{H}$ for simulation 2.}
		\label{fig:order H 2}
	\end{subfigure}
\end{figure}

\subsection{Simulation 3 (Energy dissipativity)}
We take $k=2.5\times 10^{-3}$. The coefficients in~\eqref{equ:llbar} are taken to be $\lambda_e=0.001, \lambda_r=4.0, \gamma=5.0, \kappa=3.0$, and $\beta=0.2$. The unit vector $\bff{e}=(0,1,0)^\top$. The initial data $\bff{u}_0$ is given by
\begin{equation*}
	\bff{u}_0(x,y)= \big(-2y\cos(2\pi x),\, 4x^2\sin(2\pi y),\, 2\cos(2\pi x) \sin(2\pi y) \big).
\end{equation*}
Recall that the energy of the system was defined in~\eqref{equ:energy}. Several plots of energy against time are shown in Figures~\ref{fig:energy mu pos fix k}, \ref{fig:energy mu neg fix k}, \ref{fig:energy mu pos fix h}, and \ref{fig:energy mu neg fix h} for positive or negative $\mu$ and various values of $h$ and $k$. In all cases, the energy is seen to decrease monotonically at the discrete level.

\begin{figure}[!htb]
	\centering
	\begin{subfigure}[b]{0.49\textwidth}
		\centering
		\includegraphics[width=\textwidth]{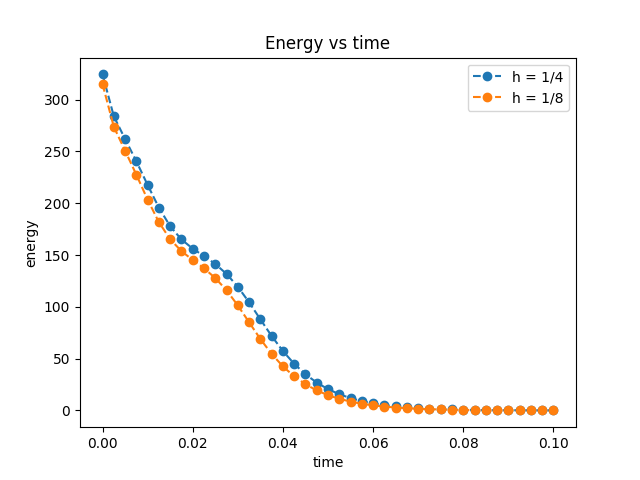}
		\caption{Energy vs time for $\mu=20.0$, $k=0.0025$}
		\label{fig:energy mu pos fix k}
	\end{subfigure}
	\begin{subfigure}[b]{0.49\textwidth}
		\centering
		\includegraphics[width=\textwidth]{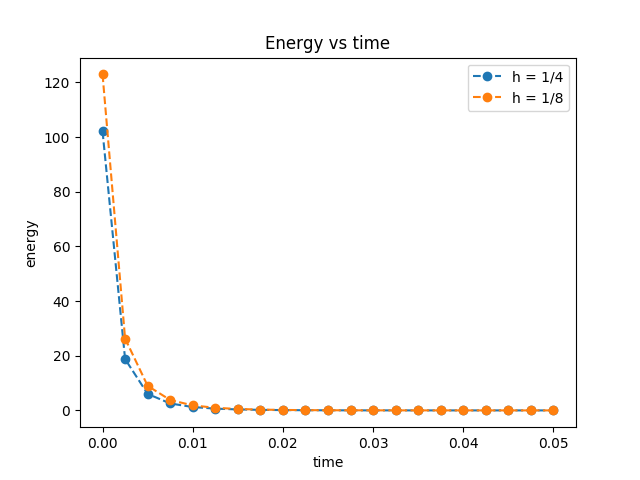}
		\caption{Energy vs time for $\mu=-5.0$, $k=0.0025$}
		\label{fig:energy mu neg fix k}
	\end{subfigure}
	\begin{subfigure}[b]{0.49\textwidth}
	\centering
	\includegraphics[width=\textwidth]{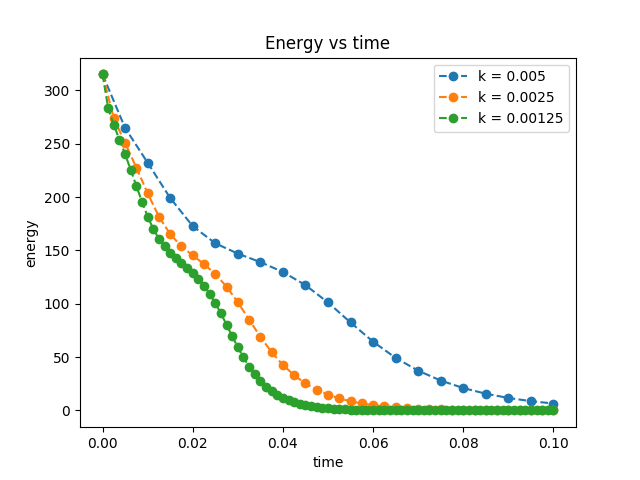}
	\caption{Energy vs time for $\mu=20.0$, $h=1/8$}
	\label{fig:energy mu pos fix h}
	\end{subfigure}
	\begin{subfigure}[b]{0.49\textwidth}
	\centering
	\includegraphics[width=\textwidth]{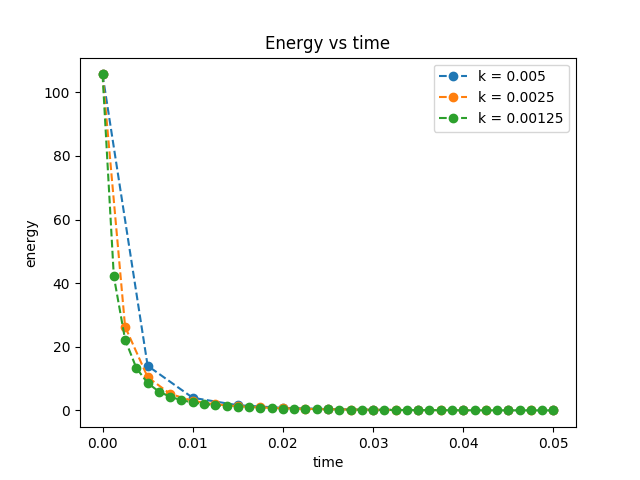}
	\caption{Energy vs time for $\mu=-1.0$, $h=1/8$}
	\label{fig:energy mu neg fix h}
	\end{subfigure}
\end{figure}

\section*{Acknowledgements}
The author is supported by the Australian Government Research Training
Program (RTP) Scholarship awarded at the University of New South Wales, Sydney. Some results from this manuscript have been presented at the WONAPDE~2024 conference in Concepci\'on, Chile.

The author also gratefully acknowledges financial support from the Australian Research Council under grant number DP200101866 (awarded to Prof. Thanh Tran).

\bibliographystyle{myabbrv}
\bibliography{mybib.bib}

\end{document}